\documentclass[11pt,a4paper]{article}

\usepackage{soul}
\usepackage{tikz}
\usepackage{amsthm}
\usepackage{amsmath}
\usepackage{amssymb}
\usepackage{mathrsfs}

\usepackage{geometry}
\usepackage{graphicx}
\usepackage{amsfonts}
\usepackage{epstopdf}
\usepackage{placeins}
\usepackage{enumerate}
\usepackage{enumitem}
\usepackage{subcaption}
\usepackage{microtype}
\usepackage[hidelinks]{hyperref} 
\usepackage[normalem]{ulem} 

\usetikzlibrary{calc,arrows.meta, positioning}

\usetikzlibrary{decorations.markings}

\numberwithin{equation}{section}

\allowdisplaybreaks

\newcommand{\esup}{\operatorname*{ess\,sup}}
\newcommand{\einf}{\operatorname*{ess\,inf}}

\newcommand{\Rmnum}[1]{\uppercase\expandafter{\romannumeral#1}} 

\def\Xint#1{\mathchoice
{\XXint\displaystyle\textstyle{#1}}%
{\XXint\textstyle\scriptstyle{#1}}%
{\XXint\scriptstyle\scriptscriptstyle{#1}}%
{\XXint\scriptscriptstyle\scriptscriptstyle{#1}}%
\!\int}
\def\XXint#1#2#3{{\setbox0=\hbox{$#1{#2#3}{\int}$ }
\vcenter{\hbox{$#2#3$ }}\kern-.6\wd0}}

\def\dashint{\Xint-}

\theoremstyle{plain}
\newtheorem{theorem}{Theorem}[section]
\newtheorem{proposition}[theorem]{Proposition}
\newtheorem{lemma}[theorem]{Lemma}

\newtheorem{definition}[theorem]{Definition}

\theoremstyle{definition}
\newtheorem{remark}[theorem]{Remark}

\renewcommand{\thefootnote}{}

\AtEndDocument{
\bigskip{\footnotesize%
   \textsc{Department of Mathematics, Aarhus University, 8000 Aarhus C, Denmark} \par
  \textit{E-mail address}: \texttt{yangmengqh@gmail.com}\par
  \textit{E-mail address}: \texttt{yang@math.au.dk}
}}

\begin{document}

\title{On the uniqueness of $p$-energy on the Sierpi\'nski carpet}
\author{Meng Yang}
\date{}

\maketitle

\abstract{For any $p\in(1,+\infty)$, it is proved that the self-similar $p$-energy on the Sierpi\'nski carpet is unique, up to multiplication by a positive constant.}

\footnote{\textsl{Date}: \today}
\footnote{\textsl{MSC2020}: 31E05, 28A80}
\footnote{\textsl{Keywords}: the Sierpi\'nski carpet, $p$-energies, $p$-energy measures.}

\renewcommand{\thefootnote}{\arabic{footnote}}
\setcounter{footnote}{0}

\section{Introduction}\label{sec_intro}

The construction of Brownian motion and its associated Dirichlet form is a fundamental problem in analysis on fractals. In the absence of a smooth differential structure, local regular Dirichlet forms provide an analog of the classical Dirichlet integral and a framework for defining Laplace operators and diffusion processes. Early developments include the work of Barlow and Perkins \cite{BP88} on the Sierpi\'nski gasket (see Figure \ref{fig_SG}), Lindstr{\o}m \cite{Lindstrom90} on nested fractals, and Kigami \cite{Kig93} on post-critically finite (p.c.f.) self-similar sets; see also the lecture notes by Barlow \cite{Bar98} and the monograph by Kigami \cite{Kig01book}. On many p.c.f. self-similar sets, finite ramification allows self-similar energies to be constructed by renormalizing energies on finite graphs.

\begin{figure}[ht]
\centering
\begin{minipage}[t]{0.48\textwidth}
\centering
\includegraphics[width=\textwidth]{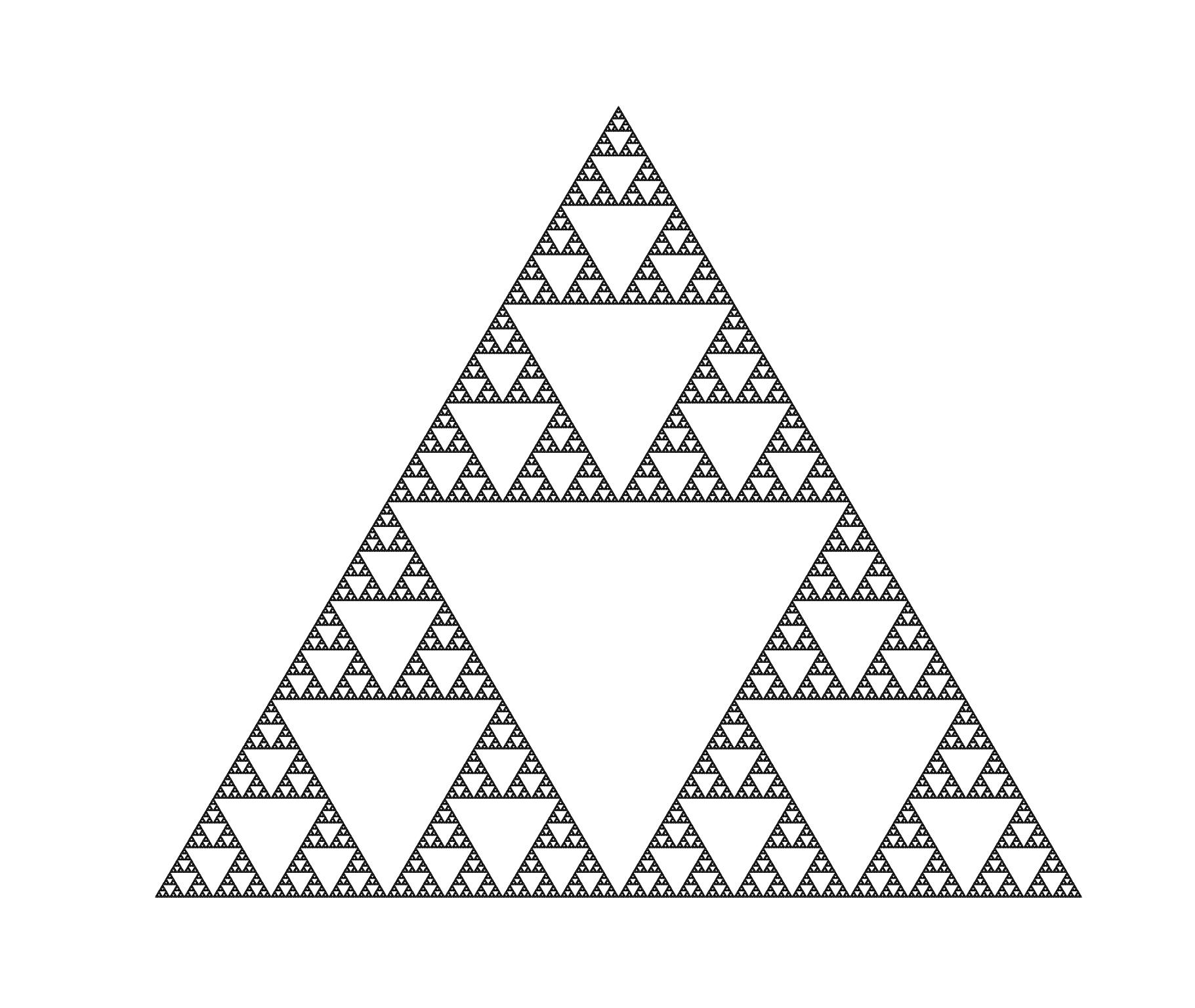}
\caption{The Sierpi\'nski gasket}\label{fig_SG}
\end{minipage}
\begin{minipage}[t]{0.48\textwidth}
\centering
\includegraphics[width=\textwidth]{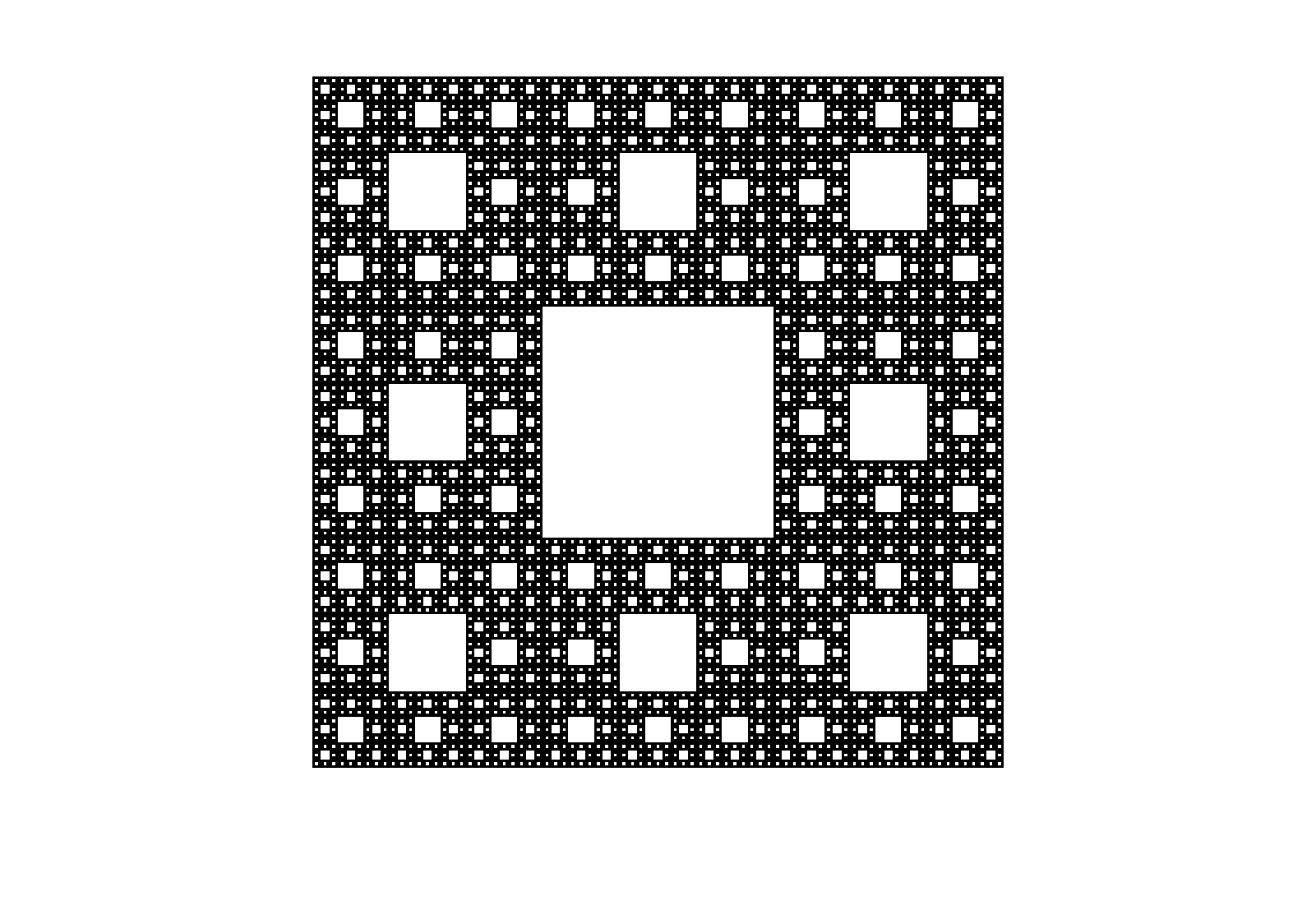}
\caption{The Sierpi\'nski carpet}\label{fig_SC}
\end{minipage}
\end{figure}

The Sierpi\'nski carpet presents substantially greater difficulties (see Figure \ref{fig_SC}). It is infinitely ramified and is not p.c.f., so the finite-dimensional renormalization methods available in the p.c.f. setting do not apply. Barlow and Bass \cite{BB89} first constructed Brownian motion on the carpet by taking subsequential limits of suitably time-changed reflected Brownian motions on planar domains approximating the carpet. Kusuoka and Zhou \cite{KZ92} subsequently developed a construction based on approximating graphs and Markov chains, which also yielded a self-similar Dirichlet form. These constructions required estimates uniform across the approximations and left open whether the resulting Brownian motions coincide, up to a constant rescaling of time.

The uniqueness problem was resolved by Barlow, Bass, Kumagai and Teplyaev \cite{BBKT10}. They proved that, up to multiplication by a positive constant, there is a unique non-zero local regular conservative Dirichlet form invariant under the local symmetries of a generalized Sierpi\'nski carpet. Their proof uses Harnack inequalities and heat kernel estimates to establish uniform comparison estimates for admissible forms. Applying these estimates to suitable differences of such forms, they show that any two such forms differ only by a constant factor.

More recently, Cao and Qiu \cite{CQ24} constructed symmetric self-similar Dirichlet forms on unconstrained Sierpi\'nski carpets. On these carpets, the constituent cells need not align with the usual grids, and intersections between cells may be line segments of irrational length. In \cite{CQ26}, they also proved uniqueness, up to multiplication by a positive constant, of self-similar $D_4$-symmetric resistance forms on such carpets. Their uniqueness proof follows the comparison argument in \cite{BBKT10}, which involves suitable differences of forms.

For $p\in(1,+\infty)$, the classical integral $\int_{\mathbb{R}^d}|\nabla f(x)|^p\mathrm{d}x$ suggests a non-linear counterpart of the Dirichlet form framework. The construction of such energies on fractals was initiated by Herman, Peirone and Strichartz \cite{HPS04} in 2004. They constructed $p$-energies on a class of Sierpi\'nski gasket-type fractals through renormalization of discrete energies. Recent advances in the study of $p$-energies on fractals and general metric measure spaces include the work of Cao, Gu and Qiu \cite{CGQ22} on p.c.f. self-similar sets, Baudoin and Chen \cite{BC23} on the Vicsek set, Chen, Gao, Yu and Zhang \cite{CGYZ26} on scale-irregular Vicsek sets, Kigami \cite{Kig23} on compact metric spaces, Alonso-Ruiz and Baudoin \cite{AB25} on Cheeger spaces, and Anttila, Eriksson-Bique and Shimizu \cite{AEBS25} on a class of Laakso-type fractal spaces.

For generalized Sierpi\'nski carpets, Shimizu \cite{Shi24} constructed self-similar $p$-energies and associated energy measures when $p$ is greater than the Ahlfors regular conformal dimension. Murugan and Shimizu \cite{MS25} subsequently constructed self-similar $p$-energies and energy measures on the standard planar Sierpi\'nski carpet for any $p\in(1,+\infty)$, removing this restriction on $p$. The principal difficulties on the Sierpi\'nski carpet include its infinite ramification and the possible discontinuity of finite-energy functions. These constructions lead naturally to the following uniqueness question, formulated in \cite[Conjecture 10.3]{MS25}:

\vspace{5pt}
\noindent\textbf{Question}: Is the self-similar $p$-energy on the Sierpi\'nski carpet unique, up to multiplication by a positive constant?
\vspace{5pt}

We answer this question affirmatively. More precisely, it is proved that for any $p\in(1,+\infty)$, any self-similar $p$-energy on the standard Sierpi\'nski carpet satisfying conditions \ref{item_SSEa}--\ref{item_SSEf} in Definition \ref{def_SSE} is a positive constant multiple of the canonical energy constructed in \cite{MS25}; see Theorem \ref{thm_unique}. In particular, its domain and self-similar scaling factor are uniquely determined.

For $p\ne2$, however, the non-linear nature of the problem creates additional obstructions. The argument in \cite{BBKT10} uses the quadratic structure of Dirichlet forms, particularly the fact that suitable differences of comparable local forms are still Dirichlet forms. This step does not extend to $p$-energies: the difference of two ``well-behaved" energies need not even satisfy the required convexity condition. Thus their uniqueness argument cannot be applied, and a different approach is needed.

A rough outline of the proof is as follows. We start with the quasi-uniqueness result in \cite{MS23arXiv}, which establishes the comparability of $p$-energies and their associated energy measures. We then use the notion of $p$-independence in \cite{EM25}, which corresponds to the notion of index in \cite{Hin10}, to show that the canonical $p$-energy has index one. This yields a common density relating the two families of energy measures. Finally, self-similarity and $D_4$-symmetry force this density to be constant, establishing uniqueness.

\section{Statement of main results}\label{sec_statement}

Before stating our main results, we fix some notation. Throughout this paper, $p\in(1,+\infty)$ is fixed. The letters $C$, $C_1$, $C_2$, $C_A$, $C_B$ will always refer to some positive constants and may change at each occurrence. The sign $\asymp$ means that the ratio of the two sides is bounded from above and below by positive constants. The sign $\lesssim$ ($\gtrsim$) means that the LHS is bounded by a positive constant times the RHS from above (below). We use $x_+$ and $x_-$ to denote the positive and negative parts of $x\in \mathbb{R}$, respectively, that is, $x_+=\max\{x,0\}$ and $x_-=\max\{-x,0\}$. For $x,y\in\mathbb{R}$, we write $x\wedge y=\min\{x,y\}$ and $x\vee y=\max\{x,y\}$. For two $\sigma$-finite Borel measures $\mu, \nu$, the notion $\mu\le\nu$ means that $\mu\ll \nu$ and $\frac{\mathrm{d}\mu}{\mathrm{d}\nu}\le1$, that is $\mu$ is absolutely continuous with respect to $\nu$ with Radon-Nikodym derivative bounded by 1, and the notion $\mu\lesssim \nu$ means that $\mu\le C\nu$ for some positive constant $C$. We use $\#A$ to denote the cardinality of a set $A$. We use $\mathcal{L}^d$ to denote the Lebesgue measure on $\mathbb{R}^d$. For any $x\in \mathbb{R}$, we define the sign function by $\mathrm{sgn}(x)=1$ if $x>0$, $\mathrm{sgn}(x)=0$ if $x=0$, and $\mathrm{sgn}(x)=-1$ if $x<0$.

Let $(K,d,m)$ be a complete metric measure space, that is, $(K,d)$ is a complete locally compact separable metric space and $m$ is a positive Radon measure on $K$ with full support. Throughout this paper, we always assume that all metric balls are relatively compact. For any $x\in K$ and any $r>0$, denote $B(x,r)=\{y\in K:d(x,y)<r\}$ and $V(x,r)=m(B(x,r))$. If $B=B(x,r)$, then denote $\delta B=B(x,\delta r)$ for any $\delta>0$. Let $\mathcal{B}(K)$ be the family of all Borel measurable subsets of $K$. Let $C(K)$ be the family of all continuous functions on $K$. Let $C_c(K)$ be the family of all continuous functions on $K$ with compact support. Denote $\dashint_A=\frac{1}{m(A)}\int_A$ and $u_A=\dashint_Au \mathrm{d} m$ for any measurable set $A$ with $m(A)\in(0,+\infty)$ and any function $u$ such that the integral $\int_Au \mathrm{d}m$ is well-defined.

In $\mathbb{R}^2$, let $\square:=[0,1]\times[0,1]\subseteq \mathbb{R}^2$ denote the unit square, and let $\ell_L$, $\ell_R$, $\ell_B$, and $\ell_T$ denote its left, right, bottom, and top sides, respectively. Let
\begin{align*}
&p_0=(0,0),p_1=(\frac{1}{2},0),p_2=(1,0),p_3=(1,\frac{1}{2}),\\
&p_4=(1,1),p_5=(\frac{1}{2},1),p_6=(0,1),p_7=(0,\frac{1}{2}),
\end{align*}
and let
$$F_i(x)=\frac{1}{3}(x-p_i)+p_i,x\in \mathbb{R}^2,i=0,1,\ldots,7.$$
Then the Sierpi\'nski carpet is the unique non-empty compact subset $K\subseteq \mathbb{R}^2$ satisfying
$$K=\bigcup_{i=0}^{7}F_i(K).$$
Let $D_4$ denote the dihedral group of the unit square $\square$. The Sierpi\'nski carpet $K$ is invariant under $D_4$. We say that $K^\partial:=\ell_L\cup \ell_R\cup \ell_B\cup \ell_T$ is the boundary of $K$, and $K^{\mathrm{int}}:=K\backslash K^\partial$ is the interior of $K$. Let $d$ be the Euclidean metric, then the Hausdorff dimension of $(K,d)$ is
$$d_h:=\frac{\log8}{\log3}.$$

For any $n\ge0$, let
$$W_n:=\left\{w=w_1\cdots w_n:w_i\in\{0,1,\ldots,7\}\text{ for any }i=1,\dots,n\right\}$$
be the set of words of length $n$ over the alphabet $\{0,1,\ldots,7\}$, where $W_0=\{\emptyset\}$ and $\emptyset$ is the empty word. Let $W_*:=\cup_{n\ge0}W_n$.

Let
$$\Sigma:=\left\{0,1,\ldots,7\right\}^\infty=\left\{ w=w_1w_2\cdots:w_i\in\{0,1,\ldots,7\}\text{ for any }i=1,2,\dots\right\}$$
be endowed with the product topology, then $\Sigma$ is a compact metrizable space. For any $i=0,1,\ldots,7$, let $\sigma_i:\Sigma\to \Sigma$ be given by $w_1w_2\cdots\mapsto iw_1w_2\cdots$. Then there exists a continuous surjective map $\pi:\Sigma\to K$ such that $\pi\circ \sigma_i=F_i\circ \pi$ for any $i=0,1,\ldots,7$.

For any $n,m\ge0$ and any
$$w^{(1)}=w^{(1)}_1\cdots w^{(1)}_n\in W_n,\qquad w^{(2)}=w^{(2)}_1\cdots w^{(2)}_m\in W_m,$$
define the concatenation $w^{(1)}w^{(2)}$ of $w^{(1)}$ and $w^{(2)}$ by
$$w^{(1)}w^{(2)}:=w^{(1)}_1\cdots w^{(1)}_n w^{(2)}_1\cdots w^{(2)}_m\in W_{n+m}.$$
For any $n\ge0$ and any
$$w^{(1)}=w^{(1)}_1\cdots w^{(1)}_n\in W_n,\qquad w^{(2)}=w^{(2)}_1w^{(2)}_2\cdots \in \Sigma,$$
define the concatenation $w^{(1)}w^{(2)}$ of $w^{(1)}$ and $w^{(2)}$ by
$$w^{(1)}w^{(2)}:=w^{(1)}_1\cdots w^{(1)}_n w^{(2)}_1 w^{(2)}_2\cdots\in \Sigma.$$
For any $m\ge n\ge0$ and any $w=w_1\cdots w_m\in W_m$, define the $n$-th truncation $[w]_n$ of $w$ by
$$[w]_n:=w_1\cdots w_n\in W_n.$$
For any $n\ge0$ and any $w=w_1w_2\cdots \in \Sigma$, define the $n$-th truncation $[w]_n$ of $w$ by
$$[w]_n:=w_1\cdots w_n\in W_n.$$

For any $n\ge0$ and any word
$$w=w_1\cdots w_n\in W_n,$$
let $\lvert w\rvert=n$ be the length of the word $w$, and define
$$F_w=F_{w_1}\circ\cdots\circ F_{w_n},\qquad K_w=F_w(K),\qquad K_w^{\mathrm{int}}=F_w(K^{\mathrm{int}}),\qquad K_w^{\partial}=F_w(K^\partial),$$
$$\sigma_w=\sigma_{w_1}\circ\cdots\circ \sigma_{w_n},\qquad \Sigma_w=\sigma_w(\Sigma).$$
We say that $K_w$ is a level-$n$ cell, $K_w^{\mathrm{int}}$ is the interior of $K_w$, $K_w^{\partial}$ is the boundary of $K_w$, and $\{F_w(i,j):i,j=0,1\}$ is the set of corners of $K_w$. In particular, for $n=0$, $\lvert \emptyset\rvert=0$, $F_\emptyset=\mathrm{Id}_{\mathbb{R}^2}$ is the identity map in $\mathbb{R}^2$, $K_\emptyset=K$, $K^{\mathrm{int}}_\emptyset=K^{\mathrm{int}}$, $K^\partial_\emptyset=K^\partial$, $\sigma_\emptyset=\mathrm{Id}_\Sigma$ is the identity map in $\Sigma$, and $\Sigma_\emptyset=\Sigma$. For any $n\ge0$, we have
$$K=\bigcup_{w\in W_n}K_w.$$
Let
$$K^\partial_n:=\bigcup_{w\in W_n}K^\partial_w,\qquad K^\partial_*:=\bigcup_{n\ge0}K^\partial_n,\qquad K^{\mathrm{int}}_n=\bigcup_{w\in W_n}K^{\mathrm{int}}_w.$$
For any $x\in K\backslash K^\partial_*$ and any $n\ge0$, there exists a unique word in $W_n$, denoted by $[x]_n$, such that $x\in K_{[x]_n}$.

Let $m$ be the unique positive Radon measure on $K$ such that $m(K_w)=8^{-|w|}$ for any $w\in W_*$, then $m$ is the normalized $d_h$-dimensional Hausdorff measure.

\begin{definition}\label{def_SSE}
We say that $(\mathcal{E},\mathcal{F})$ is a self-similar $p$-energy on $K$ if $\mathcal{F}\subseteq L^p(K;m)$ and $\mathcal{E}:\mathcal{F}\to[0,+\infty)$ satisfy the following conditions.
\begin{enumerate}[label=(SSE\alph*),ref=(SSE\alph*)]
\item\label{item_SSEa} $\{f\in \mathcal{F}:\mathcal{E}(f)=0\}=\{f\in L^p(K;m):f\text{ is constant }m\text{-a.e. on }K\}$. For any $f\in \mathcal{F}$ and $a\in \mathbb{R}$,
$$\mathcal{E}(f+a)=\mathcal{E}(f),\qquad \mathcal{E}(af)=\lvert a\rvert^p \mathcal{E}(f).$$
\item\label{item_SSEb} $\mathcal{E}^{\frac{1}{p}}$ is a seminorm on $\mathcal{F}$. Moreover, $(\mathcal{F},(\mathcal{E}(\cdot)+\lVert \cdot\rVert_{L^p(K;m)}^p)^{\frac{1}{p}})$ is a uniformly convex Banach space.
\item\label{item_SSEc} (Regularity) $\mathcal{F}\cap C(K)$ is uniformly dense in $C(K)$ and $(\mathcal{E}(\cdot)+\lVert \cdot\rVert_{L^p(K;m)}^p)^{\frac{1}{p}}$-dense in $\mathcal{F}$.
\item\label{item_SSEd} (Symmetry) For any $\psi\in D_4$ and any $f\in \mathcal{F}$, $f\circ \psi\in \mathcal{F}$ and $\mathcal{E}(f\circ \psi)=\mathcal{E}(f)$.
\item\label{item_SSEe} (Self-similarity) There exists $\rho\in(0,+\infty)$ such that for any $f\in \mathcal{F}$ and any $i=0,1,\ldots,7$, $f\circ F_i\in \mathcal{F}$ and
$$\mathcal{E}(f)=\rho\sum_{i=0}^7 \mathcal{E}(f\circ F_i).$$
Moreover,
$$\mathcal{F}\cap C(K)=\left\{f\in C(K):f\circ F_i\in \mathcal{F}\text{ for any }i=0,1,\ldots,7\right\}.$$
\item\label{item_SSEf} (Unit contractivity) For any $f\in \mathcal{F}$, $(f\vee0)\wedge1\in \mathcal{F}$ and $\mathcal{E}((f\vee0)\wedge1)\le \mathcal{E}(f)$.
\end{enumerate}
\end{definition}

Let
$$\mathfrak{E}:=\left\{(\mathcal{E},\mathcal{F}):(\mathcal{E},\mathcal{F})\text{ is a self-similar $p$-energy}\right\}.$$

Fix $(\mathcal{E},\mathcal{F})\in \mathfrak{E}$. Let
$$\mathcal{E}_1(\cdot):=\mathcal{E}(\cdot)+\lVert \cdot\rVert_{L^p(K;m)}^p,\qquad\beta_p:=\frac{\log(8\rho)}{\log3},$$
where $\rho$ is the parameter in \ref{item_SSEe}.

Under \ref{item_SSEa}--\ref{item_SSEf}, we can extend unit contractivity \ref{item_SSEf} to:
\begin{enumerate}[label=(SSE\alph*'),ref=(SSE\alph*'),start=6]
\item\label{item_SSEfp} (Lipschitz contractivity) For any $f\in\mathcal F$ and any Lipschitz function $\varphi:\mathbb R\to\mathbb R$, $\varphi\circ f\in\mathcal F$ and $\mathcal E(\varphi\circ f)\le \mathrm{Lip}(\varphi)^p\mathcal E(f)$.
\end{enumerate}
The proof follows by a standard piecewise affine approximation argument, using weak compactness and Mazur’s lemma.

By the self-similar property, for any $f\in \mathcal{F}$, there exists a positive Radon measure $\Gamma(f)$ on $K$, called the $\mathcal{E}$-energy measure of $f$, given as follows. First, we define a measure $\nu_f$ on $\Sigma$. For any $w\in W_*$, let $\nu_f(\Sigma_w)=\rho^{\lvert w\rvert}\mathcal{E}(f\circ F_w)$. By the self-similar property, we may apply Kolmogorov’s consistency theorem to obtain a measure $\nu_f$ on $\Sigma$ satisfying
$$\nu_f(\Sigma_w)=\sum_{i=0}^7\nu_f(\Sigma_{wi})\text{ for any }w\in W_*.$$
Then $\Gamma(f):=\pi_*\nu_f$ is defined as the pushforward of $\nu_f$ under $\pi$. Obviously, $\Gamma(f)(K)=\mathcal{E}(f)$.

The conditions in Definition \ref{def_SSE} imply the following spectral gap condition.

\begin{proposition}\label{prop_gap}
Let $(\mathcal{E},\mathcal{F})\in \mathfrak{E}$. Then the following condition holds.
\begin{enumerate}[label=(SSE\alph*),ref=(SSE\alph*),start=7]
\item\label{item_SSEg} (Spectral gap) There exists a constant $C_{gap}>0$ such that
$$\lVert f-f_K\rVert_{L^p(K;m)}^p\le C_{gap}\mathcal{E}(f)\text{ for any }f\in \mathcal{F}.$$
\end{enumerate}
\end{proposition}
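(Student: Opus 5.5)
We argue by contradiction, using compactness from the uniformly convex Banach space structure in \ref{item_SSEb} together with the characterization of the null space in \ref{item_SSEa}. Suppose no such constant exists. Then there are $f_n\in\mathcal{F}$ with $\lVert f_n-(f_n)_K\rVert_{L^p(K;m)}^p> n\,\mathcal{E}(f_n)$. Replace $f_n$ by $g_n:=(f_n-(f_n)_K)/\lVert f_n-(f_n)_K\rVert_{L^p}$. By \ref{item_SSEa}, this normalization changes $\mathcal{E}$ only by the scalar factor, so
$$(g_n)_K=0,\qquad \lVert g_n\rVert_{L^p}=1,\qquad \mathcal{E}(g_n)<1/n.$$
Thus $(g_n)$ is bounded in $(\mathcal{F},\mathcal{E}_1^{1/p})$.

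The key step, and the main obstacle, is to extract a subsequence converging \emph{strongly} in $L^p(K;m)$. Reflexivity of the uniformly convex space $\mathcal{F}$ only gives weak convergence, so I would establish a compactness property of the embedding $\mathcal{F}\hookrightarrow L^p$ from self-similarity \ref{item_SSEe}. The plan is to compare $g$ with its cell averages on level-$k$ cells:
$$\lVert g-P_kg\rVert_{L^p}^p=\sum_{w\in W_k}8^{-k}\lVert g\circ F_w-(g\circ F_w)_K\rVert_{L^p}^p,\qquad P_kg:=\sum_{w\in W_k}(g)_{K_w}1_{K_w}.$$
A bound $\lVert g-P_kg\rVert_{L^p}^p\le \varepsilon_k\mathcal{E}_1(g)$ with $\varepsilon_k\to0$ would give compactness, because each $P_k$ has finite rank. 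To obtain such a bound I would first try a weaker, non-quantitative statement at level $0$: the quantity
$$\sup\bigl\{\lVert g-g_K\rVert_{L^p}:\mathcal{E}(g)\le1,\ g\in\mathcal{F}\bigr\}$$
is finite. If it equals some $A<\infty$, then self-similarity gives
$$\lVert g-P_kg\rVert_{L^p}^p\le A^p\sum_w 8^{-k}\mathcal{E}(g\circ F_w)=A^p(8\rho)^{-k}\mathcal{E}(g),$$
and it remains to show $8\rho>1$.

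These reductions appear circular, because finiteness of $A$ is essentially the statement being proved. I would therefore break the circularity differently. By the regularity \ref{item_SSEc} and the unit contractivity \ref{item_SSEf}, work with truncated continuous functions. Argue directly on the weak limit $g$ of $g_n$ in $\mathcal{F}$: by Mazur's lemma and lower semicontinuity of the convex, continuous functional $\mathcal{E}$, we get $\mathcal{E}(g)\le\liminf\mathcal{E}(g_n)=0$. Hence $g$ is constant by \ref{item_SSEa}, and since $g_K=\lim (g_n)_K=0$ (averaging is weakly continuous), $g=0$. The contradiction then requires $\lVert g\rVert_{L^p}=1$, which is exactly the strong convergence.

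For strong convergence I would use the self-similar and symmetric structure as follows. Apply the same weak-limit argument to $g_n\circ F_w$ for each fixed $w$. Since $\mathcal{E}(g_n\circ F_w)\le\rho^{-|w|}\mathcal{E}(g_n)\to0$, every weak limit of $g_n\circ F_w$ is a constant $c_w$. The averages $(g_n)_{K_w}$ converge to $c_w$ along a diagonal subsequence. Consistency, $c_w=\frac18\sum_i c_{wi}$, together with control of the oscillation between adjacent cells, which is where $\mathcal{E}\to0$ and the shared edges of neighbouring cells enter via \ref{item_SSEd}, should force all $c_w$ to be equal, hence all $c_w=0$. Passing from vanishing cell averages to $\lVert g_n\rVert_{L^p}\to0$ is where the genuine difficulty sits. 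I expect to need the quasi-uniqueness comparison with the canonical energy from \cite{MS23arXiv,MS25}, for which a Poincaré inequality is known. If that comparison $\mathcal{E}\asymp\mathcal{E}^{\mathrm{can}}$ on $\mathcal{F}\cap C(K)$ is available, the proposition follows immediately from the canonical Poincaré inequality and density \ref{item_SSEc}. I would adopt this as the primary route.
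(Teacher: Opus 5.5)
There is a genuine gap. The step you yourself identify as the crux, passing from weak to strong $L^p$ convergence, is never carried out, and the route you adopt as primary is circular. In this paper, the comparability $\mathcal{E}\asymp\mathcal{E}^{(c)}$ of Lemma \ref{lem_quasi_unique} (from \cite[Propositions 10.26 and 10.29]{MS23arXiv}) and the Poincar\'e inequality $\mathrm{PI}(\beta_p)$ are invoked only after \ref{item_SSEg} has been added to the hypotheses. That is why Proposition \ref{prop_gap} is proved first, and why the paper checks that Lemmas \ref{lem_rhochi} and \ref{lem_meas} use only \ref{item_SSEa}--\ref{item_SSEf}.

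The dependence is substantive, not a formality. $\mathcal{E}^{(c)}$ is governed by the discrete energies $\rho^n\sum_{K_v\cap K_w\ne\emptyset}|f_{K_v}-f_{K_w}|^p$ (cf.\ \cite[Theorem 1.1(iii)]{MS25}). Bounding these, or $\int_{K_w}|f-f_{K_w}|^p\,\mathrm{d}m$, by $\mathcal{E}(f\circ F_w)$ is exactly the spectral gap for $\mathcal{E}$ rescaled to cells. Your intermediate thread does not help either. The map $f\mapsto f\circ F_w$ is bounded and linear on $(\mathcal{F},\mathcal{E}_1^{1/p})$, and $g_n\rightharpoonup 0$, so $g_n\circ F_w\rightharpoonup0$ and $c_w=0$ come for free. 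This says nothing about $|g_n|^p$ concentrating at ever finer scales, which is precisely the scenario that must be excluded.

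The missing idea is a mechanism that turns weak convergence into strong convergence. This is easy (Lemma \ref{lem_bdd_Lp}) for \emph{non-negative}, uniformly bounded $f_n$ with $\mathcal{E}(f_n)\to0$ that vanish on a \emph{fixed} set of positive measure. The weak limit is then the constant $0$, so $\int_K f_n\,\mathrm{d}m\to0$, hence $\lVert f_n\rVert_{L^p}\to0$. The paper's proof consists of manufacturing this situation, in two stages.

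First, it reduces a counterexample to functions $0\le h_n\le 1$ with $m(\{h_n=1\})\ge\delta$, $m(\{h_n=0\})\ge\frac12$ and $\mathcal{E}(h_n)\to0$. The tools are:
\begin{itemize}
\item a median truncation;
\item a choice of one dyadic level-set layer (Lemma \ref{lem_ug});
\item a stopping-time blowup to a cell on which $\{g=1\}$ has density in $[\delta,8\delta)$, with $\rho\ge\frac12$ controlling the rescaled energy (Lemma \ref{lem_gh}).
\end{itemize}

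Second, it uses the crossing capacity $\chi>0$ (Lemma \ref{lem_rhochi}, valid without \ref{item_SSEg}) together with planarity (Lemmas \ref{lem_path01} and \ref{lem_inter4}). Once the energy is small, for fixed levels $a<b$, every component of $\{h_n>b\}$, or every component of $\{h_n<a\}$, misses at least one corner cell $K_i^{\mathrm{int}}$, $i\in\{0,2,4,6\}$. Grouping components by a missed cell and restricting (Lemma \ref{lem_restrict}) dominates $(h_n-b)_+$, resp.\ $(a-h_n)_+$, by four pieces, each vanishing on a fixed corner cell. The weak-limit argument then forces their $L^p$ norms to zero, contradicting the definite $L^p$ mass of these truncations (Proposition \ref{prop_gap_pre}).
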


We henceforth include \ref{item_SSEg}, which follows from \ref{item_SSEa}--\ref{item_SSEf}, among the conditions in Definition \ref{def_SSE}. Under this additional condition, we obtain the following Poincar\'e inequality (see \cite[Proposition 10.31]{MS23arXiv}, \cite[Theorem 8.21]{MS25}): there exist constants $C_{PI}>0$ and $A_{PI}\ge1$ such that for any ball $B(x,r)$ and any $f\in\mathcal{F}$,
\begin{equation*}\label{eq_PI}\tag*{$\text{PI}(\beta_p)$}
\int_{B(x,r)}\lvert f-f_{B(x,r)}\rvert^p\mathrm{d} m\le C_{PI}r^{\beta_p}\int_{{B(x,A_{PI}r)}}\mathrm{d}\Gamma(f).
\end{equation*}

By \cite[Theorem 1.1]{MS25}, there exists a canonical self-similar $p$-energy $(\mathcal{E}^{(c)},\mathcal{F}^{(c)})\in \mathfrak{E}$, which is constructed by the $\Gamma$-convergence of discrete energies on a sequence of approximating graphs, and satisfies the generalized $p$-contraction property introduced in \cite{KS24aV5}, so that $\mathfrak{E}\ne \emptyset$. We use $\rho^{(c)}$ to denote the parameter associated with $(\mathcal{E}^{(c)},\mathcal{F}^{(c)})$ in \ref{item_SSEe}. The parameter
$$\beta_p^{(c)}:=\frac{\log(8\rho^{(c)})}{\log3}$$
is called the $p$-walk dimension of $(\mathcal{E}^{(c)},\mathcal{F}^{(c)})$.

The main result of this paper is as follows.

\begin{theorem}\label{thm_unique}
For any $(\mathcal{E},\mathcal{F})\in \mathfrak{E}$, let $\rho$ be the parameter associated with $(\mathcal{E},\mathcal{F})$ in \ref{item_SSEe}, then $\rho=\rho^{(c)}$, $\mathcal{F}=\mathcal{F}^{(c)}$, and there exists a constant $c>0$ such that
$$\mathcal{E}(f)=c\mathcal{E}^{(c)}(f)\text{ for any }f\in \mathcal{F}=\mathcal{F}^{(c)}.$$
\end{theorem}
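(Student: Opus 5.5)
The plan follows the outline in the introduction and has three stages: comparability, a common density, and constancy of that density.

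\textbf{Step 1 (comparability).} Fix $(\mathcal{E},\mathcal{F})\in\mathfrak{E}$. By Proposition \ref{prop_gap} it satisfies \ref{item_SSEg}, hence \ref{eq_PI}. I would invoke the quasi-uniqueness theorem of \cite{MS23arXiv}, which applies to energies satisfying the Poincar\'e inequality together with the matching capacity upper bound. The capacity upper bound for cells I would derive from self-similarity and Lipschitz contractivity \ref{item_SSEfp}, by using cut-off functions built from cell-wise piecewise affine functions. This gives $\beta_p=\beta_p^{(c)}$, hence $\rho=\rho^{(c)}$. It also gives $\mathcal{F}=\mathcal{F}^{(c)}$ with $\mathcal{E}\asymp\mathcal{E}^{(c)}$, and the comparability of energy measures $\Gamma(f)\asymp\Gamma^{(c)}(f)$ for all $f\in\mathcal{F}$.

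\textbf{Step 2 (common density via index one).} Next I would use the $p$-independence notion of \cite{EM25} to show that the canonical energy has ``index one''. Concretely, there is a fixed $h\in\mathcal{F}^{(c)}$ such that every $\Gamma^{(c)}(f)$ is absolutely continuous with respect to $\Gamma^{(c)}(h)$. Moreover, locally $\mathrm{d}\Gamma^{(c)}(f)=|\partial f/\partial h|^p\,\mathrm{d}\Gamma^{(c)}(h)$, where $\partial f/\partial h$ is obtained as a limit of ratios of increments along $h$. A natural candidate is $h(x_1,x_2)=x_1$, or a canonical harmonic function. For the second energy, comparability of the measures gives $\Gamma(f)\ll\Gamma^{(c)}(h)$. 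Using the same derivative structure, which I would obtain by approximating $f$ locally by $a\,h$ on small cells where $f$ is nearly ``affine in $h$'' in the $p$-independence sense, I would show that
$$\mathrm{d}\Gamma(f)=\varphi\,|\partial f/\partial h|^p\,\mathrm{d}\Gamma^{(c)}(h)$$
with a density $\varphi=\mathrm{d}\Gamma(h)/\mathrm{d}\Gamma^{(c)}(h)$ that does not depend on $f$. By comparability, $\varphi$ is bounded above and below by positive constants.

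\textbf{Step 3 (constancy of the density).} Self-similarity with equal $\rho$ gives $\Gamma(f\circ F_w)=\rho^{-|w|}F_w^{*}\Gamma(f)$ restricted to $K_w$, and likewise for $\Gamma^{(c)}$. Applying the common-density formula to $f\circ F_w$ then yields $\varphi\circ F_w=\varphi$ a.e.\ with respect to $\Gamma^{(c)}(\cdot)$ for all $w$; similarly, $D_4$-symmetry gives $\varphi\circ\psi=\varphi$. For every $w$, the average of $\varphi$ over $K_w$ against energy measures therefore equals the global average $c$. A Lebesgue differentiation argument (martingale convergence along cells, with respect to a measure such as $\Gamma^{(c)}(h)$ that has no atoms and charges no cell boundaries) then forces $\varphi\equiv c$. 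Integrating over $K$ gives $\mathcal{E}=c\,\mathcal{E}^{(c)}$.

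\textbf{Main obstacle.} The hard part is Step 2. It requires establishing index one for the canonical $p$-energy on the carpet, and also that the $f$-independent structure $|\partial f/\partial h|^p$ is shared by $\mathcal{E}$. In the nonlinear setting there is no bilinear form to differentiate, so I would first try to prove a chain-rule/locality statement: on sets where the $p$-independence rank is one, $\Gamma(f)$ is determined by $\Gamma(f-ah)$-minimization over $a$. I would then use the uniform convexity in \ref{item_SSEb} to identify the minimizer.
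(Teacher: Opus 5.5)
Your Step 1 is essentially the paper's: the spectral gap gives the Poincar\'e inequality, and then \cite{MS23arXiv} gives comparability. Steps 2 and 3, however, each leave out the main idea, and the gap in Step 3 is one you do not flag.

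\textbf{Step 3 is circular.} By self-similarity alone, $\Gamma^{(c)}(f)|_{K_w}$ is a rescaled copy of $\Gamma^{(c)}(f\circ F_w)$, not of $\Gamma^{(c)}(f)$. So the $\Gamma^{(c)}(f)$-average of $\varphi$ over $K_w$ is $\Gamma(f)(K_w)/\Gamma^{(c)}(f)(K_w)=\mathcal{E}(f\circ F_w)/\mathcal{E}^{(c)}(f\circ F_w)$.

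\begin{itemize}
\item Asserting that this ratio equals a global constant $c$ is precisely the theorem. The invariance $\varphi\circ F_w=\varphi$ does not give it.
\item Martingale convergence along cells only recovers $\varphi(x)$ as the limit of these ratios along $[x]_n$. Nothing forces that limit to be independent of $x$.
\item Energy measures are not self-similar in the way $m$ is, so the density argument that works for $m$ breaks down.
\end{itemize}

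What is needed is an ergodicity statement, Theorem~\ref{thm_01law}: a Borel set $A$ with $\mu(F_i^{-1}(A)\mathbin{\triangle}A)=\mu(\psi^{-1}(A)\mathbin{\triangle}A)=0$ for all $i$ and all $\psi\in D_4$ is $\mu$-null or $\mu$-conull. Applied to the level sets $\{a>r\}$, it makes $a$ constant. The paper proves it in three stages.

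\begin{itemize}
\item It treats $Q_A(f)=\Gamma(f)(A)$ as a self-similar component.
\item It shows $Q=0$ as soon as $Z(Q)\cap C(K)$ contains a non-constant function. A folding construction gives $g\in Z(Q)$ with $g|_{\ell_L}=0$ and $g|_{\ell_R}=1$; cell-wise interpolation, together with the bound of discrete energies by $\mathcal{E}$, then shows $Z(Q)=\mathcal{F}$.
\item It produces a non-constant continuous element of $Z(Q_{K\setminus A})$ by blowing up a harmonic function at a density point of $A$. This needs the compactness result Proposition~\ref{prop_Hin418}, and hence the elliptic Harnack inequality.
\end{itemize}

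Separately, upgrading $\varphi\circ F_w=\varphi$ from $\Gamma^{(c)}(f\circ F_w)$-a.e.\ to $\mu$-a.e.\ also needs the folding maps, so that every $g\in\mathcal{F}\cap C(K)$ has the form $f\circ F_w$.

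\textbf{Step 2 contains no argument for index one} (Theorem~\ref{thm_R1}), which is the heart of the paper. The paper adapts Hino's method in four moves.

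\begin{itemize}
\item Reduce to harmonic functions via Lemma~\ref{lem_Hstar}.
\item Blow up to get $\boldsymbol{h}\in\mathcal{H}^2$ with $\Psi_{\boldsymbol{h}}(\cdot)(x)$ equal to one fixed $\Theta$ for $\Lambda_{\boldsymbol h}$-a.e.\ $x$.
\item Use harmonicity with the Leibniz and chain rules to show that $\boldsymbol{h}_*(f^p\Lambda_{\boldsymbol h})\ll\mathcal{L}^2$, with the $p$-th root of its density in $W^{1,p}(\mathbb{R}^2)$.
\item Reach a contradiction. When $\beta_p>d_h$, use the Morrey estimate, which makes $\boldsymbol{h}(K)$ Lebesgue-null. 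When $\beta_p\le d_h$, use the Sobolev inequality combined with capacity estimates on $\mathbb{R}^2$. Both cases use $d_h/2<\beta_p/p$.
\end{itemize}

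By contrast, what you call the main obstacle is easy and needs neither uniform convexity nor a single dominating $h$. Take a countable dense $\mathbb{Q}$-space $\mathcal{D}$ and the pointwise seminorms $R_x(f)=(\mathrm{d}\Gamma^{(c)}(f)/\mathrm{d}\mu)^{1/p}$ and $T_x(f)=(\mathrm{d}\Gamma(f)/\mathrm{d}\mu)^{1/p}$. Comparability gives $T_x\asymp R_x$. Once the completion of $\mathcal{D}/\ker R_x$ is one-dimensional, $T_x(f)^p=a(x)R_x(f)^p$ follows, with $a(x)=T_x(g)^p/R_x(g)^p$.

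Finally, your candidate $h(x_1,x_2)=x_1$ is not in $\mathcal{F}$. Since $\beta_p>p$, its discrete energies $\rho^n\sum|f_{K_v}-f_{K_w}|^p$ grow like $3^{(\beta_p-p)n}$, whereas for $f\in\mathcal{F}$ they are bounded by a multiple of $\mathcal{E}(f)$.
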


The idea of the proof is as follows. First, one can obtain a quasi-uniqueness result from \cite[Propositions 10.26 and 10.29]{MS23arXiv}: any $(\mathcal{E},\mathcal{F})\in \mathfrak{E}$ has the same parameter $\rho=\rho^{(c)}$, the same domain $\mathcal{F}=\mathcal{F}^{(c)}$, and $\mathcal{E}$ is comparable to $\mathcal{E}^{(c)}$, that is, there exists a constant $C\ge 1$ such that
$$\frac{1}{C}\mathcal{E}^{(c)}\le \mathcal{E}\le C\mathcal{E}^{(c)},\qquad \frac{1}{C}\Gamma^{(c)}\le \Gamma\le C\Gamma^{(c)}.$$

We then use the notion of $p$-independence from \cite[Definition 2.25]{EM25}; see also the references therein for related results. In the linear Dirichlet form setting, this notion is related to the index defined in \cite[Definition 2.9]{Hin10}. We show that any two functions $f_1,f_2\in\mathcal{F}^{(c)}$ are $p$-dependent with respect to $(\mathcal{E}^{(c)},\mathcal{F}^{(c)})$, or equivalently, $(\mathcal{E}^{(c)},\mathcal{F}^{(c)})$ has index one; see Theorem \ref{thm_R1}. Roughly speaking, at almost every point, the space of energy measures is $1$-dimensional. The proof requires the fact that $\frac{d_h}{2}<\frac{\beta^{(c)}_p}{p}$, which follows from \cite[Corollary 2.2]{Yan25e} or \cite[Theorem 9.8]{KS24aV5}.

By the index-one property and the comparability of $\Gamma$ and $\Gamma^{(c)}$, we then find a Borel function $a$ such that
$$\Gamma(f)=a\Gamma^{(c)}(f)\ \text{ for any }f\in\mathcal{F}=\mathcal{F}^{(c)}.$$
We show that $a$ is invariant under the contraction maps $F_i$ and
the isometries in $D_4$, which implies the corresponding invariance properties of its level sets $\{a>r\}$ for $r\in \mathbb{R}$. A zero-one law in Theorem \ref{thm_01law} then forces $a$ to be constant and yields the uniqueness in Theorem \ref{thm_unique}.

This paper is organized as follows. In Section \ref{sec_gap}, we prove Proposition \ref{prop_gap}. In Sections \ref{sec_harmonic}--\ref{sec_01law}, we are restricted to the canonical $p$-energy. In Section \ref{sec_harmonic}, we prove some results on harmonic functions for later use. In Section \ref{sec_R1}, we prove that the canonical $p$-energy has index one (Theorem \ref{thm_R1}). In Section \ref{sec_01law}, we prove a zero-one law (Theorem \ref{thm_01law}). In Section \ref{sec_unique}, we prove Theorem \ref{thm_unique}. In Section \ref{sec_Hin310}, we prove a result in Section \ref{sec_R1}.

\section{Proof of Proposition \ref{prop_gap}}\label{sec_gap}

Throughout this section, we assume that $(\mathcal{E},\mathcal{F})\in \mathfrak{E}$. Figure \ref{fig_depend} gives the dependencies among the main results in this section.

\begin{figure}[ht]
  \centering
\begin{tikzpicture}[
  x=1cm,y=1cm,
  font=\normalsize,
  box/.style={
    draw=black!75,line width=.7pt,rounded corners=1.2mm,
    fill=white,align=center,minimum width=2.7cm,
    minimum height=.8cm,inner sep=2mm
  },
  arrow/.style={-{Stealth[length=2.4mm,width=1.7mm]},draw=black!70,line width=.8pt}
]

\node[box] (l31)  at (0,0)      {\text{Lemma \ref{lem_rhochi}}};
\node[box] (l311) at (9.2,0)    {Lemma \ref{lem_BAM}};

\node[box] (l37)  at (4.6,-1.6) {Lemma \ref{lem_restrict}};
\node[box] (l310) at (2.3,-3.2) {Lemma \ref{lem_path01}};
\node[box] (l39)  at (2.3,-4.8) {Lemma \ref{lem_inter4}};
\node[box] (l38)  at (9.2,-4.8) {Lemma \ref{lem_bdd_Lp}};

\node[box] (l35)  at (0,-6.4)   {Lemma \ref{lem_gh}};
\node[box] (p36)  at (4.6,-6.4) {Prop. \ref{prop_gap_pre}};
\node[box] (l34)  at (9.2,-6.4) {Lemma \ref{lem_ug}};
\node[box] (p22)  at (4.6,-8)   {Prop. \ref{prop_gap}};

\draw[arrow] (l31) -- (l35);
\draw[arrow] (l31) -- (l310);
\draw[arrow] (l311) -- (l37);
\draw[arrow] (l311) -- (l38);
\draw[arrow] (l37) -- (l310);
\draw[arrow] (l37) -- (p36);
\draw[arrow] (l310) -- (l39);
\draw[arrow] (l39) -- (p36);
\draw[arrow] (l38) -- (p36);
\draw[arrow] (l35) -- (p22);
\draw[arrow] (p36) -- (p22);
\draw[arrow] (l34) -- (p22);

\end{tikzpicture}
\caption{Dependencies among the main results in Section \ref{sec_gap}}\label{fig_depend}
\end{figure}

As a preliminary step, we have the following capacity estimates.

\begin{lemma}[{\cite[Lemma 10.24]{MS23arXiv}}]\label{lem_rhochi}
We have $\rho\ge \frac12$ and
\begin{align}
\chi:=&\inf \left\{\mathcal{E}(f):f\in \mathcal{F}\cap C(K),f\le 0\text{ on }{\ell_L},f\ge1\text{ on }{\ell_R}\right\}\nonumber\\
=&\inf \left\{\mathcal{E}(f):f\in \mathcal{F}\cap C(K),f|_{\ell_L}\equiv0,f|_{\ell_R}\equiv1\right\}\in(0,+\infty).\label{eq_rhochi}
\end{align}
\end{lemma}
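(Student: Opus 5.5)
The plan has three parts: the equality of the two infima in \eqref{eq_rhochi}, the finiteness and positivity of $\chi$, and the bound $\rho\ge\frac12$.

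\emph{Equality of the two infima.} The infimum over the smaller class $\{f|_{\ell_L}\equiv0,\ f|_{\ell_R}\equiv1\}$ is trivially at least the first one. For the reverse inequality, take $f\in\mathcal{F}\cap C(K)$ with $f\le0$ on $\ell_L$ and $f\ge1$ on $\ell_R$. Replace it by $(f\vee0)\wedge1$. By \ref{item_SSEf} this function lies in $\mathcal{F}$, its energy does not exceed $\mathcal{E}(f)$, it is still continuous, and it equals $0$ on $\ell_L$ and $1$ on $\ell_R$. Hence the two infima coincide.

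\emph{Finiteness and positivity of $\chi$.} For $\chi<+\infty$, I would use regularity \ref{item_SSEc} to pick $g\in\mathcal{F}\cap C(K)$ with $\sup_K|g-x_1|<\frac14$, where $x_1$ is the first coordinate. Then $h=2(g-\frac14)$ satisfies $h\le0$ on $\ell_L$ and $h\ge1$ on $\ell_R$, so $\chi\le\mathcal{E}(h)<\infty$.

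For $\chi>0$, argue by contradiction. Take $f_n$ admissible with $0\le f_n\le1$ and $\mathcal{E}(f_n)\to0$. The sequence is bounded in the uniformly convex, hence reflexive, space $\mathcal{F}$, so a subsequence converges weakly. Mazur's lemma then gives convex combinations $g_n$ converging strongly to some $g\in\mathcal{F}$. The $g_n$ remain admissible, satisfy $0\le g_n\le1$, and have $\mathcal{E}(g_n)\to0$ by convexity of $\mathcal{E}$. By lower semicontinuity $\mathcal{E}(g)=0$, so $g$ is constant a.e. by \ref{item_SSEa}. This contradicts the boundary values only if we can transfer them to the limit, and \textbf{this transfer is the main obstacle}: $L^p$ convergence does not see the lines $\ell_L,\ell_R$, which are $m$-null.

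To get around this, I would use self-similarity. For an admissible $f$, restrict to the cells $K_w$ in the leftmost column at level $n$. On $K_w$, $f\circ F_w$ has energy at most $\rho^{-n}\mathcal{E}(f)$ by \ref{item_SSEe}. Symmetrically, the rightmost column cells carry boundary value $1$. Chaining cells in a row from left to right, the values must rise from $0$ to $1$. If $\mathcal{E}(f_n)\to0$ uniformly, every cell function is nearly constant in $L^p$, but it cannot be simultaneously near its left-edge value. To make that last point rigorous I would instead use a trace-type argument: the set $\mathcal{A}=\{f\in\mathcal{F}\cap C(K): f=0 \text{ on } \ell_L,\ f=1\text{ on }\ell_R\}$, combined with the reflection $\psi$ across $\{x_1=\frac12\}$ from \ref{item_SSEd}.

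Concretely, $f$ and $1-f\circ\psi$ are both admissible, so $u=\frac12(f+1-f\circ\psi)$ is admissible, antisymmetric about $\frac12$, and has $\mathcal{E}(u)\le\mathcal{E}(f)$. Gluing copies via the self-similar structure produces a function on $K$ that equals $0$ on $\ell_L\cup\ell_R$ and $1$ in the middle column. If $\chi$ were $0$, iterating this construction would yield nonconstant limits of zero energy. I expect the cleanest rigorous route to be the one in \cite[Lemma 10.24]{MS23arXiv}: compare with the discrete graph energies and use that $\mathcal{E}$ controls the discrete modulus on level-$n$ graphs, which is bounded below.

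\emph{The bound $\rho\ge\frac12$.} Take an almost-minimizer $f$ for $\chi$ and form the function $g$ on $K$ defined cellwise: on each level-1 cell in column $j\in\{0,1,2\}$, set $g\circ F_i=\frac13(f+j)$. These pieces agree on the shared vertical edges, and on horizontal edges they agree because $f$ may be taken symmetric under the reflection $x_2\mapsto1-x_2$ (symmetrize using \ref{item_SSEd} and convexity). The second part of \ref{item_SSEe} then gives $g\in\mathcal{F}$. Since $3g$ is admissible for $\chi$,
$$3^p\chi\le 3^p\mathcal{E}(g)=3^p\rho\sum_{i=0}^7 3^{-p}\mathcal{E}(f)=8\rho\,\mathcal{E}(f)\approx8\rho\chi,$$
which yields $8\rho\ge3^p$. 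For the weaker bound $\rho\ge\frac12$, one instead uses the series/parallel decomposition: the three columns in series force $\chi\le$ (number of parallel rows) times (a series factor) at each scale. Iterating gives $\rho^n\chi_n\asymp\chi$ with the $n$-level discrete capacities bounded below by $2^{-n}$ up to constants, via the Poincar\'e-type lower bound. This yields $\rho\ge\frac12$.
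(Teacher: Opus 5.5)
Your proofs of the equality of the two infima and of $\chi<+\infty$ are fine and match the paper. The genuine gap is $\chi>0$. You correctly locate the obstacle: the boundary conditions live on the $m$-null lines $\ell_L,\ell_R$, so they are lost in a weak or Mazur limit. None of your workarounds closes it.

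Chaining cells, or antisymmetrizing and regluing, only moves the same null-set boundary data to smaller scales. Your glued function is $1$ on the middle column but $0$ only on null lines, so a zero-energy limit can simply be the constant $1$. The appeal to discrete moduli or a Poincar\'e-type lower bound is not available from \ref{item_SSEa}--\ref{item_SSEf}. In this paper the lemma is an input to the proof of the spectral gap \ref{item_SSEg}, from which the Poincar\'e inequality is derived, so that route would be circular.

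The missing idea is to glue so that the boundary values are attained on sets of positive measure. Take admissible $u_n$ with $0\le u_n\le1$ and $\mathcal{E}(u_n)\to\chi$, and set $g_n=0$ on $K_0\cup K_6\cup K_7$, $g_n=1$ on $K_2\cup K_3\cup K_4$, and $g_n\circ F_1=g_n\circ F_5=u_n$. This function is continuous, since the top edge of $K_1$ and the bottom edge of $K_5$ face the hole. Hence $g_n\in\mathcal{F}\cap C(K)$ by \ref{item_SSEe}; it is admissible, and $\mathcal{E}(g_n)=2\rho\,\mathcal{E}(u_n)$.

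Mazur convex combinations of a weakly convergent subsequence keep the exact values $0$ and $1$ on those six cells. Their strong limit $g_\infty$ is therefore non-constant. By \ref{item_SSEa} and convexity of $\mathcal{E}^{1/p}$, $0<\mathcal{E}(g_\infty)\le2\rho\chi$, so $\chi>0$. This is the argument in the paper's Remark following the lemma.

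The same gluing gives $\chi\le\mathcal{E}(g_n)=2\rho\,\mathcal{E}(u_n)\to2\rho\chi$, hence $\rho\ge\frac12$ for every $p\in(1,+\infty)$. Your column construction $g\circ F_i=\frac13(f+j)$, with $f$ symmetrized in $x_2$, is correct once $\chi>0$ is known. Note that $g$ itself, not $3g$, is the admissible function. It yields $8\rho\ge3^p$, but this implies $\rho\ge\frac12$ only when $3^p\ge4$, i.e.\ $p\ge\log4/\log3$.

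For $1<p<\log4/\log3$ your fallback (series/parallel decomposition with discrete capacities bounded below via a Poincar\'e-type estimate) is only a sketch, and it again relies on estimates not available at this stage. As written, neither $\chi>0$ nor $\rho\ge\frac12$ for small $p$ is established.
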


\begin{remark}
In the proof of {\cite[Lemma 10.24]{MS23arXiv}}, in particular, in the derivation of Equation (10.24) therein, only \ref{item_SSEa}, \ref{item_SSEb}, \ref{item_SSEc}, \ref{item_SSEe}, and \ref{item_SSEf} are needed. First, \ref{item_SSEc} and \ref{item_SSEf} give the equality in the second line of (\ref{eq_rhochi}) and $\chi<+\infty$. Second, assuming \ref{item_SSEa}, \ref{item_SSEb}, and \ref{item_SSEe}, by gluing an approximating sequence $\{u_n\}$ to get a sequence $\{g_n\}$ with $\mathcal{E}(g_n)=2\rho \mathcal{E}(u_n)$, choosing an appropriate subsequence, and applying Mazur's lemma, one can obtain a sequence $\{\widetilde{g}_n\}\subseteq \mathcal{F}\cap C(K)$ such that $\widetilde{g}_n|_{\ell_L}\equiv0$ and $\widetilde{g}_n|_{\ell_R}\equiv1$, hence $\chi\le \mathcal{E}(\widetilde{g}_n)$, and $\{\widetilde{g}_n\}$ is $\mathcal{E}$-convergent to a function $g_\infty\in \mathcal{F}$ with $g_\infty=0$ $m$-a.e. on $K_0\cup K_6\cup K_7$ and $g_\infty=1$ $m$-a.e. on $K_2\cup K_3\cup K_4$. Therefore,
$$\chi\le \mathcal{E}(g_\infty)=\lim_{n\to+\infty}\mathcal{E}(\widetilde{g}_n)\le2\rho \lim_{n\to+\infty} \mathcal{E}(u_n)\le 2\rho\chi,$$
then \ref{item_SSEa} and the fact that $g_\infty$ is non-constant give $\chi>0$, which implies $\rho\ge \frac12$.
\end{remark}

Roughly speaking, the above capacity estimates mean that any function in $\mathcal{F}\cap C(K)$ cannot ``oscillate" too much between two opposite sides of $K$. Suppose that \ref{item_SSEg} does not hold, so that some function may oscillate too much somewhere. We use self-similarity and a blowup argument to construct a function with excessive oscillation between opposite sides of $K$, yielding a contradiction.

We list some properties of energy measures under the assumptions \ref{item_SSEa}--\ref{item_SSEf} alone.

\begin{lemma}\label{lem_meas}
\begin{enumerate}[label=(\alph*),ref=(\alph*)]
\item\label{item_meas1} For any $A\in \mathcal{B}(K)$, $\Gamma(\cdot)(A)^{\frac{1}{p}}$ is a seminorm on $\mathcal{F}$.
\item\label{item_meas2} For any $f\in \mathcal{F}$ and any Lipschitz function $\varphi:\mathbb{R}\to\mathbb{R}$, $\Gamma(\varphi\circ f)\le \mathrm{Lip}(\varphi)^p \Gamma(f)$.
\item\label{item_meas3} For any $f\in \mathcal{F}$ and any $n\ge0$,
\begin{equation}\label{eq_Gamma_ss}
\Gamma(f)=\rho^n\sum_{w\in W_n}(F_w)_*\Gamma(f\circ F_w).
\end{equation}
\item\label{item_meas4} For any $f\in \mathcal{F}$ and any $\psi\in D_4$,
\begin{equation}\label{eq_Gamma_D4}
\Gamma(f\circ \psi)=(\psi^{-1})_*\Gamma(f).
\end{equation}
\item\label{item_meas21} For any $f\in \mathcal{F}\cap C(K)$ and any $a\in \mathbb{R}$,
\begin{equation}\label{eq_Gamma_AC}
\Gamma(f)\left(\left\{f=a\right\}\right)=0,
\end{equation}
\item\label{item_meas22} For any $f,g\in\mathcal{F}\cap C(K)$ and any $A\in \mathcal{B}(K)$,
\begin{equation}\label{eq_Gamma_local}
\text{if $(f-g)|_A$ is constant, then $\Gamma(f)(A)=\Gamma(g)(A)$},
\end{equation}
\end{enumerate}
\end{lemma}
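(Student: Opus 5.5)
We prove the six items in the order listed, since each relies only on the construction $\Gamma(f)=\pi_*\nu_f$ with $\nu_f(\Sigma_w)=\rho^{|w|}\mathcal{E}(f\circ F_w)$ and on the properties \ref{item_SSEa}--\ref{item_SSEf}, \ref{item_SSEfp}. The common device is a monotone class reduction: an inequality between measures on $K$ is first checked on cylinders $\Sigma_w$ in $\Sigma$, where it reduces to a statement about $\mathcal{E}(\cdot\circ F_w)$, and is then extended to all Borel sets.

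For \ref{item_meas1}, fix $w$. Since $f\mapsto f\circ F_w$ is linear and $\mathcal{E}^{1/p}$ is a seminorm by \ref{item_SSEb}, $f\mapsto\nu_f(\Sigma_w)^{1/p}$ is a seminorm. For a finite disjoint union $U$ of cylinders, write $\nu_f(U)^{1/p}$ as the $\ell^p$-norm of the vector $(\nu_f(\Sigma_w)^{1/p})_w$. Minkowski's inequality in $\ell^p$ then gives the triangle inequality on $U$. Cylinders generate the Borel $\sigma$-algebra and the measures are finite, so an arbitrary Borel set $B\subseteq\Sigma$ is approximated in $\nu_f+\nu_g+\nu_{f+g}$-measure by such unions $U$, and the triangle inequality passes to the limit. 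Since $\Gamma(f)(A)=\nu_f(\pi^{-1}A)$, the claim follows. Homogeneity is immediate from \ref{item_SSEa}.

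For \ref{item_meas2}, note that $(\varphi\circ f)\circ F_w=\varphi\circ(f\circ F_w)$. Applying \ref{item_SSEfp} to each cell gives $\nu_{\varphi\circ f}(\Sigma_w)\le\mathrm{Lip}(\varphi)^p\nu_f(\Sigma_w)$ for every $w$. Both sides are countably additive, so the inequality extends to finite disjoint unions of cylinders, and by outer approximation and regularity to all Borel sets. This gives both absolute continuity and the density bound.

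For \ref{item_meas3}, we check on cylinders: for $|v|\ge n$, $\nu_{f\circ F_w}(\Sigma_{v'})=\rho^{|v'|}\mathcal{E}(f\circ F_{wv'})$. Consequently $\nu_f$ restricted to $\Sigma_w$ equals $\rho^n(\sigma_w)_*\nu_{f\circ F_w}$. Pushing forward by $\pi$ and using $\pi\circ\sigma_w=F_w\circ\pi$ yields (\ref{eq_Gamma_ss}).

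For \ref{item_meas4}, each $\psi\in D_4$ permutes the maps $F_i$ by conjugation, namely $\psi\circ F_i=F_{\tau(i)}\circ\psi$. This induces a bijection $\tilde\psi$ of $\Sigma$ with $\pi\circ\tilde\psi=\psi\circ\pi$ and $\mathcal{E}(f\circ\psi\circ F_w)=\mathcal{E}(f\circ F_{\tau(w)})$ by \ref{item_SSEd}. Comparing on cylinders gives $\nu_{f\circ\psi}=(\tilde\psi^{-1})_*\nu_f$, and pushing forward by $\pi$ gives (\ref{eq_Gamma_D4}).

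Item \ref{item_meas21} is the main obstacle. Take $a=0$ and the sets $E_\varepsilon=\{|f|<\varepsilon\}$, and set $\varphi_\varepsilon(t)=(t\wedge\varepsilon)\vee(-\varepsilon)$. On cells $K_w\subseteq\{|f|<\varepsilon\}$ we have $\varphi_\varepsilon\circ f=f$, so $\Gamma(f)(E)\le\Gamma(\varphi_\varepsilon\circ f)(\cdot)$ there. On the other hand, $\mathcal{E}(\varphi_\varepsilon\circ f)$ need not tend to $0$, so this alone does not give decay. The standard remedy is to use the piecewise affine functions $\phi_\varepsilon$ with slope $0$ on $(-\varepsilon,\varepsilon)$ and slope $1$ outside. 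By \ref{item_meas1}, \ref{item_meas2}, and locality on open cells, $\Gamma(f)$ dominates $\Gamma(\phi_\varepsilon\circ f)$ plus the part of $\Gamma(f)$ carried by $\{|f|<\varepsilon\}$. Since $\phi_\varepsilon\circ f\to f$ in $\mathcal{E}$ (using Mazur and uniform convexity), the energy carried near $\{f=0\}$ must tend to zero, and $\{f=0\}$ is the intersection of the sets $\{|f|<\varepsilon\}$. The required locality is essentially \ref{item_meas22}, which should therefore be proved first and independently.

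For \ref{item_meas22}, by \ref{item_SSEa} and \ref{item_meas3}, if $f-g$ is constant on a cell $K_w$, then $\nu_f(\Sigma_v)=\nu_g(\Sigma_v)$ for all $v\ge w$. This handles sets that are unions of cells. For a general Borel set $A$, we reduce to $A$ closed and cover $A$ by cells on which the oscillation of $f-g$ is small. Setting $h=f-g$ and writing $f=g+h$, the contributions from cells can be compared by applying \ref{item_meas21} to the continuous function $h$ minus the constant value $c$ taken on $A$: $\Gamma(h)(\{h=c\})=0$. Then \ref{item_meas1} gives $|\Gamma(f)(A)^{1/p}-\Gamma(g)(A)^{1/p}|\le\Gamma(h)(A)^{1/p}=0$. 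This suggests proving first the special case $\Gamma(h)(\{h=c\})=0$, then deducing \ref{item_meas22} from \ref{item_meas1}, so that the circularity is avoided.
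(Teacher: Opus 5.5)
The paper does not prove this lemma itself. It imports (a)--(d) from \cite[Proposition 8.15]{MS25}, correcting $\Phi_*$ to $(\Phi^{-1})_*$, deduces (e) from \cite[Corollary 8.18]{MS25}, and quotes (f) as \cite[Corollary 8.19]{MS25}. You instead derive everything directly from $\Gamma(f)=\pi_*\nu_f$ and (SSEa)--(SSEf'), and the derivation works:
\begin{itemize}
\item (a)--(d) are correct cylinder computations followed by approximation in the generating algebra. Your conjugation $\psi\circ F_i=F_{\tau(i)}\circ\psi$ independently confirms the corrected form $(\psi^{-1})_*$ in (d).
\item \ref{item_meas21} follows from the measure inequality $\Gamma(f)\ge\Gamma(\phi_\varepsilon\circ f)+1_{\{|f|<\varepsilon\}}\Gamma(f)$ together with $\liminf_{\varepsilon\downarrow0}\mathcal{E}(\phi_\varepsilon\circ f)\ge\mathcal{E}(f)$.
\item \ref{item_meas22} follows from \ref{item_meas21} applied to $h=f-g$ (using $A\subseteq\{h=c\}$) and the reverse triangle inequality from \ref{item_meas1}.
\end{itemize}
The citation buys brevity. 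Your route buys transparency: it shows directly that only (SSEa)--(SSEf') and reflexivity from (SSEb) enter, with no use of (SSEg) or of any boundary-null property. That is exactly the remark the paper makes after the citation.

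Three points need tightening.
\begin{enumerate}
\item The circularity you worry about in \ref{item_meas21} is spurious. The locality used there is only that $\Gamma(u)(U)=0$ whenever $U$ is open and $u$ is constant on every cell $K_v\subseteq U$. This is immediate from $\pi^{-1}(U)=\bigcup_{K_v\subseteq U}\Sigma_v$ and (SSEa), and it is much weaker than \ref{item_meas22}. So drop the sentence saying \ref{item_meas22} must be proved first. The correct order is open-set locality, then \ref{item_meas21}, then \ref{item_meas22}, as your last sentence says.
\item Your claim that cellwise agreement ``handles sets that are unions of cells'' does not follow from the cylinder identity for closed cells. In general $\pi^{-1}(K_w)\supsetneq\Sigma_w$, and at this stage nothing prevents $\Gamma(f)$ from charging $K^\partial_w$. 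For a general $(\mathcal{E},\mathcal{F})\in\mathfrak{E}$ the paper only excludes this later, through Lemma \ref{lem_quasi_unique}. Only open unions are safe. Your final argument for \ref{item_meas22} does not need the detour through closed sets and small-oscillation covers, so drop it.
\item In \ref{item_meas21}, delete the abandoned attempt with $\varphi_\varepsilon$ and make the splitting explicit:
\begin{itemize}
\item open-set locality gives $\Gamma(\phi_\varepsilon\circ f)(\{|f|<\varepsilon\})=0$;
\item open-set locality for $f-\phi_\varepsilon\circ f$, which is locally $\pm\varepsilon$, together with \ref{item_meas1}, gives equality of $\Gamma(f)$ and $\Gamma(\phi_\varepsilon\circ f)$ on $\{|f|>\varepsilon\}$;
\item \ref{item_meas2} handles the level set $\{|f|=\varepsilon\}$.
\end{itemize}
For the limit you need only weak lower semicontinuity of $\mathcal{E}$, as in Lemma \ref{lem_BAM}, since $\phi_\varepsilon\circ f\to f$ uniformly and $\mathcal{E}(\phi_\varepsilon\circ f)\le\mathcal{E}(f)$. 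Uniform convexity and strong convergence are not needed.
\end{enumerate}
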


Parts \ref{item_meas1}--\ref{item_meas4} are precisely \cite[Proposition 8.15(b)--(e)]{MS25}, respectively, with the correction that $\Phi_*$ in part (e) of the cited proposition should be replaced by $(\Phi^{-1})_*$. Part \ref{item_meas21} follows from \cite[Corollary 8.18]{MS25}, and part \ref{item_meas22} is precisely \cite[Corollary 8.19]{MS25}. Notice that the spectral gap condition \ref{item_SSEg} is not needed for these results.

As preparation, we need to establish several results.

\begin{lemma}\label{lem_ug}
There exist constants $C_1,C_2>1$ depending only on $p$ such that for any non-zero non-negative $u\in \mathcal{F}\cap C(K)$, there exists $g\in \mathcal{F}\cap C(K)$ satisfying that $0\le g\le 1$ on $K$, $\{g>0\}\subseteq\{u>0\}$, and
$$0<m \left(\left\{g>0\right\}\right)\le C_1 m \left(\left\{g=1\right\}\right),\qquad\frac{\mathcal{E}(g)}{m\left(\left\{g=1\right\}\right)}\le C_2 \frac{\mathcal{E}(u)}{\lVert u\rVert_{L^p(K;m)}^p}.$$
\end{lemma}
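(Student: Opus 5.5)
We use the standard dyadic level-set (truncation) argument from the proof of Sobolev/Faber--Krahn type inequalities, with the Lipschitz contractivity \ref{item_SSEfp} as the only energy tool. Normalize $u$ so that $\lVert u\rVert_\infty=1$; this changes neither the hypotheses nor the ratio $\mathcal{E}(u)/\lVert u\rVert_{L^p}^p$, by \ref{item_SSEa}. For $k\ge1$ define the truncations
$$g_k:=\bigl(2^{k}u-1\bigr)_+\wedge 1 ,$$
so that $g_k=\varphi_k\circ u$ with $\mathrm{Lip}(\varphi_k)=2^k$. Hence $g_k\in\mathcal{F}\cap C(K)$, $0\le g_k\le1$ and $\{g_k>0\}=\{u>2^{-k}\}\subseteq\{u>0\}$. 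Moreover $\{g_k=1\}=\{u\ge 2^{1-k}\}$. Write $A_k:=\{u>2^{-k}\}$, $m_k:=m(A_k)$ and $E_k:=\Gamma(u)(A_{k}\setminus A_{k-1})$, where the annuli are taken with suitable closed/open conventions.

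The key energy estimate comes from Lemma \ref{lem_meas}. Parts \ref{item_meas2} and \ref{item_meas22} show that $\Gamma(g_k)$ is supported in $\{2^{-k}\le u\le 2^{1-k}\}$; on the sets where $g_k$ is constant we have $\Gamma(g_k)=0$, by comparing with a constant function via \eqref{eq_Gamma_local}. On the remaining set $\Gamma(g_k)\le 2^{kp}\Gamma(u)$. The level sets $\{u=a\}$ are $\Gamma(u)$-null by \eqref{eq_Gamma_AC}. Together these give
$$\mathcal{E}(g_k)\le 2^{kp}\,\Gamma(u)\bigl(\{2^{-k}<u<2^{1-k}\}\bigr)=:2^{kp}e_k,\qquad \sum_k e_k\le \mathcal{E}(u).$$
On the measure side, $\lVert u\rVert_p^p\le \sum_{k\ge1}2^{(1-k)p}\,m(\{2^{-k}<u\le 2^{1-k}\})\le \sum_k 2^{(1-k)p}m_k$. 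Note that $m_k$ is finite and nondecreasing in $k$, and $\{g_k=1\}\supseteq A_{k-1}$, so $m(\{g_k=1\})\ge m_{k-1}$.

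Now choose $k$ by a pigeonhole argument. We say $k$ is \emph{good} if both
$$m_k\le C_1 m_{k-1}\qquad\text{and}\qquad 2^{kp}e_k\le C_2\,m_{k-1}\,\frac{\mathcal{E}(u)}{\lVert u\rVert_p^p}$$
hold. A good $k$ gives the statement with $g=g_k$, and $m_{k-1}>0$ holds as soon as $k-1\ge k_0$ with $A_{k_0}\neq\emptyset$. Since $u$ is continuous, $A_{k_0}$ is open and nonempty, so $m(A_{k_0})>0$ because $m$ has full support.

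To show that some good $k$ exists, argue by contradiction and suppose every $k>k_0$ fails.

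\emph{Case 1: the measure condition fails for all large $k$.} Then $m_k$ grows geometrically with ratio $C_1>2^p$. Since $m_k\le1$, this is impossible after finitely many steps, i.e.\ it contradicts boundedness of $m_k$. More carefully, the set of $k$ with $m_k>C_1m_{k-1}$ has density controlled, and on the remaining $k$ the energy condition must fail.

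\emph{Case 2: the energy condition fails.} Then $m_{k-1}< 2^{kp}e_k\lVert u\rVert_p^p/(C_2\mathcal{E}(u))$. Multiply by $2^{-kp}$ and sum to get
$$\sum_k 2^{-kp}m_{k-1}<\lVert u\rVert_p^p/C_2 .$$

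The main obstacle is to combine the two failure modes. The plan is to choose $C_1=2^{p+1}$ and sum the weights $2^{-kp}m_{k-1}$ over all $k$. The weighted sum over the $k$ where the measure condition fails is dominated by a geometric series: each such jump $m_k>C_1m_{k-1}$ makes $2^{-kp}m_{k-1}$ at most $C_1^{-1}2^{p}\cdot 2^{-(k+1)p}m_k$. Hence that part is at most half of the total sum, so the part over $k$ where only the energy condition fails carries at least half of the total. That total is $\gtrsim\lVert u\rVert_p^p$, which contradicts the bound from Case 2 once $C_2$ is large. The index shifts ($m_{k-1}$ versus $m_k$) and the start at $k_0$ must be handled with care; this bookkeeping is the delicate part. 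Both constants depend only on $p$.
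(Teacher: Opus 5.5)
Your proposal is correct and follows essentially the same route as the paper. Both arguments use dyadic truncations of $u$ and the bound $\mathcal{E}(g_k)\le 2^{kp}\,\Gamma(u)(\{2^{-k}<u<2^{1-k}\})$, which comes from Lipschitz contraction and locality. Both then split the indices according to whether $m(\{g_k>0\})\le C_1\, m(\{g_k=1\})$, with $C_1$ chosen so that the bad indices carry at most half of the weighted mass, and finish with a pigeonhole on the remaining indices. The paper phrases the last step as a ratio-of-sums selection rather than a contradiction, and takes $C_1=2^{3p}$, $C_2=p2^{2p}$.

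Your final halving paragraph is the actual proof; the preliminary Case~1/Case~2 discussion is unnecessary. The bookkeeping you flag goes through with, e.g., $C_1=2^{p+1}$ and any $C_2>2^{2p+1}$, provided you restrict to $k>k_0$ so that $m_{k-1}>0$.
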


\begin{proof}
Since $m$ is a finite Radon measure on $K$, there exists $a>0$ such that
\begin{equation}\label{eq_ug1}
m\left(\bigcup_{n\in \mathbb{Z}}\left\{u=2^na\right\}\right)=0.
\end{equation}
By (\ref{eq_Gamma_AC}),
\begin{equation}\label{eq_ug2}
\Gamma(u)\left(\left\{u=0\right\}\right)=\Gamma(u)\left(\bigcup_{n\in \mathbb{Z}}\left\{u=2^na\right\}\right)=0.
\end{equation}
For any $n\in \mathbb{Z}$, let $g_n:=\frac{1}{2^na}((u-2^na)_+\wedge(2^na))$, then $g_n\in \mathcal{F}\cap C(K)$, $0\le g_n\le 1$ on $K$, and $\{g_n>0\}\subseteq\{u>0\}$. By (\ref{eq_ug2}), (\ref{eq_Gamma_AC}), and (\ref{eq_Gamma_local}),
\begin{equation}\label{eq_ug3}
\mathcal{E}(u)=\sum_{n\in \mathbb{Z}}\int_{2^na<u<2^{n+1}a}\mathrm{d}\Gamma(u)=a^p\sum_{n\in \mathbb{Z}}2^{pn}\mathcal{E}(g_n).
\end{equation}
For any $n\in \mathbb{Z}$, let
$$t_n:=m \left(\left\{u>2^na\right\}\right),$$
then by (\ref{eq_ug1}),
$$t_n=m \left(\left\{g_n>0\right\}\right),\qquad t_{n+1}=m \left(\left\{g_n=1\right\}\right).$$
By Fubini's theorem,
\begin{align*}
\lVert u\rVert_{L^p(K;m)}^p&\le p2^{p-1}a^p\sum_{n\in \mathbb{Z}}2^{pn}m \left(\left\{u>2^na\right\}\right)=p2^{p-1}a^p\sum_{n\in \mathbb{Z}}2^{pn}t_n=p2^{2p-1}a^p\sum_{n\in \mathbb{Z}}2^{pn}t_{n+1},\\
\lVert u\rVert_{L^p(K;m)}^p&\ge pa^p\sum_{n\in \mathbb{Z}}2^{pn}m \left(\left\{u>2^{n+1}a\right\}\right)=pa^p\sum_{n\in \mathbb{Z}}2^{pn}t_{n+1}=p2^{-p}a^p\sum_{n\in \mathbb{Z}}2^{pn}t_{n}.
\end{align*}
Let $C_1=2^{3p}$, then by the above estimates,
\begin{align*}
&\lVert u\rVert_{L^p(K;m)}^p\le p2^{2p-1}a^p\left(\sum_{n:t_n\le C_1 t_{n+1}}+\sum_{n:t_n> C_1 t_{n+1}}\right)2^{pn}t_{n+1}\\
&\le p2^{2p-1}a^p\sum_{n:t_n\le C_1 t_{n+1}}2^{pn}t_{n+1}+p2^{2p-1}a^p \frac{1}{C_1}\sum_{n:t_n> C_1 t_{n+1}}2^{pn}t_{n}\\
&\le p2^{2p-1}a^p\sum_{n:t_n\le C_1 t_{n+1}}2^{pn}t_{n+1}+\frac{1}{2}\lVert u\rVert_{L^p(K;m)}^p,
\end{align*}
which gives
$$0<\lVert u\rVert_{L^p(K;m)}^p\le p2^{2p}a^p\sum_{n:t_n\le C_1 t_{n+1}}2^{pn}t_{n+1}.$$
Combining this with (\ref{eq_ug3}),
$$\frac{\mathcal{E}(u)}{\lVert u\rVert_{L^p(K;m)}^p}\ge \frac{1}{p2^{2p}}\frac{\sum_{n:t_n\le C_1 t_{n+1}}2^{pn}\mathcal{E}(g_n)}{\sum_{n:t_n\le C_1 t_{n+1}}2^{pn}t_{n+1}}.$$
Recall the elementary fact that if $\{a_i\}_{i\in I}$ and $\{b_i\}_{i\in I}$ are two families of non-negative real numbers, where $I$ is a countable index set, satisfying that $\sum_{i\in I}a_i\in[0,+\infty)$ and $\sum_{i\in I}b_i\in(0,+\infty)$, then there exists $i\in I$ such that $\frac{a_i}{b_i}\le \frac{\sum_{i\in I}a_i}{\sum_{i\in I}b_i}$. Then there exists $n\in \mathbb{Z}$ with $t_n\le C_1 t_{n+1}$, that is,
$$m \left(\left\{g_n>0\right\}\right)\le C_1 m \left(\left\{g_n=1\right\}\right),$$
such that
$$\frac{\mathcal{E}(u)}{\lVert u\rVert_{L^p(K;m)}^p}\ge \frac{1}{p2^{2p}}\frac{\mathcal{E}(g_n)}{t_{n+1}}=\frac{1}{p2^{2p}}\frac{\mathcal{E}(g_n)}{m \left(\left\{g_n=1\right\}\right)}.$$
Letting $g=g_n$, we have the desired result with $C_2=p2^{2p}$.
\end{proof}

\begin{lemma}\label{lem_gh}
Let $g\in \mathcal{F}\cap C(K)$ satisfy $0\le g\le 1$ on $K$, and
$$m \left(\left\{g=1\right\}\right)\in(0,\delta),\qquad m \left(\left\{g>0\right\}\right)\le C m \left(\left\{g=1\right\}\right),$$
where $\delta\in(0,1)$ and $C\ge1$ are constants. Then there exists $w\in W_*$ such that $h:=g\circ F_w$ satisfies that
$$m \left(\left\{h=1\right\}\right)\ge \delta,\qquad m \left(\left\{h>0\right\}\right)\le 32C\delta,\qquad \mathcal{E}(h)\le 32\delta \frac{\mathcal{E}(g)}{m \left(\left\{g=1\right\}\right)}.$$
\end{lemma}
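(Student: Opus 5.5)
The plan is a blow-up by self-similarity combined with a pigeonhole choice of cells. Write $E:=\{g=1\}$ and $P:=\{g>0\}$, so that $m(E)\in(0,\delta)$ and $m(P)\le Cm(E)$. Since $m(K_w)=8^{-|w|}$, for each $w$ we have
$$m(\{g\circ F_w=1\})=8^{|w|}m(E\cap K_w),\qquad m(\{g\circ F_w>0\})=8^{|w|}m(P\cap K_w),$$
because $F_w$ pushes $m$ forward to $8^{-|w|}m|_{K_w}$. Here we use that the boundaries $K^\partial_w$ are $m$-null, so overlaps between cells can be ignored. For energy, \ref{item_SSEe} iterated gives $\mathcal{E}(g)=\rho^n\sum_{w\in W_n}\mathcal{E}(g\circ F_w)$, and hence $\mathcal{E}(g\circ F_w)=\rho^{-n}\Gamma(g)$-mass contribution of $w$. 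I would use $\rho\ge\frac12$ from Lemma \ref{lem_rhochi}, so $\rho^{-n}\le 2^n$, which is much smaller than $8^n$.

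First choose the level $n\ge1$ so that $8^{-n}$ is comparable to $m(E)/\delta$. Concretely, take the smallest $n$ with $8^{n}m(E)\ge\delta$; then $8^{n}m(E)<8\delta$. Call $w\in W_n$ \emph{good} if $8^n m(E\cap K_w)\ge\delta$, that is, $m(E\cap K_w)\ge 8^{-n}\delta$. The bad cells together carry $E$-mass less than $8^n\cdot 8^{-n}\delta=\delta$. This is not yet small relative to $m(E)$, so the threshold has to be refined. Instead, declare $w$ good if
$$m(E\cap K_w)\ge \tfrac{1}{2}\,\frac{m(E)}{8^n m(E)}\,8^{-n}\cdot 8^n m(E)\cdot 8^{-n}.$$
More cleanly, take the level so that $8^n m(E)\in[2\delta,16\delta)$ and call $w$ good if $m(E\cap K_w)\ge \delta 8^{-n}$. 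The bad cells then carry at most $\delta 8^{-n}\cdot 8^n\le \frac12 m(E)$, so the good cells carry at least $\frac12 m(E)$ of $E$.

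Next, average over the good cells $G$. We have $\sum_{w\in G}m(P\cap K_w)\le Cm(E)$ and $\sum_{w\in G}\rho^{n}\mathcal{E}(g\circ F_w)\le\mathcal{E}(g)$, while $\sum_{w\in G}m(E\cap K_w)\ge\frac12 m(E)$. The key step is to pick a single $w\in G$ that simultaneously has small $P$-mass and small energy. For this I would apply the elementary ratio fact from the proof of Lemma \ref{lem_ug} to the combined numerator
$$a_w:=\frac{m(P\cap K_w)}{Cm(E)}+\frac{\rho^n\mathcal{E}(g\circ F_w)}{\mathcal{E}(g)},\qquad b_w:=\frac{m(E\cap K_w)}{m(E)}.$$
Then $\sum_{w\in G} a_w\le 2$ and $\sum_{w\in G} b_w\ge\frac12$, so some $w\in G$ has $a_w\le 4b_w$. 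For that $w$, both summands are bounded by $4b_w$. Set $h:=g\circ F_w$. Then $m(\{h=1\})\ge\delta$ because $w$ is good. Also
$$m(\{h>0\})=8^n m(P\cap K_w)\le 4C\,8^n m(E\cap K_w)\le 4C\,8^n m(K_w)\cdot\ldots,$$
and here we need an upper bound on $8^n m(E\cap K_w)$. The trivial bound $m(E\cap K_w)\le 8^{-n}$ gives only $1$, not $\delta$. So instead use $b_w\le 1$ and $8^n m(E)<16\delta$, which gives $m(\{h>0\})\le 4C\,8^n m(E)\,b_w\le 64C\delta b_w$. The constants are close to the target $32$; I expect that tuning the level choice (for instance $8^n m(E)\in[\delta,8\delta)$ with threshold $\frac{\delta}{2}8^{-n}$) yields exactly $32$.

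For energy, $\mathcal{E}(h)\le 4b_w\rho^{-n}\mathcal{E}(g)$, and
$$\rho^{-n}\le 2^n\le 8^n\cdot 8^{-n}2^n\le 8^n\le \frac{16\delta}{m(E)},$$
which again gives the desired bound $\lesssim\delta\,\mathcal{E}(g)/m(E)$.

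The main obstacle is purely the bookkeeping: choosing the level and the goodness threshold so that the constants come out as $32$ in both bounds. The nontrivial inputs are only $\rho\ge\frac12$ (Lemma \ref{lem_rhochi}), the self-similar decomposition \ref{item_SSEe}, and the fact that cell boundaries are $m$-null.
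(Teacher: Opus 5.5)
\textbf{There is a genuine gap.} It sits in the step where you fix a single level $n$ with $8^n m(E)\in[2\delta,16\delta)$ and assert that the good cells (those with $m(E\cap K_w)\ge\delta 8^{-n}$) carry at least $\frac12 m(E)$. The only bound you have for the bad cells is $\#W_n\cdot\delta 8^{-n}=\delta$. By hypothesis $\delta>m(E)$, so the inequality $\delta\le\frac12 m(E)$ that you wrote is false, and the estimate gives nothing.

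This is not a matter of tuning constants. Since $m(E)<16\delta 8^{-n}$, nothing prevents $E$ from being split into equal pieces lying in $16$ or more distinct level-$n$ cells. Then every level-$n$ cell has $m(E\cap K_w)<\delta 8^{-n}$, so there is no good cell at all. The same happens for your variant with $8^n m(E)\in[\delta,8\delta)$ and threshold $\frac{\delta}{2}8^{-n}$. The scale at which $E$ fills a fraction $\ge\delta$ of a cell is set by the local geometry of $E$, not by $m(E)/\delta$. So no level chosen from $m(E)$ alone can work.

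\textbf{The missing idea} is a stopping-time (Calder\'on--Zygmund type) decomposition with \emph{variable} levels, as in the paper. Along each branch, take the first word $v$ with $m(E\cap K_v)\ge\delta\, m(K_v)$.
\begin{itemize}
\item By martingale convergence (differentiation along cells), a.e.\ point of $E$ lies in such a cell. So these maximal words form a prefix-free family $V$ with $\sum_{v\in V}m(E\cap K_v)=m(E)$ and $\sum_{v\in V}m(P\cap K_v)\le m(P)$.
\item Iterating \ref{item_SSEe} along the prefix-free family (e.g.\ via the measure $\nu_g$ on $\Sigma$) gives $\sum_{v\in V}\rho^{|v|}\mathcal{E}(g\circ F_v)\le\mathcal{E}(g)$.
\item The parent of $v$ has density $<\delta$ (or the root does, because $m(E)<\delta$). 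Hence each $v\in V$ satisfies the two-sided bound $\delta\, m(K_v)\le m(E\cap K_v)<8\delta\, m(K_v)$.
\end{itemize}
This cell-wise upper bound is what replaces your relation $8^n m(E)<16\delta$. Combined with $\rho\ge\frac12$, it gives $\rho^{-|v|}m(E\cap K_v)<8\delta\,(2/8)^{|v|}\le 8\delta$, which is exactly what the $P$-mass and energy bounds need.

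Once $V$ is available, your scaling identities and your combined-ratio selection are a valid alternative to the paper's Markov-inequality step. In fact, since $\sum_{v\in V}b_v=1$, the selection produces some $v\in V$ with $a_v\le 2b_v$. This yields the last two bounds with $16$ in place of $32$.
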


\begin{proof}
For any $n\ge0$, let $\mathcal{F}_n$ be the $\sigma$-algebra generated by $\{\Sigma_w:w\in W_n\}$. Let $\mathcal{F}_\infty$ be the $\sigma$-algebra generated by $\cup_{n\ge0}\mathcal{F}_n$, then $\{\mathcal{F}_n\}_{n\ge0}$ is a filtration on $(\Sigma,\mathcal{F}_\infty)$. Let $\mathbb{P}$ be the probability measure on $(\Sigma,\mathcal{F}_\infty)$ satisfying that $\mathbb{P}(\Sigma_w)=\frac{1}{8^{\lvert w\rvert}}$ for any $w\in W_*$, then obviously $\pi_* \mathbb{P}=m$ on $K$. Denote by $\mathbb{E}$ the expectation with respect to $\mathbb{P}$.

Let $A=\{g=1\}$, $B=\{g>0\}$, and $\widehat{A}=\pi^{-1}(A)$. Since $g\in C(K)$, $\widehat{A}\in \mathcal{F}_\infty$. For any $n\ge0$, let
$$X_n:=\mathbb{E}\left[1_{\widehat{A}}|\mathcal{F}_n\right].$$
Then $\{X_n,\mathcal{F}_n\}_{n\ge0}$ is a non-negative uniformly integrable martingale. By the martingale convergence theorem,
\begin{equation}\label{eq_gh1}
X_n\to \mathbb{E}\left[1_{\widehat{A}}|\mathcal{F}_\infty\right]=1_{\widehat{A}}\ \mathbb{P}\text{-a.s. and in }L^1.
\end{equation}
For any $n\ge0$, a direct calculation gives
$$X_n=\sum_{v\in W_n}\frac{\mathbb{P}(\widehat{A}\cap \Sigma_v)}{\mathbb{P}(\Sigma_v)}1_{\Sigma_v}=\sum_{v\in W_n}\frac{m({A}\cap K_v)}{m(K_v)}1_{\Sigma_v}=\sum_{v\in W_n}a_v1_{\Sigma_v},$$
where $a_v:=\frac{m({A}\cap K_v)}{m(K_v)}$; in particular, $X_0\equiv a_\emptyset=m(A)\in(0,\delta)$. For $w\in \Sigma$, let
\begin{align*}
T(w):=\inf \left\{n\ge0:X_n(w)\ge \delta\right\}=\inf \left\{n\ge1:a_{[w]_1}<\delta,\ldots,a_{[w]_{n-1}}<\delta,a_{[w]_n}\ge \delta\right\},
\end{align*}
then $T$ is a stopping time with respect to $\{\mathcal{F}_n\}_{n\ge0}$ and $T\ge1$ $\mathbb{P}$-a.s. By (\ref{eq_gh1}), for $\mathbb{P}$-a.s. $w\in \widehat{A}$, $X_n(w)\to 1_{\widehat{A}}(w)=1>\delta$, hence $T(w)<+\infty$. By the optional stopping theorem,
\begin{align*}
&m(A)=\mathbb{E}X_0=\mathbb{E}X_T=\sum_{n\ge1}\mathbb{E}[X_n1_{T=n}]+\mathbb{E}[1_{\widehat{A}\cap\{ T=\infty\}}]=\sum_{n\ge1}\mathbb{E}[X_n1_{T=n}]\\
&=\sum_{n\ge1}\sum_{v\in W_n}a_v \mathbb{P}(\Sigma_v\cap \{T=n\})=\sum_{n\ge1}\sum_{\substack{v\in W_n\\a_{[v]_1}<\delta,\ldots, a_{[v]_{n-1}}<\delta,a_v\ge \delta}}m(A\cap K_v)=\sum_{v\in V}m(A\cap K_v),
\end{align*}
where
\begin{align*}
&V:=\bigcup_{n\ge1}\left\{v\in W_n:a_{[v]_1}<\delta,\ldots, a_{[v]_{n-1}}<\delta,a_v\ge \delta\right\}\\
&=\left\{v\in \cup_{n\ge 1}W_n:a_{[v]_1}<\delta,\ldots, a_{[v]_{\lvert v\rvert-1}}<\delta,a_v\ge \delta\right\}.
\end{align*}
It is easy to see that $V$ is a countable set, and for any distinct $v^{(1)},v^{(2)}\in V$, exactly one of the following holds:
\begin{itemize}
\item $\lvert v^{(1)}\rvert=\lvert v^{(2)}\rvert$.
\item $\lvert v^{(1)}\rvert<\lvert v^{(2)}\rvert$ and $v^{(1)}\ne [v^{(2)}]_{\lvert v^{(1)}\rvert}$.
\item $\lvert v^{(2)}\rvert<\lvert v^{(1)}\rvert$ and $v^{(2)}\ne [v^{(1)}]_{\lvert v^{(2)}\rvert}$,
\end{itemize}
hence $\{\Sigma_v:v\in V\}$ is a pairwise disjoint family, which gives
\begin{equation}\label{eq_gh2}
\sum_{v\in V}m \left(B\cap K_v\right)\le m(B)\le Cm (A),\qquad\sum_{v\in V}\rho^{\lvert v\rvert}\mathcal{E}(g\circ F_v)\le \mathcal{E}(g).
\end{equation}

Let $\mu$ be the probability measure on $V$ given by $\mu(\{v\})=\frac{m(A\cap K_v)}{m(A)}>0$ for any $v\in V$, then (\ref{eq_gh2}) gives
$$\sum_{v\in V}\frac{m \left(B\cap K_v\right)}{m \left(A\cap K_v\right)}\mu(\{v\})\le  C,\qquad\sum_{v\in V}\frac{\rho^{\lvert v\rvert}\mathcal{E}(g\circ F_v)}{m \left(A\cap K_v\right)}\mu(\{v\})\le \frac{\mathcal{E}(g)}{m(A)}.$$
By Markov's inequality,
$$\mu \left(\left\{v\in V:\frac{m \left(B\cap K_v\right)}{m \left(A\cap K_v\right)}>4C\right\}\right)\le \frac14,\qquad\mu \left(\left\{v\in V:\frac{\rho^{\lvert v\rvert}\mathcal{E}(g\circ F_v)}{m \left(A\cap K_v\right)}>4\frac{\mathcal{E}(g)}{m(A)}\right\}\right)\le \frac14,$$
which gives
$$\mu \left(\left\{v\in V:\frac{m \left(B\cap K_v\right)}{m \left(A\cap K_v\right)}\le4C,\ \frac{\rho^{\lvert v\rvert}\mathcal{E}(g\circ F_v)}{m \left(A\cap K_v\right)}\le4\frac{\mathcal{E}(g)}{m(A)}\right\}\right)\ge \frac12>0,$$
hence there exists $v\in V\subseteq W_*$ such that
\begin{equation}\label{eq_gh3}
\frac{m \left(B\cap K_v\right)}{m \left(A\cap K_v\right)}\le4C,\qquad\frac{\rho^{\lvert v\rvert}\mathcal{E}(g\circ F_v)}{m \left(A\cap K_v\right)}\le4\frac{\mathcal{E}(g)}{m(A)}.
\end{equation}
By definition, for any $v\in V$, $a_{v}\ge \delta$ and $a_{[v]_{\lvert v\rvert-1}}<\delta$, that is,
$$\frac{m(A\cap K_v)}{m(K_v)}\ge\delta,\qquad \frac{m(A\cap K_{[v]_{\lvert v\rvert-1}})}{m(K_{[v]_{\lvert v\rvert-1}})}<\delta,$$
hence
\begin{equation}\label{eq_gh4}
\delta m(K_v)\le m(A\cap K_v)\le m(A\cap K_{[v]_{\lvert v\rvert-1}})<\delta m(K_{[v]_{\lvert v\rvert-1}})=8 \delta m(K_v).
\end{equation}

Let $h=g\circ F_v$, then combining (\ref{eq_gh3}) and (\ref{eq_gh4}),
$$m \left(\left\{h=1\right\}\right)=\frac{m \left(A\cap K_v\right)}{m(K_v)}\ge \delta,$$
\begin{align*}
&m \left(\left\{h>0\right\}\right)=\frac{m \left(B\cap K_v\right)}{m(K_v)}\le 4C\frac{m \left(A\cap K_v\right)}{m(K_v)}<32C\delta,
\end{align*}
$$\mathcal{E}(h)\le \frac{4}{\rho^{\lvert v\rvert}}m(A\cap K_v)\frac{\mathcal{E}(g)}{m(A)}\overset{(\star)}{\scalebox{2}[1]{$<$}} 32 \delta\cdot 2^{\lvert v\rvert}m(K_v)\frac{\mathcal{E}(g)}{m(A)}\le32\delta \frac{\mathcal{E}(g)}{m \left(\left\{g=1\right\}\right)},$$
where $(\star)$ follows from Lemma \ref{lem_rhochi}.
\end{proof}

\begin{proposition}\label{prop_gap_pre}
Let $\{f_n\}\subseteq \mathcal{F}\cap C(K)$ satisfy $0\le f_n\le 1$ on $K$ for any $n$, and $\lim_{n\to+\infty}\mathcal{E}(f_n)=0$. Then $\lim_{n\to+\infty}\lVert f_n-(f_n)_K\rVert_{L^p(K;m)}=0$.
\end{proposition}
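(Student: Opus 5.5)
We argue by contradiction, using weak compactness in $\mathcal{F}$. Suppose that, after passing to a subsequence, $\lVert f_n-(f_n)_K\rVert_{L^p(K;m)}\ge\varepsilon>0$ for all $n$. Since $0\le f_n\le1$ and $\mathcal{E}(f_n)\to0$, the sequence $\{f_n\}$ is bounded in the uniformly convex (hence reflexive) Banach space $(\mathcal{F},\mathcal{E}_1^{1/p})$. Passing to a further subsequence, $f_n\rightharpoonup f_\infty$ weakly in $\mathcal{F}$, and by Mazur's lemma convex combinations $\widetilde f_n$ of tails converge strongly to $f_\infty$. Convexity of $\mathcal{E}$ and lower semicontinuity give $\mathcal{E}(f_\infty)\le\liminf\mathcal{E}(\widetilde f_n)\le\lim\mathcal{E}(f_n)=0$. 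By \ref{item_SSEa}, $f_\infty$ is then constant a.e. The difficulty is that weak convergence in $\mathcal{F}$ does not give strong $L^p$ convergence of $f_n$ itself, because no compact embedding is yet available. So the contradiction must come from the geometry, namely the capacity bound $\chi>0$ of Lemma \ref{lem_rhochi}, rather than from abstract compactness.

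The plan is to reduce to functions that "oscillate from $0$ to $1$ across a cell".

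\emph{Step 1 (level-set reduction).} Since $f_n$ stays $\varepsilon$-far from its mean in $L^p$, there is a level $t_n\in(0,1)$ such that both $\{f_n>t_n+\eta\}$ and $\{f_n<t_n-\eta\}$ have measure bounded below by some $c(\varepsilon)>0$, with $\eta=\eta(\varepsilon)$. Truncating and rescaling via \ref{item_SSEfp}, we obtain $u_n\in\mathcal{F}\cap C(K)$ with $0\le u_n\le1$, $m(\{u_n=0\})\ge c$, $m(\{u_n=1\})\ge c$, and $\mathcal{E}(u_n)\le\eta^{-p}\mathcal{E}(f_n)\to0$.

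\emph{Step 2 (finding a separating cell).} Fix a level $N$ depending only on $c$. Using the density-point (martingale) argument of Lemma \ref{lem_gh}, with cells at level $N$ in the role of the $K_v$, we look for two adjacent level-$N$ cells $K_v$ and $K_{v'}$ such that $u_n$ is mostly $0$ on one and mostly $1$ on the other. Because $K$ is connected through cell adjacencies, such a pair exists with a fixed quantitative margin. Restricting to $K_v\cup K_{v'}$ and using self-similarity \ref{item_SSEe}, the energy there is at most $\rho^{-N}$ times something tending to $0$.

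\emph{Step 3 (contradiction).} In the limit, a weak subsequential limit of $u_n\circ F_v$ and $u_n\circ F_{v'}$ has zero energy, so each is a.e. constant. The margins force these constants to be essentially $0$ on one cell and $1$ on the other. Gluing across the common edge via Lemma \ref{lem_meas}\ref{item_meas22} and \ref{item_SSEe} then yields a zero-energy function that is non-constant, contradicting \ref{item_SSEa}.

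The main obstacle is making the gluing rigorous when the functions are only known up to $m$-a.e. limits. Equivalently, one must rule out that $u_n$ jumps between values across a shared edge of two adjacent cells at vanishing cost. Here I expect to need a lemma saying that restriction to a cell boundary is controlled: the trace of a small-energy function on a side is close in $L^p$ to its cell average. This should be obtained from the capacity quantity $\chi$ by applying Lemma \ref{lem_rhochi} to $(u_n-a)_+\wedge b$ composed with the appropriate $F_w$. I would first try to prove, by a path-of-cells chaining argument, that a function with small energy cannot take values $\le a$ on a large portion of one cell and $\ge b$ on an adjacent cell. Each crossing of a level-$k$ cell from side to side costs at least $\chi\rho^{-k}(b-a)^p$ through \ref{item_SSEe} and unit contractivity, and summing over a disjoint family of crossings in the spirit of Lemma \ref{lem_gh} gives a lower bound on $\mathcal{E}(u_n)$ independent of $n$, contradicting $\mathcal{E}(u_n)\to0$.
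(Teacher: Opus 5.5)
There is a genuine gap, and it sits in Step 2. With $N$ depending only on $c$, nothing you have used guarantees two adjacent level-$N$ cells on which $u_n$ is mostly $0$ and mostly $1$, respectively. Your hypotheses allow $u_n$ to oscillate between $0$ and $1$ at scales much finer than $3^{-N}$, so that every level-$N$ cell carries density close to $\frac12$ of each value. Ruling this out is exactly the content of the proposition. The martingale argument of Lemma \ref{lem_gh} does not help: it produces stopping-time cells at $n$-dependent, unbounded levels, not at a fixed $N$.

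Note also that if Step 2 held, Step 3 would be immediate and would need neither adjacency nor gluing. The map $u\mapsto\int_{K_v}u\,\mathrm{d}m$ is a bounded linear functional on $(\mathcal{F},\mathcal{E}_1^{1/p})$, so the constant weak limit $c$ of $u_n$ would have to satisfy both $c\le\varepsilon'$ and $c\ge1-\varepsilon'$. All the weight therefore rests on the unproved step.

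The fallbacks in your last paragraph do not close it. An adjacent-cell estimate $\lvert u_{K_w}-u_{K_v}\rvert^p\lesssim\mathcal{E}(u\circ F_w)+\mathcal{E}(u\circ F_v)$ is a Poincar\'e-type inequality that lies downstream of the spectral gap you are trying to prove, so using it is circular. The crossing argument needs actual paths in $\{u\le a\}$ and $\{u\ge b\}$ joining opposite sides of a cell, and such paths cannot be read off from the measures of level sets. Also, self-similarity gives $\mathcal{E}(u)\ge\rho^{k}\mathcal{E}(u\circ F_w)$, so a crossing of a level-$k$ cell costs $\rho^{k}\chi(b-a)^p$, not $\rho^{-k}\chi(b-a)^p$. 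Since only $\rho\ge\frac12$ is known, this cost may decay in $k$.

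The missing ingredient is topological: connected components. It is combined with the observation that weak compactness \emph{does} finish the argument once you have a fixed zero set of positive measure. The paper fixes levels $a_2<b_2$ independent of $n$ and sets $u_n=(f_n-b_2)_+$. There are then two cases.

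\begin{itemize}
\item Suppose a component of $\{f_n>b_2\}$ and a component of $\{f_n<a_2\}$ both meet $K_i^{\mathrm{int}}$ for every corner cell $i=0,2,4,6$. Planarity forces both to cross at least two common side cells. Lemma \ref{lem_inter4}, via Lemma \ref{lem_path01} and $\chi>0$, then gives $\mathcal{E}(f_n)\ge2\rho(b_2-a_2)^p\chi$, which is impossible for large $n$.
\item Otherwise, say every component of $\{u_n>0\}$ misses some $K_i^{\mathrm{int}}$. Grouping components accordingly, Lemma \ref{lem_restrict} gives pieces $u_n1_{U_{n,i}}\in\mathcal{F}\cap C(K)$ with energy at most $\mathcal{E}(u_n)$, each vanishing on $K_i^{\mathrm{int}}$. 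Lemma \ref{lem_bdd_Lp} then applies: the weak limit of each piece is a constant, the constant is $0$ because the integrals over $K_i^{\mathrm{int}}$ vanish, and the bound $0\le u_n\le 1$ upgrades $L^1$ to $L^p$ convergence. Summing over $i$ contradicts $\lVert u_n\rVert_{L^p(K;m)}\ge(b_1-b_2)\delta^{1/p}$.
\end{itemize}

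So your remark that compactness cannot give the contradiction holds only in the absence of such a fixed zero set. Producing that zero set through component restriction is the idea your plan is missing.
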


\begin{proof}
Suppose that the result does not hold. Passing to a suitable subsequence, we may assume that there exist constants $a_1,b_1\in [0,1]$ with $a_1<b_1$ and $\delta>0$ such that for any $n$,
$$m \left(\{f_n\le a_1\}\right)\ge \delta,\qquad m \left(\{f_n\ge b_1\}\right)\ge \delta.$$
Take $a_2,b_2\in (a_1,b_1)$ with $a_2<b_2$, and for any $n$, let
$$u_n=(f_n-b_2)_+,\qquad v_n=(a_2-f_n)_+,$$
then $u_n,v_n\in \mathcal{F}\cap C(K)$ are non-negative, $\mathcal{E}(u_n),\mathcal{E}(v_n)\le \mathcal{E}(f_n)\to0$, and
\begin{equation}\label{eq_gap_pre1}
\lVert u_n\rVert_{L^p(K;m)}^p\ge (b_1-b_2)^p\delta>0,\qquad \lVert v_n\rVert_{L^p(K;m)}^p\ge (a_2-a_1)^p\delta>0.
\end{equation}

For any $n$, let
\begin{align*}
&\mathcal{U}_n:=\left\{U:U\text{ is a connected component of }\{u_n>0\}\right\},\\
&\mathcal{V}_n:=\left\{V:V\text{ is a connected component of }\{v_n>0\}\right\},
\end{align*}
and for any $i=0,2,4,6$, let
\begin{align*}
&\mathcal{U}_{n,i}:=\left\{U\in \mathcal{U}_n:U\cap K_i^{\mathrm{int}}=\emptyset\right\},\\
&\mathcal{V}_{n,i}:=\left\{V\in \mathcal{V}_n:V\cap K_i^{\mathrm{int}}=\emptyset\right\}.
\end{align*}

For fixed $n$, assume that $\cup_{i=0,2,4,6}\mathcal{U}_{n,i}\ne \mathcal{U}_n$ and $\cup_{i=0,2,4,6}\mathcal{V}_{n,i}\ne \mathcal{V}_n$, then there exist $U\in \mathcal{U}_n\backslash \cup_{i=0,2,4,6}\mathcal{U}_{n,i}$ and $V\in \mathcal{V}_n\backslash \cup_{i=0,2,4,6}\mathcal{V}_{n,i}$, that is, $U$ is a component of $\{f_n>b_2\}$ that intersects $K_i^{\mathrm{int}}$ for all $i=0,2,4,6$, and $V$ is a component of $\{f_n<a_2\}$ that intersects $K_i^{\mathrm{int}}$ for all $i=0,2,4,6$. By Lemma \ref{lem_inter4},
$$\mathcal{E}(f_n)\ge 2\rho(b_2-a_2)^p\chi.$$

Since $\lim_{n\to+\infty}\mathcal{E}(f_n)=0$, for any sufficiently large $n$, we have either $\cup_{i=0,2,4,6}\mathcal{U}_{n,i}= \mathcal{U}_n$ or $\cup_{i=0,2,4,6}\mathcal{V}_{n,i}=\mathcal{V}_n$. Passing to a subsequence, we may assume that for any sufficiently large $n$, $\cup_{i=0,2,4,6}\mathcal{U}_{n,i}= \mathcal{U}_n$, then for $i=0,2,4,6$, let $U_{n,i}=\cup_{U\in \mathcal{U}_{n,i}}U$ and $u_{n,i}=u_n1_{U_{n,i}}$. By Lemma \ref{lem_restrict}, $u_{n,i}\in \mathcal{F}\cap C(K)$ and $\mathcal{E}(u_{n,i})\le \mathcal{E}(u_n)\to0$. Obviously, $0\le u_{n,i}\le 1$ on $K$ and $u_{n,i}=0$ on $K_i^{\mathrm{int}}$, where $m(K_i^{\mathrm{int}})=\frac{1}{8}>0$, hence Lemma \ref{lem_bdd_Lp} yields $\lVert u_{n,i}\rVert_{L^p(K;m)}\to0$. Since $\cup_{i=0,2,4,6}\mathcal{U}_{n,i}= \mathcal{U}_n$, $\{u_n>0\}=\cup_{i=0,2,4,6}U_{n,i}$ and
$$\int_{K}u_n^p \mathrm{d}m=\int_{\{u_n>0\}}u_n^p \mathrm{d}m\le \sum_{i=0,2,4,6}\int_{U_{n,i}}u_n^p \mathrm{d}m=\sum_{i=0,2,4,6}\int_K u_{n,i}^p \mathrm{d}m\to0,$$
contradicting (\ref{eq_gap_pre1}).
\end{proof}

\begin{lemma}\label{lem_restrict}
Let $f\in \mathcal{F}\cap C(K)$ be non-negative. With respect to the relative topology on $(K,d)$, let $U$ be a union of connected components of the relatively open set $\{f>0\}$. Then $f1_U\in \mathcal{F}\cap C(K)$ and $\mathcal{E}(f1_U)\le \mathcal{E}(f)$.
\end{lemma}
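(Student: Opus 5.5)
The plan is to prove continuity of $f1_U$ directly, and then obtain the energy bound by approximating $f1_U$ with truncations $(f-\varepsilon)_+1_U$, which can be handled cell by cell via self-similarity \ref{item_SSEe}.

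\textbf{Step 1 (continuity).} The carpet $K$ is locally connected, so every connected component of the relatively open set $\{f>0\}$ is relatively open. Hence $U$ is open. Let $x\in\overline{U}\setminus U$. If $f(x)>0$, then $x$ would lie in some component of $\{f>0\}$. That component is open and meets $U$, so it is contained in $U$, which is a contradiction. Thus $f=0$ on $\partial U$. Since $f$ is continuous, $f1_U$ is then continuous at every point.

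\textbf{Step 2 (truncation and finiteness).} Fix $\varepsilon>0$ and set $g_\varepsilon:=(f-\varepsilon)_+\in\mathcal{F}\cap C(K)$. By \ref{item_SSEfp}, $\mathcal{E}(g_\varepsilon)\le\mathcal{E}(f)$. The compact set $\{f\ge\varepsilon\}$ is covered by the open components of $\{f>0\}$, so only finitely many components meet it. Consequently the sets $A:=\{f\ge\varepsilon\}\cap U$ and $B:=\{f\ge\varepsilon\}\setminus U$ are both finite unions of relatively clopen pieces of $\{f\ge\varepsilon\}$. They are therefore disjoint compact sets with $\operatorname{dist}(A,B)>0$.

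\textbf{Step 3 (cellwise identification).} Choose $n$ with $\sqrt2\,3^{-n}<\operatorname{dist}(A,B)$. Every level-$n$ cell $K_w$ then meets at most one of $A$ and $B$, and $\{g_\varepsilon>0\}\subseteq A\cup B$. It follows that $(g_\varepsilon1_U)\circ F_w$ is either $g_\varepsilon\circ F_w$ or $0$, and in both cases it lies in $\mathcal{F}\cap C(K)$. The function $g_\varepsilon1_U$ is continuous, by the argument of Step 1 or directly. Iterating the characterization of $\mathcal{F}\cap C(K)$ in \ref{item_SSEe} by induction on the level gives $g_\varepsilon1_U\in\mathcal{F}\cap C(K)$. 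The self-similar identity then yields
$$\mathcal{E}(g_\varepsilon1_U)=\rho^n\sum_{w\in W_n}\mathcal{E}((g_\varepsilon1_U)\circ F_w)\le\rho^n\sum_{w\in W_n}\mathcal{E}(g_\varepsilon\circ F_w)=\mathcal{E}(g_\varepsilon)\le\mathcal{E}(f).$$

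\textbf{Step 4 (limit $\varepsilon\to0$).} As $\varepsilon\to0$, $g_\varepsilon1_U\to f1_U$ uniformly, hence in $L^p$. The family is bounded in $(\mathcal{F},\mathcal{E}_1^{1/p})$, and this space is uniformly convex by \ref{item_SSEb}, hence reflexive. So a subsequence converges weakly in $\mathcal{F}$. Mazur's lemma gives convex combinations that converge strongly in $\mathcal{F}$. These combinations also converge to $f1_U$ in $L^p$, which identifies the limit as $f1_U\in\mathcal{F}$. Since $\mathcal{E}^{1/p}$ is a continuous seminorm on $\mathcal{F}$, it is convex and hence weakly lower semicontinuous. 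Therefore $\mathcal{E}(f1_U)\le\liminf_{\varepsilon\to0}\mathcal{E}(g_\varepsilon1_U)\le\mathcal{E}(f)$.

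I expect the main obstacle to be Step 2: showing that only finitely many components are relevant and that $A$ and $B$ are positively separated. This relies on local connectedness of $K$ and on compactness of $\{f\ge\varepsilon\}$. Once the separation is in hand, the cellwise argument in Step 3 and the limit in Step 4 are routine.
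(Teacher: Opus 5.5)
Your proposal is correct and follows essentially the same route as the paper: truncate to $(f-\varepsilon)_+1_U$, use compactness to separate $\{f\ge\varepsilon\}\cap U$ from the rest, apply \ref{item_SSEe} cellwise to get membership in $\mathcal{F}\cap C(K)$ and the energy bound, then pass to the limit via weak compactness and Mazur's lemma (Lemma \ref{lem_BAM}). The differences are cosmetic. The paper separates $\{f\ge\varepsilon\}\cap U$ from $K\setminus U$ directly, using that $\{f>0\}\setminus U$ is open, so finiteness of the components is never needed; it gets continuity of $f1_U$ from the uniform limit. You instead spell out the cellwise step and prove continuity directly.
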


\begin{proof}
In what follows, we may assume that the sets are non-empty; otherwise, the result holds trivially. Let $V=\{f>0\}\backslash U$, then $U$ and $V$ are relatively open subsets of $K$. For any $n\ge1$, by \ref{item_SSEf}, $(f-\frac{1}{n})_+\in \mathcal{F}\cap C(K)$ and $\mathcal{E}((f-\frac{1}{n})_+)\le \mathcal{E}(f)$. Moreover, $\{f\ge \frac{1}{n}\}\subseteq \{f>0\}=U\sqcup V$ is compact, hence $\{f\ge \frac{1}{n}\}\cap U=\{f\ge \frac{1}{n}\}\backslash V$ is compact. Since $K\backslash U$ is compact and $(\{f\ge \frac{1}{n}\}\cap U)\cap (K\backslash U)=\emptyset$, $\mathrm{dist}(\{f\ge \frac{1}{n}\}\cap U,K\backslash U)>0$. By \ref{item_SSEe}, $f_n:=(f-\frac{1}{n})_+1_{U}\in \mathcal{F}\cap C(K)$ and $\mathcal{E}(f_n)\le \mathcal{E}((f-\frac{1}{n})_+)\le \mathcal{E}(f)$, hence $\{f_n\}\subseteq \mathcal{F}\cap C(K)$ is $\mathcal{E}$-bounded. Since $\{f_n\}$ converges uniformly to $f1_U$, and hence also in $L^p(K;m)$, by Lemma \ref{lem_BAM}, $f1_U\in \mathcal{F}\cap C(K)$, there exists a subsequence, still denoted by $\{f_n\}$, that $\mathcal{E}_1$-weakly-converges to $f1_U$, and $\mathcal{E}(f1_U)\le \varliminf_{n\to+\infty}\mathcal{E}(f_n)\le \mathcal{E}(f)$.
\end{proof}

\begin{lemma}\label{lem_bdd_Lp}
Let $A\in \mathcal{B}(K)$ satisfy $m(A)>0$. Let $M\in(0,+\infty)$ be a constant. Let $\{f_n\}\subseteq \mathcal{F}$ satisfy $0\le f_n\le M$ on $K$ and $f_n=0$ $m$-a.e. on $A$, for any $n$, and $\lim_{n\to+\infty}\mathcal{E}(f_n)=0$. Then $\lim_{n\to+\infty}\lVert f_n\rVert_{L^p(K;m)}=0$.
\end{lemma}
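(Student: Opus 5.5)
We argue by contradiction, combining weak compactness in the uniformly convex space $(\mathcal{F},\mathcal{E}_1^{1/p})$ with the characterization of null-energy functions in \ref{item_SSEa}. Suppose the conclusion fails. Passing to a subsequence, we may assume $\lVert f_n\rVert_{L^p(K;m)}\ge \varepsilon>0$ for all $n$. Since $0\le f_n\le M$ and $m(K)=1$, we have $\lVert f_n\rVert_{L^p}\le M$. Together with $\mathcal{E}(f_n)\to0$, this makes $\{f_n\}$ bounded in $\mathcal{E}_1$.

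By \ref{item_SSEb}, $(\mathcal{F},\mathcal{E}_1^{1/p})$ is uniformly convex, hence reflexive by Milman--Pettis. A subsequence therefore converges $\mathcal{E}_1$-weakly to some $f\in\mathcal{F}$. (Alternatively one can invoke Lemma \ref{lem_BAM}, the Banach--Alaoglu/Mazur type lemma already used in Lemma \ref{lem_restrict}, once $L^p$ convergence is available.) By Mazur's lemma, convex combinations $g_n$ of $\{f_k\}_{k\ge n}$ converge to $f$ in $\mathcal{E}_1$-norm. Convexity of $\mathcal{E}$, which follows from $\mathcal{E}^{1/p}$ being a seminorm, gives $\mathcal{E}(g_n)\le\sup_{k\ge n}\mathcal{E}(f_k)\to0$. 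Hence
$$\mathcal{E}(f)^{1/p}\le \mathcal{E}(f-g_n)^{1/p}+\mathcal{E}(g_n)^{1/p}\to0,$$
so $\mathcal{E}(f)=0$, and by \ref{item_SSEa} $f$ equals a constant $c$ $m$-a.e. The $g_n$ inherit $0\le g_n\le M$ and $g_n=0$ $m$-a.e. on $A$. Since $g_n\to f$ in $L^p$ and $m(A)>0$, we get $c=0$, so $f=0$.

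The difficulty is that weak convergence of $f_n$ to $0$ does not directly give $\lVert f_n\rVert_{L^p}\to0$; Mazur only yields strong convergence of convex combinations. To upgrade, I would apply the same argument to every subsequence. Every subsequence has a further subsequence whose convex combinations converge to $0$ strongly, so the whole sequence converges $\mathcal{E}_1$-weakly to $0$. To rule out mass escaping without energy, I would use the uniform convexity of $\mathcal{E}_1$. Normalizing $h_n=f_n/\mathcal{E}_1(f_n)^{1/p}$, we have $\mathcal{E}_1(h_n)^{1/p}=1$ and $\mathcal{E}(h_n)\to0$. Consider $\frac12(h_n+h_k)$. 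Its $L^p$ norm is close to $1$ whenever the $L^p$ parts of $h_n$ and $h_k$ align. Uniform convexity of $L^p$ (Clarkson) and nonnegativity would force the $h_n$ to be Cauchy in $L^p$, which gives a strong limit of norm $1$ and energy $0$, contradicting the weak limit $0$. The alignment claim is the delicate point.

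A cleaner route for this step is to borrow the blowup strategy of the section. Apply Lemma \ref{lem_ug} to the nonnegative $f_n$ (after approximating by continuous functions via \ref{item_SSEc}) to obtain $g_n$ with $\mathcal{E}(g_n)/m(\{g_n=1\})\to0$ and $m(\{g_n>0\})\le C_1 m(\{g_n=1\})$. Then use Lemma \ref{lem_gh} to rescale to cells where $\{g_n=1\}$ has density at least $\delta$ and $g_n$ vanishes on a set of measure at least $1-32C_1\delta$. These rescaled functions have energy tending to $0$, yet each is $1$ on a set of measure at least $\delta$ and $0$ on a set of measure bounded below. This non-constancy gap contradicts the compactness argument above, since that argument produces a constant limit which must then be both $0$ and $1$. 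I expect this contradiction, obtained either by uniform convexity or by blowup, to be the main obstacle; the weak-compactness part is routine.
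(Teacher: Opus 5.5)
Your weak-compactness step is correct and matches the paper. The sequence $\{f_n\}$ is $\mathcal{E}_1$-bounded, a subsequence converges $\mathcal{E}_1$-weakly to some $f$ with $\mathcal{E}(f)=0$, so $f$ is a constant $c$, and $c=0$ because $\int_A f_n\,\mathrm{d}m=0$.

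\textbf{The gap} is the final step, which you flag as the main obstacle but never close. Your first route depends on an unproved ``alignment'' claim, so it is not an argument. Your second route is circular, and it would not work anyway.
\begin{itemize}
\item \emph{Circularity.} After Lemmas \ref{lem_ug} and \ref{lem_gh} you reach functions $h_n$ with $\mathcal{E}(h_n)\to0$ that equal $1$ on a set of measure at least $\delta$ and $0$ on a set of measure bounded below. This is exactly the configuration the paper rules out in the proof of Proposition \ref{prop_gap}. It does so via Proposition \ref{prop_gap_pre}, and that proof invokes the very lemma you are proving, together with the planar topology of Lemmas \ref{lem_inter4} and \ref{lem_path01}.
\item \emph{No contradiction from compactness.} The sets $\{h_n=1\}$ and $\{h_n=0\}$ move with $n$, and weak limits do not see moving sets. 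A constant weak limit $c$ with $\delta\le c\le\frac12$ is perfectly consistent. There is no fixed set on which the limit must vanish and another on which it must equal $1$.
\end{itemize}

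\textbf{The missing idea} is elementary and uses the two hypotheses you never exploited, $f_n\ge0$ and $f_n\le M$.
\begin{itemize}
\item By H\"older's inequality, $u\mapsto\int_K u\,\mathrm{d}m$ is a bounded linear functional on $(\mathcal{F},\mathcal{E}_1^{1/p})$. Weak convergence to $0$ therefore gives $\int_K f_n\,\mathrm{d}m\to0$.
\item Because $f_n\ge0$, this quantity is exactly $\lVert f_n\rVert_{L^1(K;m)}$. For nonnegative functions, the $L^1$ norm is the value of a continuous linear functional, so weak convergence to $0$ forces norm convergence to $0$ in $L^1$.
\item Finally, $0\le f_n^p\le M^{p-1}f_n$ gives
\[
\lVert f_n\rVert_{L^p(K;m)}^p\le M^{p-1}\lVert f_n\rVert_{L^1(K;m)}\to0,
\]
which contradicts $\lVert f_n\rVert_{L^p(K;m)}\ge\varepsilon$.
\end{itemize}
This is the paper's proof. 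It needs only reflexivity (Lemma \ref{lem_BAM}), with no further use of uniform convexity and no blowup. You already had the whole sequence converging weakly to $0$; evaluating the functional $\int_K\cdot\,\mathrm{d}m$ finishes the argument.
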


\begin{proof}
Suppose that there exist a subsequence, still denoted by $\{f_n\}$, and a constant $\varepsilon>0$ such that $\lVert f_n\rVert_{L^p(K;m)}\ge \varepsilon>0$ for any $n$. By assumption, $\{f_n\}$ is $\mathcal{E}_1$-bounded, then Lemma \ref{lem_BAM} yields that there exists a subsequence, still denoted by $\{f_n\}$, that $\mathcal{E}_1$-weakly-converges to some $f\in \mathcal{F}$, and $\mathcal{E}(f)\le\varliminf_{n\to+\infty}\mathcal{E}(f_n)=0$. By \ref{item_SSEa}, there exists a constant $c \in \mathbb{R}$ such that $f=c$ $m$-a.e. on $K$. For any $B\in \mathcal{B}(K)$, H\"older's inequality gives that $u\mapsto \int_B u \mathrm{d}m$ is a bounded linear functional on $(\mathcal{F},\mathcal{E}_1^{\frac{1}{p}})$. Together with the $\mathcal{E}_1$-weak-convergence of $\{f_n\}$ to $f$, we have
$$\lim_{n\to+\infty}\int_Bf_n \mathrm{d}m=\int_Bf \mathrm{d}m\text{ for any }B\in \mathcal{B}(K).$$
In particular, $0=\int_{A}f_n \mathrm{d}m\to \int_Af \mathrm{d}m=cm(A)$. Since $m(A)>0$, $f=c=0$ $m$-a.e. on $K$. Moreover, $\lim_{n\to+\infty}\int_Kf_n \mathrm{d}m=\int_K f \mathrm{d}m=0$, hence $\{f_n\}$ is $L^1(K;m)$-convergent to $0$. Since $0\le f_n^p\le M^{p-1}f_n$ on $K$, $0\le \int_K f_n^p \mathrm{d}m\le M^{p-1}\int_K f_n \mathrm{d}m\to0$, a contradiction.
\end{proof}

A path in $K$ is a continuous map $\gamma:[0,1]\to K$. We say that $\gamma$ connects two subsets $A,B\subseteq K$ if $\gamma(0)\in A$ and $\gamma(1)\in B$. We also use $\gamma$ to denote its image $\gamma([0,1])$ if no ambiguity arises. Obviously, the Sierpi\'nski carpet $K$ is locally path connected, hence components of relatively open subsets are open, and connected relatively open subsets are path connected.

\begin{lemma}\label{lem_inter4}
Let $f\in \mathcal{F}\cap C(K)$ and let $a,b\in \mathbb{R}$ satisfy $a<b$. Assume that both of the following conditions hold:
\begin{itemize}
\item A connected component of $\{f>b\}$ intersects $K_i^{\mathrm{int}}$ for all $i=0,2,4,6$.
\item A connected component of $\{f<a\}$ intersects $K_i^{\mathrm{int}}$ for all $i=0,2,4,6$.
\end{itemize}
Then $\mathcal{E}(f)\ge 2\rho(b-a)^p\chi$, where $\chi\in(0,+\infty)$ is as in Lemma \ref{lem_rhochi}.
\end{lemma}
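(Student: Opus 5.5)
The plan is to reduce the statement to two separate level-$1$ cells. In each of them the normalized function
$$g:=\Bigl(\bigl(\tfrac{f-a}{b-a}\bigr)\vee0\Bigr)\wedge1\in\mathcal{F}\cap C(K)$$
should, after rescaling by $F_i$, cross the unit square between a pair of opposite sides in the manner required by Lemma \ref{lem_path01}. I read that lemma as the crossing version of the capacity bound of Lemma \ref{lem_rhochi}; its exact form is not visible here, so this is an assumption. The form I would use is: if $h\in\mathcal{F}\cap C(K)$, $0\le h\le1$, and there are paths in $\{h=0\}$ and in $\{h=1\}$ which both join the same pair of opposite sides of $\square$, then $\mathcal{E}(h)\ge\chi$.

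Granting this, suppose we find two distinct $i\neq j$ such that both $g\circ F_i$ and $g\circ F_j$ satisfy that hypothesis. Then by \ref{item_SSEe}, \ref{item_SSEfp} and \ref{item_SSEa},
$$\mathcal{E}(f)=(b-a)^p\mathcal{E}\Bigl(\tfrac{f-a}{b-a}\Bigr)\ge(b-a)^p\mathcal{E}(g)\ge(b-a)^p\rho\bigl(\mathcal{E}(g\circ F_i)+\mathcal{E}(g\circ F_j)\bigr)\ge2\rho(b-a)^p\chi.$$
This is exactly the claimed bound, so everything reduces to a combinatorial-topological statement about level-$1$ cells.

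Since $K$ is locally path connected, the component $U$ of $\{f>b\}$ is path connected. It therefore contains paths joining points of $K_0^{\mathrm{int}}$, $K_2^{\mathrm{int}}$, $K_4^{\mathrm{int}}$, $K_6^{\mathrm{int}}$, and likewise for the component $V$ of $\{f<a\}$. On $U$ we have $g=1$, on $V$ we have $g=0$, and $U\cap V=\emptyset$. The eight level-$1$ cells form a ring around the removed middle square. Any path from $K_0^{\mathrm{int}}$ to $K_2^{\mathrm{int}}$ must pass through the ring, because the middle square is not in $K$. So it either traverses the side cell $K_1$ from its left side $F_1(\ell_L)$ to its right side $F_1(\ell_R)$, or it goes around the other way through $K_7,K_6,K_5,K_4,K_3$.

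I would record, for each side cell $K_1,K_3,K_5,K_7$, whether $U$ and whether $V$ contain a sub-path crossing that cell between the two sides it shares with its neighbouring corner cells. The sub-path can be extracted by taking the first entrance and last exit times of the path in the closed cell. The aim is a planar separation argument in the annulus. Two disjoint connected sets, each meeting all four corner cells, cannot both avoid crossing the same side cell unless each of them winds around the hole. This should force at least two side cells, and possibly also corner cells, in which both $U$ and $V$ have crossings between the same pair of opposite sides.

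I expect this topological step to be the main obstacle. The difficulty is that paths may wander into neighbouring cells and back, so a crossing of the ring between two corners need not be a clean left-to-right crossing of a single side cell. It could be split across a side cell and a corner cell, with the crossing realized top-to-bottom in a corner cell instead. My first attempt is a case analysis on the cyclic order in which $U$ and $V$ meet the four corners. Using disjointness and the Jordan curve theorem in the closed annulus $\square\setminus(\frac13,\frac23)^2$, I would show that each of the four boundary arcs between consecutive corners is crossed by $U$ and by $V$, with at most two exceptions. Lemma \ref{lem_restrict} can then be used, if needed, to replace $g$ by a version truncated to a single component, so that Lemma \ref{lem_path01} applies cell by cell.
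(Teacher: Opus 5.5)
Your reduction is the same as the paper's: normalize by $b-a$, truncate to $[0,1]$, split into level-$1$ cells with \ref{item_SSEe}, and apply Lemma \ref{lem_path01} (after a rotation from \ref{item_SSEd}) in two side cells. The paper truncates cellwise, using $\frac{(f\circ F_i-a)_+}{b-a}\wedge1$, which is equivalent. Your reading of Lemma \ref{lem_path01} is also correct up to that rotation.

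The gap is the topological step, which you flag as the main obstacle and leave at ``should force''. The idea you are missing is that this step needs no interplay between $U$ and $V$: no disjointness, no Jordan curve theorem, no case analysis of the cyclic order. Each of $U$ and $V$ \emph{separately} crosses at least three of the four side cells. Here a crossing of $K_i$ means a path inside $K_i\cap U$ (resp.\ $K_i\cap V$) joining $K_i\cap K_{i-1}$ to $K_i\cap K_{i+1}$, with indices mod $8$. Pigeonhole then gives two side cells crossed by both. This is exactly how the paper concludes: $\#I_1\ge3$ and $\#I_2\ge3$ imply $\#(I_1\cap I_2)\ge2$.

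To justify the count, define $\Phi:K\to\mathbb{R}/4\mathbb{Z}$ as follows:
\begin{itemize}
\item $\Phi\equiv0,1,2,3$ on $K_0,K_2,K_4,K_6$;
\item $\Phi$ is affine across each side cell between its two connection sides, e.g.\ $\Phi(x,y)=3x-1$ on $K_1$ and $\Phi(x,y)=3y$ on $K_3$.
\end{itemize}
The definitions agree on all overlaps, including the inner corner points such as $K_1\cap K_7=\{(\frac13,\frac13)\}$, so $\Phi$ is continuous.

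Now lift $\Phi\circ\gamma$ to $\mathbb{R}$ for a path $\gamma\subseteq U$ from $K_0^{\mathrm{int}}$ to $K_2^{\mathrm{int}}$. Suppose the lift runs from $0$ to $1$. Let $t_1$ be the first time it equals $1$, and $t_0$ the last time before $t_1$ that it equals $0$. Then $\gamma((t_0,t_1))\subseteq\Phi^{-1}((0,1))\subseteq K_1$. The endpoints satisfy $\gamma(t_0)\in K_1\cap K_0$ and $\gamma(t_1)\in K_1\cap K_2$, so $\gamma|_{[t_0,t_1]}$ is a clean crossing of $K_1$.

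Otherwise the lift ends at $1+4k$ with $k\neq0$. The same argument, applied to the unit intervals the lift must traverse, yields clean crossings of $K_3$, $K_5$ and $K_7$. Doing this for the four consecutive pairs of corner cells shows that $U$ crosses at least three side cells; the same holds for $V$.

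This also settles your worry about crossings ``split across a side cell and a corner cell''. A crossing is always realized inside a single side cell, and corner cells are never needed. Likewise, Lemma \ref{lem_restrict} plays no role at this stage; the paper uses it only inside the proof of Lemma \ref{lem_path01}.
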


\begin{proof}
For simplicity, write $K_8=K_0$. For $i=1,3,5,7$, we say that $K_i\cap K_{i-1}$ and $K_i\cap K_{i+1}$, which are two closed line segments, are connection sides of $K_i$, see Figure \ref{fig_inter4}.

\begin{figure}[ht]
  \centering
  \begin{tikzpicture}[scale=3]
    \draw (0,0) rectangle (1,1);
    \foreach \t in {1,2} {
      \draw ({\t/3},0) -- ({\t/3},1);
      \draw (0,{\t/3}) -- (1,{\t/3});
    }

    \foreach \i/\x/\y in {
      0/1/1, 1/3/1, 2/5/1, 3/5/3,
      4/5/5, 5/3/5, 6/1/5, 7/1/3
    } {
      \node at ({\x/6},{\y/6}) {$K_{\i}$};
    }
  \end{tikzpicture}
  \caption{The eight level-$1$ cells}
  \label{fig_inter4}
\end{figure}

By assumption, we can choose connected components $U$ and $V$ of $\{f>b\}$ and $\{f<a\}$, respectively, such that $U\cap K_i^{\mathrm{int}}\ne \emptyset$ and $V\cap K_i^{\mathrm{int}}\ne\emptyset$ for all $i=0,2,4,6$. Let
\begin{align*}
&I_1:=\left\{i\in\{1,3,5,7\}:\text{$\exists$ a path $\gamma\subseteq K_i\cap U$ connecting the connection sides of $K_i$}\right\},\\
&I_2:=\left\{i\in\{1,3,5,7\}:\text{$\exists$ a path $\gamma\subseteq K_i\cap V$ connecting the connection sides of $K_i$}\right\},
\end{align*}
then $\# I_1\ge3$ and $\# I_2\ge 3$, hence $\#(I_1\cap I_2)\ge2$. For any $i\in I_1\cap I_2$, there exist paths $\gamma^{(1)}\subseteq K_i\cap U$ and $\gamma^{(2)}\subseteq K_i\cap V$ connecting the connection sides of $K_i$, then $f>b$ on $\gamma^{(1)}$ and $f<a$ on $\gamma^{(2)}$. By \ref{item_SSEd}, \ref{item_SSEf}, and an application of Lemma \ref{lem_path01} to the function $\frac{(f\circ F_i-a)_+}{b-a}\wedge1\in \mathcal{F}\cap C(K)$,
$$\frac{1}{(b-a)^p}\mathcal{E}(f\circ F_i)\ge \mathcal{E}\left(\frac{(f\circ F_i-a)_+}{b-a}\wedge1\right)\ge\chi.$$
Then by \ref{item_SSEe},
$$\mathcal{E}(f)\ge\rho\sum_{i\in I_1\cap I_2}\mathcal{E}(f\circ F_i)\ge \rho\cdot(b-a)^p \chi\cdot\#(I_1\cap I_2)\ge 2\rho(b-a)^p \chi.$$
\end{proof}

\begin{lemma}\label{lem_path01}
Let $f\in \mathcal{F}\cap C(K)$. If there exist paths $\gamma^{(1)}$ and $\gamma^{(2)}$ in $K$, both of which connect $\ell_L$ and $\ell_R$, such that $f=1$ on $\gamma^{(1)}$ and $f=0$ on $\gamma^{(2)}$, then $\mathcal{E}(f)\ge \chi$.
\end{lemma}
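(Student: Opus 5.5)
The plan is to reduce Lemma \ref{lem_path01} to the definition of $\chi$ in Lemma \ref{lem_rhochi} by cutting $K$ along the two paths and building a competitor for $\chi$. First, by \ref{item_SSEf} applied to $f$, and replacing $f$ by $(f\vee0)\wedge1$, we may assume $0\le f\le1$; the values on $\gamma^{(1)}$ and $\gamma^{(2)}$ are unchanged and the energy does not increase. Both paths cross $K$ from $\ell_L$ to $\ell_R$ and $f$ takes different values on them, so they are disjoint. Hence one path lies ``above'' the other; say $\gamma^{(2)}$ is below. By planar topology (Jordan-type separation in $\square$), $\square\setminus(\gamma^{(1)}\cup\gamma^{(2)})$ has a component $\Omega$ lying between the paths, bounded by pieces of $\ell_L$, $\ell_R$, $\gamma^{(1)}$, $\gamma^{(2)}$.

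The key step is to obtain from $f$ a function crossing from $0$ to $1$ between the \emph{bottom and top} sides. Then, after rotating by an element of $D_4$ using \ref{item_SSEd}, it becomes a competitor for the left–right definition of $\chi$. The construction I would try: define $g:=f$ on $\overline{\Omega}\cap K$, $g:=1$ on the part of $K$ above $\gamma^{(1)}$, and $g:=0$ on the part below $\gamma^{(2)}$. Then $g$ is continuous, since on each path the glued values match. Also $g\equiv0$ on $\ell_B$ and $g\equiv1$ on $\ell_T$, the paths separating top from bottom. For the energy, we want $\mathcal{E}(g)\le\mathcal{E}(f)$. We have $\Gamma(g)$ vanishing on the regions where $g$ is constant, and $\Gamma(g)=\Gamma(f)$ on $\overline{\Omega}\cap K$ by locality \eqref{eq_Gamma_local}. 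Together with \eqref{eq_Gamma_AC} on the paths (where $g\in\{0,1\}$), this gives $\mathcal{E}(g)=\Gamma(f)(\Omega\cap K)\le\mathcal{E}(f)$. Composing with the rotation $\psi\in D_4$ taking $\ell_B\to\ell_L$ and $\ell_T\to\ell_R$ then gives $\chi\le\mathcal{E}(g\circ\psi)=\mathcal{E}(g)\le\mathcal{E}(f)$.

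The main obstacle is showing that $g\in\mathcal{F}$, since it is defined by cutting and pasting rather than by a lattice operation. I would avoid the issue by writing $g$ through truncations that are already admissible. Define $g$ as a pointwise combination $g=(f\vee 1_{\text{above}})\wedge(1-1_{\text{below}})$ realized by a Lipschitz-type construction. Cleaner still, use Lemma \ref{lem_restrict}-style arguments: the sets $\{f>1-\varepsilon\}$ and $\{f<\varepsilon\}$ contain open neighbourhoods of $\gamma^{(1)}$ and $\gamma^{(2)}$. So consider $h_\varepsilon:=\bigl((f-\varepsilon)_+/(1-2\varepsilon)\bigr)\wedge1$. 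Inside $\Omega$ I would keep $h_\varepsilon$. Outside, I would modify it to a constant, using that the relevant connected components of $\{h_\varepsilon<1\}$ and $\{h_\varepsilon>0\}$ meeting the outer regions are separated from $\Omega$ by the open level neighbourhoods of the paths. Lemma \ref{lem_restrict} (applied to $1-h_\varepsilon$ and to $h_\varepsilon$ with suitable unions of components) then yields continuous functions in $\mathcal{F}$ with energy at most $\mathcal{E}(h_\varepsilon)\le(1-2\varepsilon)^{-p}\mathcal{E}(f)$. These functions equal $0$ on $\ell_B$ and $1$ on $\ell_T$. Letting $\varepsilon\downarrow0$ gives $\mathcal{E}(f)\ge\chi$. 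The delicate point is the topological claim that these components really do separate as needed in the carpet, not just in the square. I expect this to follow from the paths lying in $K$ and the complementary holes being open squares disjoint from $K$.
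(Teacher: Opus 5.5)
Your skeleton matches the paper's: truncate to $0\le f\le1$ by \ref{item_SSEf}, use planarity, apply Lemma \ref{lem_restrict} twice to discard unwanted components, and rotate by \ref{item_SSEd}. But the step that carries the proof is not done.

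\textbf{The gap: the decomposition into regions.} Your argument rests on ``one path lies above the other'', a component $\Omega$ between the paths bounded by pieces of the sides and the paths, and regions ``above $\gamma^{(1)}$'' and ``below $\gamma^{(2)}$''. None of this is proved, and for general paths it is not accurate. The $\gamma^{(i)}$ need not be simple, and may revisit $\ell_L,\ell_R$ or touch $\ell_B,\ell_T$. Then $\square\setminus(\gamma^{(1)}\cup\gamma^{(2)})$ can have pockets, e.g.\ inside a loop of $\gamma^{(2)}$, or between an excursion of $\gamma^{(2)}$ and $\ell_L$, that lie in none of your three regions. So your $g$ is not even defined on all of $K$.

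As a result, you never say which unions of components Lemma \ref{lem_restrict} is applied to, nor prove that the output is $0$ on all of $\ell_B$ and $1$ on all of $\ell_T$. Also, the two applications must be composed: the second is applied to $1$ minus the output of the first, not separately to $h_\varepsilon$ and $1-h_\varepsilon$.

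Your worry about the carpet versus the square runs the wrong way. Any path in $K$ is a path in $\square$, so planar obstructions transfer directly. The only carpet-specific fact needed is local path connectedness of $K$, so that connected relatively open sets are path connected.

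\textbf{The missing idea: choose the components intrinsically.} Then the only planar input is the crossing property: every path in $\square$ from $\ell_B$ to $\ell_T$ meets every path from $\ell_L$ to $\ell_R$.
\begin{enumerate}
\item Let $U$ be the component of $\{f>0\}$ containing $\gamma^{(1)}$. It cannot meet both $\ell_B$ and $\ell_T$, since a path in $U$ joining them would meet $\gamma^{(2)}$, where $f=0$.
\item Say $U\cap\ell_B=\emptyset$ (the other case is symmetric). Lemma \ref{lem_restrict} gives $g:=f1_U\in\mathcal{F}\cap C(K)$ with $\mathcal{E}(g)\le\mathcal{E}(f)$, $g=1$ on $\gamma^{(1)}$ and $g=0$ on $\ell_B$.
\item Let $V$ be the component of $\{g<1\}$ containing $\ell_B$. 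It misses $\ell_T$, since a path in $V$ from $\ell_B$ to $\ell_T$ would meet $\gamma^{(1)}$, where $g=1$.
\item Lemma \ref{lem_restrict} applied to $1-g$ gives $h:=(1-g)1_V$ with $h=1$ on $\ell_B$, $h=0$ on $\ell_T$ and $\mathcal{E}(h)\le\mathcal{E}(g)$.
\item A rotation in $D_4$ turns $h$ into a competitor for $\chi$.
\end{enumerate}
This needs no region $\Omega$ and no notion of ``above''. The $\varepsilon$-regularization $h_\varepsilon$ is also unnecessary: Lemma \ref{lem_restrict} works directly with components of $\{f>0\}$ (its proof already handles the cutoff), so $h_\varepsilon$ only adds a limit you don't need.
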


\begin{proof}
The proof relies heavily on planarity. By replacing $f$ by $(f\vee0)\wedge 1$ and using \ref{item_SSEf}, we may assume that $0\le f\le1$ on $K$. Obviously, $\gamma^{(1)}\cap \gamma^{(2)}=\emptyset$, see Figure \ref{fig_path01}.

\begin{figure}[htbp]
  \centering
  \begin{tikzpicture}[scale=3]
    \draw (0,0) rectangle (1,1);

    \draw ({1/3},{1/3}) rectangle ({2/3},{2/3});

    \draw[thick]
      (0,0.72)
      .. controls (0.18,0.88) and (0.32,0.72) .. (0.50,0.76)
      .. controls (0.68,0.82) and (0.82,0.70) .. (1,0.75);

    \draw[thick]
      (0,0.28)
      .. controls (0.18,0.12) and (0.32,0.28) .. (0.50,0.24)
      .. controls (0.68,0.18) and (0.82,0.30) .. (1,0.25);

    \node at (0.50,0.89) {$\gamma^{(1)}$};
    \node at (0.50,0.11) {$\gamma^{(2)}$};
    \node[left] at (0,0.5) {$\ell_L$};
    \node[right] at (1,0.5) {$\ell_R$};
    \node[above] at (0.5,1) {$\ell_T$};
    \node[below] at (0.5,0) {$\ell_B$};
  \end{tikzpicture}
  \caption{Two paths $\gamma^{(1)}$ and $\gamma^{(2)}$}
  \label{fig_path01}
\end{figure}

Let $U$ be the connected component of $\{f>0\}$ that contains $\gamma^{(1)}$, then either $U\cap \ell_B=\emptyset$ or $U\cap \ell_T=\emptyset$; otherwise, there exists a path $\gamma\subseteq U$ connecting $\ell_B$ and $\ell_T$, by planarity, $\gamma\cap \gamma^{(2)}\ne\emptyset$, a contradiction. Without loss of generality, we may assume that $U\cap \ell_B=\emptyset$, let $g=f1_U$, then $g=1$ on $\gamma^{(1)}$ and $g=0$ on $\ell_B$. By Lemma \ref{lem_restrict}, $g\in \mathcal{F}\cap C(K)$ and $\mathcal{E}(g)\le \mathcal{E}(f)$.

Let $V$ be the connected component of $\{g<1\}=\{1-g>0\}$ that contains $\ell_B$, then $V\cap \ell_T=\emptyset$; otherwise, there exists a path $\gamma\subseteq V$ connecting $\ell_B$ and $\ell_T$, by planarity again, $\gamma\cap \gamma^{(1)}\ne \emptyset$, a contradiction. Let $h=(1-g)1_V$, then $h=1$ on $\ell_B$ and $h=0$ on $\ell_T$. Since $1-g\in \mathcal{F}\cap C(K)$, by Lemma \ref{lem_restrict} again, $h\in \mathcal{F}\cap C(K)$ and $\mathcal{E}(h)\le \mathcal{E}(g)$. By \ref{item_SSEd} and Lemma \ref{lem_rhochi},
$$\chi\le \mathcal{E}(h)\le \mathcal{E}(g)\le \mathcal{E}(f).$$
\end{proof}

\begin{lemma}\label{lem_BAM}
Let $\{u_n\}_n\subseteq \mathcal{F}$ be $\mathcal{E}_1$-bounded, then there exists a subsequence, still denoted by $\{u_n\}_n$, that $\mathcal{E}_1$-weakly-converges to some $v\in \mathcal{F}$. Moreover, for any $n$, there exist an integer $I_n\ge n$ and coefficients $\lambda^{(n)}_k\ge0$, $k=n,\ldots,I_n$, such that $\sum_{k=n}^{I_n}\lambda^{(n)}_k=1$, and $\{\sum_{k=n}^{I_n}\lambda^{(n)}_ku_k\}_n$ is $\mathcal{E}_1$-convergent to $v$, which gives
$$\mathcal{E}_1(v)\le\varliminf_{n\to+\infty}\mathcal{E}_1(u_n),\qquad \mathcal{E}(v)\le\varliminf_{n\to+\infty}\mathcal{E}(u_n).$$
In particular, if $\{u_n\}_n\subseteq \mathcal{F}$ is $\mathcal{E}$-bounded and is $L^p(K;m)$-convergent to some $u\in L^p(K;m)$, then $u=v\in \mathcal{F}$.
\end{lemma}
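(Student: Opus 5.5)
The statement is a Banach--Alaoglu--Mazur type result. The plan rests on the fact from \ref{item_SSEb} that $(\mathcal{F},\mathcal{E}_1^{1/p})$ is a uniformly convex Banach space.

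\textbf{Weak compactness.} By the Milman--Pettis theorem, $(\mathcal{F},\mathcal{E}_1^{1/p})$ is reflexive. In a reflexive Banach space every bounded sequence has a weakly convergent subsequence (Eberlein--\v{S}mulian). Applied to the $\mathcal{E}_1$-bounded sequence $\{u_n\}$, this gives a subsequence, still denoted $\{u_n\}$, converging $\mathcal{E}_1$-weakly to some $v\in\mathcal{F}$. Note that $\mathcal{E}_1^{1/p}$ is a norm: it is the $\ell^p$-combination of the seminorm $\mathcal{E}^{1/p}$ and the norm $\lVert\cdot\rVert_{L^p}$.

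\textbf{Convex combinations via Mazur.} Fix $n$. The tail $\{u_k\}_{k\ge n}$ still converges weakly to $v$. By Mazur's lemma, $v$ lies in the norm closure of the convex hull of $\{u_k:k\ge n\}$. So we can choose a finite convex combination $\sum_{k=n}^{I_n}\lambda^{(n)}_k u_k$ with
$$\mathcal{E}_1\Bigl(\sum_{k=n}^{I_n}\lambda^{(n)}_ku_k-v\Bigr)<\frac1n.$$
This yields the required $\mathcal{E}_1$-convergent sequence. To get the lower semicontinuity, pass first to a subsequence realising $\varliminf_n\mathcal{E}(u_n)$ as a limit, and only then apply the two steps above. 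By convexity of $\mathcal{E}^{1/p}$ (a seminorm),
$$\mathcal{E}\Bigl(\sum_k\lambda^{(n)}_ku_k\Bigr)^{1/p}\le\sup_{k\ge n}\mathcal{E}(u_k)^{1/p}.$$
The seminorm $\mathcal{E}^{1/p}$ is continuous for $\mathcal{E}_1$, so letting $n\to\infty$ gives $\mathcal{E}(v)\le\lim_n\mathcal{E}(u_n)$. Alternatively, one can use directly that a continuous convex functional is weakly lower semicontinuous. The same argument applies to $\mathcal{E}_1$.

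\textbf{Identification of the limit.} Suppose $\{u_n\}$ is $\mathcal{E}$-bounded and $L^p$-convergent to $u$. Then $\lVert u_n\rVert_{L^p}$ is bounded, so $\{u_n\}$ is $\mathcal{E}_1$-bounded and the above applies. The convex combinations $\sum_k\lambda^{(n)}_ku_k$ are tail averages, so they still converge to $u$ in $L^p$. They also converge to $v$ in $\mathcal{E}_1$, hence in $L^p$. Therefore $u=v$ $m$-a.e., and so $u\in\mathcal{F}$.

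There is no serious obstacle. The only point needing care is the ordering of subsequence extractions, so that the $\varliminf$ is preserved and the convex combinations use only indices $\ge n$.
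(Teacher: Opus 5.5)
Your proposal is correct and follows the paper's own route: Milman--Pettis gives reflexivity of $(\mathcal{F},\mathcal{E}_1^{1/p})$, so bounded sequences have weakly convergent subsequences; Mazur's lemma applied to the tails $\{u_k\}_{k\ge n}$ then yields the convex combinations, and comparing their $L^p$-limit with $v$ identifies $u=v$. One small caution: the $\varliminf$ in the statement is taken along the extracted subsequence, so the weak lower semicontinuity of the continuous convex functionals $\mathcal{E}^{1/p}$ and $\mathcal{E}_1^{1/p}$ (the alternative you mention) gives both inequalities at once, whereas your pre-extraction route handles only one functional at a time, because the convex-combination bound by itself only controls $\limsup_n\mathcal{E}(u_n)$.
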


\begin{proof}
By \ref{item_SSEb} and the Milman--Pettis theorem (see \cite[Theorem 3.31]{Bre11}), $(\mathcal{F},\mathcal{E}_1^{\frac{1}{p}})$ is a reflexive Banach space. The results then follow directly from the Banach--Alaoglu theorem (see \cite[Theorems 3.16 and 3.18]{Bre11}) together with Mazur’s lemma, for which we refer to \cite[Theorem 2 in Section V.1]{Yos95}.
\end{proof}

We now give the proof of Proposition \ref{prop_gap} as follows.

\begin{proof}[Proof of Proposition \ref{prop_gap}]
By \ref{item_SSEc}, it suffices to prove the result for functions in $\mathcal{F}\cap C(K)$. Suppose that the result does not hold, then there exists $\{v_n\}\subseteq \mathcal{F}\cap C(K)$ such that $\lVert v_n-(v_n)_K\rVert_{L^p(K;m)}=1$ for any $n$, and $\lim_{n\to+\infty}\mathcal{E}(v_n)=0$.

For any $n$, there exists $c_n \in\mathbb{R}$ such that
\begin{equation}\label{eq_median}
m \left(\left\{v_n\ge c_n\right\}\right)\ge \frac{1}{2},\qquad m \left(\left\{v_n\le c_n\right\}\right)\ge \frac{1}{2}.
\end{equation}
By H\"older's inequality,
$$1=\int_{K}\lvert v_n-(v_n)_K\rvert^p \mathrm{d}m\le 2^p\int_K \lvert v_n-c_n\rvert^p \mathrm{d}m=2^p\int_K (v_n-c_n)_+^p \mathrm{d}m+2^p\int_K (v_n-c_n)_-^p \mathrm{d}m,$$
then either $\lVert (v_n-c_n)_+\rVert_{L^p(K;m)}^p\ge \frac{1}{2^{p+1}}$ or $\lVert (v_n-c_n)_-\rVert_{L^p(K;m)}^p\ge \frac{1}{2^{p+1}}$. Let
\begin{equation*}
u_n:=
\begin{cases}
(v_n-c_n)_+&\text{if }\lVert (v_n-c_n)_+\rVert_{L^p(K;m)}^p\ge \frac{1}{2^{p+1}},\\
(v_n-c_n)_-&\text{otherwise},
\end{cases}
\end{equation*}
then $u_n\in \mathcal{F}\cap C(K)$ is non-zero and non-negative, $\mathcal{E}(u_n)\le \mathcal{E}(v_n)\to0$, $\lVert u_n\rVert_{L^p(K;m)}^p\ge \frac{1}{2^{p+1}}$, and by (\ref{eq_median}),
\begin{equation}\label{eq_median_un}
m \left(\left\{u_n>0\right\}\right)\le \frac{1}{2},\qquad m \left(\left\{u_n=0\right\}\right)\ge \frac{1}{2}.
\end{equation}

Let $C_1,C_2>1$ be the constants in Lemma \ref{lem_ug}. Applying this lemma to $u_n$, we obtain $g_n\in \mathcal{F}\cap C(K)$ satisfying that $0\le g_n\le 1$ on $K$, $\{g_n>0\}\subseteq \{u_n>0\}$, and
$$0<m \left(\left\{g_n>0\right\}\right)\le C_1 m \left(\left\{g_n=1\right\}\right),$$
\begin{equation}\label{eq_Emgn}
\frac{\mathcal{E}(g_n)}{m \left(\left\{g_n=1\right\}\right)}\le C_2 \frac{\mathcal{E}(u_n)}{\lVert u_n\rVert^p_{L^p(K;m)}}\le 2^{p+1}C_2 \mathcal{E}(u_n)\to0.
\end{equation}
Fix $\delta\in(0,\frac{1}{64C_1})$. If $m (\{g_n=1\})<\delta$, then by Lemma \ref{lem_gh} with $C=C_1$, there exists $h_n\in \mathcal{F}\cap C(K)$ such that $0\le h_n\le1$ on $K$, and
$$m \left(\left\{h_n=1\right\}\right)\ge \delta,\qquad m \left(\left\{h_n>0\right\}\right)\le 32C_1\delta\le \frac{1}{2},\qquad \mathcal{E}(h_n)\le 32\delta \frac{\mathcal{E}(g_n)}{m \left(\left\{g_n=1\right\}\right)}\to0,$$
hence $m(\{h_n=0\})\ge \frac{1}{2}$. If $m (\{g_n=1\})\ge\delta$, then let $h_n=g_n$. Obviously, $m(\{h_n=1\})=m(\{g_n=1\})\ge\delta$. Since $\{g_n>0\}\subseteq \{u_n>0\}$, by (\ref{eq_median_un}),
$$m (\{h_n=0\})=m (\{g_n=0\})\ge m(\{u_n=0\})\ge\frac{1}{2}.$$
Moreover, by (\ref{eq_Emgn}),
$$\mathcal{E}(h_n)=\mathcal{E}(g_n)\le \frac{\mathcal{E}(g_n)}{m(\{g_n=1\})}\to0.$$
Therefore, in both cases, $h_n\in \mathcal{F}\cap C(K)$ satisfies that $0\le h_n\le 1$ on $K$, and
$$m \left(\left\{h_n=1\right\}\right)\ge \delta,\qquad m \left(\left\{h_n=0\right\}\right)\ge \frac{1}{2},\qquad \mathcal{E}(h_n)\to0.$$
On the one hand, by Proposition \ref{prop_gap_pre},
$$\int_{K}\lvert h_n-(h_n)_K\rvert^p \mathrm{d}m\to0.$$
On the other hand,
\begin{align*}
&\int_{K}\lvert h_n-(h_n)_K\rvert^p \mathrm{d}m\ge\int_{\{h_n=1\}}(1-(h_n)_K)^p \mathrm{d}m+\int_{\{h_n=0\}}((h_n)_K)^p \mathrm{d}m\\
&\ge \inf_{a\in[0,1]}\left(\delta(1-a)^p+\frac{1}{2}a^p\right)=\frac{\delta}{\left(1+(2\delta)^{\frac{1}{p-1}}\right)^{p-1}}>0,
\end{align*}
a contradiction.
\end{proof}

\section{Some results on harmonic functions}\label{sec_harmonic}

Let $(K,d,m)$ be the Sierpi\'nski carpet. The canonical $p$-energy $(\mathcal{E}^{(c)},\mathcal{F}^{(c)})$ satisfies the generalized $p$-contraction property introduced in \cite{KS24aV5} (see \cite[the Remark following Theorem 1.1]{MS25}, \cite[Remark 8.20 and Proposition 5.23(a)]{KS24aV5}), which in turn implies the following conditions.

\begin{itemize}
\item \textbf{Clarkson’s inequality} (see \cite[Theorem 6.22(i), the construction (8.14) in the proof of Theorem 1.1, and the Remark following Theorem 1.2]{MS25}, \cite[Proposition 2.3(e) and (f), and Proposition 5.10(e)]{KS24aV5}): For any $f,g\in\mathcal{F}^{(c)}$ and any $A\in \mathcal{B}(K)$, we have
\begin{equation*}\label{eq_Cla}\tag*{Cla}
\begin{cases}
\mathcal{E}^{(c)}(f+g)+\mathcal{E}^{(c)}(f-g)\ge2 \left(\mathcal{E}^{(c)}(f)^{\frac{1}{p-1}}+\mathcal{E}^{(c)}(g)^{\frac{1}{p-1}}\right)^{p-1}&\text{if }p\in(1,2],\\
\mathcal{E}^{(c)}(f+g)+\mathcal{E}^{(c)}(f-g)\le2 \left(\mathcal{E}^{(c)}(f)^{\frac{1}{p-1}}+\mathcal{E}^{(c)}(g)^{\frac{1}{p-1}}\right)^{p-1}&\text{if }p\in[2,+\infty),\\
\end{cases}
\end{equation*}
and
\begin{equation*}\label{eq_ClaGamma}\tag*{$\text{Cla}^{\Gamma^{(c)}}$}
\begin{cases}
\Gamma^{(c)}(f+g)(A)+\Gamma^{(c)}(f-g)(A)\\
\hspace{60pt}\ge2 \left(\Gamma^{(c)}(f)(A)^{\frac{1}{p-1}}+\Gamma^{(c)}(g)(A)^{\frac{1}{p-1}}\right)^{p-1}\text{if }p\in(1,2],\\
\Gamma^{(c)}(f+g)(A)+\Gamma^{(c)}(f-g)(A)\\
\hspace{60pt}\le2 \left(\Gamma^{(c)}(f)(A)^{\frac{1}{p-1}}+\Gamma^{(c)}(g)(A)^{\frac{1}{p-1}}\right)^{p-1}\text{if }p\in[2,+\infty).\\
\end{cases}
\end{equation*}
\item \textbf{Strong sub-additivity} (see \cite[Proposition 2.3(c)]{KS24aV5}): For any $f,g\in\mathcal{F}^{(c)}$, we have $f\vee g, f\wedge g\in\mathcal{F}^{(c)}$ and
\begin{equation*}\label{eq_SubAdd}\tag*{SubAdd}
\mathcal{E}^{(c)}(f\vee g)+\mathcal{E}^{(c)}(f\wedge g)\le\mathcal{E}^{(c)}(f)+\mathcal{E}^{(c)}(g).
\end{equation*}
\item \textbf{$\mathcal{F}^{(c)}\cap L^\infty(K;m)$ is an algebra} (see \cite[Proposition 2.3(d)]{KS24aV5}): For any $f,g\in\mathcal{F}^{(c)}\cap L^\infty(K;m)$, we have $fg\in\mathcal{F}^{(c)}$ and
\begin{equation*}\label{eq_Alg}\tag*{Alg}
\mathcal{E}^{(c)}(fg)^{{1}/{p}}\le \lVert {f}\rVert_{L^\infty(K;m)}\mathcal{E}^{(c)}(g)^{\frac{1}{p}}+\lVert {g}\rVert_{L^\infty(K;m)}\mathcal{E}^{(c)}(f)^{\frac{1}{p}}.
\end{equation*}
\item \textbf{Fr\'echet differentiability} (see \cite[Theorem 3.7, Proposition 4.3, Definition 4.4, Theorem 4.5]{KS24aV5}): For any $f,g\in\mathcal{F}^{(c)}$ and any $A\in \mathcal{B}(K)$, the derivatives
$$\mathcal{E}^{(c)}(f;g)=\frac{1}{p}\frac{\mathrm{d}}{\mathrm{d} t}\mathcal{E}^{(c)}(f+tg)|_{t=0}\in\mathbb{R},\qquad \Gamma^{(c)}(f;g)(A)=\frac{1}{p}\frac{\mathrm{d}}{\mathrm{d} t}\Gamma^{(c)}(f+tg)(A)|_{t=0}\in\mathbb{R}$$
exist, the maps $\mathcal{E}^{(c)}(f;\cdot),\Gamma^{(c)}(f;\cdot)(A):\mathcal{F}^{(c)}\to\mathbb{R}$ are linear, $\mathcal{E}^{(c)}(f;f)=\mathcal{E}^{(c)}(f)$, and $\Gamma^{(c)}(f;f)(A)=\Gamma^{(c)}(f)(A)$. Moreover, for any $f, g\in\mathcal{F}^{(c)}$, $A\in \mathcal{B}(K)$, and $a\in\mathbb{R}$,
$$\mathcal{E}^{(c)}(af;g)=\mathrm{sgn}(a)|a|^{p-1}\mathcal{E}^{(c)}(f;g),\qquad \Gamma^{(c)}(af;g)(A)=\mathrm{sgn}(a)|a|^{p-1}\Gamma^{(c)}(f;g)(A),$$
$$\lvert \mathcal{E}^{(c)}(f;g)\rvert\le\mathcal{E}^{(c)}(f)^{\frac{p-1}{p}}\mathcal{E}^{(c)}(g)^{\frac{1}{p}},\qquad \lvert \Gamma^{(c)}(f;g)(A)\rvert\le \Gamma^{(c)}(f)(A)^{\frac{p-1}{p}}\Gamma^{(c)}(g)(A)^{\frac{1}{p}}.$$
\item\textbf{H\"older-type inequality, Leibniz rule and chain rule} (see \cite[Theorem 4.6, Proposition 4.8, Theorem 4.15(b), Theorem 5.12]{KS24aV5}, \cite[Section 8]{Yan25a}): For any $f,g\in \mathcal{F}^{(c)}$, $\Gamma^{(c)}(f;g)$ is a signed measure on $K$, and for any non-negative Borel functions $\varphi,\psi$ and any $A\in \mathcal{B}(K)$,
\begin{equation}\label{eq_Holder}
\int_A {\varphi}{\psi} \mathrm{d}\lvert \Gamma^{(c)}(f;g)\rvert\le \left(\int_A {\varphi}^{\frac{p}{p-1}}\mathrm{d}\Gamma^{(c)}(f)\right)^{\frac{p-1}{p}}\left(\int_A{\psi}^p \mathrm{d}\Gamma^{(c)}(g)\right)^{\frac{1}{p}},
\end{equation}
where $\lvert \Gamma^{(c)}(f;g)\rvert$ is the total variation of $ \Gamma^{(c)}(f;g)$. For any $f\in \mathcal{F}^{(c)}$ and $g,h\in \mathcal{F}^{(c)}\cap L^\infty(K;m)$,
\begin{equation}\label{eq_Leibniz}
\mathrm{d}\Gamma^{(c)}(f;gh)=\widetilde{g}\mathrm{d}\Gamma^{(c)}(f;h)+\widetilde{h}\mathrm{d}\Gamma^{(c)}(f;g).
\end{equation}
For any $n\ge1$, $f,g_1,\ldots,g_n\in \mathcal{F}^{(c)}$, and $\Phi\in C^1(\mathbb{R}^n)$ with bounded first derivatives, let $\boldsymbol{g}=(g_1,\ldots,g_n)$, then $\Phi(\boldsymbol{g})\in \mathcal{F}^{(c)}$ and
\begin{equation}\label{eq_chain}
\mathrm{d}\Gamma^{(c)}(f;\Phi(\boldsymbol{g}))=\sum_{i=1}^n \frac{\partial \Phi}{\partial x_i}\left(\widetilde{g}_1,\ldots,\widetilde{g}_n\right)\mathrm{d}\Gamma^{(c)}(f;g_i).
\end{equation}
In \cite{KS24aV5}, (\ref{eq_Leibniz}) and (\ref{eq_chain}) are stated for functions in $\mathcal{F}^{(c)}\cap C(K)$. The quasi-continuity and capacity results in \cite[Section 8]{Yan25a}, together with standard approximation and weak compactness, extend these results to the classes of functions considered above, with quasi-continuous versions understood.

\end{itemize}

Moreover, we have the following properties.

\begin{itemize}
\item \textbf{Zero energy measure on boundaries} (see \cite[Remark 8.23]{MS25}): For any $f\in \mathcal{F}^{(c)}$ and any minimal energy-dominant measure $\mu$, we have $\Gamma^{(c)}(f)(K^\partial)=\mu(K^{\partial})=0$, $\Gamma^{(c)}(f)(K^\partial_n)=\mu(K^\partial_n)=0$ for any $n\ge0$, and hence
\begin{equation}\label{eq_bdy_zero}
\Gamma^{(c)}(f)(K^\partial_*)=\mu(K^\partial_*)=0.
\end{equation}

\item \textbf{Full support of minimal energy-dominant measures}: Let $\mu$ be a minimal energy-dominant measure, then $\mu$ has full support. Indeed, for any relatively open subset $U\subseteq K$, take $x\in U$ and $r>0$ such that $B(x,2r)\subseteq U$ and $K\backslash B(x,2r)\ne\emptyset$, then by \cite[Theorem 1.5]{MS25}, there exists $f\in \mathcal{F}^{(c)}\cap C(K)$ such that $f=1$ on $B(x,r)$ and $\mathrm{supp}(f)\subseteq B(x,2r)$, hence $f$ is non-constant, by the strongly local property, $\Gamma^{(c)}(f)(U)=\mathcal{E}^{(c)}(f)>0$, which gives $\mu(U)>0$.
\end{itemize}

For notational convenience, we omit the superscript $(c)$ when no ambiguity arises.

For any relatively open subset $U\subseteq K$, let
$$\mathcal{F}(U):=\left\{u\in \mathcal{F}:\widetilde{u}=0\text{ q.e. on }K\backslash U\right\},\qquad \mathcal{F}_D:=\mathcal{F}(K^{\mathrm{int}}),$$
then \cite[Lemma 6.2]{Yan25c} shows that $\mathcal{F}(U)$ is the $\mathcal{E}_1^{\frac{1}{p}}$-closure of $\{u\in \mathcal{F}\cap C(K):\mathrm{supp}(u)\subseteq U\}$.
We say that a function $h\in\mathcal{F}$ is harmonic in $U$ if $\mathcal{E}(h)\le \mathcal E(h+u)$ for any $u\in\mathcal{F}(U)$.

Let $\mathcal{H}$ be the set of all harmonic functions in $K^{\mathrm{int}}$ and for any $n\ge0$, let
$$\mathcal{H}_n:=\left\{f\in \mathcal{F}:f\circ F_w\in \mathcal{H}\text{ for any }w\in W_n\right\}.$$
Let $\mathcal{H}_*:=\cup_{n\ge0}\mathcal{H}_n$. Roughly speaking, $\mathcal{H}_*$ is the set of piecewise harmonic functions.

The first main result of this section is the following density result.

\begin{lemma}[{$p$-version of \cite[Lemma 3.10]{Hin05}}]\label{lem_Hstar}
We have that $\mathcal{H}_*$ is $\mathcal{E}_1$-dense in $\mathcal{F}$.
\end{lemma}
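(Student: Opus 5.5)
The plan is to approximate a fixed $f\in\mathcal{F}\cap C(K)$ by cellwise harmonic replacements $h_n\in\mathcal{H}_n$ and prove $h_n\to f$ in $\mathcal{E}_1$. Since $\mathcal{F}\cap C(K)$ is $\mathcal{E}_1$-dense in $\mathcal{F}$ by \ref{item_SSEc}, this gives the lemma.

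\emph{Construction.} For $n\ge0$ set $U_n:=K^{\mathrm{int}}_n=\bigcup_{w\in W_n}K^{\mathrm{int}}_w$. The set $f+\mathcal{F}(U_n)$ is closed and convex in the uniformly convex space $(\mathcal{F},\mathcal{E}_1^{1/p})$. I would minimize $\mathcal{E}$ over it, taking a minimizing sequence and using Lemma \ref{lem_BAM}, to obtain $h_n$. If $\mathcal{E}$ is not strictly convex modulo constants, pick any minimizer; this is harmless.

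The structural step is the identification
$$u\in\mathcal{F}(U_n)\iff u\in\mathcal{F}\text{ and }u\circ F_w\in\mathcal{F}_D\text{ for every }w\in W_n.$$
It is immediate for $u\in\mathcal{F}\cap C(K)$ with $\mathrm{supp}(u)\subseteq U_n$. In general I would pass through the closure description of \cite[Lemma 6.2]{Yan25c}, combined with the gluing part of \ref{item_SSEe}: continuous functions on cells that vanish on the cell boundaries glue to a continuous function.

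Given this, $\mathcal{E}(h_n+u)=\rho^n\sum_{w\in W_n}\mathcal{E}(h_n\circ F_w+u\circ F_w)$ splits over the cells, and the perturbations decouple. Hence each $h_n\circ F_w$ minimizes $\mathcal{E}$ over $h_n\circ F_w+\mathcal{F}_D$. In other words $h_n\circ F_w\in\mathcal{H}$, so $h_n\in\mathcal{H}_n$. Moreover $\mathcal{E}(h_n)\le\mathcal{E}(f)$, because $u=0$ is admissible.

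\emph{Dirichlet Poincaré inequality.} I would next prove that there is $C>0$ with $\lVert u\rVert_{L^p(K;m)}^p\le C\mathcal{E}(u)$ for all $u\in\mathcal{F}_D$. Suppose not, and take $u_k\in\mathcal{F}_D$ with $\lVert u_k\rVert_{L^p}=1$ and $\mathcal{E}(u_k)\to0$. By \ref{item_SSEg}, $u_k-(u_k)_K\to0$ in $L^p$. Passing to a subsequence, $(u_k)_K\to c$ with $|c|=1$, so $u_k\to c$ in $L^p$. Lemma \ref{lem_BAM} then gives $c\in\mathcal{F}_D$, since $\mathcal{F}_D$ is convex and closed, hence weakly closed. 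A nonzero constant cannot vanish q.e. on $K^\partial$, because $K^\partial$ has positive capacity; the capacity estimate of Lemma \ref{lem_rhochi} makes this plausible. This is a contradiction.

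\emph{Convergence.} Put $v_n:=f-h_n\in\mathcal{F}(U_n)$, so that $\mathcal{E}(v_n)\le2^p\mathcal{E}(f)$. Applying the Dirichlet Poincaré inequality on each cell and then self-similarity gives
$$\lVert v_n\rVert_{L^p}^p=\sum_{w\in W_n}8^{-n}\lVert v_n\circ F_w\rVert_{L^p}^p\le C\,8^{-n}\sum_{w\in W_n}\mathcal{E}(v_n\circ F_w)=C(8\rho)^{-n}\mathcal{E}(v_n)\le C2^p(8\rho)^{-n}\mathcal{E}(f).$$
This tends to $0$ because $8\rho\ge4>1$ by Lemma \ref{lem_rhochi}. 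Thus $h_n\to f$ in $L^p$ with $\mathcal{E}(h_n)\le\mathcal{E}(f)$. By Lemma \ref{lem_BAM}, $h_n\rightharpoonup f$ weakly and $\mathcal{E}(f)\le\varliminf\mathcal{E}(h_n)$, so $\mathcal{E}_1(h_n)\to\mathcal{E}_1(f)$. Uniform convexity (Radon--Riesz) upgrades weak convergence plus convergence of norms to $\mathcal{E}_1$-convergence.

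I expect the main obstacle to be the structural step. One must identify $\mathcal{F}(U_n)$ with the cellwise $\mathcal{F}_D$ condition, handling quasi-continuous versions and q.e.\ statements on the cell boundaries, and justify the splitting of $\mathcal{E}$. A secondary point is verifying that constants do not lie in $\mathcal{F}_D$.
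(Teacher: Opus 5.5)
Your argument is correct, and after the common first step it follows a genuinely different route from the paper. That first step is the harmonic replacement $h_n$ of $f$ on $K^{\mathrm{int}}_n$, with $h_n=f$ on $K^\partial_n$.

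\textbf{The paper's route.} It first uses the comparison principle of \cite{Yan25d} to get $\inf_{K^\partial_w}f\le h_n\le\sup_{K^\partial_w}f$. For continuous $f$ this gives uniform, hence $L^p$, convergence. It then proves $\mathcal{E}(h_n)\le\mathcal{E}(h_{n+1})\le\mathcal{E}(f)$ and $\mathcal{E}(h_n)\le\mathcal{E}(\frac{h_n+h_m}{2})$, using locality and (\ref{eq_bdy_zero}). Combined with \ref{eq_Cla}, this shows that $\{h_n\}$ is $\mathcal{E}$-Cauchy.

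\textbf{Your route.} You get $L^p$ convergence from a cellwise Dirichlet Poincar\'e inequality, with the explicit rate $\lVert f-h_n\rVert_{L^p(K;m)}^p\lesssim(8\rho)^{-n}\mathcal{E}(f)$. You then replace the Clarkson computation by the Radon--Riesz property of the uniformly convex space in \ref{item_SSEb}.

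\textbf{What each route buys.} Yours never uses continuity of $f$, so the reduction via \ref{item_SSEc} is unnecessary. It also avoids the comparison principle, \ref{eq_Cla} and (\ref{eq_bdy_zero}); it uses only the axioms, \ref{item_SSEg} and Lemma \ref{lem_rhochi}. Your structural step also justifies the claim ``hence $h_n\in\mathcal{H}_n$'', which the paper states without detail. The paper's route additionally gives uniform convergence for $f\in C(K)$.

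\textbf{Two small points to tidy.}
\begin{enumerate}[label=(\arabic*)]
\item The decoupling needs every family $(v_w)_{w\in W_n}\subseteq\mathcal{F}_D$ to be realized as $(u\circ F_w)_w$ for some $u\in\mathcal{F}(U_n)$. This follows from the same approximate-glue-and-take-the-limit argument you describe.
\item Existence of the minimizer needs coercivity, i.e.\ the Dirichlet Poincar\'e inequality on $\mathcal{F}(U_n)$. So that inequality should be proved before the construction, not after it.
\end{enumerate}

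\textbf{The one step you only made plausible.} You did not prove $1\notin\mathcal{F}_D$; appealing to ``positive capacity of $K^\partial$'' is not an argument. It is easy to close, and one gets the Dirichlet Poincar\'e inequality directly, without contradiction. Take $u\in\mathcal{F}_D$ and approximate it in $\mathcal{E}_1$ by $\phi_k\in\mathcal{F}\cap C(K)$ with $\mathrm{supp}(\phi_k)\subseteq K^{\mathrm{int}}$. Glue $\phi_k$ on $K_0$ with $0$ on the other seven level-$1$ cells. The result is continuous, hence lies in $\mathcal{F}$ by \ref{item_SSEe}, and the glued sequence is $\mathcal{E}_1$-Cauchy. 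Its limit $U\in\mathcal{F}$ satisfies $U\circ F_0=u$, $U=0$ on $K\backslash K_0$, and $\mathcal{E}(U)=\rho\,\mathcal{E}(u)$. Since $U$ vanishes on a set of measure $\frac78$, \ref{item_SSEg} gives
$$\tfrac78\lvert U_K\rvert^p\le\lVert U-U_K\rVert_{L^p(K;m)}^p\le C_{gap}\,\rho\,\mathcal{E}(u).$$
Therefore $\lVert u\rVert_{L^p(K;m)}^p=8\lVert U\rVert_{L^p(K;m)}^p\lesssim\mathcal{E}(u)$. In particular $1\notin\mathcal{F}_D$.
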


\begin{proof}
For notational convenience, we may assume that all functions in $\mathcal{F}$ are quasi-continuous. By regularity \ref{item_SSEc}, we may assume that $f\in \mathcal{F}\cap C(K)$. For any $n\ge0$, since $K^{\mathrm{int}}_n=\cup_{w\in W_n}K^{\mathrm{int}}_w$ is a bounded open subset of $K$, by \ref{item_SSEg} or \ref{eq_PI}, together with the same minimizing argument as in \cite[Proposition 4.2]{Yan25c}, there exists $h_n\in \mathcal{F}$, such that $h_n$ is harmonic in $K^{\mathrm{int}}_n$ and ${h}_n={f}$ q.e. on $K\backslash K^{\mathrm{int}}_n=K^\partial_n$, hence $h_n\in \mathcal{H}_n\subseteq \mathcal{H}_*$. For any $w\in W_n$, since $h_n=f$ q.e. on $K^\partial_w$, by the comparison principle (see \cite[Proposition 3.5]{Yan25d}), $\inf_{K^\partial_w}f\le h_n\le \sup_{K^\partial_w}f$. Since $f$ is uniformly continuous on $K$, $\{h_n\}$ converges uniformly to $f$, which also implies $L^p(K;m)$-convergence.

We claim that $\{h_n\}$ is an $\mathcal{E}$-Cauchy sequence. Indeed, for any $n\ge0$, since $\Gamma(h_n)(K^\partial_*)=\Gamma(f)(K^\partial_*)=0$,
$$\mathcal{E}(h_n)=\sum_{w\in W_n}\Gamma(h_n)(K^{\mathrm{int}}_w)\le\sum_{w\in W_n}\Gamma(f)(K^{\mathrm{int}}_w)=\mathcal{E}(f),$$
and
$$\mathcal{E}(h_n)=\sum_{w\in W_n}\Gamma(h_n)(K^{\mathrm{int}}_w)\le\sum_{w\in W_n}\Gamma(h_{n+1})(K^{\mathrm{int}}_w)=\mathcal{E}(h_{n+1}),$$
which implies $\{\mathcal{E}(h_n)\}_n$ is an increasing sequence bounded above by $\mathcal{E}(f)$. Hence $A:=\lim_{n\to+\infty}\mathcal{E}(h_n)\le \mathcal{E}(f)$. For any $n\le m$, since $\Gamma\left(\frac{h_n+h_m}{2}\right)(K^\partial_*)=0$,
$$\mathcal{E}(h_n)=\sum_{w\in W_n}\Gamma(h_n)(K^{\mathrm{int}}_w)\le\sum_{w\in W_n}\Gamma\left(\frac{h_n+h_m}{2}\right)(K^{\mathrm{int}}_w)=\mathcal{E}\left(\frac{h_n+h_m}{2}\right).$$
If $p\in(1,2]$, then by \ref{eq_Cla},
$$2 \left(\mathcal{E}\left(\frac{h_n+h_m}{2}\right)^{\frac{1}{p-1}}+\mathcal{E}\left(\frac{h_n-h_m}{2}\right)^{\frac{1}{p-1}}\right)^{p-1}\le \mathcal{E}(h_n)+\mathcal{E}(h_m)\to 2A,$$
where
\begin{align*}
&\text{LHS}\ge 2 \left(\mathcal{E}\left(h_n\right)^{\frac{1}{p-1}}+\mathcal{E}\left(\frac{h_n-h_m}{2}\right)^{\frac{1}{p-1}}\right)^{p-1}\\
&\to 2 \left(A^{\frac{1}{p-1}}+\varlimsup_{n\le m\to+\infty}\mathcal{E}\left(\frac{h_n-h_m}{2}\right)^{\frac{1}{p-1}}\right)^{p-1},
\end{align*}
hence $\lim_{n\le m\to+\infty}\mathcal{E}\left(\frac{h_n-h_m}{2}\right)=0$. If $p\in[2,+\infty)$, then the same result follows by applying \ref{eq_Cla} again.

Therefore, $\{h_n\}\subseteq \mathcal{H}_*$ is an $\mathcal{E}$-Cauchy sequence, and is $L^p(K;m)$-convergent to $f$, hence $\mathcal{E}_1$-convergent to $f$.
\end{proof}

For any $n\ge0$ and any $A\subseteq W_n$, let $K_A:=\cup_{w\in A}K_w$. By (\ref{eq_bdy_zero}), for any $f\in \mathcal{F}$, any $n\ge0$, and any $A_1,A_2\subseteq W_n$ with $A_1\cap A_2=\emptyset$,
$$\Gamma(f)(K_{A_1\cup A_2})=\Gamma(f)(K_{A_1})+\Gamma(f)(K_{A_2}).$$
For any $w\in W_n$, let $\mathcal{N}_0(w):=\{w\}$, and for any $k\ge0$, let
\begin{equation}\label{eq_k_nbhd}
\mathcal{N}_{k+1}(w):=\left\{v\in W_n:K_v\cap K_{\mathcal{N}_{k}(w)}\ne\emptyset\right\}.
\end{equation}
Roughly speaking, $\mathcal{N}_k(w)$ is the $k$-th neighborhood of $w$ in $W_n$. Obviously, for any $w\in W_*$ and any $k\ge0$, $\# \mathcal{N}_k(w)\le (2k+1)^2$. Moreover, for any $k\ge0$,
\begin{equation}\label{eq_sup_nbhd}
\sup_{n\ge0}\sup_{v\in W_n} \# \left\{w\in W_n:v\in \mathcal{N}_k(w)\right\}\le (2k+1)^2.
\end{equation}
For any $n\ge0$ and any subset $A\subseteq W_n$, let $K_A:=\cup_{w\in A}K_w$, and
\begin{align*}
\mathcal{F}^{0}_A&:=\left\{f\in \mathcal{F}:f=0\ m\text{-a.e. on }K_{W_n\backslash A}\right\},\\
\mathcal{H}(A)&:=\left\{h\in \mathcal{F}:\mathcal{E}(h)\le \mathcal{E}(h+g)\text{ for any }g\in \mathcal{F}^0_A\right\}.
\end{align*}

The second main result of this section is the following compactness result.

\begin{proposition}[{$p$-version of \cite[Proposition 4.18]{Hin13}}]\label{prop_Hin418}
Let $\{h_n\}_{n\ge1}\subseteq \mathcal{F}$ and $\{w^{(n)}\}_{n\ge1}\subseteq W_*$ satisfy $K_{\mathcal{N}_3(w^{(n)})}\cap K^\partial=\emptyset$, $h_n\in \mathcal{H}(\mathcal{N}_3(w^{(n)}))$, and $\int_{K_{w^{(n)}}}h_n \mathrm{d}m=0$, for any $n\ge1$, and
\begin{equation}\label{eq_Hin425}
\sup_{n\ge1}\frac{1}{\rho^{\lvert w^{(n)}\rvert}}\Gamma(h_n) \left(K_{\mathcal{N}_3(w^{(n)})}\right)<+\infty.
\end{equation}
Then there exist a subsequence of $\{h_n\circ F_{w^{(n)}}\}_{n\ge1}$ and a function $h\in \mathcal{H}\cap C(K)$, such that the subsequence is $\mathcal{E}_1$-convergent and uniformly convergent to $h$ on $K$.
\end{proposition}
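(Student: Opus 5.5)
The plan is to rescale each $h_n$ to the unit carpet, get uniform energy bounds plus equicontinuity, and then pass to a limit by Arzel\`a--Ascoli and weak compactness. Write $w=w^{(n)}$, $N=|w|$.

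\textbf{Rescaling.} Consider the blown-up function
$$g_n:=h_n\circ F_{w}\quad\text{on } K,$$
and more generally $h_n\circ F_v$ for $v\in\mathcal{N}_3(w)$. By \eqref{eq_Gamma_ss} and \eqref{eq_bdy_zero},
$$\rho^{N}\sum_{v\in\mathcal{N}_3(w)}\mathcal{E}(h_n\circ F_v)=\Gamma(h_n)\bigl(K_{\mathcal{N}_3(w)}\bigr).$$
Hence \eqref{eq_Hin425} gives $\sup_n\mathcal{E}(g_n)<+\infty$. Since $\int_K g_n\,\mathrm{d}m=0$, the spectral gap \ref{item_SSEg} gives $\sup_n\lVert g_n\rVert_{L^p}<+\infty$, so $\{g_n\}$ is $\mathcal{E}_1$-bounded.

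\textbf{Local boundedness and equicontinuity.} I would use the harmonicity of $h_n$ on the interior of $K_{\mathcal{N}_3(w)}$; this is where the condition $K_{\mathcal{N}_3(w)}\cap K^\partial=\emptyset$ is used, so that the neighborhood is a genuine full $3$-layer of cells. The key point is that $h_n\in\mathcal{H}(\mathcal{N}_3(w))$ means harmonic in an open set containing a neighborhood of $K_{\mathcal{N}_2(w)}$. Since $h_n+c$ is still in $\mathcal{H}(\mathcal{N}_3(w))$, and by self-similarity and $D_4$-symmetry, each local picture is a rescaled copy of a fixed configuration. I would then invoke the elliptic Harnack inequality and the resulting H\"older regularity of harmonic functions for the canonical $p$-energy; these are available from \ref{eq_PI} together with cutoff Sobolev or capacity estimates, as in \cite{MS25}, \cite{Yan25d}. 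They should give
$$\operatorname{osc}_{K_{\mathcal{N}_1(w)}}h_n\lesssim \Bigl(3^{-N(\beta_p-d_h)}\,\Gamma(h_n)(K_{\mathcal{N}_3(w)})\Bigr)^{1/p}=\bigl(\rho^{-N}\Gamma(h_n)(K_{\mathcal{N}_3(w)})\bigr)^{1/p},$$
together with a H\"older modulus on $K_w$ with the same scaling. Combined with $\int_{K_w}h_n\,\mathrm{d}m=0$, this shows that $\{g_n\}$ is uniformly bounded and equicontinuous on $K$.

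\textbf{Limit.} By Arzel\`a--Ascoli, a subsequence of $\{g_n\}$ converges uniformly to some $h\in C(K)$. By Lemma \ref{lem_BAM}, we get $h\in\mathcal{F}$ and $g_n\rightharpoonup h$ $\mathcal{E}_1$-weakly. To show $h\in\mathcal{H}$, take $u\in\mathcal{F}\cap C(K)$ with $\mathrm{supp}\,u\subseteq K^{\mathrm{int}}$, transplant it to $u\circ F_w^{-1}$ on $K_w$ extended by $0$, and test harmonicity of $h_n$. This gives $\mathcal{E}(g_n)\le\mathcal{E}(g_n+u)$, by locality \eqref{eq_Gamma_local} and self-similarity. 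Equivalently, $\mathcal{E}(g_n;u)=0$. A more robust route is to show $\mathcal{E}(g_n)\to\mathcal{E}(h)$, and hence strong convergence by uniform convexity. I would do this by comparing with $h$ plus a cutoff correction: using the cutoff $\phi$ equal to $1$ on $K^{\mathrm{int}}$ minus a small collar, set $h+\phi(g_n-h)$. Minimality of $g_n$ on its own cell, and \ref{eq_Alg} with $\lVert g_n-h\rVert_\infty\to0$, give $\limsup\Gamma(g_n)(\text{inner part})\le\Gamma(h)(\text{inner part})+o(1)$. The collar contributes little because $\Gamma(g_n)$ of thin collars is uniformly small, via Caccioppoli-type estimates and \eqref{eq_bdy_zero}. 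Strong convergence then makes $\mathcal{E}(h)\le\mathcal{E}(h+u)$ pass to the limit, so $h\in\mathcal{H}$.

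\textbf{Main obstacle.} I expect the main difficulty to be the uniform oscillation/H\"older estimate with the correct scaling. A secondary difficulty is upgrading weak to strong $\mathcal{E}$-convergence: energy might concentrate near $K^\partial$ of the blown-up cell, and the three layers of neighborhood harmonicity are exactly what should rule this out.
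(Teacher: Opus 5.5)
Your first two steps match the paper: the blow-up bounds, the Harnack-based sup and H\"older estimates at scale $\lambda^{1/p}$ (Proposition \ref{prop_holder}), and Arzel\`a--Ascoli. The gap is the upgrade from weak to strong $\mathcal{E}$-convergence. You need it for the $\mathcal{E}_1$-convergence in the statement. You also need it for harmonicity of the limit, since $f\mapsto\mathcal{E}(f;u)$ is nonlinear for $p\neq2$.

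The admissible competitor is $g_n+\phi(h-g_n)$, with $\phi$ vanishing near the edge of the harmonicity domain. Your $h+\phi(g_n-h)$ with $\phi=1$ inside is not admissible. Either way, on the transition region $T$ the Leibniz/Alg bound only gives
$$\Gamma(\phi(g_n-h))(T)^{1/p}\le\Gamma(g_n-h)(T)^{1/p}+\lVert g_n-h\rVert_\infty\,\mathcal{E}(\phi)^{1/p},$$
and the first term is not small. So everything rests on $\sup_n\Gamma(g_n)(T)$ being small for thin collars $T$, and that is not justified.
\begin{itemize}
\item \eqref{eq_bdy_zero} holds for each function separately, with no uniformity in $n$.
\item Caccioppoli combined with the modulus $\delta^k$ bounds the energy of the roughly $3^k$ level-$k$ cells along a side by $\lesssim(3\rho\delta^p)^k\Lambda$. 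Since $\delta\ge\frac12$ and $8\rho=3^{\beta_p}>3^p$, we have $3\rho\delta^p>\frac38(\frac32)^p$, which exceeds $1$ for $p\ge3$, so this does not tend to $0$.
\item Uniform smallness of collar energies actually follows from the strong convergence you want, via Lemma \ref{lem_Hin25}\ref{item_enedom2}. As stated, the argument is circular.
\end{itemize}
There is also a structural problem. On $g_n=h_n\circ F_{w^{(n)}}$ you only keep harmonicity in $K^{\mathrm{int}}$, so the collar near $K^\partial$ sits at the edge of the harmonicity domain, where nothing controls it. To use the outer layers, all functions must live on one fixed enlarged domain. The paper does this as follows.
\begin{itemize}
\item It passes to a subsequence with constant shape $P(w^{(n)})$. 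There are up to $2^{49}$ shapes, so the local pictures are \emph{not} all copies of one configuration.
\item It transplants via similarities $\xi_n$ and cuts off, setting $f_n=g\,(h_n\circ\xi_n)\in\mathcal{H}(\mathcal{N}_1(u))$, which satisfies $f_n\circ F_u=h_n\circ F_{w^{(n)}}$.
\end{itemize}

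The paper then avoids collar estimates altogether by proving directly that $\{f_n\circ F_u\}$ is $\mathcal{E}$-Cauchy. It tests the harmonicity of $f_l$ with the truncated difference
$$\phi=\tfrac12\bigl(((f_k-f_l)\vee(-\varepsilon_{k,l}\widehat g))\wedge(\varepsilon_{k,l}\widehat g)\bigr),$$
where $\varepsilon_{k,l}=\sup_{K_{\mathcal{N}_1(u)}}|f_k-f_l|\to0$ and $\widehat g=1$ on $K_u$.
\begin{itemize}
\item On $A_0\supseteq K_u$ the competitor $f_l+\phi$ equals $\frac{f_k+f_l}{2}$.
\item On $A_\pm$ it equals $f_l\pm\frac12\varepsilon_{k,l}\widehat g$, which costs only $O(\varepsilon_{k,l}\Lambda^{(p-1)/p})$.
\item This gives $\Gamma(f_l)(A_0)$ and $\Gamma(f_k)(A_0)$ both at most $\Gamma(\frac{f_k+f_l}{2})(A_0)+O(\varepsilon_{k,l})$.
\item The localized Clarkson inequality for $\Gamma$ then forces $\Gamma(\frac{f_k-f_l}{2})(A_0)\to0$.
\end{itemize}
The truncation at level $\varepsilon_{k,l}\widehat g$ replaces any collar control.

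If you want to keep your compare-with-the-limit route, it can be repaired on the enlarged domain with a pigeonhole choice of layer.
\begin{itemize}
\item Take a small neighbourhood $U$ of $K_u$ inside $K_{\mathcal{N}_1(u)}$ with $\Gamma(f_\infty)(U\setminus K_u)<\epsilon$.
\item Split it into $M$ disjoint layers $\{(j-1)/M<\psi<j/M\}$, where $\psi$ is a cutoff equal to $1$ on $K_u$ and supported in $U$.
\item For each $n$, pick a layer carrying $\Gamma(f_n)$-mass at most $C\Lambda/M$.
\end{itemize}
This yields $\limsup_n\Gamma(f_n)(K_u)\le\Gamma(f_\infty)(K_u)$, and uniform convexity then gives strong convergence. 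Some such device is required, and your proposal does not supply one.
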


The remainder of this section is devoted to its proof. The idea is to establish elliptic Harnack inequalities that give suitable oscillation estimates, allowing us to apply the Arzel\`a--Ascoli theorem. Unlike the argument in the linear setting in \cite{Hin13}, our argument will use recent results on non-linear functional inequalities.

For any $f\in \mathcal{F}$, $n\ge0$, and $A\subseteq W_n$, let
$$\mathcal{E}^{A}(f):=\rho^{n}\sum_{w\in A}\mathcal{E}(f\circ F_w).$$
For any $w,v\in W_n$, write $w\sim_n v$ if $K_w\cap K_v$ is a line segment. By \cite[(8.22) and (8.25)]{MS25}, there exists a constant $C>0$ such that for any $n\ge0$ and any $w,v\in W_n$ with $w\sim_n v$,
\begin{equation}\label{eq_P9a}
\int_{K_w}|f-f_{K_w}|^p\mathrm{d} m\le \frac{C}{8^{\lvert w\rvert}}\mathcal{E}(f\circ F_w),
\end{equation}
and
\begin{equation}\label{eq_P9b}
|f_{K_w}-f_{K_v}|^p\le C\left(\mathcal{E}(f\circ F_w)+\mathcal{E}(f\circ F_v)\right).
\end{equation}

\begin{lemma}[{$p$-version of \cite[Lemma 5.20]{Hin13}}]\label{lem_poinc}
There exists a constant $C>0$ such that for any $f\in \mathcal{F}$, $w\in W_*$ with $K_{\mathcal{N}_3(w)}\cap K^\partial=\emptyset$, and $v\in \mathcal{N}_3(w)$,
\begin{gather}
\lvert f_{K_w}-f_{K_v}\rvert^p \le \frac{C}{\rho^{\lvert w\rvert}}\mathcal{E}^{\mathcal{N}_3(w)}(f),\label{eq_Hin520a}\\
\int_{K_{\mathcal{N}_3(w)}}|f-f_{K_w}|^p\mathrm{d} m\le \frac{C}{(8\rho)^{|w|}}\mathcal{E}^{\mathcal{N}_3(w)}(f).\label{eq_Hin520b}
\end{gather}
\end{lemma}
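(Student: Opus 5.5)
The plan is to chain the two cell-level estimates (\ref{eq_P9a}) and (\ref{eq_P9b}) along paths of adjacent cells inside $\mathcal{N}_3(w)$. All cells $K_v$ with $v\in\mathcal{N}_3(w)$ have the same level $n=|w|$, and $\#\mathcal{N}_3(w)\le 49$. The first step is to check that for each $v\in\mathcal{N}_3(w)$ there is a chain $w=v_0\sim_n v_1\sim_n\cdots\sim_n v_k=v$ with all $v_j\in\mathcal{N}_3(w)$ and $k$ bounded by an absolute constant (say $k\le 49$). This is where the hypothesis $K_{\mathcal{N}_3(w)}\cap K^\partial=\emptyset$ enters. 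It guarantees that $K_{\mathcal{N}_3(w)}$ stays away from the outer boundary, so the cells in the neighbourhood are not cut off by the edge of $K$.

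The main obstacle is this combinatorial connectivity. Two level-$n$ cells meeting only at a corner are not $\sim_n$-related, and holes of the carpet at various levels sit inside the $7\times7$ block of level-$n$ squares around $K_w$. My approach is to show that the $\sim_n$-graph restricted to $\mathcal{N}_3(w)$ is connected, arguing on the level-$n$ grid. Each missing square (a hole of some level $\le n$) is surrounded by a ring of present squares adjacent along edges. Hence any corner-only contact between $K_u$ and $K_{u'}$ can be rerouted through an edge-adjacent square. I expect that square either to lie within $\mathcal{N}_3(w)$ already or to be adjoined at the cost of using neighbours from $\mathcal{N}_3$ rather than $\mathcal{N}_2$. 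Concretely, I would check that for $u,u'\in\mathcal{N}_3(w)$ sharing a corner, one of the two squares completing the $2\times2$ block is a carpet cell. Both cannot be holes, since two holes of the carpet never share a corner. I would then verify that this cell is in $\mathcal{N}_3(w)$, or otherwise route the chain through $\mathcal{N}_2$ instead. If needed, I would weaken the claim to chains from $w$ through the $\mathcal{N}_3$-neighbourhood, which is all that the energy bound needs.

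Given the chains, (\ref{eq_Hin520a}) follows directly. By the triangle inequality and (\ref{eq_P9b}) applied to $f$ (note that (\ref{eq_P9b}) is scale-free, being stated already for $f\circ F_w$),
$$|f_{K_w}-f_{K_v}|^p\le k^{p-1}\sum_{j}|f_{K_{v_j}}-f_{K_{v_{j+1}}}|^p\le 2Ck^{p-1}\sum_{u\in\mathcal{N}_3(w)}\mathcal{E}(f\circ F_u)=\frac{2Ck^{p-1}}{\rho^{|w|}}\mathcal{E}^{\mathcal{N}_3(w)}(f).$$

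For (\ref{eq_Hin520b}), split $K_{\mathcal{N}_3(w)}$ into the cells $K_v$, which overlap only on boundaries of $m$-measure zero, and bound each piece by
$$\int_{K_v}|f-f_{K_w}|^p\mathrm{d} m\le 2^{p-1}\int_{K_v}|f-f_{K_v}|^p\mathrm{d} m+2^{p-1}8^{-|w|}|f_{K_v}-f_{K_w}|^p.$$
The first term is controlled by (\ref{eq_P9a}), giving $C8^{-|w|}\mathcal{E}(f\circ F_v)$. The second term is controlled by (\ref{eq_Hin520a}), giving $C(8\rho)^{-|w|}\mathcal{E}^{\mathcal{N}_3(w)}(f)$. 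Since $\rho^{|w|}\mathcal{E}(f\circ F_v)\le\mathcal{E}^{\mathcal{N}_3(w)}(f)$, the first term is also at most $C(8\rho)^{-|w|}\mathcal{E}^{\mathcal{N}_3(w)}(f)$. Summing over at most $49$ cells yields (\ref{eq_Hin520b}).
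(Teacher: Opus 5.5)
Your proposal is correct and follows the paper's proof essentially step for step. You chain (\ref{eq_P9b}) along a $\sim_n$-path inside $\mathcal{N}_3(w)$ to get (\ref{eq_Hin520a}), then split $K_{\mathcal{N}_3(w)}$ into its at most $49$ cells and combine (\ref{eq_P9a}) with (\ref{eq_Hin520a}) to get (\ref{eq_Hin520b}). The paper simply asserts the existence of the chain ``by the geometry of the carpet''. Your completing-square argument settles it cleanly if you apply it along a shortest corner-contact path $w=u_0,\dots,u_j=v$ with $u_i\in\mathcal{N}_i(w)$: the present cell completing a diagonal step out of $u_{i-1}$ is edge-adjacent to $u_{i-1}$, hence lies in $\mathcal{N}_i(w)\subseteq\mathcal{N}_3(w)$ (this does not actually use $K_{\mathcal{N}_3(w)}\cap K^\partial=\emptyset$).
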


\begin{proof}
Let $n\ge0$ and $w\in W_n$. By (\ref{eq_sup_nbhd}), $\# \mathcal{N}_3(w)\le 49$. For any $v\in \mathcal{N}_3(w)\backslash\{w\}$, by the geometry of the Sierpi\'nski carpet, there exist $v_0,\ldots,v_k\in \mathcal{N}_3(w)$ with $v_0=w$, $v_k=v$, and $v_{i-1}\sim_n v_{i}$ for any $i=1,\ldots,k$. By (\ref{eq_P9b}),
\begin{align*}
&\lvert f_{K_w}-f_{K_v}\rvert^p \le k^{p-1}\sum_{i=1}^k \lvert f_{K_{v_{i-1}}}-f_{K_{v_{i}}}\rvert^p\\
&\lesssim \sum_{i=1}^k\left(\mathcal{E}(f\circ F_{v_{i-1}})+\mathcal{E}(f\circ F_{v_i})\right)\le 2\sum_{v\in \mathcal{N}_3(w)}\mathcal{E}(f\circ F_v),
\end{align*}
hence we obtain (\ref{eq_Hin520a}). Then by (\ref{eq_P9a}),
$$\int_{K_v}\lvert f-f_{K_w}\rvert^p \mathrm{d}m\le 2^{p-1}\left(\int_{K_v}\lvert f-f_{K_v}\rvert^p \mathrm{d}m+\frac{1}{8^n}\lvert f_{K_w}-f_{K_v}\rvert^p \right)\lesssim \frac{1}{8^n}\sum_{v\in \mathcal{N}_3(w)}\mathcal{E}(f\circ F_v),$$
hence
\begin{align*}
&\int_{K_{\mathcal{N}_3(w)}}|f-f_{K_w}|^p\mathrm{d} m=\int_{K_{w}}|f-f_{K_w}|^p\mathrm{d} m+\sum_{v\in \mathcal{N}_3(w)\backslash\{w\}}\int_{K_{v}}|f-f_{K_w}|^p\mathrm{d} m\\
&\lesssim \frac{{2^{p+2}48^{p}C}}{8^n}\sum_{v\in \mathcal{N}_3(w)}\mathcal{E}(f\circ F_v)=\frac{{2^{p+2}48^{p}C}}{(8\rho)^n}\mathcal{E}^{\mathcal{N}_3(w)}(f).
\end{align*}
\end{proof}

\begin{lemma}[{$p$-version of {\cite[Lemmas 5.12 and 5.18]{Hin13}}}]\label{lem_poinc0}
There exists a constant $C>0$ such that for any $n\ge0$, $w\in W_n$ with $K_{\mathcal{N}_3(w)}\cap K^\partial=\emptyset$, and $f\in\mathcal{F}^{0}_{\mathcal{N}_3(w)}$,
$$\int_K|f|^p\mathrm{d} m=\int_{K_{\mathcal{N}_3(w)}}|f|^p\mathrm{d} m\le  \frac{C}{(8\rho)^n}\mathcal{E}^{\mathcal{N}_3(w)}(f)\le \frac{C}{(8\rho)^n}\mathcal{E}(f).$$
\end{lemma}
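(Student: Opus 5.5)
The idea is to compare $f$ with its average over a cell just outside the support. Since $f$ vanishes on the complementary cells, that average is zero, and Lemma \ref{lem_poinc} then gives the estimate directly.

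The first equality holds because $f\in\mathcal{F}^0_{\mathcal{N}_3(w)}$ means $f=0$ $m$-a.e. on $K_{W_n\backslash\mathcal{N}_3(w)}$. The last inequality is immediate from \ref{item_SSEe} iterated $n$ times, $\mathcal{E}(f)=\rho^n\sum_{v\in W_n}\mathcal{E}(f\circ F_v)\ge\mathcal{E}^{\mathcal{N}_3(w)}(f)$. So only the middle inequality needs proof.

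For the middle inequality, the key observation is that $f$ vanishes on cells adjacent to $K_{\mathcal{N}_3(w)}$. Since $K_{\mathcal{N}_3(w)}\cap K^\partial=\emptyset$, there is a cell $K_u$ with $u\in\mathcal{N}_4(w)\backslash\mathcal{N}_3(w)$. We would like $u\sim_n v$ for some $v\in\mathcal{N}_3(w)$; such a $u$ exists by the geometry of the carpet, since the neighborhood does not reach the boundary and so is not all of $W_n$. On $K_u$ we have $f=0$ a.e., hence $f_{K_u}=0$ and $\mathcal{E}(f\circ F_u)=0$ by \ref{item_SSEa}.

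I would then estimate, for each $v\in\mathcal{N}_3(w)$,
$$\int_{K_v}|f|^p\,\mathrm{d}m\le 2^{p-1}\Big(\int_{K_v}|f-f_{K_w}|^p\,\mathrm{d}m+8^{-n}|f_{K_w}|^p\Big).$$
The first term summed over $v$ is controlled by (\ref{eq_Hin520b}). For the second term, write $|f_{K_w}|=|f_{K_w}-f_{K_u}|$. Then $|f_{K_w}-f_{K_{v_0}}|^p\lesssim\rho^{-n}\mathcal{E}^{\mathcal{N}_3(w)}(f)$ by (\ref{eq_Hin520a}), with $v_0\sim_n u$. Next, $|f_{K_{v_0}}-f_{K_u}|^p\le C(\mathcal{E}(f\circ F_{v_0})+\mathcal{E}(f\circ F_u))=C\,\mathcal{E}(f\circ F_{v_0})\le C\rho^{-n}\mathcal{E}^{\mathcal{N}_3(w)}(f)$ by (\ref{eq_P9b}). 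Combining, $8^{-n}|f_{K_w}|^p\lesssim(8\rho)^{-n}\mathcal{E}^{\mathcal{N}_3(w)}(f)$. Summing over the at most $49$ cells gives the claim.

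The only delicate point is the existence of $u\notin\mathcal{N}_3(w)$ sharing a side with some $v_0\in\mathcal{N}_3(w)$. I would argue this as follows. Take $v_0$ a leftmost cell of $\mathcal{N}_3(w)$ in a row. Its left neighbor square in the level-$n$ grid lies in $\square$, because $K_{\mathcal{N}_3(w)}$ avoids $K^\partial$. If that square is a removed hole, step to an adjacent row instead. Alternatively, and more simply, pass from $w$ along a straight chain of $\sim_n$-adjacent cells, which exists since no cell of $\mathcal{N}_3(w)$ touches $\ell_L$. The first cell of this chain leaving $\mathcal{N}_3(w)$ serves as $u$.
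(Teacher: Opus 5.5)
Your proof is correct and is essentially the paper's argument: pick $u\in W_n\backslash\mathcal{N}_3(w)$ and $v\in\mathcal{N}_3(w)$ with $u\sim_n v$, use $f_{K_u}=0$ and $\mathcal{E}(f\circ F_u)=0$ in (\ref{eq_P9b}), and conclude with (\ref{eq_Hin520a}) and (\ref{eq_Hin520b}). The one step to tighten is the existence of the pair $u\sim_n v$; the paper gets it in one line from the connectedness of the finite graph $(W_n,\sim_n)$ together with $\mathcal{N}_3(w)\ne W_n$, which is cleaner than your straight-chain argument, since a straight row of level-$n$ squares can run into a removed square.
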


\begin{proof}
It suffices to prove the first inequality. Notice that $(W_n,\sim_n)$ is a connected finite graph. Since $\mathcal{N}_3(w)\ne W_n$, there exist $v\in \mathcal{N}_3(w)$ and $u\in W_n\backslash\mathcal{N}_3(w)$ such that $v\sim_nu$. Since $f\in \mathcal{F}^0_{\mathcal{N}_3(w)}$, $f=0$ on $K_u$, that is, $f\circ F_u=0$, which gives $f_{K_u}=0$ and $\mathcal{E}(f\circ F_u)=0$. By (\ref{eq_P9b}),
$$\lvert f_{K_v}\rvert^p\lesssim \mathcal{E}(f\circ F_v)\le \sum_{v\in \mathcal{N}_3(w)}\mathcal{E}(f\circ F_v)=\frac{1}{\rho^n}\mathcal{E}^{\mathcal{N}_3(w)}(f).$$
Then by (\ref{eq_Hin520a}),
$$\lvert f_{K_w}\rvert^p \le 2^{p-1}\left(\lvert f_{K_v}\rvert^p+\lvert f_{K_w}-f_{K_v}\rvert^p\right)\lesssim\frac{1}{\rho^n}\mathcal{E}^{\mathcal{N}_3(w)}(f).$$
Finally, (\ref{eq_Hin520b}) gives
$$\int_{K_{\mathcal{N}_3(w)}}|f|^p\mathrm{d} m\le 2^{p-1}\left(\int_{K_{\mathcal{N}_3(w)}}|f-f_{K_w}|^p\mathrm{d} m+\frac{\# \mathcal{N}_3(w)}{8^n}\lvert f_{K_w}\rvert^p\right)\lesssim \frac{1}{(8\rho)^n}\mathcal{E}^{\mathcal{N}_3(w)}(f).$$
\end{proof}

\begin{lemma}[{$p$-version of {\cite[Lemmas 5.13 and 5.14]{Hin13}}}]\label{lem_ext}
Let $n\ge0$ and let $w\in W_n$ satisfy $K_{\mathcal{N}_3(w)}\cap K^\partial=\emptyset$. Denote $A=\mathcal{N}_3(w)$.
\begin{enumerate}[label=(\alph*),ref=(\alph*)]
\item\label{item_exta} For any $f\in \mathcal{F}$, there exists a unique function in $\mathcal{H}(A)$, denoted by $H_{A}f$, such that $H_{A}f=f$ on $K_{W_n\backslash A}$. Moreover, $H_{A}f-f$ is the unique function in $\mathcal{F}^0_{A}$ that minimizes $\mathcal{F}^0_A\to \mathbb{R}$, $\varphi\mapsto\mathcal{E}^A(f+\varphi)$, $H_Af$ is the unique minimizer of $\mathcal{E}$ over $\{f+\varphi:\varphi\in \mathcal{F}^0_A\}$, and $f\in \mathcal{H}(A)$ if and only if $H_{A}f=f$.
\item\label{item_extb} Let $f_1,f_2\in \mathcal{F}$. If $f_1=f_2$ on $K_A$, then $H_Af_1=H_Af_2$ on $K_A$, hence $H_Af_1-H_Af_2=f_1-f_2$.
\item\label{item_extc} Let $f\in \mathcal{F}$. If $f\ge0$ on $K$, then $H_Af\ge0$ on $K$.
\item\label{item_extd} For any $f\in \mathcal{F}$ and any $a,b\in \mathbb{R}$, we have $H_A(af+b)=aH_Af+b$.
\end{enumerate}
\end{lemma}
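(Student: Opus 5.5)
The plan is to obtain every part from a single splitting identity for $\mathcal{E}$ on the affine classes $f+\mathcal{F}^0_A$, and then to use the direct method and strict convexity. First I record that for any $f\in\mathcal{F}$ and $\varphi\in\mathcal{F}^0_A$,
\begin{equation*}
\mathcal{E}(f+\varphi)=\mathcal{E}^A(f+\varphi)+\mathcal{E}^{W_n\backslash A}(f).
\end{equation*}
This follows from \ref{item_SSEe} iterated $n$ times, $\mathcal{E}=\mathcal{E}^{W_n}=\mathcal{E}^A+\mathcal{E}^{W_n\backslash A}$, together with the fact that $\varphi\circ F_u=0$ $m$-a.e. for $u\in W_n\backslash A$. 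Consequently, minimizing $\mathcal{E}$ over $f+\mathcal{F}^0_A$ is the same problem as minimizing $\varphi\mapsto\mathcal{E}^A(f+\varphi)$ over $\mathcal{F}^0_A$. I also note that $\mathcal{F}^0_A$ is an $\mathcal{E}_1$-closed linear subspace, since $\mathcal{E}_1$-convergence implies $L^p$-convergence and hence preserves vanishing a.e. on $K_{W_n\backslash A}$.

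For \ref{item_exta}, take a minimizing sequence $\varphi_k\in\mathcal{F}^0_A$ for $\mathcal{E}(f+\cdot)$. The bounds $\mathcal{E}(\varphi_k)\le 2^{p-1}(\mathcal{E}(f+\varphi_k)+\mathcal{E}(f))$ and Lemma \ref{lem_poinc0} show that $\{\varphi_k\}$ is $\mathcal{E}_1$-bounded. Lemma \ref{lem_BAM} then gives convex combinations converging in $\mathcal{E}_1$ to some $\varphi\in\mathcal{F}^0_A$. By convexity of $\mathcal{E}$ (from \ref{item_SSEb}), $\mathcal{E}(f+\varphi)$ is the infimum.

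Uniqueness comes from strict convexity. If $h,h'$ are two minimizers, then \ref{eq_Cla} applied to $(h+h')/2$ and $(h-h')/2$ forces $\mathcal{E}((h-h')/2)=0$. Hence $h-h'$ is constant by \ref{item_SSEa}, and since it lies in $\mathcal{F}^0_A$ with $m(K_{W_n\backslash A})>0$, this constant is $0$. Set $H_Af:=f+\varphi$. Since $H_Af+\mathcal{F}^0_A=f+\mathcal{F}^0_A$, $H_Af$ minimizes over its own class, that is, $H_Af\in\mathcal{H}(A)$, and $H_Af=f$ on $K_{W_n\backslash A}$.

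Conversely, let $h\in\mathcal{H}(A)$ with $h=f$ on $K_{W_n\backslash A}$. Then $h\in f+\mathcal{F}^0_A$ and $h$ minimizes $\mathcal{E}$ over $h+\mathcal{F}^0_A=f+\mathcal{F}^0_A$, so $h=H_Af$. This gives uniqueness in $\mathcal{H}(A)$ and also the criterion $f\in\mathcal{H}(A)\iff H_Af=f$.

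For \ref{item_extb}, which is the locality step I expect to carry the real content, assume $f_1=f_2$ on $K_A$. Then $f_1\circ F_v=f_2\circ F_v$ for every $v\in A$, so $\mathcal{E}^A(f_1+\psi)=\mathcal{E}^A(f_2+\psi)$ for all $\psi\in\mathcal{F}^0_A$. The two functionals $\psi\mapsto\mathcal{E}^A(f_i+\psi)$ on $\mathcal{F}^0_A$ are therefore identical. By the uniqueness part of \ref{item_exta}, they have the same unique minimizer, so $H_Af_1-f_1=H_Af_2-f_2$. This is exactly the identity $H_Af_1-H_Af_2=f_1-f_2$, and restricting to $K_A$, where $f_1=f_2$, gives $H_Af_1=H_Af_2$ on $K_A$. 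The only point to check here is that the splitting identity is valid with $f$ arbitrary in $\mathcal{F}$ and not merely continuous; this holds because it uses only \ref{item_SSEe} and a.e. vanishing.

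For \ref{item_extc}, let $f\ge0$ and $h=H_Af$. On $K_{W_n\backslash A}$ we have $h=f\ge0$, so $h\vee0=h+h_-$ with $h_-\in\mathcal{F}^0_A$; the membership $h_-\in\mathcal{F}$ comes from \ref{item_SSEfp}. Thus $h\vee0\in f+\mathcal{F}^0_A$ and $\mathcal{E}(h\vee0)\le\mathcal{E}(h)$ by \ref{item_SSEfp}. Uniqueness of the minimizer then yields $h\vee0=h$, i.e. $h\ge0$.

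For \ref{item_extd}, \ref{item_SSEa} gives $\mathcal{E}(a g+b)=|a|^p\mathcal{E}(g)$. When $a\ne0$ we have $af+b+\mathcal{F}^0_A=a(f+\mathcal{F}^0_A)+b$, so $aH_Af+b$ is the minimizer over the class of $af+b$, and uniqueness gives the identity. When $a=0$ both sides equal the constant $b$, which has zero energy and lies in its own class.
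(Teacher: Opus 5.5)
Your proposal is correct and follows the paper's route: (a) is the direct method, with Lemma \ref{lem_poinc0} supplying coercivity on $\mathcal{F}^0_A$ and Clarkson's inequality giving uniqueness (the paper simply defers this to \cite[Proposition 4.2]{Yan25c}), and (b)--(d) are argued exactly as in the paper, via uniqueness of the minimizer of $\varphi\mapsto\mathcal{E}^A(f+\varphi)$, the truncation $h\vee0=h+h_-$, and affine invariance of $\mathcal{E}$. One small fix: applying \ref{eq_Cla} to the pair $(h\pm h')/2$ only works for $p\in(1,2]$; for $p\ge2$ you should apply it to the pair $h,h'$, which gives $\mathcal{E}(h+h')+\mathcal{E}(h-h')\le 2^pI$ (with $I$ the minimal value), and combined with $\mathcal{E}(h+h')=2^p\mathcal{E}\bigl(\tfrac{h+h'}{2}\bigr)\ge 2^pI$ this forces $\mathcal{E}(h-h')=0$.
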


\begin{proof}
\ref{item_exta} The proof is identical to that of \cite[Proposition 4.2]{Yan25c}, with the Poincar\'e inequality used there replaced by Lemma \ref{lem_poinc0}.

\ref{item_extb} Since $f_1=f_2$ on $K_A$, the following two functionals defined on $\mathcal{F}^0_A$ coincide: $\varphi\mapsto \mathcal{E}^{A}(f_1+\varphi)$ and $\varphi\mapsto \mathcal{E}^{A}(f_2+\varphi)$. By \ref{item_exta}, they have the same minimizer $\varphi=H_Af_1-f_1=H_Af_2-f_2\in \mathcal{F}^0_A$, hence $H_Af_1-H_Af_2=f_1-f_2$, in particular, $H_Af_1=H_Af_2$ on $K_A$.

\ref{item_extc} Let $h:=H_Af$, then $h_+\in \mathcal{F}$ and $\mathcal{E}(h_+)\le \mathcal{E}(h)$. Since $f\ge0$ on $K$, on $K_{W_n\backslash A}$, $h_+-f=f_+-f=f-f=0$, which gives $h_+\in\{f+\varphi:\varphi\in \mathcal{F}^0_A\}$. Since $h=H_Af$ is the unique minimizer of $\mathcal{E}$ over $\{f+\varphi:\varphi\in \mathcal{F}^0_A\}$ by \ref{item_exta}, $h_+=h$, which implies $H_Af\ge0$ on $K$.

\ref{item_extd} It is obvious by the uniqueness in \ref{item_exta} and the definition of $\mathcal{H}(A)$.
\end{proof}

\begin{lemma}[Comparison principle]\label{lem_comparison}
Let $A$ be as in Lemma \ref{lem_ext} and let $u,v\in\mathcal{H}(A)$ satisfy $u\le v$ on $K_{W_n\backslash A}$. Then $u\le v$ on $K$. In particular, for any $h\in\mathcal{H}(A)$,
$$-H_Ah_-\le h\le H_Ah_+,\qquad |h|\le H_Ah_++H_Ah_-.$$
\end{lemma}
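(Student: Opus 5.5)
The plan is to prove the comparison principle by the standard truncation argument, using the minimizing characterization in Lemma \ref{lem_ext}\ref{item_exta} together with strong sub-additivity \ref{eq_SubAdd}. Let $u,v\in\mathcal{H}(A)$ with $u\le v$ on $K_{W_n\backslash A}$. Set $\varphi:=(u-v)_+$. Since $u-v\le0$ on $K_{W_n\backslash A}$, we have $\varphi=0$ there, so $\varphi\in\mathcal{F}^0_A$; membership in $\mathcal{F}$ follows from \ref{item_SSEfp}. Note that $u\wedge v=u-\varphi$ and $u\vee v=v+\varphi$. Thus $u\wedge v\in\{u+\psi:\psi\in\mathcal{F}^0_A\}$ and $u\vee v\in\{v+\psi:\psi\in\mathcal{F}^0_A\}$.

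By Lemma \ref{lem_ext}\ref{item_exta}, $u=H_Au$ and $v=H_Av$ are the unique minimizers of $\mathcal{E}$ over $u+\mathcal{F}^0_A$ and $v+\mathcal{F}^0_A$ respectively. This gives
$$\mathcal{E}(u)\le\mathcal{E}(u\wedge v),\qquad \mathcal{E}(v)\le\mathcal{E}(u\vee v).$$
On the other hand, \ref{eq_SubAdd} gives $\mathcal{E}(u\vee v)+\mathcal{E}(u\wedge v)\le\mathcal{E}(u)+\mathcal{E}(v)$. Combining the three inequalities forces both to be equalities: $\mathcal{E}(u\wedge v)=\mathcal{E}(u)$ and $\mathcal{E}(u\vee v)=\mathcal{E}(v)$. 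Uniqueness of the minimizer then yields $u\wedge v=u$, i.e.\ $u\le v$ on $K$. The key subtle point is exactly this uniqueness, which comes from strict convexity and uniform convexity as encoded in Lemma \ref{lem_ext}\ref{item_exta}. If one worries that uniqueness there is only stated for $\mathcal{E}$ modulo constants, note that the competitors differ from $u$ by an element of $\mathcal{F}^0_A$, which vanishes on a set of positive measure. So the minimizer is genuinely unique, as Lemma \ref{lem_ext}\ref{item_exta} asserts.

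For the \emph{in particular} part, take $h\in\mathcal{H}(A)$. By Lemma \ref{lem_ext}\ref{item_exta}, $H_Ah_+\in\mathcal{H}(A)$ and $H_Ah_+=h_+\ge h$ on $K_{W_n\backslash A}$. The first part then gives $h\le H_Ah_+$ on $K$. Similarly $-H_Ah_-=H_A(-h_-)$ by Lemma \ref{lem_ext}\ref{item_extd}. This function lies in $\mathcal{H}(A)$ and equals $-h_-\le h$ outside $K_A$, so $-H_Ah_-\le h$. Finally, Lemma \ref{lem_ext}\ref{item_extc} gives $H_Ah_\pm\ge0$. Hence $h\le H_Ah_+\le H_Ah_++H_Ah_-$ and $-h\le H_Ah_-\le H_Ah_++H_Ah_-$, which together yield $|h|\le H_Ah_++H_Ah_-$.

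All inequalities are understood $m$-a.e.
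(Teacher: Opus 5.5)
Your proposal is correct and follows the paper's proof essentially step for step: you use the truncation $(u-v)_+\in\mathcal{F}^0_A$, the two minimality inequalities from Lemma \ref{lem_ext}\ref{item_exta}, strong sub-additivity to force equality, and uniqueness of the minimizer to conclude $u\wedge v=u$. Your derivation of the ``in particular'' inequalities via Lemma \ref{lem_ext}\ref{item_extc} and \ref{item_extd} correctly fills in a step the paper leaves implicit.
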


\begin{proof}
Since $u\le v$ on $K_{W_n\backslash A}$, $\pm(u-v)_+\in \mathcal{F}^0_A$. Notice that $u\vee v=v+(u-v)_+$ and $u\wedge v=u-(u-v)_+$. Since $u,v\in \mathcal{H}(A)$, by Lemma \ref{lem_ext}\ref{item_exta}, $u=H_Au$, $v=H_Av$, and $\mathcal{E}(v)\le \mathcal{E}(u\vee v)$, $\mathcal{E}(u)\le \mathcal{E}(u\wedge v)$. Then by \ref{eq_SubAdd},
$$\mathcal{E}(u)+\mathcal{E}(v)\le \mathcal{E}(u\vee v)+\mathcal{E}(u\wedge v)\le \mathcal{E}(u)+\mathcal{E}(v).$$
Hence $\mathcal{E}(v)= \mathcal{E}(u\vee v)$ and $\mathcal{E}(u)= \mathcal{E}(u\wedge v)$. By Lemma \ref{lem_ext}\ref{item_exta} again, $v=u\vee v$ and $u=u\wedge v$, that is, $u\le v$ on $K$.
\end{proof}

\begin{lemma}[Elliptic Harnack inequality on cell neighbourhoods; $p$-version of {\cite[Lemma 5.21]{Hin13}}]\label{lem_ehi_cell}
There exists a constant $C_H\ge1$ such that for any $w\in W_*$ with $K_{\mathcal{N}_3(w)}\cap K^\partial=\emptyset$, and any $u\in\mathcal{H}(\mathcal{N}_3(w))$ with $u\ge0$ on $K_{\mathcal{N}_3(w)}$,
$$\esup_{K_{\mathcal{N}_2(w)}}u\le C_H\einf_{K_{\mathcal{N}_2(w)}}u.$$
\end{lemma}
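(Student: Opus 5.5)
The plan is to reduce to a scale-invariant elliptic Harnack inequality for $p$-harmonic functions on balls of the metric measure space $(K,d,m)$. That inequality is available for the canonical $p$-energy by the recent non-linear theory: PI$(\beta_p)$ together with the cutoff Sobolev/capacity upper bounds from \cite[Theorem 1.5]{MS25} and volume doubling give the EHI (as in the works cited, e.g. \cite{Yan25d} or the Murugan--Shimizu framework). We then transfer it to cell neighbourhoods by a chaining argument.

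\emph{Step 1: geometry.} Fix $w\in W_n$ with $K_{\mathcal{N}_3(w)}\cap K^\partial=\emptyset$. Since every level-$n$ cell has side $3^{-n}$, there are constants $c_1<c_2$, independent of $n$ and $w$, with the following properties:
\begin{itemize}
\item $K_{\mathcal{N}_2(w)}$ is covered by a uniformly bounded number $N$ of balls $B(x_j,c_13^{-n})$;
\item each enlarged ball $B(x_j,c_2 3^{-n})$, with $c_2$ chosen so that the EHI's enlargement factor $A$ times these balls still lies inside, is contained in the relative interior of $K_{\mathcal{N}_3(w)}$;
\item consecutive small balls in a suitable ordering overlap, which is possible because $K_{\mathcal{N}_2(w)}$ is connected via the graph $\sim_n$.
\end{itemize}
The third property is the only point where the carpet's geometry enters.

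\emph{Step 2: harmonicity.} We show that $u\in\mathcal{H}(\mathcal{N}_3(w))$ is harmonic, in the sense of the paper, in the open set $U:=K_{\mathcal{N}_3(w)}\setminus K_{W_n\setminus\mathcal{N}_3(w)}$. Any $\varphi\in\mathcal{F}(U)$, i.e. with $\widetilde\varphi=0$ q.e. off $U$, vanishes $m$-a.e. on $K_{W_n\setminus \mathcal{N}_3(w)}$, so $\mathcal{F}(U)\subseteq\mathcal{F}^0_{\mathcal{N}_3(w)}$. Hence $\mathcal{E}(u)\le\mathcal{E}(u+\varphi)$ for all such $\varphi$. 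Since $u\ge0$ on $K_{\mathcal{N}_3(w)}\supseteq B(x_j,c_23^{-n})$, the EHI on balls yields
$$\esup_{B(x_j,c_13^{-n})}u\le C\einf_{B(x_j,c_13^{-n})}u,$$
with $C$ independent of $n$ by scale invariance. Here we use that the EHI only requires nonnegativity on the large ball, not globally. If the available EHI is stated for globally nonnegative functions, we instead use the nonlocal-free form valid for strongly local energies, or replace $u$ by $H_A u_+$ via Lemma \ref{lem_comparison} (note $u=u_+$ on $K_{\mathcal N_3(w)}$).

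\emph{Step 3: chaining.} Overlapping consecutive balls of positive measure gives
$$\esup_{K_{\mathcal{N}_2(w)}}u\le C^{N}\einf_{K_{\mathcal{N}_2(w)}}u,$$
so $C_H:=C^N$ works.

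The main obstacle is Step 2's appeal to an EHI on balls valid at all scales with uniform constants and locality (nonnegativity only on the ball). If this is not directly citable, we would instead derive it by Moser/De Giorgi iteration from PI$(\beta_p)$, Lemma \ref{lem_poinc0} (a scaled Sobolev-type inequality for $\mathcal{F}^0_A$), and cutoff functions built from \cite[Theorem 1.5]{MS25} rescaled via \ref{item_SSEe}, using the Leibniz and chain rules (\ref{eq_Leibniz}), (\ref{eq_chain}) to test the Euler--Lagrange equation $\mathcal{E}(u;\varphi)=0$ with $\varphi=\eta^p u^{-q}$-type functions.
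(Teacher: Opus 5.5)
Your proposal is correct and follows the paper's route. The paper's proof consists of three ingredients: the elliptic Harnack inequality on balls from \cite[Corollary 2.8]{Yan25d}, the comparability of balls and cells, and a chaining argument. Your Steps 1--3 fill in these details, namely the $3^{-n}$-separation of $K_{\mathcal N_2(w)}$ from the cells outside $\mathcal N_3(w)$, the inclusion $\mathcal F(U)\subseteq\mathcal F^0_{\mathcal N_3(w)}$, and the replacement of $u$ by $H_Au_+$ when global nonnegativity is needed. The one point the paper flags that you omit is that \cite{Yan25d} is formulated on unbounded spaces; the paper notes that the strongly local property ensures the ball EHI still holds on the bounded carpet.
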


\begin{proof}
It follows directly from the elliptic Harnack inequality for balls obtained in \cite[Corollary 2.8]{Yan25d}, the comparability of balls and cells, and a suitable chaining argument. Although the underlying space in \cite{Yan25d} is assumed to be unbounded, the strongly local property ensures that the result still holds for bounded spaces.
\end{proof}

\begin{proposition}[{$p$-version of {\cite[Proposition 5.22]{Hin13}}}]\label{prop_sup}
There exists a constant $C>0$ such that for any $w\in W_*$ with $K_{\mathcal{N}_3(w)}\cap K^\partial=\emptyset$ and any $h\in\mathcal{H}(\mathcal{N}_3(w))$,
\begin{equation}\label{eq_Hin522}
\esup_{K_{\mathcal{N}_2(w)}}|h-h_{K_w}|\le C\left(\frac{1}{\rho^{|w|}}\mathcal{E}^{\mathcal{N}_3(w)}(h)\right)^{\frac{1}{p}}.
\end{equation}
\end{proposition}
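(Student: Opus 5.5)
The plan follows \cite[Proposition 5.22]{Hin13}: reduce the sup bound to an $L^p$ bound by the elliptic Harnack inequality, then control the $L^p$ bound by the Poincar\'e-type estimate of Lemma \ref{lem_poinc}. Write $A=\mathcal{N}_3(w)$, $n=|w|$, and set $g:=h-h_{K_w}$. By Lemma \ref{lem_ext}\ref{item_extd}, $g\in\mathcal{H}(A)$, and $\mathcal{E}^A(g)=\mathcal{E}^A(h)$ by \ref{item_SSEa}. Hence it suffices to prove \eqref{eq_Hin522} for $g$. By Lemma \ref{lem_comparison},
$$|g|\le H_Ag_++H_Ag_-\quad\text{on }K,$$
and by Lemma \ref{lem_ext}\ref{item_extc} both $u_\pm:=H_Ag_\pm$ are non-negative functions in $\mathcal{H}(A)$. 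It therefore suffices to bound $\esup_{K_{\mathcal{N}_2(w)}}u_\pm$.

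For the non-negative harmonic functions $u_\pm$, Lemma \ref{lem_ehi_cell} gives
$$\esup_{K_{\mathcal{N}_2(w)}}u_\pm\le C_H\einf_{K_{\mathcal{N}_2(w)}}u_\pm\le C_H\left(\dashint_{K_w}u_\pm^p\,\mathrm{d}m\right)^{1/p}.$$
The task then reduces to an $L^p$ bound $\int_{K_w}u_\pm^p\,\mathrm{d}m\lesssim 8^{-n}\rho^{-n}\mathcal{E}^A(g)$. The difficulty is that $u_\pm$ agrees with $g_\pm$ only outside $K_A$, so $u_\pm$ is not directly controlled by $g$ on $K_w$.

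To get around this, I would apply the Harnack inequality to $u_+$ on the annular region instead. Choose a cell $v\in\mathcal{N}_2(w)$. Here one subtlety appears: Harnack on $K_{\mathcal{N}_2(w)}$ only sees values inside $K_A$, while $u_+=g_+$ holds only outside $K_A$. A cleaner route is to use the maximum principle. Since $u_+\le \esup_{K_{W_n\setminus A}}g_+$ is not helpful either, I would instead use the variational characterization in Lemma \ref{lem_ext}\ref{item_exta} together with Lemma \ref{lem_poinc0}. Write $u_+=g_++\varphi$ with $\varphi\in\mathcal{F}^0_A$. Minimality gives
$$\mathcal{E}^A(u_+)\le\mathcal{E}^A(g_+)\le \mathcal{E}^A(g),$$
using \ref{item_SSEfp} cellwise. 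Then $\mathcal{E}^A(\varphi)^{1/p}\le 2\mathcal{E}^A(g)^{1/p}$, and Lemma \ref{lem_poinc0} yields $\int|\varphi|^p\lesssim (8\rho)^{-n}\mathcal{E}^A(g)$. Meanwhile \eqref{eq_Hin520b} applied to $g$ gives $\int_{K_A}|g|^p\lesssim (8\rho)^{-n}\mathcal{E}^A(g)$, because $g_{K_w}=0$. Combining these, $\int_{K_w}u_+^p\le 2^{p-1}\bigl(\int_{K_w} g_+^p+\int|\varphi|^p\bigr)\lesssim (8\rho)^{-n}\mathcal{E}^A(g)$. Dividing by $m(K_w)=8^{-n}$ and inserting this into the Harnack estimate gives \eqref{eq_Hin522}. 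The same argument applies to $u_-$.

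The main obstacle I expect is justifying $\mathcal{E}^A(g_+)\le\mathcal{E}^A(g)$ and the related cellwise manipulations. This requires the cellwise Lipschitz contraction, applied through $g_+\circ F_v=(g\circ F_v)_+$ and \ref{item_SSEfp}. It also requires checking that the minimizer in Lemma \ref{lem_ext}\ref{item_exta} minimizes $\mathcal{E}^A$, not only $\mathcal{E}$; that lemma states this directly. A smaller point is that the Harnack inequality is stated with $u\ge0$ on $K_{A}$, which holds for $u_\pm$ by Lemma \ref{lem_ext}\ref{item_extc}.
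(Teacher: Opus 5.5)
Your argument is correct and is essentially the paper's proof: you normalize to $h_{K_w}=0$, dominate $|h|$ by $H_Ah_++H_Ah_-$ via the comparison principle, and apply the cell Harnack inequality to each term. You then bound the resulting $L^p$ average by splitting $H_Ah_\pm=h_\pm+(H_Ah_\pm-h_\pm)$, using Lemma~\ref{lem_poinc0} for the $\mathcal{F}^0_A$ part and \eqref{eq_Hin520b} for $h_\pm$. The only differences are cosmetic: you average over $K_w$ rather than $K_{\mathcal{N}_2(w)}$, and the exploratory detour in your third paragraph (annular regions, maximum principle) can be deleted.
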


\begin{proof}
Replacing $h$ by $h-h_{K_w}$, we may assume that $h_{K_w}=0$. Denote $A=\mathcal{N}_3(w)$. By Lemma \ref{lem_comparison}, $\lvert h\rvert\le H_{A}h_++H_Ah_-$, where $H_Ah_\pm\ge0$ by Lemma \ref{lem_ext}\ref{item_extc}. Lemma \ref{lem_ehi_cell} gives
\begin{equation}\label{eq_Hin522a}
\esup_{K_{\mathcal{N}_2(w)}}\lvert h\rvert\le \esup_{K_{\mathcal{N}_2(w)}}H_Ah_++\esup_{K_{\mathcal{N}_2(w)}}H_Ah_-\lesssim \einf_{K_{\mathcal{N}_2(w)}}H_Ah_++\einf_{K_{\mathcal{N}_2(w)}}H_Ah_-.
\end{equation}
Since $m(K_{\mathcal{N}_2(w)})=\frac{\# \mathcal{N}_2(w)}{8^{\lvert w\rvert}}\ge \frac{1}{8^{\lvert w\rvert}}$,
\begin{align}
&\einf_{K_{\mathcal{N}_2(w)}}H_Ah_\pm\le\left( \dashint_{K_{\mathcal{N}_2(w)}}( H_Ah_\pm)^p \mathrm{d}m\right)^{\frac{1}{p}}\nonumber\\
&\le 8^{\frac{\lvert w\rvert}{p}}\left(\int_{K_A}\lvert H_Ah_\pm-h_{\pm}\rvert^p \mathrm{d}m\right)^{\frac{1}{p}}+8^{\frac{\lvert w\rvert}{p}}\left(\int_{K_A}h_{\pm}^p \mathrm{d}m\right)^{\frac{1}{p}}.\label{eq_Hin522b}
\end{align}
By Lemma \ref{lem_ext}\ref{item_exta}, $H_Ah_{\pm}-h_{\pm}\in \mathcal{F}^0_A$, then Lemma \ref{lem_poinc0} gives
\begin{align}
&\left(\int_{K_A}\lvert H_Ah_\pm-h_{\pm}\rvert^p \mathrm{d}m\right)^{\frac{1}{p}}\lesssim \frac{1}{(8\rho)^{\frac{\lvert w\rvert}{p}}}\mathcal{E}^{A}(H_Ah_{\pm}-h_{\pm})^{\frac{1}{p}}\nonumber\\
&\le \frac{1}{(8\rho)^{\frac{\lvert w\rvert}{p}}}\left(\mathcal{E}^{A}(H_Ah_{\pm})^{\frac{1}{p}}+\mathcal{E}^{A}(h_{\pm})^{\frac{1}{p}}\right)\le \frac{2}{(8\rho)^{\frac{\lvert w\rvert}{p}}}\mathcal{E}^{A}(h_{\pm})^{\frac{1}{p}}\le \frac{2}{(8\rho)^{\frac{\lvert w\rvert}{p}}}\mathcal{E}^{A}(h)^{\frac{1}{p}}.\label{eq_Hin522c}
\end{align}
Since $h_{K_w}=0$, by (\ref{eq_Hin520b}),
$$\int_{K_A} h_{\pm}^p \mathrm{d}m\le \int_{K_A}\lvert h\rvert^p \mathrm{d}m\lesssim \frac{1}{(8\rho)^{\lvert w\rvert}}\mathcal{E}^A(h).$$
Combining this with (\ref{eq_Hin522a})--(\ref{eq_Hin522c}), we obtain (\ref{eq_Hin522}).
\end{proof}

We have the following oscillation estimates.

\begin{proposition}\label{prop_holder}
Let $C_H\ge1$ be the constant in Lemma \ref{lem_ehi_cell} and let $\delta=\max\{\frac{C_H-1}{C_H+1},\frac12\}\in[\frac12,1)$. There exists a constant $C>0$ such that, for any $n\ge0$, any $w\in W_n$ with $K_{\mathcal{N}_3(w)}\cap K^\partial=\emptyset$, and any $h\in\mathcal{H}(\mathcal{N}_3(w))$, writing $\lambda:=\frac{1}{\rho^{\lvert w\rvert}}\mathcal{E}^{\mathcal{N}_3(w)}(h)$, the function $h$ has a version $\overline{h}$, which is continuous on $K_{\mathcal{N}_2(w)}$ and satisfies
\begin{enumerate}[label=(\arabic*),ref=(\arabic*)]
\item\label{item_holder1}
$$\sup_{K_{\mathcal{N}_2(w)}}|\overline{h}-h_{K_w}|\le C\lambda^{\frac{1}{p}};$$
\item\label{item_holder2} for any $k\ge1$ and any $x,y\in K_{\mathcal{N}_2(w)}$ with $|x-y|<\frac{1}{3^{n+k}}$,
$$|\overline{h}(x)-\overline{h}(y)|\le C\delta^k\lambda^{\frac{1}{p}}.$$
\end{enumerate}
\end{proposition}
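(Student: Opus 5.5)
Part \ref{item_holder1} is essentially Proposition \ref{prop_sup}, once a continuous version is in hand. The plan is to prove an oscillation-decay estimate first and derive \ref{item_holder2} from it, and then to construct $\overline{h}$ as the continuous version. Throughout, write $\mathrm{osc}_E u:=\esup_E u-\einf_E u$.

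\textbf{Step 1 (one-step oscillation decay).} Fix $v\in W_{n+j}$, $j\ge0$, with $K_v\subseteq K_{\mathcal{N}_2(w)}$. Let $A_v=\mathcal{N}_3(v)$ at level $n+j$, and assume $K_{A_v}\subseteq K_{\mathcal{N}_3(w)}$. Then $h\in\mathcal{H}(A_v)$: any $g\in\mathcal{F}^0_{A_v}$ vanishes on $K_{W_{n+j}\backslash A_v}$, hence lies in $\mathcal{F}^0_{\mathcal{N}_3(w)}$. Set $M=\esup_{K_{A_v}}h$ and $m_0=\einf_{K_{A_v}}h$. Both are finite, because by Proposition \ref{prop_sup} $h$ is bounded on $K_{\mathcal{N}_2(w)}$; if needed we shrink to $\mathcal{N}_2$-neighbourhoods and use the nesting of neighbourhoods. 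We apply Lemma \ref{lem_ehi_cell} on the cell $v$ to the functions $H_{A_v}(h-m_0)_+$ and $H_{A_v}(M-h)_+$. These are non-negative by Lemma \ref{lem_ext}\ref{item_extc}, and they coincide with $h-m_0$ and $M-h$ on $K_{A_v}$ by Lemma \ref{lem_ext}\ref{item_extb},\ref{item_extd}. This gives
\[
\esup_{K_{\mathcal{N}_2(v)}}(h-m_0)\le C_H\einf_{K_{\mathcal{N}_2(v)}}(h-m_0),\qquad \esup_{K_{\mathcal{N}_2(v)}}(M-h)\le C_H\einf_{K_{\mathcal{N}_2(v)}}(M-h).
\]
Adding the two inequalities yields the standard Moser-type bound
\[
\mathrm{osc}_{K_{\mathcal{N}_2(v)}}h\le \tfrac{C_H-1}{C_H+1}\,\mathrm{osc}_{K_{A_v}}h\le\delta\,\mathrm{osc}_{K_{A_v}}h .
\]

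\textbf{Step 2 (iteration).} Given $x,y\in K_{\mathcal{N}_2(w)}$ with $|x-y|<3^{-(n+k)}$, choose level-$(n+k-c)$ cells $v_k$ containing $x$, where $c$ is a fixed integer, so that $y\in K_{\mathcal{N}_1(v_k)}$. Parent cells at successively coarser levels have nested $\mathcal{N}$-neighbourhoods: since level-$(j+1)$ cells have one third the side length, $K_{\mathcal{N}_3(v')}\subseteq K_{\mathcal{N}_2(\text{parent}(v'))}$. Using this nesting, we iterate Step 1 about $k-c'$ times, starting from the base bound $\mathrm{osc}\le 2C\lambda^{1/p}$ given by Proposition \ref{prop_sup}. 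This yields $\mathrm{osc}_{K_{\mathcal{N}_2(v_k)}}h\le C\delta^{k}\lambda^{1/p}$ after adjusting constants, using $\delta\ge\frac12$ so that the finitely many lost steps only change $C$. The boundary condition $K_{\mathcal{N}_3(\cdot)}\cap K^\partial=\emptyset$ is inherited from $w$.

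\textbf{Step 3 (continuous version).} By Step 2, the essential oscillation of $h$ over neighbourhoods of each point tends to $0$ uniformly on $K_{\mathcal{N}_2(w)}$. Hence $\overline{h}(x):=\lim_{r\to0}\dashint_{B(x,r)\cap K_{\mathcal{N}_2(w)}}h\,\mathrm{d}m$ exists, equals $h$ $m$-a.e. by Lebesgue differentiation, and inherits both \ref{item_holder1} (from Proposition \ref{prop_sup}) and \ref{item_holder2}.

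The main obstacle is the geometric bookkeeping in Step 2. One must check that the neighbourhoods really nest across levels and stay inside $\mathcal{N}_3(w)$, where harmonicity is available, and that points near the edge of $K_{\mathcal{N}_2(w)}$ can still be handled. If pure nesting fails near the edge, I would first try to use $\mathcal{N}_2$-neighbourhoods of cells contained in $\mathcal{N}_3(w)$, together with a bounded number of chaining steps, accepting a worse constant $C$.
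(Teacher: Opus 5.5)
Your proposal is correct and follows the paper's route. The paper's proof of Proposition \ref{prop_holder} is the one-line remark that the result follows from a standard iteration of the elliptic Harnack inequality (Lemma \ref{lem_ehi_cell}) combined with the bound (\ref{eq_Hin522}) of Proposition \ref{prop_sup}, which is what your Steps 1--3 carry out.

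The edge problem you anticipate is real if you seed the iteration only with Proposition \ref{prop_sup} at $w$: for a cell touching the outer edge of $K_{\mathcal{N}_2(w)}$, its $\mathcal{N}_3$-neighbourhood leaves $K_{\mathcal{N}_2(w)}$ at every level. The fix is to seed at level $n+1$. Comparing grid positions gives $K_{\mathcal{N}_3(v')}\subseteq K_{\mathcal{N}_1(v)}$ whenever $v$ is the parent of $v'$. Hence every level-$(n+1)$ cell $u$ with $K_u\subseteq K_{\mathcal{N}_2(w)}$ satisfies $K_{\mathcal{N}_3(u)}\subseteq K_{\mathcal{N}_3(w)}$ and $\rho^{-(n+1)}\mathcal{E}^{\mathcal{N}_3(u)}(h)=\rho^{-(n+1)}\Gamma(h)(K_{\mathcal{N}_3(u)})\le\rho^{-1}\lambda$. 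Proposition \ref{prop_sup} applied at $u$ then supplies the base oscillation bound on $K_{\mathcal{N}_2(u)}$, and your nested iteration runs down from there.
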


\begin{proof}
It follows directly by combining a standard iteration argument based on the elliptic Harnack inequality (Lemma \ref{lem_ehi_cell}) with the estimate (\ref{eq_Hin522}) in Proposition \ref{prop_sup}.
\end{proof}

We now give the proof of Proposition \ref{prop_Hin418} as follows.

\begin{proof}[Proof of Proposition \ref{prop_Hin418}]
Let
\begin{equation}\label{eq_Lambda}
\Lambda:=\sup_{n\ge1}\frac{1}{\rho^{\lvert w^{(n)}\rvert}}\Gamma(h_n)(K_{\mathcal{N}_3(w^{(n)})})=\sup_{n\ge1}\frac{1}{\rho^{\lvert w^{(n)}\rvert}}\mathcal{E}^{\mathcal{N}_3(w^{(n)})}(h_n)<+\infty.
\end{equation}
By Proposition \ref{prop_holder}, each $h_n$ has a version, still denoted by $h_n$, which is continuous on $K_{\mathcal{N}_2(w^{(n)})}$, such that $\{h_n\}_{n\ge1}$ satisfies
\begin{enumerate}[label=(\alph*),ref=(\alph*)]
\item\label{item_unibdd}
$$\sup_{n\ge1}\sup_{K_{\mathcal{N}_2(w^{(n)})}}\lvert h_n\rvert\lesssim \Lambda^{\frac{1}{p}}.$$
\item\label{item_equicont} for any $k\ge1$ and any $x,y\in K_{\mathcal{N}_2(w^{(n)})}$ with $|x-y|<\frac{1}{3^{\lvert w^{(n)}\rvert+k}}$,
$$|{h}_n(x)-{h}_n(y)|\lesssim \delta^k\Lambda^{\frac{1}{p}},$$
where $\delta\in[\frac{1}{2},1)$ is the constant in Proposition \ref{prop_holder}.
\end{enumerate}

For any $w\in W_*$, let $c_w\in \mathbb{R}^2$ denote the center of the square $F_w(\square)$.
For any $w\in W_*$ with $K_{\mathcal{N}_3(w)}\cap K^\partial=\emptyset$, we define its ``shape" $P(w)$ as
$$P(w):=\left\{3^{\lvert w\rvert}(c_v-c_w)\in \mathbb{R}^2:v\in \mathcal{N}_3(w)\right\}\subseteq \left\{(i,j)\in \mathbb{Z}^2:-3\le i,j\le 3\right\},$$
here we use the convention that if $(x_1,y_1),(x_2,y_2)\in \mathbb{R}^2$, then $(x_1,y_1)-(x_2,y_2):=(x_1-x_2,y_1-y_2)$. Notice that, as $w$ ranges over $W_*$, the ``shapes" $P(w)$, which are subsets of $\{(i,j)\in \mathbb{Z}^2:-3\le i,j\le 3\}$, can take at most $2^{49}$ distinct values. By passing to a subsequence, we may assume that $\{w^{(n)}\}_n$ all have the same ``shape", that is, $P(w^{(n)})=P(w^{(1)})$ for all $n$. Let $u=w^{(1)}$ and for any $n$, let $\xi_n:\mathbb{R}^2\to \mathbb{R}^2$ be given by $\xi_n(x)=3^{\lvert w^{(1)}\rvert-\lvert w^{(n)}\rvert}(x-c_{w^{(1)}})+c_{w^{(n)}}$, then $\xi_n(K_{\mathcal{N}_3(u)})=K_{\mathcal{N}_3(w^{(n)})}$; see also \cite[Definition 5.17]{Hin13} for the definition of ``same shape" and the construction of such a function $\xi_n$.

By regularity, there exists $g\in \mathcal{F}^0_{\mathcal{N}_2(u)}\cap C(K)$ satisfying $0\le g\le 1$ on $K$ and $g=1$ on $K_{\mathcal{N}_1(u)}$. For any $n$, let
\begin{equation}\label{eq_fnghn}
{f}_n(x):=
\begin{cases}
g(x)h_n(\xi_n(x))&\text{if }x\in K_{\mathcal{N}_2(u)},\\
0&\text{if }x\in K\backslash K_{\mathcal{N}_2(u)},
\end{cases}
\end{equation}
then $f_n\in \mathcal{F}\cap C(K)$ is well-defined and $f_n\in \mathcal{H}(\mathcal{N}_1(u))$. By \ref{item_unibdd}, \ref{item_equicont}, and the uniform continuity of $g\in C(K)$, $\{f_n\}_{n\ge1}$ is uniformly bounded and equicontinuous on $K_{\mathcal{N}_2(u)}$. Since $K_{\mathcal{N}_2(u)}$ is compact, by the Arzel\`a--Ascoli theorem, there exist a subsequence, still denoted by $\{f_n\}_{n\ge1}$, and a function $f_\infty\in C(K_{\mathcal{N}_2(u)})$, such that $\{f_n\}_{n\ge1}$ converges uniformly to $f_\infty$ on $K_{\mathcal{N}_2(u)}$. Since $\mathrm{supp}(f_n)\subseteq K_{\mathcal{N}_2(u)}$ for any $n$, $\mathrm{supp}(f_\infty)\subseteq K_{\mathcal{N}_2(u)}$ and $\{f_n\}_{n\ge1}$ converges uniformly to $f_\infty$ on $K$.

It suffices to show that $\{f_n\circ F_u\}_{n\ge1}$ is an $\mathcal{E}$-Cauchy sequence, which would imply that $h:=f_\infty\circ F_u\in \mathcal{H}\cap C(K)$, and that $\{f_n\circ F_u\}_{n\ge1}$ is $\mathcal{E}_1$-convergent and uniformly convergent to $h=f_\infty\circ F_u$ on $K$. By \ref{item_unibdd} and (\ref{eq_fnghn}),
$$\sup_{n\ge1}\sup_K\lvert f_n\rvert\lesssim \Lambda^{\frac{1}{p}},$$
moreover, \ref{eq_Alg} and (\ref{eq_Lambda}) imply
\begin{align*}
&\mathcal{E}(f_n)^{\frac{1}{p}}=\mathcal{E}^{\mathcal{N}_2(u)}\left(g(h_n\circ \xi_n)\right)^{\frac{1}{p}}\\
&\le \left(\sup_{K_{\mathcal{N}_2(u)}}g\right) \mathcal{E}^{\mathcal{N}_2(u)}(h_n\circ \xi_n)^{\frac{1}{p}}+\left(\sup_{K_{\mathcal{N}_2(u)}}\lvert h_n\circ \xi_n\rvert\right)\mathcal{E}^{\mathcal{N}_2(u)}(g)^{\frac{1}{p}}\lesssim \Lambda^{\frac{1}{p}},
\end{align*}
hence $\sup_{n\ge1}\mathcal{E}(f_n)^{\frac{1}{p}}\lesssim \Lambda^{\frac{1}{p}}$.

For any $k,l\ge1$, let $\varepsilon_{k,l}=\sup_{K_{\mathcal{N}_1(u)}}\lvert f_k-f_l\rvert\lesssim \Lambda^{\frac{1}{p}}$, then $\lim_{k,l\to+\infty}\varepsilon_{k,l}=0$. By regularity again, there exists $\widehat{g}\in \mathcal{F}^0_{\mathcal{N}_1(u)}\cap C(K)$ satisfying $0\le \widehat{g}\le 1$ on $K$ and $\widehat{g}=1$ on $K_{u}$. Let
\begin{align*}
A_0&:=\left\{x\in K_{\mathcal{N}_1(u)}:\lvert f_k(x)-f_l(x)\rvert\le \varepsilon_{k,l}\widehat{g}\right\}\supseteq K_u,\\
A_+&:=\left\{x\in K_{\mathcal{N}_1(u)}:f_k(x)-f_l(x)> \varepsilon_{k,l}\widehat{g}\right\},\\
A_-&:=\left\{x\in K_{\mathcal{N}_1(u)}:f_k(x)-f_l(x)<- \varepsilon_{k,l}\widehat{g}\right\},
\end{align*}
then $K_{\mathcal{N}_1(u)}=A_0\sqcup A_+\sqcup A_-$. Since
$$\rho^{\lvert u\rvert}\mathcal{E}((f_k-f_l)\circ F_u)=\Gamma(f_k-f_l)(K_u)\le \Gamma(f_k-f_l)(A_0),$$
we only need to show that
\begin{equation}\label{eq_Hin418goal}
\lim_{k,l\to+\infty}\Gamma(f_k-f_l)(A_0)=0.
\end{equation}

Let
$$\phi:=\frac{1}{2}\left(((f_k-f_l)\vee (-\varepsilon_{k,l}\widehat{g}))\wedge (\varepsilon_{k,l}\widehat{g})\right),$$
then $\phi\in \mathcal{F}^0_{\mathcal{N}_1(u)}\cap C(K)$. Since $f_l$ is harmonic on $\mathcal{N}_1(u)$, that is, $f_l\in \mathcal{H}(\mathcal{N}_1(u))$,
\begin{equation}\label{eq_Hin418a}
\Gamma(f_l)(K_{\mathcal{N}_1(u)})\le \Gamma(f_l+\phi)(K_{\mathcal{N}_1(u)}).
\end{equation}
Notice that $f_l+\phi=f_l\pm\frac{1}{2}\varepsilon_{k,l}\widehat{g}\text{ on }A_{\pm}$. By the strongly local property,
$$\Gamma(f_l+\phi)(A_{\pm})^{\frac{1}{p}}=\Gamma \left(f_l\pm\frac{1}{2}\varepsilon_{k,l}\widehat{g}\right)(A_{\pm})^{\frac{1}{p}}\le \Gamma(f_l)(A_{\pm})^{\frac{1}{p}}+\frac{1}{2}\varepsilon_{k,l}\mathcal{E}(\widehat{g})^{\frac{1}{p}}.$$
By the elementary result that for any $a,b\ge0$, $(a+b)^p\le a^p+pb(a+b)^{p-1}$,
$$\Gamma(f_l+\phi)(A_{\pm})\le \Gamma(f_l)(A_{\pm})+\varepsilon_{k,l}D_1,$$
where
$$D_1:=\frac{1}{2}p\mathcal{E}(\widehat{g})^{\frac{1}{p}}\left(\mathcal{E}(f_l)^{\frac{1}{p}}+\frac{1}{2}\varepsilon_{k,l}\mathcal{E}(\widehat{g})^{\frac{1}{p}}\right)^{p-1}\lesssim \Lambda^{\frac{p-1}{p}}.$$
Since $f_l+\phi=\frac{f_k+f_l}{2}$ on $A_0$, by (\ref{eq_Hin418a}) and the strongly local property,
\begin{align*}
&\Gamma(f_l)(K_{\mathcal{N}_1(u)})=\Gamma(f_l)(A_0)+\Gamma(f_l)(A_+)+\Gamma(f_l)(A_-)\\
&\le\Gamma(f_l+\phi)(K_{\mathcal{N}_1(u)})=\Gamma(f_l+\phi)(A_0)+\Gamma(f_l+\phi)(A_+)+\Gamma(f_l+\phi)(A_-)\\
&\le \Gamma \left(\frac{f_k+f_l}{2}\right)(A_0)+\Gamma(f_l)(A_+)+\Gamma(f_l)(A_-)+2\varepsilon_{k,l}D_1,
\end{align*}
which implies
\begin{equation}\label{eq_Hin418b}
\Gamma(f_l)(A_0)\le \Gamma \left(\frac{f_k+f_l}{2}\right)(A_0)+2\varepsilon_{k,l}D_1.
\end{equation}
Similarly, by changing the roles of $k$ and $l$,
\begin{equation}\label{eq_Hin418c}
\Gamma(f_k)(A_0)\le \Gamma \left(\frac{f_k+f_l}{2}\right)(A_0)+2\varepsilon_{k,l}D_2,
\end{equation}
where
$$D_2:=\frac{1}{2}p\mathcal{E}(\widehat{g})^{\frac{1}{p}}\left(\mathcal{E}(f_k)^{\frac{1}{p}}+\frac{1}{2}\varepsilon_{k,l}\mathcal{E}(\widehat{g})^{\frac{1}{p}}\right)^{p-1}\lesssim \Lambda^{\frac{p-1}{p}}.$$

If $p\in[2,+\infty)$, then by \hyperref[eq_ClaGamma]{$\text{Cla}^{\Gamma}$},
\begin{align*}
&\Gamma \left(\frac{f_k-f_l}{2}\right)(A_0)\le 2 \left(\Gamma\left(\frac{f_k}{2}\right)(A_0)^{\frac{1}{p-1}}+\Gamma\left(\frac{f_l}{2}\right)(A_0)^{\frac{1}{p-1}}\right)^{p-1}-\Gamma \left(\frac{f_k+f_l}{2}\right)(A_0)\\
&=\frac{1}{2^{p-1}}\left(\Gamma\left({f_k}\right)(A_0)^{\frac{1}{p-1}}+\Gamma\left({f_l}\right)(A_0)^{\frac{1}{p-1}}\right)^{p-1}-\Gamma \left(\frac{f_k+f_l}{2}\right)(A_0).
\end{align*}
Combining this with (\ref{eq_Hin418b}), (\ref{eq_Hin418c}), and the elementary result that for any $p>1$ and any $a,b\ge0$, $(a^{\frac{1}{p-1}}+b^{\frac{1}{p-1}})^{p-1}\le \max\{1,2^{p-2}\}(a+b)$,
$$\Gamma \left(\frac{f_k-f_l}{2}\right)(A_0)\le \varepsilon_{k,l}(D_1+D_2)\to0\text{ as }k,l\to+\infty.$$
If $p\in(1,2]$, then by \hyperref[eq_ClaGamma]{$\text{Cla}^{\Gamma}$}, (\ref{eq_Hin418b}), and (\ref{eq_Hin418c}),
\begin{align*}
&2 \left(\Gamma \left(\frac{f_k+f_l}{2}\right)(A_0)^{\frac{1}{p-1}}+\Gamma \left(\frac{f_k-f_l}{2}\right)(A_0)^{\frac{1}{p-1}}\right)^{p-1}\\
&\le \Gamma(f_k)(A_0)+\Gamma(f_l)(A_0)\le 2\Gamma \left(\frac{f_k+f_l}{2}\right)(A_0)+2\varepsilon_{k,l}(D_1+D_2).
\end{align*}
By the elementary result that for any $p\in(1,2]$ and any $a,b\ge0$, $(a+b)^{\frac{1}{p-1}}\le a^{\frac{1}{p-1}}+ C_p \left(a^{\frac{2-p}{p-1}} b+ b^{\frac{1}{p-1}}\right)$, where $C_p = \frac{1}{p-1}2^{\max\left\{\frac{1}{p-1}-2,\,0\right\}}$,
\begin{align*}
&\Gamma \left(\frac{f_k+f_l}{2}\right)(A_0)^{\frac{1}{p-1}}+\Gamma \left(\frac{f_k-f_l}{2}\right)(A_0)^{\frac{1}{p-1}}\le \left(\Gamma \left(\frac{f_k+f_l}{2}\right)(A_0)+\varepsilon_{k,l}(D_1+D_2)\right)^{\frac{1}{p-1}}\\
&\le \Gamma \left(\frac{f_k+f_l}{2}\right)(A_0)^{\frac{1}{p-1}}+C_p \left(\Gamma \left(\frac{f_k+f_l}{2}\right)(A_0)^{\frac{2-p}{p-1}}\varepsilon_{k,l}(D_1+D_2)+\left(\varepsilon_{k,l}(D_1+D_2)\right)^{\frac{1}{p-1}}\right),
\end{align*}
hence
\begin{align*}
&\Gamma \left(\frac{f_k-f_l}{2}\right)(A_0)^{\frac{1}{p-1}}\lesssim \left(\sup_{n\ge1}\mathcal{E}(f_n)\right)^{\frac{2-p}{p-1}}\varepsilon_{k,l}(D_1+D_2)+\left(\varepsilon_{k,l}(D_1+D_2)\right)^{\frac{1}{p-1}}\\
&\lesssim \Lambda^{\frac{1}{p(p-1)}}\varepsilon_{k,l}+\Lambda^{\frac{1}{p}}\varepsilon_{k,l}^{\frac{1}{p-1}}\to0\text{ as }k,l\to+\infty.
\end{align*}
In summary, we obtain (\ref{eq_Hin418goal}).
\end{proof}

\section{Index one for the \texorpdfstring{$p$}{p}-energy}\label{sec_R1}

Let us recall the definitions of a $p$-Dirichlet space and its regularity from \cite[Definitions 2.2 and 2.12]{EM25} as follows.

We say that $(K,d,m,\mathcal{E},\mathcal{F},\Gamma)$ is a $p$-Dirichlet space if the following hold:
\begin{enumerate}[label=($p$-DS\arabic*),ref=($p$-DS\arabic*)]
\item\label{item_pDS1} The triple $(K,d,m)$ is a locally compact metric space equipped with a positive Radon measure $m$.
\item\label{item_pDS2} The space $\mathcal{F}$ is a subspace of $L^p(K;m)$ and $\mathcal{E}:\mathcal{F}\to [0,+\infty)$ is a non-negative function such that $(\mathcal{F},(\mathcal{E}(\cdot)+\lVert \cdot\rVert_{L^p(K;m)}^p)^{\frac{1}{p}})$ is a Banach space.
\item\label{item_pDS3} For any $f\in \mathcal{F}$, there exists a finite non-negative Borel measure $\Gamma(f)$ on $K$ such that $\Gamma(f)(K)=\mathcal{E}(f)$ and for any $\lambda\in \mathbb{R}$, $\Gamma(\lambda f)=\lvert \lambda\rvert^p \Gamma(f)$.
\item\label{item_pDS4} For any $f,g\in \mathcal{F}$ and any $A\in \mathcal{B}(K)$,
$$\Gamma(f+g)(A)^{\frac{1}{p}}\le \Gamma(f)(A)^{\frac{1}{p}}+\Gamma(g)(A)^{\frac{1}{p}}.$$
\item\label{item_pDS5} For any $f\in \mathcal{F}$ and any function $\phi:\mathbb{R}\to\mathbb{R}$ with $\phi(0)=0$ and $\lvert \phi(t)-\phi(s)\rvert\le L \lvert t-s\rvert$ for any $t,s\in \mathbb{R}$, where $L\ge0$ is a constant, we have $\phi\circ f\in \mathcal{F}$ and
$$\mathrm{d}\Gamma(\phi\circ f)\le L^p \mathrm{d}\Gamma(f).$$
\item\label{item_pDS6} For any $f\in \mathcal{F}$ and any open subset $A\subseteq K$ satisfying that $f=c$ $m$-a.e. on $A$, where $c\in \mathbb{R}$ is a constant, then $\Gamma(f)(A)=0$.
\item\label{item_pDS7} Let $\{f_n\}\subseteq \mathcal{F}$ and $f\in L^p(K;m)$. If $\sup_{n}\mathcal{E}(f_n)<+\infty$ and $\{f_n\}_n$ is $L^p(K;m)$-convergent to $f$, then $f\in \mathcal{F}$ and for any $A\in \mathcal{B}(K)$,
$$\Gamma(f)(A)\le\varliminf_{n\to+\infty}\Gamma(f_n)(A).$$
\end{enumerate}

A $p$-Dirichlet space $(K,d,m,\mathcal{E},\mathcal{F},\Gamma)$ is called regular if $\mathcal{F}\cap C_c(K)$ is uniformly dense in $C_c(K)$ and $(\mathcal{E}(\cdot)+\lVert \cdot\rVert_{L^p(K;m)}^p)^{\frac{1}{p}}$-dense in $\mathcal{F}$.

As in the theory of Dirichlet forms, one can develop a theory of capacity and quasi-continuity for regular $p$-Dirichlet spaces. In particular, for a regular $p$-Dirichlet space $(K,d,m,\mathcal{E},\mathcal{F},\Gamma)$, any function $f\in \mathcal{F}$ admits a quasi-continuous modification $\widetilde{f}\in \mathcal{F}$, unique up to a capacity zero set, such that $f=\widetilde{f}$ $m$-a.e. on $K$; see \cite[Proposition 2.34]{EM25}.

Let $n\ge1$ be an integer. Let $\mathbb{S}^{n-1}:=\{\boldsymbol{\lambda}\in \mathbb{R}^n:\lvert \boldsymbol{\lambda}\rvert=1\}$ denote the unit sphere in $\mathbb{R}^n$. Let $\mathcal{F}^n$ denote the $n$-fold Cartesian product of $\mathcal{F}$.

Let $\Lambda$ be a $\sigma$-finite Borel measure on $(K,d)$, and let $\{f_i\}_{i\in I}$ be a family of measurable functions, where $I$ is the index set. Then the lattice infimum
$$\bigwedge_{i\in I}^\Lambda f_i$$
is the unique measurable function $f$, up to $\Lambda$-a.e. equality, such that the following hold.
\begin{enumerate}[label=(\arabic*),ref=(\arabic*)]
\item\label{item_lattice1} For any $i\in I$, we have $f\le f_i$ $\Lambda$-a.e. on $K$.
\item\label{item_lattice2} If $g$ also satisfies \ref{item_lattice1}, then $g\le f$ $\Lambda$-a.e. on $K$.
\end{enumerate}

We say that $\boldsymbol{\phi}=(\phi_1,\ldots,\phi_n)\in \mathcal{F}^n$ is $p$-independent on $A\in \mathcal{B}(K)$ if
$$\bigwedge^{\Lambda_{\boldsymbol{\phi}}}_{\boldsymbol{\lambda}=(\lambda_1,\ldots,\lambda_n)\in \mathbb{S}^{n-1}}\frac{\mathrm{d}\Gamma \left(\sum_{i=1}^n\lambda_i\phi_i\right)}{\mathrm{d}\Lambda_{\boldsymbol{\phi}}}(x)>0\text{ for $\Lambda_{\boldsymbol{\phi}}$-a.e. }x\in A,$$
where
$$\Lambda_{\boldsymbol{\phi}}:=\sum_{i=1}^n\Gamma(\phi_i).$$

By sub-linearity \ref{item_pDS4}, for any $\boldsymbol{\lambda}=(\lambda_1,\ldots,\lambda_n)$, $\boldsymbol{\lambda}'=(\lambda_1',\ldots,\lambda_n')\in \mathbb{R}^{n}$,
\begin{align}
&\lvert \left(\frac{\mathrm{d}\Gamma \left(\sum_{i=1}^n\lambda_i\phi_i\right)}{\mathrm{d}\Lambda_{\boldsymbol{\phi}}}(x)\right)^{\frac{1}{p}}-\left(\frac{\mathrm{d}\Gamma \left(\sum_{i=1}^n\lambda_i'\phi_i\right)}{\mathrm{d}\Lambda_{\boldsymbol{\phi}}}(x)\right)^{\frac{1}{p}}\rvert\nonumber\\
&\le \sum_{i=1}^n \lvert \lambda_i-\lambda_i'\rvert \left(\frac{\mathrm{d}\Gamma(\phi_i)}{\mathrm{d}\Lambda_{\boldsymbol{\phi}}}(x)\right)^{\frac{1}{p}}\text{ for $\Lambda_{\boldsymbol{\phi}}$-a.e. }x\in K.\label{eq_Gamma_holder}
\end{align}

From now on, let $S$ be any fixed countable dense subset of $\mathbb{S}^{n-1}$ containing $\boldsymbol{e}_1=(1,0,\ldots,0)$, \ldots, $\boldsymbol{e}_n=(0,0,\ldots,1)$.

For any $\boldsymbol{\phi}=(\phi_1,\ldots,\phi_n)\in \mathcal{F}^n$, there exists $N_{\boldsymbol{\phi}}\subseteq K$ with $\Lambda_{\boldsymbol{\phi}}(N_{\boldsymbol{\phi}})=0$ such that, for any $x\in K\backslash N_{\boldsymbol{\phi}}$, $(\frac{\mathrm{d}\Gamma \left(\sum_{i=1}^n\lambda_i\phi_i\right)}{\mathrm{d}\Lambda_{\boldsymbol{\phi}}}(x))^{\frac{1}{p}}$ is well-defined for any $\boldsymbol{\lambda}=(\lambda_1,\ldots,\lambda_n)\in S$ and is Lipschitz continuous as a function of $\boldsymbol{\lambda}$ on $S$. It therefore has a unique Lipschitz continuous extension to $\mathbb{S}^{n-1}$, which we denote by the same expression. For any $x\in K\backslash N_{\boldsymbol{\phi}}$, let
\begin{equation}\label{eq_Psiphi}
\Psi_{\boldsymbol{\phi}}(\boldsymbol{\lambda})(x)=\Psi_{\boldsymbol{\phi}}(\lambda_1,\ldots,\lambda_n)(x):=\frac{\mathrm{d}\Gamma \left(\sum_{i=1}^n\lambda_i\phi_i\right)}{\mathrm{d}\Lambda_{\boldsymbol{\phi}}}(x),
\end{equation}
then $\Psi_{\boldsymbol{\phi}}(\cdot)(x)\in C(\mathbb{S}^{n-1})$. By sub-linearity \ref{item_pDS4}, after enlarging $N_{\boldsymbol{\phi}}$ by a $\Lambda_{\boldsymbol{\phi}}$-null set, for any $x\in K\backslash N_{\boldsymbol{\phi}}$ and any $\boldsymbol{\lambda}\in S$,
\begin{equation}\label{eq_Psi_upper}
\Psi_{\boldsymbol{\phi}}(\boldsymbol{\lambda})(x)^{\frac{1}{p}}\le\sum_{i=1}^n\lvert\lambda_i\rvert \left(\frac{\mathrm{d}\Gamma(\phi_i)}{\mathrm{d}\Lambda_{\boldsymbol{\phi}}}(x)\right)^{\frac{1}{p}}\le\sum_{i=1}^n \lvert\lambda_i\rvert\le \sqrt{n}\left(\sum_{i=1}^n \lambda_i^2\right)^{\frac{1}{2}}=\sqrt{n}.
\end{equation}
By continuity in $\boldsymbol{\lambda}$, this estimate also holds for any $\boldsymbol{\lambda}\in \mathbb{S}^{n-1}$. Moreover, by definition, $\boldsymbol{\phi}\in \mathcal{F}^n$ is $p$-independent on $A\in \mathcal{B}(K)$ if and only if
\begin{equation}\label{eq_pind}
\Lambda_{\boldsymbol{\phi}}\left(\left\{x\in A\backslash N_{\boldsymbol{\phi}}:\inf_{\boldsymbol{\lambda}\in S}\Psi_{\boldsymbol{\phi}}(\boldsymbol{\lambda})(x)=\inf_{\boldsymbol{\lambda}\in {\mathbb{S}}^{n-1}}\Psi_{\boldsymbol{\phi}}(\boldsymbol{\lambda})(x)=0\right\}\right)=0.
\end{equation}

We say that a $\sigma$-finite Borel measure $\mu$ on $K$ is a minimal energy-dominant measure of $(\mathcal{E},\mathcal{F})$ if the following two conditions hold.
\begin{enumerate}[label=(\roman*)]
\item (Domination) For any $f\in \mathcal{F}$, we have $\Gamma(f)\ll \mu$.
\item (Minimality) If another $\sigma$-finite Borel measure $\nu$ on $K$ also satisfies the above domination condition, then $\mu\ll\nu$.
\end{enumerate}
If $(K,d)$ is separable, then \cite[Proposition 3.3.55]{HKST15} implies that $L^p(K;m)$ is separable, and \cite[Proposition 4.1]{AHM23} and \ref{item_SSEb} imply that $(\mathcal{F},(\mathcal{E}(\cdot)+\lVert \cdot\rVert_{L^p(K;m)}^p)^{\frac{1}{p}})$ is also separable. Then the existence of a minimal energy-dominant measure $\mu$ follows from \cite[Lemma 9.20]{MS25}; see also \cite[Lemma 2.2]{Nak85}, \cite[LEMMAS 2.2, 2.3 and 2.4]{Hin10} for the existence in the Dirichlet form setting. Moreover, for $A\in \mathcal{B}(K)$, $\mu(A)=0$ if and only if $\Gamma(f)(A)=0$ for any $f\in \mathcal{F}$.

Let $\mu$ be a minimal energy-dominant measure. Then in (\ref{eq_Psiphi}),
\begin{equation}\label{eq_Psiphimu}
\Psi_{\boldsymbol{\phi}}(\boldsymbol{\lambda})(x)=
\begin{cases}
\frac{\frac{\mathrm{d}\Gamma \left(\sum_{i=1}^n\lambda_i\phi_i\right)}{\mathrm{d}\mu}(x)}{\frac{\mathrm{d}\Lambda_{\boldsymbol{\phi}}}{\mathrm{d}\mu}(x)}&\text{if }\frac{\mathrm{d}\Lambda_{\boldsymbol{\phi}}}{\mathrm{d}\mu}(x)>0,\\
0&\text{if }\frac{\mathrm{d}\Lambda_{\boldsymbol{\phi}}}{\mathrm{d}\mu}(x)=0.
\end{cases}
\end{equation}

\begin{lemma}[{$p$-version of \cite[LEMMA 2.5]{Hin10}}]\label{lem_Hin25}
Let $\mu$ be a minimal energy-dominant measure and let $f,g\in \mathcal{F}$.
\begin{enumerate}[label=(\roman*),ref=(\roman*)]
\item\label{item_enedom1} It holds that
\begin{equation}\label{eq_enedom1}
\lvert \left(\frac{\mathrm{d}\Gamma(f)}{\mathrm{d}\mu}\right)^{\frac{1}{p}}-\left(\frac{\mathrm{d}\Gamma(g)}{\mathrm{d}\mu}\right)^{\frac{1}{p}}\rvert\le \left(\frac{\mathrm{d}\Gamma(f-g)}{\mathrm{d}\mu}\right)^{\frac{1}{p}}\mu\text{-a.e. on }K.
\end{equation}
\item\label{item_enedom2} Let $\{f_n\}\subseteq \mathcal{F}$ and $f\in \mathcal{F}$ be such that $\{f_n\}$ is $\mathcal{E}$-convergent to $f$. Then $\{\frac{\mathrm{d}\Gamma(f_n)}{\mathrm{d}\mu}\}$ is $L^1(K;\mu)$-convergent to $\frac{\mathrm{d}\Gamma(f)}{\mathrm{d}\mu}$.
\end{enumerate}
\end{lemma}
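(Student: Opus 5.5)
Part \ref{item_enedom1} will come from the sub-linearity \ref{item_pDS4} applied set-by-set and then differentiated. Fix $f,g\in\mathcal{F}$. For any $A\in\mathcal{B}(K)$, \ref{item_pDS4} applied to $f=g+(f-g)$ and to $g=f+(g-f)$, together with $\Gamma(g-f)=\Gamma(f-g)$ from \ref{item_pDS3}, gives
$$\lvert \Gamma(f)(A)^{\frac1p}-\Gamma(g)(A)^{\frac1p}\rvert\le\Gamma(f-g)(A)^{\frac1p}.$$
To pass to densities, I would write $\varphi_h:=\frac{\mathrm{d}\Gamma(h)}{\mathrm{d}\mu}$ and argue by contradiction. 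Suppose the set where $\varphi_f^{1/p}>\varphi_g^{1/p}+\varphi_{f-g}^{1/p}$ has positive $\mu$-measure. Then there are rationals $a>b+c\ge0$ and a set $E$ with $\mu(E)>0$, hence with finite positive $\mu$-measure by $\sigma$-finiteness, on which $\varphi_f\ge a^p$, $\varphi_g\le b^p$ and $\varphi_{f-g}\le c^p$. Integrating over $E$ gives
$$\Gamma(f)(E)^{\frac1p}\ge a\mu(E)^{\frac1p}>(b+c)\mu(E)^{\frac1p}\ge\Gamma(g)(E)^{\frac1p}+\Gamma(f-g)(E)^{\frac1p},$$
which contradicts the set inequality. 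Swapping the roles of $f$ and $g$ handles the other sign, and this proves (\ref{eq_enedom1}). The same argument could be phrased through a Lebesgue differentiation or martingale argument along the cells, but the rational-level-set argument avoids any differentiation theorem.

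For part \ref{item_enedom2}, set $\varphi_n:=\varphi_{f_n}$ and $\varphi:=\varphi_f$; these are in $L^1(\mu)$ with $\int\varphi_n\,\mathrm{d}\mu=\mathcal{E}(f_n)$. By \ref{item_enedom1},
$$\lVert\varphi_n^{1/p}-\varphi^{1/p}\rVert_{L^p(\mu)}^p\le\int\frac{\mathrm{d}\Gamma(f_n-f)}{\mathrm{d}\mu}\,\mathrm{d}\mu=\mathcal{E}(f_n-f)\to0,$$
so $\varphi_n^{1/p}\to\varphi^{1/p}$ in $L^p(\mu)$. To get $L^1$ convergence of the $p$-th powers, I would use the elementary inequality $\lvert s^p-t^p\rvert\le p\lvert s-t\rvert(s+t)^{p-1}$ for $s,t\ge0$ and then Hölder with exponents $p$ and $\frac p{p-1}$:
$$\int\lvert\varphi_n-\varphi\rvert\,\mathrm{d}\mu\le p\,\lVert\varphi_n^{1/p}-\varphi^{1/p}\rVert_{L^p(\mu)}\,\lVert\varphi_n^{1/p}+\varphi^{1/p}\rVert_{L^p(\mu)}^{p-1}.$$
The second factor is bounded by $\bigl(\mathcal{E}(f_n)^{1/p}+\mathcal{E}(f)^{1/p}\bigr)^{p-1}$, which stays bounded, so the right-hand side tends to $0$.

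The only genuinely delicate point is the passage from the set inequality to the a.e. density inequality, since $\mu$ is only $\sigma$-finite and the densities may be infinite only on null sets. The rational level-set argument handles both issues, after restricting to a set of finite $\mu$-measure.
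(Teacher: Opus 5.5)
Your proposal is correct and follows the paper's route: part \ref{item_enedom1} comes from the sub-linearity \ref{item_pDS4}, and your rational level-set argument spells out the passage from the set inequality to densities, which the paper leaves implicit. Part \ref{item_enedom2} uses the elementary inequality $\lvert s^p-t^p\rvert\lesssim\lvert s-t\rvert(s+t)^{p-1}$ together with part \ref{item_enedom1} and H\"older's inequality, exactly as in the paper (and in your level-set step, finiteness of $\mu(E)$ is automatic, since $a>0$ gives $\mu(E)\le\mathcal{E}(f)/a^p$).
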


\begin{proof}
\ref{item_enedom1} It follows directly from sub-linearity \ref{item_pDS4}. \ref{item_enedom2} Recall the following elementary result: There exists a constant $C_p>0$, depending only on $p$, such that for any $a,b\in[0,+\infty)$,
$$\lvert a^p-b^p\rvert\le C_p \lvert a-b\rvert \left(a^{p-1}+b^{p-1}\right).$$
Then by (\ref{eq_enedom1}) and H\"older's inequality,
\begin{align*}
&\int_K \lvert \frac{\mathrm{d}\Gamma(f_n)}{\mathrm{d}\mu}-\frac{\mathrm{d}\Gamma(f)}{\mathrm{d}\mu}\rvert \mathrm{d}\mu \\
&\le C_p\int_K \lvert \left(\frac{\mathrm{d}\Gamma(f_n)}{\mathrm{d}\mu}\right)^{\frac{1}{p}}-\left(\frac{\mathrm{d}\Gamma(f)}{\mathrm{d}\mu}\right)^{\frac{1}{p}}\rvert \left(\left(\frac{\mathrm{d}\Gamma(f_n)}{\mathrm{d}\mu}\right)^{\frac{p-1}{p}}+\left(\frac{\mathrm{d}\Gamma(f)}{\mathrm{d}\mu}\right)^{\frac{p-1}{p}}  \right)\mathrm{d}\mu\\
&\le C_p\int_K  \left(\frac{\mathrm{d}\Gamma(f_n-f)}{\mathrm{d}\mu}\right)^{\frac{1}{p}} \left(\left(\frac{\mathrm{d}\Gamma(f_n)}{\mathrm{d}\mu}\right)^{\frac{p-1}{p}}+\left(\frac{\mathrm{d}\Gamma(f)}{\mathrm{d}\mu}\right)^{\frac{p-1}{p}}  \right)\mathrm{d}\mu\\
&\le C_p \left(\mathcal{E}(f_n)^{\frac{p-1}{p}}+\mathcal{E}(f)^{\frac{p-1}{p}}\right)\mathcal{E}(f_n-f)^{\frac{1}{p}}\to0.
\end{align*}
\end{proof}

In our setting, the Sierpi\'nski carpet $(K,d,m)$ equipped with the canonical $p$-energy $(\mathcal{E}^{(c)},\mathcal{F}^{(c)})$ and its $p$-energy measure $\Gamma^{(c)}$ is a regular $p$-Dirichlet space; see \cite[Remark 2.6(2)]{EM25}. For notational convenience, we omit the superscript $(c)$ when no ambiguity arises.

The main result of this section is the following index one result.

\begin{theorem}\label{thm_R1}
For any $n\ge2$ and any $\boldsymbol{f}=(f_1,\ldots,f_n)\in \mathcal{F}^n$,
$$\Lambda_{\boldsymbol{f}}\left(\left\{\text{$\boldsymbol{f}$ is $p$-independent}\right\}\right)=0.$$
Equivalently, if $N_{\boldsymbol{f}}\subseteq K$ is a $\Lambda_{\boldsymbol{f}}$-null set such that $\Psi_{\boldsymbol{f}}(\cdot)(x)\in C(\mathbb{S}^{n-1})$ for any $x\in K\backslash N_{\boldsymbol{f}}$, then
$$\inf_{\boldsymbol{\lambda}\in \mathbb{S}^{n-1}}\Psi_{\boldsymbol{f}}(\boldsymbol{\lambda})(x)=0\text{ for $\Lambda_{\boldsymbol{f}}$-a.e. }x\in K\backslash N_{\boldsymbol{f}}.$$
\end{theorem}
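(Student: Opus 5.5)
The plan follows Hino's proof of index one for self-similar Dirichlet forms on the carpet, in the $p$-energy setting. We argue by contradiction and blow up at a density point.

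\textbf{Reductions.} First reduce to $n=2$: if $(f_1,\ldots,f_n)$ is $p$-independent at $x$, then so is any pair $(f_i,f_j)$ for which $x$ is charged. Next use Lemma \ref{lem_Hin25}\ref{item_enedom2} together with Lemma \ref{lem_Hstar} to replace $f_1,f_2$ by piecewise harmonic functions in $\mathcal{H}_*$. This requires showing that the set where $\inf_{\boldsymbol{\lambda}}\Psi_{\boldsymbol{f}}(\boldsymbol{\lambda})>0$ depends lower semicontinuously, in measure, on $\boldsymbol{f}$ in $L^1(\mu)$. We control this via the Radon--Nikodym densities with respect to a fixed minimal energy-dominant measure $\mu$ and the Lipschitz bound (\ref{eq_Gamma_holder}).

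\textbf{Blow-up at a density point.} Suppose $A:=\{\inf_{\boldsymbol{\lambda}}\Psi_{\boldsymbol{f}}\ge\varepsilon\}$ has positive $\Lambda_{\boldsymbol{f}}$-measure. By (\ref{eq_bdy_zero}), $\Lambda_{\boldsymbol{f}}$-a.e. point lies off $K^\partial_*$. A martingale/Lebesgue differentiation argument on cells $K_{[x]_m}$, as in Lemma \ref{lem_gh}, then gives a point $x$ and a sequence of words $w^{(m)}=[x]_m$ with $K_{\mathcal{N}_3(w^{(m)})}\cap K^\partial=\emptyset$. Along these words, the normalized measures $\rho^{-m}\Gamma(f_i)$ on $K_{\mathcal{N}_3(w^{(m)})}$ and $\Lambda_{\boldsymbol{f}}(K_{w^{(m)}})$ are comparable. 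Moreover, the densities $\frac{\mathrm{d}\Gamma(\lambda_1 f_1+\lambda_2 f_2)}{\mathrm{d}\Lambda_{\boldsymbol{f}}}$ become almost constant, approximately $\Psi_{\boldsymbol{f}}(\boldsymbol{\lambda})(x)$, on $K_{\mathcal{N}_3(w^{(m)})}$, uniformly in $\boldsymbol{\lambda}\in S$ by the equicontinuity (\ref{eq_Psi_upper})/(\ref{eq_Gamma_holder}).

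Since $f_i\in\mathcal{H}_*$, for $m$ large $f_i$ is harmonic on $K_{\mathcal{N}_3(w^{(m)})}$, so $f_i\in\mathcal{H}(\mathcal{N}_3(w^{(m)}))$. Normalize by subtracting cell averages and scaling by $(\rho^{-m}\Lambda_{\boldsymbol{f}}(K_{\mathcal{N}_3(w^{(m)})}))^{-1/p}$. Proposition \ref{prop_Hin418} then yields a subsequence along which $f_i\circ F_{w^{(m)}}$ (normalized) converges in $\mathcal{E}_1$ and uniformly to limits $h_1,h_2\in\mathcal{H}\cap C(K)$. By Lemma \ref{lem_Hin25}\ref{item_enedom2} applied to all $\lambda_1h_1+\lambda_2h_2$, we obtain $\Gamma(\lambda_1h_1+\lambda_2h_2)=\Psi(\boldsymbol{\lambda})\,(\Gamma(h_1)+\Gamma(h_2))$ on $K$ with constant density $\Psi(\boldsymbol{\lambda})\ge\varepsilon$ for every $\boldsymbol{\lambda}\in\mathbb{S}^1$. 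In particular, $\mathcal{E}(\lambda_1h_1+\lambda_2h_2)\ge\varepsilon\,(\mathcal{E}(h_1)+\mathcal{E}(h_2))>0$, so $h_1,h_2$ are linearly independent modulo constants. The limit thus has a ``homogeneous'' two-dimensional structure at all scales.

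\textbf{Contradiction (main obstacle).} The limit pair must be ruled out using $\frac{d_h}{2}<\frac{\beta_p}{p}$. The idea, following Hino's argument, is to consider the continuous map $\Phi=(h_1,h_2):K\to\mathbb{R}^2$. By Proposition \ref{prop_holder}, $h_i$ are Hölder with exponent $\theta=\beta_p/p$ (oscillation on level-$k$ cells is $\lesssim\rho^{-k/p}=3^{-k\beta_p/p}\cdot 8^{k/p}$; renormalized appropriately, the cellwise oscillation is $\lesssim (8^{-k}\rho^{-k})^{1/p}$ relative to mass). Hence $\Phi(K)$ has Hausdorff dimension at most $d_h/\theta<2$, so $\mathcal{L}^2(\Phi(K))=0$.

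On the other hand, a coarea/chain-rule argument using (\ref{eq_chain}) shows the following. For $\Phi$-images of Lebesgue-null sets $E$, choosing $\varphi\in C^1(\mathbb{R}^2)$ approximating suitable functions concentrated near $E$ forces $\Gamma(\varphi(h_1,h_2))$ to vanish. Meanwhile, independence with constant positive density forces the energy measure $\Gamma(\lambda_1h_1+\lambda_2h_2)$ to be nondegenerate in every direction. Concretely, I would compute $\Gamma(\varphi(\Phi))$ via (\ref{eq_chain}) and the $p$-independence lower bound: $\frac{\mathrm{d}\Gamma(\varphi\circ\Phi)}{\mathrm{d}\Lambda}\ge\varepsilon|\nabla\varphi(\Phi)|^p$. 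Then I would take $\varphi$ with $|\nabla\varphi|$ large on a small neighborhood of $\Phi(K)$ but with controlled $\mathcal{E}$ bounds coming from the Hölder estimate, and derive incompatible growth rates. Getting this quantitative comparison right, which amounts to transferring Hino's Lebesgue-null-image argument to the nonlinear setting, is the step I expect to be hardest. This is where $\frac{d_h}{2}<\frac{\beta_p}{p}$ enters, and it is presumably what the deferred result in Section \ref{sec_Hin310} supplies.
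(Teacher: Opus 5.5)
Your reductions and blow-up (to $n=2$, to $\mathcal{H}_*$ via Lemma \ref{lem_Hstar}, then via Proposition \ref{prop_Hin418} to a pair $\boldsymbol{h}\in\mathcal{H}^2$ with $\Psi_{\boldsymbol{h}}(\cdot)(x)=\Theta$ for $\Lambda_{\boldsymbol{h}}$-a.e.\ $x$) agree with Proposition \ref{prop_Hin49} and Theorem \ref{thm_step1}. The gap is in the contradiction step, which is missing two essential ingredients.

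First, $\mathcal{L}^2(\boldsymbol{h}(K))=0$ contradicts nothing unless you also know that $\boldsymbol{h}_*(f^p\Lambda_{\boldsymbol{h}})\ll\mathcal{L}^2$ for a cutoff $f\in\mathcal{F}_D$. This is Proposition \ref{prop_Hin34}, and it is where harmonicity is genuinely used, not merely for compactness and oscillation bounds. One tests $0=\mathcal{E}(h_i;f^p\varphi(\boldsymbol{h}))$ for $\varphi\in C_c^\infty(\mathbb{R}^2)$. By (\ref{eq_Leibniz}), (\ref{eq_chain}) and the constancy of $\Theta$, the quantity $\mathrm{d}\Gamma(h_i;h_j)/\mathrm{d}\Lambda_{\boldsymbol{h}}=\frac{1}{p}\frac{\partial\Theta}{\partial\lambda_j}(\boldsymbol{e}_i)$ is a number. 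This gives $\frac{\partial}{\partial x_i}\kappa=\sum_j a_{ij}\kappa_j$ in the sense of distributions, where $\kappa=\boldsymbol{h}_*(f^p\Lambda_{\boldsymbol{h}})$ and the $\kappa_j=\boldsymbol{h}_*\Gamma(h_j;f^p)$ are finite signed measures. Inverting the coefficient matrix is exactly where $\Theta>0$ and \ref{eq_Cla} enter. A finite measure whose distributional gradient is a finite measure is absolutely continuous, and (\ref{eq_Holder}) then gives $\xi^{1/p}\in W^{1,p}(\mathbb{R}^2)$. Your proposed substitute cannot replace this. The independence lower bound $\varepsilon\int_K|\nabla\varphi(\boldsymbol{h})|^p\,\mathrm{d}\Lambda_{\boldsymbol{h}}$ and the chain-rule upper bound (\ref{eq_Hin313c}) are comparable bounds on the same quantity $\mathcal{E}(\varphi(\boldsymbol{h}))$, consistent for every $\varphi$. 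So no choice of $\varphi$ yields incompatible growth rates. Some input excluding a singular $\boldsymbol{h}_*\Lambda_{\boldsymbol{h}}$ is indispensable, and your sketch supplies none.

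Second, your argument for $\mathcal{L}^2(\boldsymbol{h}(K))=0$ only works when $\beta_p>d_h$. Proposition \ref{prop_holder} does not make $h_i$ H\"older of order $\beta_p/p$. It gives $\mathrm{diam}\,\boldsymbol{h}(K_w)\lesssim(\rho^{-n}\Lambda_{\boldsymbol{h}}(K_{\mathcal{N}_3(w)}))^{1/p}$ for $w\in W_n$. Summing this to $\sum_{w\in W_n}\mathrm{diam}\,\boldsymbol{h}(K_w)^s\lesssim3^{(d_h-s\beta_p/p)n}$ uses H\"older's inequality, which requires $s\le p$. Hence you need $s\in(pd_h/\beta_p,p\wedge2]$, which is empty when $\beta_p\le d_h$, i.e.\ for $p\in(1,\mathrm{dim}_{\mathrm{ARC}}(K,d)]$, a non-trivial range.

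For these $p$ the paper argues quite differently (Case 2 of the proof). It combines:
\begin{itemize}
\item the Sobolev inequality (\ref{eq_Sobolev}) coming from \ref{eq_PI};
\item the capacity equivalence of Proposition \ref{prop_Hin310}, which is what Section \ref{sec_Hin310} supplies (not the inequality $\frac{d_h}{2}<\frac{\beta_p}{p}$, which simply follows from $d_h<2$ and $\beta_p>p$);
\item Lemmas \ref{lem_Hin39}, \ref{lem_Hin311}, \ref{lem_Hin312} and Proposition \ref{prop_Hin313}.
\end{itemize}
Together these produce a point $x_0\in\boldsymbol{h}(K)$ at which the Lebesgue density of $\boldsymbol{h}_*\Lambda_{\boldsymbol{h}}$ has bounded averages over balls centred at $x_0$, and $ar^2\le(\boldsymbol{h}_*m)(\overline{B}(x_0,r))=o(r^{2-p})$ for small $r$. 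The test functions $g_\delta(|\boldsymbol{h}-x_0|)$ then have energy $O(\delta^{2-p})$ but violate (\ref{eq_Sobolev}) as $\delta\downarrow0$. This case again rests on Proposition \ref{prop_Hin34}, and your plan contains no argument for it.
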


\begin{remark}\label{rmk_R1}
Indeed, it suffices to prove the result for $n=2$. Suppose that there exist $\boldsymbol{f}=(f_1,f_2)\in \mathcal{F}^2$ and $A\in \mathcal{B}(K)$ satisfying $\Lambda_{\boldsymbol{f}}(A)>0$, $\Psi_{{\boldsymbol{f}}}(\cdot)(x)\in C(\mathbb{S}^1)$ for any $x\in A$, and
\begin{equation}\label{eq_R1_f}
\inf_{\boldsymbol{\lambda}\in \mathbb{S}^1}\Psi_{\boldsymbol{f}}(\boldsymbol{\lambda})(x)>0\text{ for any }x\in A.
\end{equation}
We derive a contradiction.
\end{remark}

Our strategy follows the essential ideas of \cite[Strategy 3.6]{Hin13}, with substantial modifications for the non-linear setting.

\begin{enumerate}[label=Step \arabic*:, start=0,ref=Step \arabic*]
\item\label{item_step0} Use the density result (Lemma \ref{lem_Hstar}) to reduce $\boldsymbol{f}\in \mathcal{F}^2$ in Remark \ref{rmk_R1} to $\boldsymbol{g}\in \mathcal{H}^2$; see Proposition \ref{prop_Hin49}.
\item\label{item_step1} Reduce the function $\Psi_{\boldsymbol{g}}(\cdot)(x)$ from \ref{item_step0} to a fixed function $\Theta\in C(\mathbb{S}^1)$. More precisely, find $\boldsymbol{h}\in \mathcal{H}^2$ with $\Lambda_{\boldsymbol{h}}(K)>0$, and $\Theta\in C(\mathbb{S}^1)$, bounded above and below by positive constants, such that $\Psi_{\boldsymbol{h}}(\cdot)(x)=\Theta$ for $\Lambda_{\boldsymbol{h}}$-a.e. $x\in K$; see Theorem \ref{thm_step1}.
\item\label{item_step2} We show that, after multiplying the measure $\Lambda_{\boldsymbol{h}}$ from \ref{item_step1} by a suitable cutoff function $f^p$, the pushforward measure $\widetilde{\boldsymbol{h}}_*(\widetilde{f}^p\Lambda_{\boldsymbol{h}})$ on $\mathbb{R}^2$ is absolutely continuous with respect to Lebesgue measure $\mathcal{L}^2$, with a controlled Radon--Nikodym derivative; see Proposition \ref{prop_Hin34}. This gives estimates for $\widetilde{\boldsymbol{h}}_*(\widetilde{f}^p \Lambda_{\boldsymbol{h}})$ in the results from Lemma \ref{lem_Hin39} through Proposition \ref{prop_Hin313}.
\item\label{item_step3} Finally, we use the estimates from \ref{item_step2} to obtain a contradiction in either case: to the non-triviality of $\Lambda_{\boldsymbol{h}}$ when $\beta_p>d_h$, and to a Sobolev inequality when $\beta_p\le d_h$. In both cases, the condition $\frac{d_h}{2}<\frac{\beta_p}{p}$ is intrinsically needed.
\end{enumerate}

We begin with \ref{item_step0}.

\begin{proposition}[{$p$-version of \cite[Proposition 4.9]{Hin13}}]\label{prop_Hin49}
Suppose that (\ref{eq_R1_f}) holds for some $\boldsymbol{f}=(f_1,f_2)\in \mathcal{F}^2$ and some Borel subset $A\in \mathcal{B}(K)$ with $\Lambda_{\boldsymbol{f}}(A)>0$. Then there exist $\boldsymbol{g}=(g_1,g_2)\in \mathcal{H}^2$, a Borel subset, still denoted by $A\in \mathcal{B}(K)$, and a constant $a>0$, such that $\Lambda_{\boldsymbol{g}}(A)>0$, $\Psi_{\boldsymbol{g}}(\cdot)(x)\in C(\mathbb{S}^1)$ for any $x\in A$, and
\begin{equation}\label{eq_R1_h}
\inf_{\boldsymbol{\lambda}\in \mathbb{S}^1}\Psi_{\boldsymbol{g}}(\boldsymbol{\lambda})(x)\ge a>0\text{ for any }x\in A.
\end{equation}
\end{proposition}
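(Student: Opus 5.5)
The plan is to first approximate $\boldsymbol{f}$ by piecewise harmonic functions using Lemma \ref{lem_Hstar}, and then to localize to a single cell, where self-similarity turns piecewise harmonic functions into genuine elements of $\mathcal{H}$. Throughout I will work with a fixed minimal energy-dominant measure $\mu$ and with densities with respect to $\mu$, using (\ref{eq_Psiphimu}). I will also use the fact that $\mu(K^\partial_*)=0$ by (\ref{eq_bdy_zero}).

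\emph{Step 1: quantitative form of (\ref{eq_R1_f}).} For $\boldsymbol{\lambda}\in\mathbb{S}^1$ write $\gamma_{\boldsymbol{\lambda}}(\boldsymbol{f}):=\frac{\mathrm{d}\Gamma(\lambda_1f_1+\lambda_2f_2)}{\mathrm{d}\mu}$ and $\ell_{\boldsymbol{f}}:=\frac{\mathrm{d}\Lambda_{\boldsymbol{f}}}{\mathrm{d}\mu}$. Condition (\ref{eq_R1_f}) says $\gamma_{\boldsymbol{\lambda}}(\boldsymbol{f})\ge \Psi\,\ell_{\boldsymbol{f}}$ with $\inf_{\boldsymbol\lambda}\Psi>0$ on $A$, and $\ell_{\boldsymbol{f}}>0$ $\Lambda_{\boldsymbol f}$-a.e. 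Shrinking $A$ (keeping $\mu(A)>0$), I will arrange constants $0<b\le M<\infty$ such that on $A$
$$\inf_{\boldsymbol{\lambda}\in S}\gamma_{\boldsymbol{\lambda}}(\boldsymbol{f})\ge b\qquad\text{and}\qquad \ell_{\boldsymbol{f}}\le M.$$

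\emph{Step 2: approximation.} By Lemma \ref{lem_Hstar}, I pick $u_1,u_2\in\mathcal{H}_N$, for a common $N$ (possible since $\mathcal{H}_n\subseteq\mathcal{H}_{n+1}$), with $\mathcal{E}(f_i-u_i)\le\varepsilon$. By (\ref{eq_enedom1}) and sub-linearity, for every $\boldsymbol{\lambda}\in S$, $\mu$-a.e.,
$$\bigl|\gamma_{\boldsymbol{\lambda}}(\boldsymbol{u})^{1/p}-\gamma_{\boldsymbol{\lambda}}(\boldsymbol{f})^{1/p}\bigr|\le \sum_{i=1}^2\Bigl(\frac{\mathrm{d}\Gamma(f_i-u_i)}{\mathrm{d}\mu}\Bigr)^{1/p}=:e.$$
The key point is that the right-hand side does not depend on $\boldsymbol\lambda$, so the bound is uniform over $S$ and, by continuity, over $\mathbb{S}^1$. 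Since $\int_K e^p\,\mathrm{d}\mu\lesssim\varepsilon$, Markov's inequality gives $\mu(\{e>\eta\})\lesssim\varepsilon\eta^{-p}$. Choosing $\eta$ with $\eta<\frac12 b^{1/p}$ and then $\varepsilon$ small, the set $A':=A\cap\{e\le\eta\}$ has $\mu(A')>0$. On $A'$ the estimate above yields
$$\inf_{\boldsymbol\lambda}\gamma_{\boldsymbol\lambda}(\boldsymbol u)\ge 2^{-p}b\qquad\text{and}\qquad \ell_{\boldsymbol u}\le 2^{p}(M+2\eta^p).$$
Hence $\Lambda_{\boldsymbol u}(A')>0$ and, on $A'$,
$$\Psi_{\boldsymbol u}(\boldsymbol\lambda)=\frac{\gamma_{\boldsymbol\lambda}(\boldsymbol u)}{\ell_{\boldsymbol u}}\ge a'>0 .$$

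\emph{Step 3: localization.} Since $\mu(K^\partial_N)=0$, there is $w\in W_N$ with $\mu(A'\cap K^{\mathrm{int}}_w)>0$. I set $g_i:=u_i\circ F_w$, which lies in $\mathcal{H}$ because $u_i\in\mathcal{H}_N$. By (\ref{eq_Gamma_ss}) and the $\Gamma$-nullity of cell boundaries, for every $\boldsymbol\lambda$,
$$\Gamma(\lambda_1u_1+\lambda_2u_2)|_{K_w}=\rho^N(F_w)_*\Gamma(\lambda_1g_1+\lambda_2g_2),$$
and likewise for $\Lambda_{\boldsymbol u}$. Both numerator and denominator in (\ref{eq_Psiphi}) are transported by the same map $F_w$ with the same factor $\rho^N$. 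Therefore $\Psi_{\boldsymbol g}(\boldsymbol\lambda)(x)=\Psi_{\boldsymbol u}(\boldsymbol\lambda)(F_wx)$ for $\Lambda_{\boldsymbol g}$-a.e. $x$, simultaneously for all $\boldsymbol\lambda\in S$. Taking the new set to be $F_w^{-1}(A'\cap K^{\mathrm{int}}_w)$, minus the countable union of exceptional null sets (including $N_{\boldsymbol g}$), gives a set of positive $\Lambda_{\boldsymbol g}$-measure on which $\Psi_{\boldsymbol g}(\cdot)(x)\in C(\mathbb{S}^1)$ and (\ref{eq_R1_h}) holds with $a=a'$.

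The main obstacle I expect is the bookkeeping of null sets and uniformity in $\boldsymbol\lambda$, namely passing from the countable set $S$ to all of $\mathbb{S}^1$. I will handle this using (\ref{eq_Gamma_holder}), the continuous extension of $\Psi$, and the $\boldsymbol\lambda$-independent error bound $e$. A second point to check is that $\mu$-null and $\Lambda$-null sets are compatible under the change of measure $\mu\leftrightarrow\Lambda_{\boldsymbol u}$ and under $F_w$. This follows from (\ref{eq_Psiphimu}) together with $\Lambda_{\boldsymbol u}\ll\mu$.
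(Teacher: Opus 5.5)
Your proposal is correct and follows the same route as the paper: you approximate $\boldsymbol{f}$ by elements of $\mathcal{H}_*^2$ via Lemma \ref{lem_Hstar}, control the $\mu$-densities uniformly in $\boldsymbol{\lambda}$ by sub-linearity, discard a small exceptional set, and then pull back to a single level-$N$ cell by self-similarity. The only difference is bookkeeping. You first reduce to a set where the $\mu$-densities are bounded above and below, and then use an absolute threshold with Markov's inequality in $\mu$. The paper instead uses a threshold relative to $\frac{\mathrm{d}\Lambda_{\boldsymbol{f}}}{\mathrm{d}\mu}$, measures the bad sets in $\Lambda_{\boldsymbol{f}}$, and needs Lemma \ref{lem_Hin25}\ref{item_enedom2} to keep $\Lambda_{\boldsymbol{h}^{(n)}}(A_n)>0$; your pointwise lower bound on $\gamma_{\boldsymbol\lambda}(\boldsymbol u)$ makes that last step unnecessary.
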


\begin{proof}
First, we prove that there exist $\boldsymbol{h}=(h_1,h_2)\in \mathcal{H}_*^2$, $B\in \mathcal{B}(K)$, and a constant $a>0$, such that $\Lambda_{\boldsymbol{h}}(B)>0$, $\Psi_{\boldsymbol{h}}(\cdot)(x)\in C(\mathbb{S}^1)$ for any $x\in B$, and
\begin{equation}\label{eq_R1_h1}
\inf_{\boldsymbol{\lambda}\in \mathbb{S}^1}\Psi_{\boldsymbol{h}}(\boldsymbol{\lambda})(x)\ge a>0\text{ for any }x\in B.
\end{equation}
By (\ref{eq_R1_f}), there exist a Borel subset of $A$, still denoted by $A$, and a constant $a\in(0,\frac{1}{2^p}]$ sufficiently small, such that $\Lambda_{\boldsymbol{f}}(A)>0$, and
\begin{equation}\label{eq_f2h00}
\inf_{\boldsymbol{\lambda}\in \mathbb{S}^1}\Psi_{\boldsymbol{f}}(\boldsymbol{\lambda})(x)\ge 2^pa>0\text{ for any }x\in A.
\end{equation}
By Lemma \ref{lem_Hstar}, there exists a sequence $\{\boldsymbol{h}^{(n)}=(h^{(n)}_1,h^{(n)}_2)\}_n\subseteq \mathcal{H}_*^2$ such that $\{h^{(n)}_i\}_n$ is $\mathcal{E}_1$-convergent to $f_i$ for $i=1,2$. By sub-linearity \ref{item_pDS4}, we may choose a common $\mu$-null set $N\in \mathcal{B}(K)$ and $\mu$-versions of the following Radon--Nikodym derivatives such that, for any $x\in K\backslash N$, any $\boldsymbol{\lambda}=(\lambda_1,\lambda_2),\boldsymbol{\delta}=(\delta_1,\delta_2)\in S$, and any $\boldsymbol{u}=(u_1,u_2),\boldsymbol{v}=(v_1,v_2)\in \{\boldsymbol{f}\}\cup\{\boldsymbol{h}^{(n)},\boldsymbol{h}^{(n)}-\boldsymbol{f}\}_n$,
$$\lvert \left(\frac{\mathrm{d}\Gamma(\lambda_1u_1+\lambda_2 u_2)}{\mathrm{d}\mu}(x)\right)^{\frac{1}{p}}-\left(\frac{\mathrm{d}\Gamma(\lambda_1v_1+\lambda_2v_2)}{\mathrm{d}\mu}(x)\right)^{\frac{1}{p}}\rvert\le\sum_{i=1}^2 \lvert \lambda_i\rvert\left(\frac{\mathrm{d}\Gamma(u_i-v_i)}{\mathrm{d}\mu}(x)\right)^{\frac{1}{p}},$$
and
$$\lvert \left(\frac{\mathrm{d}\Gamma(\lambda_1u_1+\lambda_2 u_2)}{\mathrm{d}\mu}(x)\right)^{\frac{1}{p}}-\left(\frac{\mathrm{d}\Gamma(\delta_1u_1+\delta_2u_2)}{\mathrm{d}\mu}(x)\right)^{\frac{1}{p}}\rvert\le\sum_{i=1}^2 \lvert \lambda_i-\delta_i\rvert\left(\frac{\mathrm{d}\Gamma(u_i)}{\mathrm{d}\mu}(x)\right)^{\frac{1}{p}}.$$
Then for any $\boldsymbol{u}=(u_1,u_2)\in \{\boldsymbol{f}\}\cup\{\boldsymbol{h}^{(n)},\boldsymbol{h}^{(n)}-\boldsymbol{f}\}_n$ and any $x\in K\backslash N$, the function
$$\boldsymbol{\lambda}=(\lambda_1,\lambda_2)\mapsto\left(\frac{\mathrm{d}\Gamma(\lambda_1u_1+\lambda_2 u_2)}{\mathrm{d}\mu}(x)\right)^{\frac{1}{p}}$$
is well-defined and Lipschitz continuous on $S$. It therefore has a unique Lipschitz continuous extension to $\mathbb{S}^1$, which we denote by $R_{\boldsymbol{u}}(\cdot)(x)\in C(\mathbb{S}^1)$, satisfying that for any $\boldsymbol{u}\in \{\boldsymbol{f}\}\cup\{\boldsymbol{h}^{(n)}\}_n$, any $\boldsymbol{\lambda},\boldsymbol{\delta}\in \mathbb{S}^1$, and any $x\in K\backslash N$,
\begin{equation}\label{eq_Rld}
\lvert R_{\boldsymbol{u}}(\boldsymbol{\lambda})(x)-R_{\boldsymbol{u}}(\boldsymbol{\delta})(x)\rvert \le\sum_{i=1}^2 \lvert \lambda_i-\delta_i\rvert R_{\boldsymbol{u}}(\boldsymbol{e}_i)(x),
\end{equation}
and for any $n$,
\begin{equation}\label{eq_Ruv}
\lvert R_{\boldsymbol{h}^{(n)}}(\boldsymbol{\lambda})(x)-R_{\boldsymbol{f}}(\boldsymbol{\lambda})(x)\rvert\le \sum_{i=1}^2 \lvert \lambda_i\rvert R_{\boldsymbol{h}^{(n)}-\boldsymbol{f}}(\boldsymbol{e}_i)(x).
\end{equation}
Moreover, Lemma \ref{lem_Hin25}\ref{item_enedom2} yields that, for any $\boldsymbol{\lambda}=(\lambda_1,\lambda_2)\in \mathbb{S}^1$,
\begin{equation}\label{eq_GammaR}
R_{\boldsymbol{u}}(\boldsymbol{\lambda})(\cdot)^p=\frac{\mathrm{d}\Gamma(\lambda_1u_1+\lambda_2u_2)}{\mathrm{d}\mu}\ \mu\text{-a.e.},
\end{equation}
hence
\begin{equation}\label{eq_LambdaR}
R_{\boldsymbol{u}}(\boldsymbol{e}_1)(\cdot)^p+R_{\boldsymbol{u}}(\boldsymbol{e}_2)(\cdot)^p=\frac{\mathrm{d}\Lambda_{\boldsymbol{u}}}{\mathrm{d}\mu}\ \mu\text{-a.e.}
\end{equation}
By (\ref{eq_Psiphimu}), for any $\boldsymbol{u}\in \{\boldsymbol{f}\}\cup\{\boldsymbol{h}^{(n)}\}_n$, any $x\in K\backslash N$, and any $\boldsymbol{\lambda}\in \mathbb{S}^1$,
\begin{equation}\label{eq_PsiR}
\Psi_{\boldsymbol{u}}(\boldsymbol{\lambda})(x)=
\begin{cases}
\frac{R_{\boldsymbol{u}}(\boldsymbol{\lambda})(x)^p}{R_{\boldsymbol{u}}(\boldsymbol{e}_1)(x)^p+R_{\boldsymbol{u}}(\boldsymbol{e}_2)(x)^p}&\text{if }R_{\boldsymbol{u}}(\boldsymbol{e}_1)(x)^p+R_{\boldsymbol{u}}(\boldsymbol{e}_2)(x)^p>0,\\
0&\text{if }R_{\boldsymbol{u}}(\boldsymbol{e}_1)(x)^p+R_{\boldsymbol{u}}(\boldsymbol{e}_2)(x)^p=0.
\end{cases}
\end{equation}
By Minkowski's inequality and (\ref{eq_Ruv}), for any $x\in K\backslash N$,
\begin{align}
&\lvert \left(R_{\boldsymbol{h}^{(n)}}(\boldsymbol{e}_1)(x)^p+R_{\boldsymbol{h}^{(n)}}(\boldsymbol{e}_2)(x)^p\right)^{\frac{1}{p}}-\left(R_{\boldsymbol{f}}(\boldsymbol{e}_1)(x)^p+R_{\boldsymbol{f}}(\boldsymbol{e}_2)(x)^p\right)^{\frac{1}{p}}\rvert\nonumber\\
&\le \left(\lvert R_{\boldsymbol{h}^{(n)}}(\boldsymbol{e}_1)(x)-R_{\boldsymbol{f}}(\boldsymbol{e}_1)(x)\rvert^p+\lvert R_{\boldsymbol{h}^{(n)}}(\boldsymbol{e}_2)(x)-R_{\boldsymbol{f}}(\boldsymbol{e}_2)(x)\rvert^p\right)^{\frac{1}{p}}\nonumber\\
&\le \left(R_{\boldsymbol{h}^{(n)}-\boldsymbol{f}}(\boldsymbol{e}_1)^p+R_{\boldsymbol{h}^{(n)}-\boldsymbol{f}}(\boldsymbol{e}_2)^p\right)^{\frac{1}{p}}.\label{eq_Rhnf}
\end{align}
Since $\mu(N)=0$, $\Lambda_{\boldsymbol{f}}(N)=0$, which implies $\Lambda_{\boldsymbol{f}}(A\backslash N)=\Lambda_{\boldsymbol{f}}(A)>0$. Combining the above inequality with (\ref{eq_Ruv}), (\ref{eq_PsiR}), and (\ref{eq_f2h00}), for any $x\in A\backslash N$,
\begin{align}
&R_{\boldsymbol{h}^{(n)}}(\boldsymbol{\lambda})(x)\ge R_{\boldsymbol{f}}(\boldsymbol{\lambda})(x)-\sum_{i=1}^2 \lvert \lambda_i\rvert R_{\boldsymbol{h}^{(n)}-\boldsymbol{f}}(\boldsymbol{e}_i)(x)\nonumber\\
&\ge 2 a^{\frac{1}{p}} \left(R_{\boldsymbol{f}}(\boldsymbol{e}_1)(x)^p+R_{\boldsymbol{f}}(\boldsymbol{e}_2)(x)^p\right)^{\frac{1}{p}}-\sum_{i=1}^2  R_{\boldsymbol{h}^{(n)}-\boldsymbol{f}}(\boldsymbol{e}_i)(x)\nonumber\\
&\ge 2 a^{\frac{1}{p}} \left(R_{\boldsymbol{h}^{(n)}}(\boldsymbol{e}_1)(x)^p+R_{\boldsymbol{h}^{(n)}}(\boldsymbol{e}_2)(x)^p\right)^{\frac{1}{p}}-2\sum_{i=1}^2  R_{\boldsymbol{h}^{(n)}-\boldsymbol{f}}(\boldsymbol{e}_i)(x).\label{eq_f2h03}
\end{align}

Let
$$N_{i,n}:=\left\{x\in K\backslash N:R_{\boldsymbol{h}^{(n)}-\boldsymbol{f}}(\boldsymbol{e}_i)(x)^p> \frac{a}{2^{3p}}\left(R_{\boldsymbol{f}}(\boldsymbol{e}_1)^p+R_{\boldsymbol{f}}(\boldsymbol{e}_2)^p\right)\right\},$$
then for any $x\in K\backslash (N\cup N_{1,n}\cup N_{2,n})$, (\ref{eq_Rhnf}) gives
\begin{align*}
&R_{\boldsymbol{h}^{(n)}-\boldsymbol{f}}(\boldsymbol{e}_1)(x)+R_{\boldsymbol{h}^{(n)}-\boldsymbol{f}}(\boldsymbol{e}_2)(x)\le \frac{a^{\frac{1}{p}}}{2^2} \left(R_{\boldsymbol{f}}(\boldsymbol{e}_1)^p+R_{\boldsymbol{f}}(\boldsymbol{e}_2)^p\right)^{\frac{1}{p}}\\
&\le \frac{a^{\frac{1}{p}}}{2^2} \left(\left(R_{\boldsymbol{h}^{(n)}}(\boldsymbol{e}_1)^p+R_{\boldsymbol{h}^{(n)}}(\boldsymbol{e}_2)^p\right)^{\frac{1}{p}}+\left(R_{\boldsymbol{h}^{(n)}-\boldsymbol{f}}(\boldsymbol{e}_1)(x)^p+R_{\boldsymbol{h}^{(n)}-\boldsymbol{f}}(\boldsymbol{e}_2)(x)^p\right)^{\frac{1}{p}}\right),
\end{align*}
hence
\begin{align}
&R_{\boldsymbol{h}^{(n)}-\boldsymbol{f}}(\boldsymbol{e}_1)(x)+R_{\boldsymbol{h}^{(n)}-\boldsymbol{f}}(\boldsymbol{e}_2)(x)\nonumber\\
&\le \frac{\frac{a^{\frac{1}{p}}}{2^2}}{1-\frac{a^{\frac{1}{p}}}{2^2}}\left(R_{\boldsymbol{h}^{(n)}}(\boldsymbol{e}_1)^p+R_{\boldsymbol{h}^{(n)}}(\boldsymbol{e}_2)^p\right)^{\frac{1}{p}}\le \frac{a^{\frac{1}{p}}}{2}\left(R_{\boldsymbol{h}^{(n)}}(\boldsymbol{e}_1)^p+R_{\boldsymbol{h}^{(n)}}(\boldsymbol{e}_2)^p\right)^{\frac{1}{p}}.\label{eq_f2h04}
\end{align}

Let $A_n:=A\backslash (N\cup N_{1,n}\cup N_{2,n})$, then by (\ref{eq_f2h03}) and (\ref{eq_f2h04}), for any $x\in A_n$,
\begin{equation}\label{eq_f2h05}
R_{\boldsymbol{h}^{(n)}}(\boldsymbol{\lambda})(x)\ge a^{\frac{1}{p}}\left(R_{\boldsymbol{h}^{(n)}}(\boldsymbol{e}_1)^p+R_{\boldsymbol{h}^{(n)}}(\boldsymbol{e}_2)^p\right)^{\frac{1}{p}}.
\end{equation}
Moreover, by (\ref{eq_LambdaR}) and (\ref{eq_GammaR}),
$$\Lambda_{\boldsymbol{f}}(N_{i,n})=\int_{N_{i,n}}\frac{\mathrm{d}\Lambda_{\boldsymbol{f}}}{\mathrm{d}\mu}(x) \mu(\mathrm{d}x)\le \frac{2^{3p}}{a}\int_{N_{i,n}}\frac{\mathrm{d}\Gamma(h^{(n)}_i-f_i)}{\mathrm{d}\mu}(x)\mu(\mathrm{d}x)\le \frac{2^{3p}}{a}\mathcal{E}(h^{(n)}_i-f_i)\to0,$$
hence there exists $M_1\ge1$ such that for any $n>M_1$, $\Lambda_{\boldsymbol{f}}(N_{i,n})<\frac{1}{4}\Lambda_{\boldsymbol{f}}(A)$, then $\Lambda_{\boldsymbol{f}}(A_{n})\ge \frac{1}{2}\Lambda_{\boldsymbol{f}}(A)$. By Lemma \ref{lem_Hin25}\ref{item_enedom2}, $\frac{\mathrm{d}\Lambda_{\boldsymbol{h}^{(n)}}}{\mathrm{d}\mu}$ is $L^1(K;\mu)$-convergent to $\frac{\mathrm{d}\Lambda_{\boldsymbol{f}}}{\mathrm{d}\mu}$, then for any $B\in \mathcal{B}(K)$,
$$\lvert \Lambda_{\boldsymbol{h}^{(n)}}(B)-\Lambda_{\boldsymbol{f}}(B)\rvert\le \int_B \lvert \frac{\mathrm{d}\Lambda_{\boldsymbol{h}^{(n)}}}{\mathrm{d}\mu}-\frac{\mathrm{d}\Lambda_{\boldsymbol{f}}}{\mathrm{d}\mu}\rvert \mathrm{d}\mu\le \int_K \lvert \frac{\mathrm{d}\Lambda_{\boldsymbol{h}^{(n)}}}{\mathrm{d}\mu}-\frac{\mathrm{d}\Lambda_{\boldsymbol{f}}}{\mathrm{d}\mu}\rvert \mathrm{d}\mu,$$
hence there exists $M_2\ge1$ such that for any $n>M_2$, the RHS $\le \frac{1}{4}\Lambda_{\boldsymbol{f}}(A)$. Take any $n>\max\{M_1,M_2\}$, then $\Lambda_{\boldsymbol{h}^{(n)}}(A_n)\ge \frac{1}{4}\Lambda_{\boldsymbol{f}}(A)>0$, and (\ref{eq_f2h05}), together with (\ref{eq_PsiR}), gives $\inf_{\boldsymbol{\lambda}\in\mathbb{S}^1}\Psi_{\boldsymbol{h}^{(n)}}(\boldsymbol{\lambda})(x)\ge a>0$ for any $x\in A_n$. Therefore, we obtain (\ref{eq_R1_h1}).

Second, we prove (\ref{eq_R1_h}). Let $\boldsymbol{h}=(h_1,h_2)\in \mathcal{H}^2_*$, $B\in \mathcal{B}(K)$ and $a>0$ be as in (\ref{eq_R1_h1}), then there exists $n\ge0$ such that $h_1,h_2\in \mathcal{H}_n$. Since $\Lambda_{\boldsymbol{h}}(B)>0$, there exists $w\in W_n$ such that $\Lambda_{\boldsymbol{h}}(B\cap K_w)>0$. Let $g_1=h_1\circ F_w$, $g_2=h_2\circ F_w$, and $A=F_w^{-1}(B\cap K_w)$, then $\boldsymbol{g}=(g_1,g_2)\in \mathcal{H}^2$, $\Lambda_{\boldsymbol{g}}(A)=\frac{1}{\rho^n}\Lambda_{\boldsymbol{h}}(B\cap K_w)>0$, and for any $x\in A$, we have $F_w(x)\in B$, hence $\Psi_{\boldsymbol{g}}(\cdot)(x)=\Psi_{\boldsymbol{h}}(\cdot)(F_w(x))\in C(\mathbb{S}^1)$ and
$$\inf_{\boldsymbol{\lambda}\in\mathbb{S}^1}\Psi_{\boldsymbol{g}}(\boldsymbol{\lambda})(x)=\inf_{\boldsymbol{\lambda}\in\mathbb{S}^1}\Psi_{\boldsymbol{h}}(\boldsymbol{\lambda})(F_w(x))\ge a>0.$$
\end{proof}

The following result gives \ref{item_step1}.

\begin{theorem}\label{thm_step1}
Suppose that (\ref{eq_R1_h}) holds for some $\boldsymbol{g}=(g_1,g_2)\in \mathcal{H}^2$, some Borel subset $A\in \mathcal{B}(K)$ with $\Lambda_{\boldsymbol{g}}(A)>0$, and some constant $a>0$. Then there exist $\boldsymbol{h}=(h_1,h_2)\in \mathcal{H}^2$ and $\Theta\in C(\mathbb{S}^1)$ such that $\Lambda_{\boldsymbol{h}}(K)=1$, $a\le \Theta\le 2^{\frac{p}{2}}$ on $\mathbb{S}^1$, and
$$\Psi_{\boldsymbol{h}}(\cdot)(x)=\Theta\text{ for $\Lambda_{\boldsymbol{h}}$-a.e. }x\in K.$$
\end{theorem}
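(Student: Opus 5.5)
The plan is a blow-up argument in the spirit of \cite[Strategy 3.6]{Hin13}, combining Lebesgue differentiation on $\Sigma$ with the compactness result Proposition \ref{prop_Hin418}.

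\textbf{Choice of a density point.} Work with the Lipschitz functions $R_{\boldsymbol{g}}(\cdot)(x)\in C(\mathbb{S}^1)$ from the proof of Proposition \ref{prop_Hin49}; they are defined on a countable dense $S$ and extended by continuity. Since $\Gamma(g_i)(K^\partial_*)=0$ by (\ref{eq_bdy_zero}), we may work on $\Sigma$ via $\pi$ and ignore boundaries of cells. Apply the martingale (Lebesgue) differentiation theorem for the filtration $\{\mathcal{F}_n\}$ with respect to $\Lambda_{\boldsymbol{g}}$ simultaneously to the countably many densities $\frac{\mathrm{d}\Gamma(\lambda_1g_1+\lambda_2g_2)}{\mathrm{d}\Lambda_{\boldsymbol{g}}}$, $\boldsymbol{\lambda}\in S$, and to $1_A$. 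This gives a point $x\in A$, not in $K^\partial_*$, with the following properties for $w_n:=[x]_n$:
$$\frac{\Gamma(\lambda_1g_1+\lambda_2g_2)(K_{w_n})}{\Lambda_{\boldsymbol{g}}(K_{w_n})}\to \Psi_{\boldsymbol{g}}(\boldsymbol{\lambda})(x)=:\Theta(\boldsymbol{\lambda}),\qquad \frac{\Lambda_{\boldsymbol{g}}(K_{\mathcal{N}_3(w_n)}\backslash K_{w_n})}{\Lambda_{\boldsymbol{g}}(K_{w_n})}\ \text{bounded}.$$
The second property is the doubling-type control needed for (\ref{eq_Hin425}). It should follow from a Vitali/density argument for $\Lambda_{\boldsymbol{g}}$ on neighbourhoods. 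Alternatively, it follows from comparing $\Gamma(g_i)$ on neighbouring cells via the Harnack/oscillation estimates Proposition \ref{prop_sup} and (\ref{eq_P9a}) for harmonic functions. I expect this to be the main obstacle. If a genuine neighbourhood density point is not available, I would first choose $x$ as a density point for the enlarged sets $K_{\mathcal{N}_3(w)}$, using that $\#\mathcal{N}_3(w)\le 49$ and a covering argument.

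\textbf{Renormalization and passage to the limit.} Set $c_n:=\Lambda_{\boldsymbol{g}}(K_{w_n})$. Define
$$h^{(n)}_i:=\left(\rho^{-|w_n|}c_n\right)^{-1/p}\bigl(g_i-(g_i)_{K_{w_n}}\bigr)\circ F_{w_n}.$$
Since $g_i\in\mathcal{H}$ and $K_{\mathcal{N}_3(w_n)}\cap K^\partial=\emptyset$ for large $n$ (because $x\notin K^\partial_*$), we have $g_i\in\mathcal{H}(\mathcal{N}_3(w_n))$. The normalization makes (\ref{eq_Hin425}) hold and gives $\Lambda_{\boldsymbol{h}^{(n)}}(K)=1$. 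Proposition \ref{prop_Hin418} then gives a subsequence along which $h^{(n)}_i\to h_i\in\mathcal{H}\cap C(K)$ in $\mathcal{E}_1$. By Lemma \ref{lem_Hin25}\ref{item_enedom2}, the densities converge in $L^1(\mu)$, so $\Lambda_{\boldsymbol{h}}(K)=1$ and $\Gamma(\lambda_1h_1+\lambda_2h_2)(K)=\Theta(\boldsymbol{\lambda})$.

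\textbf{Constancy of the density.} It remains to show $\Psi_{\boldsymbol{h}}(\cdot)(y)=\Theta$ for $\Lambda_{\boldsymbol{h}}$-a.e. $y$. For each sub-word $v$, the same differentiation at $x$ gives $\Gamma(\lambda\cdot\boldsymbol{g})(K_{w_nv})/\Lambda_{\boldsymbol{g}}(K_{w_nv})\to\Theta(\boldsymbol{\lambda})$ for all $v$ of bounded length. This is not automatic, since density at $x$ only controls cells containing $x$. To handle it, I would use the $L^1(\Lambda_{\boldsymbol{g}})$ convergence of the averaged densities over $K_{w_n}$: the average of $\lvert\Psi_{\boldsymbol{g}}(\boldsymbol{\lambda})-\Theta(\boldsymbol{\lambda})\rvert$ over $K_{w_n}$ tends to $0$ at a Lebesgue point. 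After rescaling, this average becomes the $\Lambda_{\boldsymbol{h}^{(n)}}$-integral of $\lvert\Psi_{\boldsymbol{h}^{(n)}}(\boldsymbol{\lambda})-\Theta(\boldsymbol{\lambda})\rvert$. Combined with the $L^1(\mu)$ convergence of the densities and the Lipschitz bound (\ref{eq_Gamma_holder}) in $\boldsymbol{\lambda}$, this passes to the limit and shows $\Psi_{\boldsymbol{h}}(\boldsymbol{\lambda})=\Theta(\boldsymbol{\lambda})$ $\Lambda_{\boldsymbol{h}}$-a.e. for all $\boldsymbol{\lambda}\in S$, hence for all of $\mathbb{S}^1$ by continuity.

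\textbf{Bounds on $\Theta$.} The bounds $a\le\Theta\le 2^{p/2}$ follow from (\ref{eq_R1_h}) at $x$ and from (\ref{eq_Psi_upper}) with $n=2$.
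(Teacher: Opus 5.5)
\textbf{The gap.} The gap is the step you flag as the main obstacle, and as you set it up it cannot be closed. You ask for $\Lambda_{\boldsymbol{g}}(K_{\mathcal{N}_3([x]_n)})/\Lambda_{\boldsymbol{g}}(K_{[x]_n})$ to stay bounded along the \emph{whole} sequence $n$. That is a doubling property of $\Lambda_{\boldsymbol{g}}$ at $x$.
\begin{itemize}
\item Martingale differentiation says nothing about such a ratio, and no doubling property of energy measures is available here.
\item For general finite measures on the cell filtration, this ratio is unbounded along the full sequence even at almost every point.
\item Your Harnack/oscillation route runs in the wrong direction. Proposition \ref{prop_sup} and (\ref{eq_P9a}) bound oscillations by the energy on the \emph{larger} set $K_{\mathcal{N}_3(w)}$. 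You need the opposite: the energy on $K_{\mathcal{N}_3(w)}$ controlled by the energy on the single cell $K_w$. That would be a quantitative unique-continuation estimate for harmonic functions, which nothing in the paper provides.
\item The ``Vitali/density'' suggestions are too vague to close this.
\end{itemize}

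\textbf{The missing idea.} Proposition \ref{prop_Hin418} only needs (\ref{eq_Hin425}) along \emph{infinitely many} $n$, and those $n$ come from counting, not analysis.
\begin{itemize}
\item Fix $c>0$ and put $G_n:=\{w\in W_n:\Lambda_{\boldsymbol{g}}(K_w)<c\,\Lambda_{\boldsymbol{g}}(K_{\mathcal{N}_3(w)})\}$.
\item Since $\Lambda_{\boldsymbol{g}}(K^\partial_*)=0$, and each $v\in W_n$ lies in at most $49$ of the sets $\mathcal{N}_3(w)$ by (\ref{eq_sup_nbhd}), for every $n$ you get $\Lambda_{\boldsymbol{g}}(K_{G_n})\le c\sum_{w\in G_n}\sum_{v\in\mathcal{N}_3(w)}\Lambda_{\boldsymbol{g}}(K_v)\le 49c\,\Lambda_{\boldsymbol{g}}(K)$.
\item Fatou then gives $\Lambda_{\boldsymbol{g}}(\varliminf_{n}K_{G_n})\le 49c\,\Lambda_{\boldsymbol{g}}(K)$.
\item Choose $c$ with $49c\,\Lambda_{\boldsymbol{g}}(K)<\Lambda_{\boldsymbol{g}}(A)$. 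Then a positive-measure subset of $A$ consists of points $x$ with $\Lambda_{\boldsymbol{g}}(K_{[x]_n})\ge c\,\Lambda_{\boldsymbol{g}}(K_{\mathcal{N}_3([x]_n)})$ for infinitely many $n$. These are exactly the paper's points of $A\backslash K_\infty$, with $c=\delta/147$.
\item Pick your $x$ there, off $K^\partial_*$ and a Lebesgue point, and blow up only along those $n$.
\end{itemize}

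\textbf{The rest of your plan.} With this lemma inserted, the remaining steps work, and your final step is a genuine simplification of the paper's. You need $x$ to be a Lebesgue point of each $\Psi_{\boldsymbol{g}}(\boldsymbol{\lambda})$, $\boldsymbol{\lambda}\in S$. Get this by applying martingale convergence to $\lvert\Psi_{\boldsymbol{g}}(\boldsymbol{\lambda})-q\rvert$ for rational $q$, not just to $\Psi_{\boldsymbol{g}}(\boldsymbol{\lambda})$ itself. At such a point, $\int_K\lvert\Psi_{\boldsymbol{h}^{(n)}}(\boldsymbol{\lambda})-\Theta(\boldsymbol{\lambda})\rvert\,\mathrm{d}\Lambda_{\boldsymbol{h}^{(n)}}\to0$. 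This integral is the $L^1(\mu)$-norm of $\frac{\mathrm{d}\Gamma(\lambda_1h^{(n)}_1+\lambda_2h^{(n)}_2)}{\mathrm{d}\mu}-\Theta(\boldsymbol{\lambda})\frac{\mathrm{d}\Lambda_{\boldsymbol{h}^{(n)}}}{\mathrm{d}\mu}$, so Lemma \ref{lem_Hin25}\ref{item_enedom2} passes it to the limit. This replaces three pieces of the paper's argument: the choice of $\Theta$ in the support of $(\Psi_{\boldsymbol{g}}|_B)_*(\Lambda_{\boldsymbol{g}}|_B)$, the sequence of points $x_k\in B_k$, and the Borel--Cantelli step.
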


\begin{proof}
The proof follows a blowup argument similar to the one in \cite[pp. 771--773]{Hin13}. By normalization, we may assume that $\Lambda_{\boldsymbol{g}}(K)=1$. Let $\delta:=\Lambda_{\boldsymbol{g}}(A)>0$. For any $n\ge0$, let
$$A_n:=\{w\in W_n:K_{\mathcal{N}_3(w)}\cap K^\partial\ne\emptyset\},$$
then $K_{A_n}\downarrow K^\partial$. By (\ref{eq_bdy_zero}), $\Lambda_{\boldsymbol{g}}(K^\partial)=0$, then there exists $n_0\ge0$ such that for any $n\ge n_0$, $\Lambda_{\boldsymbol{g}}(K_{A_n})<\frac{\delta}{3}$. For any $n\ge0$, let
$$G_n:=\left\{w\in W_n:\Lambda_{\boldsymbol{g}}(K_w)<\frac{\delta}{147}\Lambda_{\boldsymbol{g}}\left(K_{\mathcal{N}_3(w)}\right)\right\}.$$
Roughly speaking, the desired harmonic function should satisfy a mean value property across neighboring cells, and $G_n$ consists of the cells where this property fails. By (\ref{eq_bdy_zero}) and (\ref{eq_sup_nbhd}),
\begin{align*}
&\Lambda_{\boldsymbol{g}}\left(K_{G_n}\right)=\sum_{w\in G_n}\Lambda_{\boldsymbol{g}}(K_w)\le \frac{\delta}{147}\sum_{w\in G_n}\Lambda_{\boldsymbol{g}} \left(K_{\mathcal{N}_3(w)}\right)=\frac{\delta}{147}\sum_{w\in G_n}\sum_{v\in \mathcal{N}_3(w)}\Lambda_{\boldsymbol{g}} \left(K_v\right)\\
&\le \frac{\delta}{147}\sum_{v\in W_n} \Lambda_{\boldsymbol{g}}(K_v)\cdot \#\{w\in W_n:v\in \mathcal{N}_3(w)\}\le \frac{\delta}{147}\cdot (2\cdot3+1)^2\sum_{v\in W_n}\Lambda_{\boldsymbol{g}}(K_v)=\frac{\delta}{3}.
\end{align*}
Let $K_\infty:=\varliminf_{n\to+\infty}K_{G_n}=\cup_{N\ge0}\cap_{n\ge N}K_{G_n}$, then Fatou's lemma gives
$$\Lambda_{\boldsymbol{g}}(K_\infty)\le \varliminf_{n\to+\infty}\Lambda_{\boldsymbol{g}}(K_{G_n})\le \frac{\delta}{3}.$$
Let $B:=A\backslash (K_{A_{n_0}}\cup K_\infty\cup K^\partial_*)$, then $\Lambda_{\boldsymbol{g}}(B)\ge \delta-\frac{\delta}{3}-\frac{\delta}{3}=\frac{\delta}{3}$.

Recall that $\Psi_{\boldsymbol{g}}(\cdot)(x)\in C(\mathbb{S}^1)$ for any $x\in A$. Hence $\Psi_{\boldsymbol{g}}|_B:B\to (C(\mathbb{S}^1),\lVert \cdot\rVert_{C(\mathbb{S}^1)})$, $x\mapsto \Psi_{\boldsymbol{g}}(\cdot)(x)$ is measurable, where $\lVert \Theta\rVert_{C(\mathbb{S}^1)}=\max_{\boldsymbol{\lambda}\in \mathbb{S}^1}\lvert \Theta(\boldsymbol{\lambda})\rvert$ for $\Theta\in C(\mathbb{S}^1)$. Indeed, by the paragraph preceding (\ref{eq_Psiphi}), for any fixed $\Theta_0\in C(\mathbb{S}^1)$, $\lVert \Psi_{\boldsymbol{g}}(\cdot)(x)-\Theta_0\rVert_{C(\mathbb{S}^1)}=\sup_{\boldsymbol{\lambda}\in S}\lvert \Psi_{\boldsymbol{g}}(\boldsymbol{\lambda})(x)-\Theta_0(\boldsymbol{\lambda})\rvert$, where $S$ is a countable dense subset of $\mathbb{S}^1$. Then the pushforward $(\Psi_{\boldsymbol{g}}|_B)_*(\Lambda_{\boldsymbol{g}}|_B)$ of $\Lambda_{\boldsymbol{g}}|_B$ under $\Psi_{\boldsymbol{g}}|_B$ is a positive measure on $C(\mathbb{S}^1)$ satisfying $(\Psi_{\boldsymbol{g}}|_B)_*(\Lambda_{\boldsymbol{g}}|_B)(C(\mathbb{S}^1))=\Lambda_{\boldsymbol{g}}(B)>0$. Take
$$\Theta\in \mathrm{supp}\left((\Psi_{\boldsymbol{g}}|_B)_*(\Lambda_{\boldsymbol{g}}|_B)\right),$$
then $\Theta\in C(\mathbb{S}^1)$ satisfies $a\le \Theta\le 2^{\frac{p}{2}}$ on $\mathbb{S}^1$, where the lower and upper bounds follow from (\ref{eq_R1_h}) and (\ref{eq_Psi_upper}), respectively. For any $k\ge1$, let
\begin{equation}\label{eq_Bk}
B_k:=\left\{x\in B:\lVert \Psi_{\boldsymbol{g}}(\cdot)(x)-\Theta\rVert_{C(\mathbb{S}^1)}<\frac{1}{k}\right\},
\end{equation}
then $\Lambda_{\boldsymbol{g}}(B_k)>0$. Since $\Lambda_{\boldsymbol{g}}$ is a probability measure on $K$, let $\mathbb{E}$ denote the corresponding expectation. For any $n\ge0$, let $\mathcal{G}_n$ be the $\sigma$-algebra generated by $\{K_w:w\in W_n\}$, then $\{\mathcal{G}_n\}_{n\ge0}$ is a filtration, and $\mathcal{B}(K)=\mathcal{G}_\infty:=\sigma (\cup_{n\ge0}\mathcal{G}_n)$. For any $k\ge1$ and any $n\ge0$, let
$$Y^{(k)}_n:=\mathbb{E}[1_{B_k}|\mathcal{G}_n],$$
then $\{Y^{(k)}_n\}_{n\ge0}$ is a non-negative uniformly integrable martingale. By the martingale convergence theorem,
\begin{equation}\label{eq_Ykn_conv}
Y^{(k)}_n\to \mathbb{E}[1_{B_k}|\mathcal{G}_\infty]=1_{B_k}\ \Lambda_{\boldsymbol{g}}\text{-a.e. on $K$ and in }L^1(K;\Lambda_{\boldsymbol{g}}).
\end{equation}
Recall that for any $x\in K\backslash K^\partial_*$ and any $n\ge0$, $[x]_n$ is the unique word in $W_n$ such that $x\in K_{[x]_n}$. By (\ref{eq_bdy_zero}), $\Lambda_{\boldsymbol{g}}(K^\partial_*)=0$, hence for $\Lambda_{\boldsymbol{g}}$-a.e. $x\in K\backslash K^\partial_*$,
\begin{equation}\label{eq_Ykn_x}
Y^{(k)}_n(x)=
\begin{cases}
\frac{\Lambda_{\boldsymbol{g}}(B_k\cap K_{[x]_n})}{\Lambda_{\boldsymbol{g}}(K_{[x]_n})}&\text{if }\Lambda_{\boldsymbol{g}}(K_{[x]_n})>0,\\
0&\text{if }\Lambda_{\boldsymbol{g}}(K_{[x]_n})=0.
\end{cases}
\end{equation}
For any $k\ge1$, since $\Lambda_{\boldsymbol{g}}(B_k)>0$, by (\ref{eq_Ykn_conv}), there exist $x_k\in B_k$ and $N_k\ge0$ such that $1_{B_k}(x_k)=1$, and for any $n\ge N_k$, $Y^{(k)}_{n}(x_k)>1-\frac{1}{2^k}$, which implies $\Lambda_{\boldsymbol{g}}(B_k\cap K_{[x_k]_n})>0$. Since $x_k\in K\backslash K_\infty =\cap_{N\ge0}\cup_{n\ge N}(K\backslash K_{G_n})$, there exist infinitely many $n\ge0$ such that $\Lambda_{\boldsymbol{g}}(K_{[x_k]_n})\ge \frac{\delta}{147}\Lambda_{\boldsymbol{g}}(K_{\mathcal{N}_3([x_k]_n)})$. In particular, there exists $n_k\ge \max\{N_k,n_0\}$ such that $\Lambda_{\boldsymbol{g}}(B_k\cap K_{[x_k]_{n_k}})>0$, and
$$Y^{(k)}_{n_k}(x_k)>1-\frac{1}{2^k},\qquad \Lambda_{\boldsymbol{g}}(K_{[x_k]_{n_k}})\ge \frac{\delta}{147}\Lambda_{\boldsymbol{g}}(K_{\mathcal{N}_3([x_k]_{n_k})}).$$
For any $k\ge1$, let $\boldsymbol{g}^{(k)}=(g^{(k)}_1,g^{(k)}_2)\in \mathcal{H}^2$ be given by
$$g^{(k)}_i=\left(\frac{\rho^{n_k}}{\Lambda_{\boldsymbol{g}}(K_{[x_k]_{n_k}})}\right)^{\frac{1}{p}}\left(g_i-\dashint_{K_{[x_k]_{n_k}}}g_i \mathrm{d}m\right),\ i=1,2,$$
then $\int_{K_{[x_k]_{n_k}}}g^{(k)}_i \mathrm{d}m=0$,
\begin{equation}\label{eq_gk_1}
\Lambda_{\boldsymbol{g}^{(k)}}(K_{[x_k]_{n_k}})=\frac{\rho^{n_k}}{\Lambda_{\boldsymbol{g}}(K_{[x_k]_{n_k}})}\Lambda_{\boldsymbol{g}}(K_{[x_k]_{n_k}})=\rho^{n_k},
\end{equation}
and
\begin{equation}\label{eq_gk_bd}
\Lambda_{\boldsymbol{g}^{(k)}}(K_{\mathcal{N}_3([x_k]_{n_k})})=\frac{\rho^{n_k}}{\Lambda_{\boldsymbol{g}}(K_{[x_k]_{n_k}})}\Lambda_{\boldsymbol{g}}(K_{\mathcal{N}_3([x_k]_{n_k})})\le \frac{147}{\delta}\rho^{n_k}.
\end{equation}
Let $h^{(k)}_i=g^{(k)}_i\circ F_{[x_k]_{n_k}}$, then $\boldsymbol{h}^{(k)}=(h^{(k)}_1,h^{(k)}_2)\in \mathcal{H}^2$ satisfies
$$\Lambda_{\boldsymbol{h}^{(k)}}(K)=\sum_{i=1}^2 \Gamma(h^{(k)}_i)(K)=\frac{1}{\rho^{n_k}}\sum_{i=1}^2 \Gamma(g^{(k)}_i)(K_{[x_k]_{n_k}})=\frac{1}{\rho^{n_k}}\Lambda_{\boldsymbol{g}^{(k)}}(K_{[x_k]_{n_k}})=1.$$
Let $\widehat{B}_k=F_{[x_k]_{n_k}}^{-1}(B_k)$, then
\begin{align*}
&\Lambda_{\boldsymbol{h}^{(k)}}(\widehat{B}_k)=\sum_{i=1}^2 \Gamma(h^{(k)}_i)(\widehat{B}_k)=\frac{1}{\rho^{n_k}}\sum_{i=1}^2 \Gamma(g^{(k)}_i)(B_k\cap K_{[x_k]_{n_k}})\\
&=\frac{\Lambda_{\boldsymbol{g}}(B_k\cap K_{[x_k]_{n_k}})}{\Lambda_{\boldsymbol{g}}(K_{[x_k]_{n_k}})}=Y^{(k)}_{n_k}(x_{k})>1-\frac{1}{2^k},
\end{align*}
hence
\begin{equation}\label{eq_hk_bd}
\Lambda_{\boldsymbol{h}^{(k)}}(K\backslash\widehat{B}_k)<\frac{1}{2^k}.
\end{equation}
Since $x_k\not\in K_{A_{n_0}}$, Proposition \ref{prop_Hin418} applies to $\{g^{(k)}_i\}_k$ and $\{[x_k]_{n_k}\}$. By (\ref{eq_gk_bd}), there exist a subsequence $\{\boldsymbol{h}^{k(j)}\}_j$ of $\{\boldsymbol{h}^{(k)}\}_k$, and some $\boldsymbol{h}=(h_1,h_2)\in \mathcal{H}^2$, such that $\{h^{k(j)}_i\}_j$ is $\mathcal{E}_1$-convergent to $h_i$ for any $i=1,2$. By Lemma \ref{lem_Hin25}\ref{item_enedom2}, after passing to a further subsequence, we may assume that $\mathcal{E}(h^{k(j)}_i-h_i)<\frac{1}{2^{j}}$ for any $j$ and any $i=1,2$, and that
\begin{equation}\label{eq_lim_hkj}
\lim_{j\to+\infty}\Psi_{\boldsymbol{h}^{(k(j))}}(\boldsymbol{\lambda})(x)=\Psi_{\boldsymbol{h}}(\boldsymbol{\lambda})(x)\text{ for any }\boldsymbol{\lambda}\in S,\text{ for $\Lambda_{\boldsymbol{h}}$-a.e. }x\in K.
\end{equation}
Then by (\ref{eq_gk_1}), $\Lambda_{\boldsymbol{h}}(K)=1$, and by sub-linearity \ref{item_pDS4} and (\ref{eq_hk_bd}),
\begin{align*}
&\Lambda_{\boldsymbol{h}}(K\backslash \widehat{B}_{k(j)})^{\frac{1}{p}}\le \lvert \Lambda_{\boldsymbol{h}^{(k(j))}}(K\backslash \widehat{B}_{k(j)})^{\frac{1}{p}}-\Lambda_{\boldsymbol{h}}(K\backslash \widehat{B}_{k(j)})^{\frac{1}{p}}\rvert+\Lambda_{\boldsymbol{h}^{(k(j))}}(K\backslash \widehat{B}_{k(j)})^{\frac{1}{p}}\\
&\overset{(\star)}{\scalebox{2}[1]{$\le$}}\sum_{i=1}^2 \lvert \left(\Gamma(h_i^{(k(j))})(K\backslash \widehat{B}_{k(j)})\right)^{\frac{1}{p}}-\left(\Gamma(h_i)(K\backslash \widehat{B}_{k(j)})\right)^{\frac{1}{p}}\rvert+\Lambda_{\boldsymbol{h}^{(k(j))}}(K\backslash \widehat{B}_{k(j)})^{\frac{1}{p}}\\
&\le\sum_{i=1}^2 \Gamma(h_i^{(k(j))}-h_i)(K\backslash \widehat{B}_{k(j)})^{\frac{1}{p}}+\Lambda_{\boldsymbol{h}^{(k(j))}}(K\backslash \widehat{B}_{k(j)})^{\frac{1}{p}}\\
&\le\sum_{i=1}^2\mathcal{E}(h^{(k(j))}_i-h_i)^{\frac{1}{p}}+\Lambda_{\boldsymbol{h}^{(k(j))}}(K\backslash \widehat{B}_{k(j)})^{\frac{1}{p}}\le \frac{3}{2^{\frac{j}{p}}},
\end{align*}
where $(\star)$ follows from the following elementary result: For any $a_1,a_2,b_1,b_2\in [0,+\infty)$,
$$\lvert (a_1+a_2)^{\frac{1}{p}}-(b_1+b_2)^{\frac{1}{p}}\rvert\le \lvert a_1^{\frac{1}{p}}-b_1^{\frac{1}{p}}\rvert+\lvert a_2^{\frac{1}{p}}-b_2^{\frac{1}{p}}\rvert.$$
By the Borel--Cantelli lemma, $\Lambda_{\boldsymbol{h}}(\varliminf_{j\to+\infty}\widehat{B}_{k(j)})=1$, that is, for $\Lambda_{\boldsymbol{h}}$-a.e. $x\in K$, there exist infinitely many $j$ such that $x\in \widehat{B}_{k(j)}=F^{-1}_{[x_{k(j)}]_{n_{k(j)}}}(B_{k(j)})$, then $F_{[x_{k(j)}]_{n_{k(j)}}}(x)\in B_{k(j)}$. By (\ref{eq_Bk}),
$$\lVert \Psi_{\boldsymbol{h}^{(k(j))}}(\cdot)(x)-\Theta\rVert_{C(\mathbb{S}^1)}=\lVert \Psi_{\boldsymbol{g}^{(k(j))}}(\cdot)(F_{[x_{k(j)}]_{n_{k(j)}}}(x))-\Theta\rVert_{C(\mathbb{S}^1)}<\frac{1}{k(j)}$$
for infinitely many $j$. Combining this with (\ref{eq_lim_hkj}), we obtain $\Psi_{\boldsymbol{h}}(\cdot)(x)=\Theta$ for $\Lambda_{\boldsymbol{h}}$-a.e. $x\in K$.
\end{proof}

In \ref{item_step2}, we consider the pushforwards of energy measures under $\boldsymbol{h}=(h_1,h_2):K\to \mathbb{R}^2$, which are measures on $\mathbb{R}^2$.

\begin{proposition}[{$p$-version of \cite[Proposition 3.4]{Hin13}}]\label{prop_Hin34}
Let $\boldsymbol{h}=(h_1,h_2)\in \mathcal{H}^2$ satisfy $\Lambda_{\boldsymbol{h}}(K)>0$. Assume that there exists a function $\Theta\in C(\mathbb{S}^1)$, bounded above and below by positive constants, such that $\Psi_{\boldsymbol{h}}(\cdot)(x)=\Theta$ for $\Lambda_{\boldsymbol{h}}$-a.e. $x\in K$. Let $f\in \mathcal{F}_D$ be non-negative and bounded. Then the pushforward $\widetilde{\boldsymbol{h}}_*(\widetilde{f}^p\Lambda_{\boldsymbol{h}})$ of $\widetilde{f}^p\Lambda_{\boldsymbol{h}}$ under $\widetilde{\boldsymbol{h}}=(\widetilde{h}_1,\widetilde{h}_2):K\to \mathbb{R}^2$ is absolutely continuous with respect to $\mathcal{L}^2$, and its Radon--Nikodym derivative $\xi:=\frac{\mathrm{d}\widetilde{\boldsymbol{h}}_*(\widetilde{f}^p\Lambda_{\boldsymbol{h}})}{\mathrm{d}\mathcal{L}^2}$ satisfies $\xi^{\frac{1}{p}}\in W^{1,p}(\mathbb{R}^2)$.
\end{proposition}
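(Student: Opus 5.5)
The plan is to mimic Hino's integration-by-parts argument, replacing the bilinear form by the derivatives $\Gamma(u;v)$ from the Fr\'echet differentiability property. Write $\nu:=\widetilde{\boldsymbol{h}}_*(\widetilde{f}^p\Lambda_{\boldsymbol{h}})$, which is a finite measure on $\mathbb{R}^2$ since $f$ is bounded. The target is an estimate of the form
$$\left\lvert\int_{\mathbb{R}^2}\partial_k\varphi\,\mathrm{d}\nu\right\rvert\le C\left(\int_{\mathbb{R}^2}\lvert\varphi\rvert^{\frac{p}{p-1}}\mathrm{d}\nu\right)^{\frac{p-1}{p}}\quad\text{for any }\varphi\in C_c^1(\mathbb{R}^2),\ k=1,2,$$
which says that the distributional gradient of $\nu$ is $\boldsymbol{b}\,\nu$ for some $\boldsymbol{b}\in L^p(\nu;\mathbb{R}^2)$.

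\textbf{Step 1 (structure constants).} Extend $\Theta$ $p$-homogeneously to $\widehat{\Theta}(\boldsymbol{\lambda}):=\lvert\boldsymbol{\lambda}\rvert^p\Theta(\boldsymbol{\lambda}/\lvert\boldsymbol{\lambda}\rvert)$. By hypothesis and \ref{item_pDS3}, $\Gamma(\lambda_1h_1+\lambda_2h_2)=\widehat{\Theta}(\boldsymbol{\lambda})\Lambda_{\boldsymbol{h}}$ for every $\boldsymbol{\lambda}\in\mathbb{R}^2$ (first for $\boldsymbol{\lambda}$ in a countable dense set, then for all by \eqref{eq_Gamma_holder}). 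Since $t\mapsto\Gamma(h_j+th_k)(A)$ is differentiable at $t=0$ for every Borel $A$ with $\Lambda_{\boldsymbol{h}}(A)>0$, $\widehat{\Theta}$ has partial derivatives at $\boldsymbol{e}_j$, and
$$\Gamma(h_j;h_k)=c_{jk}\Lambda_{\boldsymbol{h}},\qquad c_{jk}:=\tfrac1p\partial_k\widehat{\Theta}(\boldsymbol{e}_j).$$
Next I need the matrix $(c_{jk})$ to be invertible. The function $N:=\widehat{\Theta}^{1/p}$ is a norm on $\mathbb{R}^2$: it is a seminorm by \ref{item_pDS4}, and definite since $\Theta\ge$ const $>0$. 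By \ref{eq_ClaGamma}, this norm is uniformly convex, hence strictly convex. Euler's relation gives $\nabla\widehat{\Theta}(\boldsymbol{e}_j)\cdot\boldsymbol{e}_j=p\Theta(\boldsymbol{e}_j)>0$. The rows $\nabla\widehat{\Theta}(\boldsymbol{e}_1)$ and $\nabla\widehat{\Theta}(\boldsymbol{e}_2)$ are outer normals to the strictly convex unit ball of $N$ at the two non-antipodal points $\boldsymbol{e}_1/N(\boldsymbol{e}_1)$ and $\boldsymbol{e}_2/N(\boldsymbol{e}_2)$, so they cannot be parallel. Hence $\det(c_{jk})\ne0$.

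\textbf{Step 2 (integration by parts).} Fix $\varphi\in C_c^1(\mathbb{R}^2)$. Note that $\varphi(\boldsymbol{h})-\varphi(0)$ is bounded and lies in $\mathcal{F}$ by \eqref{eq_chain}, and $f^p\in\mathcal{F}_D\cap L^\infty$. By \ref{eq_Alg} and a cutoff, $\psi:=f^p\varphi(\boldsymbol{h})\in\mathcal{F}_D$. Since each $h_j\in\mathcal{H}$, the first variation gives $\mathcal{E}(h_j;\psi)=0$. Expanding by \eqref{eq_Leibniz} and \eqref{eq_chain} (with quasi-continuous versions) yields
$$0=\sum_{k}\int\widetilde{f}^p\,\partial_k\varphi(\widetilde{\boldsymbol{h}})\,\mathrm{d}\Gamma(h_j;h_k)+p\int\varphi(\widetilde{\boldsymbol{h}})\,\widetilde{f}^{p-1}\,\mathrm{d}\Gamma(h_j;f).$$
The first term equals $\sum_kc_{jk}\int\partial_k\varphi\,\mathrm{d}\nu$. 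For the second term, apply \eqref{eq_Holder} with the weights $\varphi(\widetilde{\boldsymbol{h}})\widetilde{f}^{p-1}$ and $1$. This bounds it by
$$p\left(\int\lvert\varphi(\widetilde{\boldsymbol{h}})\rvert^{\frac{p}{p-1}}\widetilde{f}^p\,\mathrm{d}\Gamma(h_j)\right)^{\frac{p-1}{p}}\mathcal{E}(f)^{\frac1p}\le p\,\mathcal{E}(f)^{\frac1p}\lVert\varphi\rVert_{L^{p/(p-1)}(\nu)},$$
using $\Gamma(h_j)\le\Lambda_{\boldsymbol{h}}$. Inverting $(c_{jk})$ gives the target estimate. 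By Riesz representation in $L^{p/(p-1)}(\nu)$, there is then $\boldsymbol{b}\in L^p(\nu;\mathbb{R}^2)$ with $\nabla\nu=\boldsymbol{b}\,\nu$ distributionally.

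\textbf{Step 3 (absolute continuity and Sobolev regularity).} Mollify: set $\nu_\varepsilon:=\nu*\eta_\varepsilon$, so that $\nabla\nu_\varepsilon=(\boldsymbol{b}\nu)*\eta_\varepsilon$. The map $(a,\boldsymbol{v})\mapsto\lvert\boldsymbol{v}\rvert^p/a^{p-1}$ is jointly convex, so Jensen's inequality gives
$$\int\lvert\nabla\nu_\varepsilon^{1/p}\rvert^p\,\mathrm{d}x=p^{-p}\int\frac{\lvert\nabla\nu_\varepsilon\rvert^p}{\nu_\varepsilon^{p-1}}\,\mathrm{d}x\le p^{-p}\int\lvert\boldsymbol{b}\rvert^p\,\mathrm{d}\nu,$$
while $\lVert\nu_\varepsilon^{1/p}\rVert_{L^p}^p=\nu(\mathbb{R}^2)$. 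Thus $\{\nu_\varepsilon^{1/p}\}_\varepsilon$ is bounded in $W^{1,p}(\mathbb{R}^2)$. By weak compactness and Rellich's theorem, a subsequence converges weakly in $W^{1,p}$ and strongly in $L^p_{\mathrm{loc}}$ to some $\zeta$. Then $\nu_\varepsilon\to\zeta^p$ in $L^1_{\mathrm{loc}}$, while $\nu_\varepsilon\to\nu$ vaguely, so $\nu=\zeta^p\mathcal{L}^2$. Therefore $\xi=\zeta^p$ and $\xi^{1/p}=\zeta\in W^{1,p}(\mathbb{R}^2)$.

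\textbf{Main obstacle.} I expect the delicate point to be Step 1. It requires the non-degeneracy of $(c_{jk})$, for which the strict convexity of $N$ must be extracted from \ref{eq_ClaGamma}. It also requires identifying $\Gamma(h_j;h_k)$ with $c_{jk}\Lambda_{\boldsymbol{h}}$ from the pointwise relation $\Psi_{\boldsymbol{h}}(\cdot)(x)=\Theta$ on $\mathbb{S}^1$, which uses differentiability of $\widehat{\Theta}$ at $\boldsymbol{e}_j$. A secondary technical issue is justifying the Leibniz and chain rules for $\psi=f^p\varphi(\boldsymbol{h})$ with $f$ only in $\mathcal{F}_D\cap L^\infty$ rather than continuous; here I would rely on the quasi-continuity extension quoted in Section \ref{sec_harmonic}.
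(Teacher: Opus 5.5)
Your proposal is correct. Steps 1 and 2 agree in substance with the paper:
\begin{itemize}
\item the same $p$-homogeneous extension of $\Theta$;
\item the same identification $\Gamma(h_j;h_k)=c_{jk}\Lambda_{\boldsymbol h}$;
\item the same first-variation identity $\mathcal{E}(h_j;f^p\varphi(\boldsymbol h))=0$, expanded by the Leibniz and chain rules.
\end{itemize}
For invertibility, the paper argues analytically. If $\det=0$, then equality holds in $\lvert\mathcal{E}(h_1;h_2)\rvert\le\mathcal{E}(h_1)^{\frac{p-1}{p}}\mathcal{E}(h_2)^{\frac1p}$ and its mirror, and Clarkson's inequality forces $\mathcal{E}(h_1-ch_2)=0$, contradicting $\Theta\ge\delta_1$. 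You instead read strict convexity of $N=\widehat\Theta^{1/p}$ off Clarkson's inequality for $\Gamma$. You then use that supporting normals at non-antipodal points of a strictly convex ball are never parallel. This is the same mechanism in geometric form. A convex function with both partial derivatives at a point is differentiable there, so your rows really are supporting normals.

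The genuine difference is Step 3. The paper argues in three stages:
\begin{itemize}
\item it obtains $\kappa\ll\mathcal{L}^2$ from the Bouleau--Hirsch lemma (a finite measure whose distributional derivatives are finite measures is absolutely continuous);
\item it uses the localized bound $\lvert\kappa_i(B)\rvert\le p\,\kappa(B)^{\frac{p-1}{p}}(\boldsymbol h_*\Gamma(f))(B)^{\frac1p}$, together with differentiation of measures, to get $\xi\in W^{1,1}(\mathbb{R}^2)$ and the pointwise bound $\lvert\xi_i\rvert\le p\,\xi^{\frac{p-1}{p}}(\cdots)^{\frac1p}$;
\item it finishes with the regularization $\gamma_\varepsilon(\xi)=(\xi+\varepsilon)^{1/p}-\varepsilon^{1/p}$.
\end{itemize}
You only need a global bound in $L^{p/(p-1)}(\nu)$. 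Via Riesz representation this gives $\nabla\nu=\boldsymbol b\,\nu$ with $\boldsymbol b\in L^p(\nu)$. Jensen's inequality for the jointly convex map $(a,\boldsymbol v)\mapsto\lvert\boldsymbol v\rvert^p/a^{p-1}$, applied to mollifications, plus Rellich compactness, then delivers absolute continuity and $\xi^{1/p}\in W^{1,p}(\mathbb{R}^2)$ at once.

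Your route is self-contained; it is essentially the standard finite-Fisher-information argument, and it avoids both external measure-theoretic lemmas. The paper's route is shorter given those citations and retains a localized estimate, although only $\xi^{1/p}\in W^{1,p}(\mathbb{R}^2)$ is used later.

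The one routine detail you should add is that $\nu_\varepsilon^{1/p}$ need not be smooth where $\nu_\varepsilon=0$. This is handled exactly as in the paper: work with $(\nu_\varepsilon+\delta)^{1/p}-\delta^{1/p}$ and let $\delta\downarrow0$.
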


\begin{proof}
For notational convenience, we may assume that all functions in $\mathcal{F}$ are quasi-continuous. Obviously, $\boldsymbol{h}$ is non-zero. For any $\varphi\in C^\infty_c(\mathbb{R}^2)$, since $\boldsymbol{h}=(h_1,h_2)\in \mathcal{H}^2$ and $f^p\varphi(\boldsymbol{h})\in \mathcal{F}_D$, by (\ref{eq_Leibniz}) and (\ref{eq_chain}), for any $i=1,2$,
\begin{equation}\label{eq_Hin34a}
0=\mathcal{E}(h_i;f^p\varphi(\boldsymbol{h}))=\int_K \varphi(\boldsymbol{h})\mathrm{d}\Gamma(h_i;f^p)+\sum_{j=1}^2\int_K f^p\frac{\partial\varphi}{\partial x_j}(\boldsymbol{h}) \mathrm{d}\Gamma(h_i;h_j).
\end{equation}
We extend $\Theta$ from $\mathbb{S}^1$ to $\mathbb{R}^2$, still denoted by $\Theta$, by
$$\Theta(\boldsymbol{\lambda}):=
\begin{cases}
\lvert \boldsymbol{\lambda}\rvert^p\Theta \left(\frac{\boldsymbol{\lambda}}{\lvert\boldsymbol{\lambda}\rvert}\right)&\text{if }\boldsymbol{\lambda}\ne0,\\
0&\text{if }\boldsymbol{\lambda}=0.
\end{cases}
$$
By assumption, there exist constants $\delta_1,\delta_2>0$ such that
\begin{equation}\label{eq_Hin34b}
\delta_1 \lvert \boldsymbol{\lambda}\rvert^p\le \Theta(\boldsymbol{\lambda})\le \delta_2 \lvert \boldsymbol{\lambda}\rvert^p\text{ for any }\boldsymbol{\lambda}\in \mathbb{R}^2.
\end{equation}
Then by the same argument as in the paragraph preceding (\ref{eq_Psiphi}),
\begin{equation*}
\frac{\mathrm{d}\Gamma(\lambda_1h_1+\lambda_2h_2)}{\mathrm{d}\Lambda_{\boldsymbol{h}}}(x)=\Theta(\boldsymbol{\lambda})\text{ for any $\boldsymbol{\lambda}=(\lambda_1,\lambda_2)\in \mathbb{R}^2$, for }\Lambda_{\boldsymbol{h}}\text{-a.e. $x\in K$},
\end{equation*}
hence
\begin{equation}\label{eq_Hin34c}
\mathcal{E}(\lambda_1h_1+\lambda_2h_2)=\Theta(\boldsymbol{\lambda})\Lambda_{\boldsymbol{h}}(K)\text{ for any }\boldsymbol{\lambda}=(\lambda_1,\lambda_2)\in \mathbb{R}^2,
\end{equation}
which gives for any $i,j=1,2$,
\begin{equation}\label{eq_Hin34d}
\mathcal{E}(h_i;h_j)=\frac{1}{p}\lim_{\lambda_j\to0}\frac{\mathcal{E}(h_i+\lambda_jh_j)-\mathcal{E}(h_i)}{\lambda_j}=\frac{1}{p}\frac{\partial\Theta}{\partial\lambda_j}(\boldsymbol{e}_i)\Lambda_{\boldsymbol{h}}(K),\qquad \mathcal{E}(h_i)=\Theta(\boldsymbol{e}_i)\Lambda_{\boldsymbol{h}}(K).
\end{equation}
Moreover, for any $B\in \mathcal{B}(K)$,
$$\Gamma(h_i;h_j)(B)=\frac{1}{p}\lim_{\lambda_j\to0}\frac{\Gamma(h_i+\lambda_jh_j)(B)-\Gamma(h_i)(B)}{\lambda_j}=\frac{1}{p}\frac{\partial\Theta}{\partial\lambda_j}(\boldsymbol{e}_i)\Lambda_{\boldsymbol{h}}(B),$$
hence
$$\frac{\mathrm{d}\Gamma(h_i;h_j)}{\mathrm{d}\Lambda_{\boldsymbol{h}}}=\frac{1}{p}\frac{\partial\Theta}{\partial\lambda_j}(\boldsymbol{e}_i),\qquad\frac{\mathrm{d}\Gamma(h_i;h_i)}{\mathrm{d}\Lambda_{\boldsymbol{h}}}=\frac{\mathrm{d}\Gamma(h_i)}{\mathrm{d}\Lambda_{\boldsymbol{h}}}=\Theta(\boldsymbol{e}_i),$$
where the right-hand sides are fixed numbers, then (\ref{eq_Hin34a}) gives
$$\int_{\mathbb{R}^2}\varphi \mathrm{d}\left(\boldsymbol{h}_*\Gamma(h_i;f^p)\right)=-\frac{1}{p}\sum_{j=1}^2 \frac{\partial\Theta}{\partial \lambda_j}(\boldsymbol{e}_i)\int_{\mathbb{R}^2} \frac{\partial\varphi}{\partial x_j}\mathrm{d}\left(\boldsymbol{h}_*(f^p\Lambda_{\boldsymbol{h}})\right),$$
Let $\kappa=\boldsymbol{h}_*(f^p\Lambda_{\boldsymbol{h}})$ and $\kappa_i=\boldsymbol{h}_*\Gamma(h_i;f^p)$ for $i=1,2$, then
\begin{equation}\label{eq_Hin34e}
\left(
\begin{array}{c}
\int_{\mathbb{R}^2}\varphi \mathrm{d}\kappa_1\\
\int_{\mathbb{R}^2}\varphi \mathrm{d}\kappa_2
\end{array}
\right)
=-
\left(
\begin{array}{cc}
\frac{1}{p}\frac{\partial\Theta}{\partial\lambda_1}(\boldsymbol{e}_1)&\frac{1}{p}\frac{\partial\Theta}{\partial\lambda_2}(\boldsymbol{e}_1)\\
\frac{1}{p}\frac{\partial\Theta}{\partial\lambda_1}(\boldsymbol{e}_2)&\frac{1}{p}\frac{\partial\Theta}{\partial\lambda_2}(\boldsymbol{e}_2)
\end{array}
\right)
\left(
\begin{array}{c}
\int_{\mathbb{R}^2}\frac{\partial\varphi}{\partial x_1}\mathrm{d}\kappa\\
\int_{\mathbb{R}^2}\frac{\partial\varphi}{\partial x_2}\mathrm{d}\kappa
\end{array}
\right).
\end{equation}

We claim that the $(2\times2)$ matrix on the right-hand side is invertible. It suffices to show that it has a non-zero determinant. Indeed, suppose that its determinant is zero, then by (\ref{eq_Hin34b}) and (\ref{eq_Hin34d}),
$$\mathcal{E}(h_1;h_2)\mathcal{E}(h_2;h_1)=\mathcal{E}(h_1)\mathcal{E}(h_2)>0.$$
Since
$$\lvert\mathcal{E}(h_1;h_2)\rvert\le \mathcal{E}(h_1)^{\frac{p-1}{p}}\mathcal{E}(h_2)^{\frac{1}{p}},\qquad \lvert\mathcal{E}(h_2;h_1)\rvert\le \mathcal{E}(h_2)^{\frac{p-1}{p}}\mathcal{E}(h_1)^{\frac{1}{p}},$$
equality holds in these two inequalities. Let
$$f_1:=\frac{h_1}{\mathcal{E}(h_1)^{\frac{1}{p}}},\qquad f_2:=\frac{h_2}{\mathcal{E}(h_2)^{\frac{1}{p}}}\mathrm{sgn}\left(\mathcal{E}(h_1;h_2)\right),$$
then $\mathcal{E}(f_1)=\mathcal{E}(f_2)=\mathcal{E}(f_1;f_2)=1$. By linearity and the triangle inequality,
\begin{align*}
&1=\mathcal{E}\left(f_1;\frac{f_1+f_2}{2}\right)\le \mathcal{E}(f_1)^{\frac{p-1}{p}}\mathcal{E}\left(\frac{f_1+f_2}{2}\right)^{\frac{1}{p}}\\
&=\mathcal{E}\left(\frac{f_1+f_2}{2}\right)^{\frac{1}{p}}\le \frac{1}{2}\left(\mathcal{E}(f_1)^{\frac{1}{p}}+\mathcal{E}(f_2)^{\frac{1}{p}}\right)=1,
\end{align*}
hence $\mathcal{E}\left(\frac{f_1+f_2}{2}\right)=1$. By \ref{eq_Cla}, $\mathcal{E}(f_1-f_2)=0$, hence there exists a constant $c\in \mathbb{R}$ such that $\mathcal{E}(h_1-ch_2)=0$. By (\ref{eq_Hin34c}) and (\ref{eq_Hin34b}),
$$0=\mathcal{E}(h_1-ch_2)=\Theta((1,-c))\Lambda_{\boldsymbol{h}}(K)\ge \delta_1 (1+c^2)^\frac{p}{2}\Lambda_{\boldsymbol{h}}(K)>0,$$
a contradiction. Therefore, (\ref{eq_Hin34e}) gives
$$
\left(
\begin{array}{c}
\int_{\mathbb{R}^2}\frac{\partial\varphi}{\partial x_1}\mathrm{d}\kappa\\
\int_{\mathbb{R}^2}\frac{\partial\varphi}{\partial x_2}\mathrm{d}\kappa
\end{array}
\right)
=-
\left(
\begin{array}{cc}
a_{11}&a_{12}\\
a_{21}&a_{22}
\end{array}
\right)
\left(
\begin{array}{c}
\int_{\mathbb{R}^2}\varphi \mathrm{d}\kappa_1\\
\int_{\mathbb{R}^2}\varphi \mathrm{d}\kappa_2
\end{array}
\right),
$$
where $a_{11},a_{12},a_{21},a_{22}\in \mathbb{R}$ are constants. Hence, for any $i=1,2$ and any $\varphi\in C^\infty_c(\mathbb{R}^2)$,
\begin{equation}\label{eq_Hin34f}
\int_{\mathbb{R}^2}\frac{\partial\varphi}{\partial x_i}\mathrm{d}\kappa=-\sum_{j=1}^2 a_{ij}\int_{\mathbb{R}^2}\varphi \mathrm{d}\kappa_j,
\end{equation}
which implies, in the sense of distributions, $\frac{\partial}{\partial x_i}\kappa=\sum_{j=1}^2a_{ij}\kappa_j$ for $i=1,2$. By \cite[Lemma I.7.2.2.1]{BH91}, $\kappa\ll \mathcal{L}^2$. Let $\xi:=\frac{\mathrm{d}\kappa}{\mathrm{d}\mathcal{L}^2}$ denote the Radon--Nikodym derivative.

For any $i=1,2$ and any $B\in \mathcal{B}(\mathbb{R}^2)$, by (\ref{eq_Holder}),
\begin{align}
&\lvert \kappa_i(B)\rvert=p\lvert \int_{\boldsymbol{h}^{-1}(B)}f^{p-1}\mathrm{d}\Gamma(h_i;f)\rvert\le p \left(\int_{\boldsymbol{h}^{-1}(B)}f^p \mathrm{d}\Gamma(h_i)\right)^{\frac{p-1}{p}}\left(\int_{\boldsymbol{h}^{-1}(B)}\mathrm{d}\Gamma(f)\right)^{\frac{1}{p}}\nonumber\\
&\le p \left(\int_{\boldsymbol{h}^{-1}(B)}f^p \mathrm{d}\Lambda_{\boldsymbol{h}}\right)^{\frac{p-1}{p}}\left(\int_{\boldsymbol{h}^{-1}(B)}\mathrm{d}\Gamma(f)\right)^{\frac{1}{p}}=p \kappa(B)^{\frac{p-1}{p}}(\boldsymbol{h}_*\Gamma(f))(B)^{\frac{1}{p}},\label{eq_Hin34g}
\end{align}
hence $\kappa_i\ll \kappa$, which implies $\kappa_i\ll \mathcal{L}^2$. Let $\xi_i:=\frac{\mathrm{d}\kappa_i}{\mathrm{d}\mathcal{L}^2}$ denote the Radon--Nikodym derivative, then for any $i=1,2$,
$$\frac{\partial \xi}{\partial x_i}=\sum_{j=1}^2 a_{ij}\xi_j,$$
hence $\xi\in W^{1,1}(\mathbb{R}^2)$. By (\ref{eq_Hin34g}), applying \cite[Theorem 3.22]{Fol99}, we have
$$\lvert \xi_i\rvert\le p \xi^{\frac{p-1}{p}} \left(\frac{\mathrm{d}\left(\boldsymbol{h}_*\Gamma(f)\right)_{\mathrm{ac}}}{\mathrm{d}\mathcal{L}^2}\right)^{\frac{1}{p}}\ \mathcal{L}^2\text{-a.e. on }\mathbb{R}^2,$$
where $\left(\boldsymbol{h}_*\Gamma(f)\right)_{\mathrm{ac}}$ is the absolutely continuous part in the Lebesgue decomposition of $\boldsymbol{h}_*\Gamma(f)$ with respect to $\mathcal{L}^2$. For any $\varepsilon>0$, let $\gamma_\varepsilon(t)=(t+\varepsilon)^{\frac{1}{p}}-\varepsilon^{\frac{1}{p}}$, then
$$\frac{\partial \gamma_\varepsilon(\xi)}{\partial x_i}=\frac{1}{p}\frac{1}{(\xi+\varepsilon)^{1-\frac{1}{p}}}\frac{\partial\xi}{\partial x_i}=\sum_{j=1}^2 \frac{a_{ij}}{p}\frac{\xi_j}{(\xi+\varepsilon)^{1-\frac{1}{p}}},$$
where
$$\int_{\mathbb{R}^2}\frac{\lvert \xi_j\rvert^p}{(\xi+\varepsilon)^{p-1}}\mathrm{d}\mathcal{L}^2\le p^p \left(\boldsymbol{h}_*\Gamma(f)\right)_{\mathrm{ac}}(\mathbb{R}^2)\le p^p \mathcal{E}(f)<+\infty,$$
hence $\sup_{\varepsilon>0}\lVert \frac{\partial \gamma_\varepsilon(\xi)}{\partial x_i}\rVert_{L^p(\mathbb{R}^2)}<+\infty$ for $i=1,2$. Since $\gamma_\varepsilon(t)\uparrow t^{\frac{1}{p}}$ as $\varepsilon\downarrow0$ and $\xi\in L^1(\mathbb{R}^2)$, $\gamma_\varepsilon(\xi)\to \xi^{\frac{1}{p}}$ in $L^p(\mathbb{R}^2)$ as $\varepsilon\downarrow0$. By \cite[Section 9.1, Remark 4(i), p. 264]{Bre11}, $\xi^{\frac{1}{p}}\in W^{1,p}(\mathbb{R}^2)$.
\end{proof}

To complete \ref{item_step2}, we need some further preparation. Let $\rho$ and $\xi$ be measurable functions on $\mathbb{R}^2$ satisfying that $0\le \rho\le \xi$ $\mathcal{L}^2$-a.e. on $\mathbb{R}^2$ and $\xi^{\frac{1}{p}}\in W^{1,p}(\mathbb{R}^2)$. For any $u\in C_c^1(\mathbb{R}^2)$, let
$$Q^\rho(u):=\int_{\mathbb{R}^2}\lvert \nabla u\rvert^p (\rho+1)\mathrm{d}\mathcal{L}^2,\qquad Q^\rho_1(u):=\int_{\mathbb{R}^2}(\lvert \nabla u\rvert^p+\lvert u\rvert^p) (\rho+1)\mathrm{d}\mathcal{L}^2.$$
Let $\mathrm{Dom}(Q^\rho)$ be the $Q^\rho_1$-closure of $C_c^1(\mathbb{R}^2)$ in $L^p(\mathbb{R}^2;(\rho+1)\mathcal{L}^2)$, then $(Q^\rho,\mathrm{Dom}(Q^\rho))$ is a local regular $p$-energy on $L^p(\mathbb{R}^2;(\rho+1)\mathcal{L}^2)$. Let $(Q,W^{1,p}(\mathbb{R}^2))$ be the standard regular $p$-energy on $L^p(\mathbb{R}^2)$ given by
$$Q(u):=\int_{\mathbb{R}^2}\lvert \nabla u\rvert^p \mathrm{d}\mathcal{L}^2,\qquad Q_1(u):=\int_{\mathbb{R}^2}(\lvert \nabla u \rvert^p+\lvert u\rvert^p) \mathrm{d}\mathcal{L}^2.$$
Let $\mathrm{Cap}^{\rho}$ and $\mathrm{Cap}^{1,p}$ denote the capacities associated with ${Q}^{\rho}$ and $Q$, respectively; see \cite[Section 8]{Yan25a} for the relevant definitions and results on capacity and quasi-continuity for general local regular $p$-energies. For any $x\in \mathbb{R}^2$ and any $r>0$, let
$$B(x,r)=\{y\in \mathbb{R}^2:\lvert x-y\rvert_{\mathbb{R}^2}<r\},\qquad \overline{B}(x,r)=\{y\in \mathbb{R}^2:\lvert x-y\rvert_{\mathbb{R}^2}\le r\},$$
where $\lvert \cdot\rvert_{\mathbb{R}^2}$ denotes the Euclidean metric on $\mathbb{R}^2$.

The following result parallels {\cite[Lemma 3.9]{Hin13}}. Its proof carries over verbatim, with squaring and taking square roots replaced by taking $p$th powers and $p$th roots, respectively, and $\mathrm{Cap}^{1,2}$ throughout replaced by $\mathrm{Cap}^{1,p}$. We therefore omit the proof.

\begin{lemma}\label{lem_Hin39}
For $\mathrm{Cap}^{1,p}$-q.e. $x\in \mathbb{R}^2$, we have
$$\sup_{r>0}\dashint_{B(x,r)}\rho(y)\mathrm{d}y<+\infty.$$
\end{lemma}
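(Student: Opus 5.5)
The plan is to dominate $\rho$ by $\xi$ and reduce the statement to a maximal-function estimate for $\xi$. Since $0\le\rho\le\xi$ $\mathcal{L}^2$-a.e., it suffices to prove that
$$\sup_{r>0}\dashint_{B(x,r)}\xi(y)\,\mathrm{d}y<+\infty$$
for $\mathrm{Cap}^{1,p}$-q.e. $x\in\mathbb{R}^2$. Put $\eta:=\xi^{\frac1p}\in W^{1,p}(\mathbb{R}^2)$, so that $\xi=\eta^p$. The quantity to control becomes
$$\sup_{r>0}\dashint_{B(x,r)}\eta^p\,\mathrm{d}y=\Big(\sup_{r>0}\big\|\eta\big\|_{L^p(B(x,r),\,\mathrm{d}y/|B(x,r)|)}\Big)^p,$$
which is the $p$th power of a $p$-maximal function of a $W^{1,p}$ function. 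The point is then that Sobolev functions have finite $L^p$-averages, and indeed Lebesgue points, outside a set of $\mathrm{Cap}^{1,p}$-capacity zero.

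For large radii the bound is immediate: for $r\ge1$,
$$\dashint_{B(x,r)}\xi\,\mathrm{d}y\le \frac{\lVert \eta\rVert_{L^p(\mathbb{R}^2)}^p}{\pi}<+\infty.$$
For small radii I will use the standard capacitary weak-type estimate for the fractional maximal-type operator
$$M_p\eta(x):=\sup_{0<r\le1}\Big(\dashint_{B(x,r)}\lvert\eta\rvert^p\,\mathrm{d}y\Big)^{\frac1p}.$$
One route is the Poincaré inequality on balls. Writing $\eta_{B}$ for the mean of $\eta$ over $B(x,r)$, it gives
$$\Big(\dashint_{B(x,r)}\lvert\eta-\eta_{B}\rvert^p\Big)^{\frac1p}\lesssim r\Big(\dashint_{B(x,r)}\lvert\nabla\eta\rvert^p\Big)^{\frac1p}=\Big(r^{p-2}\int_{B(x,r)}\lvert\nabla\eta\rvert^p\Big)^{\frac1p},$$
while $\lvert\eta_B\rvert\le M\eta(x)$, the Hardy--Littlewood maximal function. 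Hence $M_p\eta\lesssim M\eta+\big(M_{p-2}(\lvert\nabla\eta\rvert^p)\big)^{\frac1p}$, where $M_{p-2}$ denotes the corresponding fractional maximal operator on measures.

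The key claim is that $\mathrm{Cap}^{1,p}(\{M_p\eta>\lambda\})\lesssim\lambda^{-p}\lVert\eta\rVert^p_{W^{1,p}}$. To verify it, I will approximate $\eta$ in $W^{1,p}$ by $C^\infty_c$ functions $\eta_k$ with $\lVert\eta-\eta_k\rVert_{W^{1,p}}\le 4^{-k}$. For each smooth $\eta_k$, $M_p\eta_k$ is finite everywhere. Applying the capacitary weak-type inequality to $\eta-\eta_k$, a Borel--Cantelli argument in capacity then shows that $M_p\eta<+\infty$ outside a set of zero $\mathrm{Cap}^{1,p}$.

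Because this capacitary weak-type estimate is the classical one (Federer--Ziemer, or Adams--Hedberg, \emph{Function Spaces and Potential Theory}, for Lebesgue points of $W^{1,p}$ and the boundedness of $M$ in capacity), I expect to cite it rather than reprove it. This matches Hino's proof of \cite[Lemma 3.9]{Hin13} with $2$ replaced by $p$.

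The main obstacle is the case $p\le2$, where $\mathrm{Cap}^{1,p}$ charges points only weakly, or not at all. There the capacitary inequality must be applied to $M\eta$ itself, which is legitimate because $M\eta\in W^{1,p}$ (Kinnunen) with $\lvert\nabla M\eta\rvert\le M\lvert\nabla\eta\rvert$. The $L^p$-average term then has to be absorbed as above. For $p>2$ the Sobolev embedding $W^{1,p}(\mathbb{R}^2)\hookrightarrow C_b(\mathbb{R}^2)$ makes $\eta$ bounded, so the statement holds for every $x$ with no exceptional set.
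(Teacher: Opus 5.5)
Your argument is correct and is essentially the proof the paper defers to, namely Hino's Lemma~3.9 with $2$ replaced by $p$. You dominate $\rho$ by $\xi=\eta^p$ with $\eta\in W^{1,p}(\mathbb{R}^2)$, bound the radii $r\ge1$ by $\lVert\eta\rVert_{L^p(\mathbb{R}^2)}^p$, and control the small-radius $L^p$-averages of $\eta$ off a $\mathrm{Cap}^{1,p}$-null set by the classical Federer--Ziemer/Adams--Hedberg capacitary estimates, which you unpack via the Poincar\'e inequality, Kinnunen's bound for $M\eta$, and a Hausdorff-content covering for the gradient term. One simplification: the weak-type bound $\mathrm{Cap}^{1,p}(\{M_p\eta>\lambda\})\lesssim\lambda^{-p}\lVert\eta\rVert_{W^{1,p}}^p$ alone already gives $\mathrm{Cap}^{1,p}(\{M_p\eta=+\infty\})=0$ by letting $\lambda\to+\infty$, so the smooth approximation and Borel--Cantelli step is only needed for the full Lebesgue-point statement, not for the finiteness asserted here.
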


The following result parallels \cite[Proposition 3.10]{Hin13}. The proof in \cite{Hin13} relies on the characterization of $\mathrm{Dom}(Q^{\xi})$ in the Dirichlet form setting given by R\"ockner--Zhang or Eberle. Since we have not found a $p$-analogue of this characterization in the existing literature, for completeness, we give a proof in Section \ref{sec_Hin310}.

\begin{proposition}\label{prop_Hin310}
Let $B\subseteq \mathbb{R}^2$. Then $\mathrm{Cap}^{\rho}(B)=0$ if and only if $\mathrm{Cap}^{1,p}(B)=0$.
\end{proposition}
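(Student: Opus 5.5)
The inequality $Q^\rho_1\ge Q_1$ gives one direction at once, so the plan is: trivial direction $\mathrm{Cap}^{1,p}\le\mathrm{Cap}^{\rho}$, then the converse via exhaustion by sets where the maximal function of $\rho$ is bounded, combined with an $L^\infty$-weight cutoff argument.

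\emph{Easy direction.} Since $\rho\ge0$, we have $Q^\rho_1(u)\ge Q_1(u)$ for all $u\in C^1_c(\mathbb{R}^2)$. Hence $\mathrm{Dom}(Q^\rho)\hookrightarrow W^{1,p}(\mathbb{R}^2)$ continuously. For open $U$, any admissible function for $\mathrm{Cap}^\rho(U)$ (i.e.\ $u\ge1$ a.e.\ on $U$) is admissible for $\mathrm{Cap}^{1,p}(U)$ with smaller or equal norm. Thus $\mathrm{Cap}^{1,p}(U)\le \mathrm{Cap}^\rho(U)$ for open $U$, and by outer regularity for all $B$. So $\mathrm{Cap}^\rho(B)=0$ implies $\mathrm{Cap}^{1,p}(B)=0$.

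\emph{Converse, reduction to bounded maximal function.} Let $M\rho(x):=\sup_{r>0}\dashint_{B(x,r)}\rho$ and $E_k:=\{M\rho\le k\}$. By Lemma \ref{lem_Hin39}, $\mathrm{Cap}^{1,p}(\mathbb{R}^2\setminus\bigcup_k E_k)=0$. However, this exceptional set still has to be shown $\mathrm{Cap}^\rho$-null. The plan is therefore to prove the following statement.
\begin{quote}
(\textasteriskcentered) If $B$ is $\mathrm{Cap}^{1,p}$-null, then $B\cap E_k$ is $\mathrm{Cap}^\rho$-null for each $k$.
\end{quote}
Separately, the exceptional set $N=\{M\rho=+\infty\}$ has to be handled. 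I would apply the argument of (\textasteriskcentered) with the truncated weights $\rho\wedge k$. Since $\rho\le\xi$ and $\xi^{1/p}\in W^{1,p}$, the function $\xi$ is locally integrable with good decay. The aim is then to show directly that $\mathrm{Cap}^{\rho}$ is countably subadditive and dominated by $\mathrm{Cap}^{1,p}$ on small balls centred in $E_k$, and to cover $N$ using the $\xi$-version of Lemma \ref{lem_Hin39} (note $\rho\le\xi$, so $M\rho\le M\xi$).

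\emph{Key estimate for (\textasteriskcentered).} For a compact $F\subseteq B\cap E_k\cap \overline{B}(0,R)$ with $\mathrm{Cap}^{1,p}(F)<\varepsilon$, take $u\in C^1_c$ with $u\ge1$ near $F$ and $Q_1(u)<\varepsilon$. The goal is to bound $\int(|\nabla u|^p+|u|^p)\rho$. Rather than using $u$ directly, I would use a Whitney-type cover: cover $F$ by balls $B(x_i,r_i)$ with $x_i\in F$, and choose $u=\max_i\phi_i$, where $\phi_i$ are the standard cutoffs of $B(x_i,r_i)$ inside $B(x_i,2r_i)$. Then
\[
\int|\nabla\phi_i|^p\rho\lesssim r_i^{-p}\cdot r_i^2\dashint_{B(x_i,2r_i)}\rho\le C k\,r_i^{2-p},
\]
which compares with the Euclidean capacity content. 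This requires a Hausdorff-content characterisation of $\mathrm{Cap}^{1,p}$-null sets in $\mathbb{R}^2$. Since Hausdorff content is not equivalent to capacity, this step must be replaced by a more careful argument.

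\emph{Main obstacle and fallback.} I expect the Hausdorff-content issue to be the hardest step. The fallback is a \emph{Muckenhoupt-type} argument. The weight $w=\rho+1$ satisfies $w\le\xi+1$, and $\xi^{1/p}\in W^{1,p}$ gives local integrability of $\xi$ of every order in two dimensions, by Sobolev embedding. With this higher integrability, Hölder's inequality gives
\[
\int|\nabla u|^p\rho\le\|\nabla u\|_{L^{ps'}}^p\,\|\rho\|_{L^{s}}.
\]
A set with $\mathrm{Cap}^{1,p}(B)=0$ also has zero $(1,q)$-capacity for $q$ slightly above $p$. This holds if $p\ge2$ and the set is empty; if $p<2$, then null sets of $\mathrm{Cap}^{1,p}$ are not automatically $\mathrm{Cap}^{1,q}$-null, so this route also requires care. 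For that reason I would ultimately follow Hino's structure: characterise $\mathrm{Dom}(Q^\rho)$ as $\{u\in W^{1,p}:\int(|\nabla u|^p+|u|^p)\rho<\infty\}$ by a truncation-and-mollification density argument using $M\rho\le k$ locally. Then show that quasi-continuous versions coincide, and deduce equality of the null sets from the fact that both capacities are computed by the same quasi-continuous functions.
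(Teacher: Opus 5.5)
Your easy direction is correct, but the converse is not established: every route you sketch either breaks down or ends in a step that does not follow.

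\begin{itemize}
\item The covering argument on $E_k=\{M\rho\le k\}$ bounds $\mathrm{Cap}^\rho$ by $\sum_i r_i^{2-p}$, i.e.\ by $(2-p)$-dimensional Hausdorff content. This cannot succeed. Sets with $0<\mathcal{H}^{2-p}<+\infty$ are $\mathrm{Cap}^{1,p}$-null (the fact used in the proof of Lemma \ref{lem_Hin312}) but have positive content, and for $p=2$ the sum merely counts balls.
\item Handling $\{M\rho=+\infty\}$ through Lemma \ref{lem_Hin39} is circular. That lemma only gives $\mathrm{Cap}^{1,p}$-nullity, which is exactly what has to be upgraded.
\item The H\"older route fails because the null sets compare the wrong way: $\mathrm{Cap}^{1,q}$-null for $q>p$ implies $\mathrm{Cap}^{1,p}$-null, not conversely. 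Moreover, for $p<2$ one only has $\xi=\eta^p\in L^{2/(2-p)}$, not $L^s_{\mathrm{loc}}$ for every $s$. Also, for $p=2$ the $\mathrm{Cap}^{1,2}$-null sets are not empty.
\item The last step of your fallback is a non sequitur. A qualitative identification of $\mathrm{Dom}(Q^\rho)$, together with the fact that $\mathrm{Cap}^\rho$-quasi-continuous functions are $\mathrm{Cap}^{1,p}$-quasi-continuous, does not force the null sets to agree. Take $p=2$ and the $A_2$ weight $\rho=|x|^{-\alpha}$, $0<\alpha<2$. Smooth functions are dense in the weighted Sobolev space, and the origin is $\mathrm{Cap}^{1,2}$-null, yet a radial estimate gives $\mathrm{Cap}^\rho(\{0\})>0$. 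Moreover, ``quasi-continuous versions coincide $\mathrm{Cap}^\rho$-q.e.'' is essentially the statement to be proved.
\end{itemize}
So the hypothesis $\rho\le\xi$, $\xi^{1/p}\in W^{1,p}(\mathbb{R}^2)$ has to be used quantitatively.

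The missing idea is the paper's blending construction. Since $\mathrm{Cap}^{1,p}\le\mathrm{Cap}^\rho\le\mathrm{Cap}^\xi$, it suffices to show $\mathrm{Cap}^\xi(A)\le2^{p-1}\bigl(C^p\mathrm{Cap}^{1,p}(A)+\varepsilon(C)\bigr)$ with $\varepsilon(C)\to0$. Set $\eta=\xi^{1/p}$, $F_C(s)=(1-\frac{C}{s_++1})_+$ and $f_C=F_C(\eta)$. Replace a Euclidean capacitary function $0\le g\le1$ by $W_{g,C}=g+(1-g)f_C$, which still equals $1$ where $g=1$. The bounds $(\eta+1)(1-f_C)\le C$ and $(\eta+1)F_C'(\eta)\le1_{\{\eta>C-1\}}$ give
$(\eta+1)W_{g,C}\le Cg+(\eta+1)1_{\{\eta>C-1\}}$ and $(\eta+1)\lvert\nabla W_{g,C}\rvert\le C\lvert\nabla g\rvert+\lvert\nabla\eta\rvert1_{\{\eta>C-1\}}$.
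Since $\xi+1\le(\eta+1)^p$, this yields $Q^\xi_1(W_{g,C})\le2^{p-1}\bigl(C^pQ_1(g)+\varepsilon(C)\bigr)$ with $\varepsilon(C)=\int_{\{\eta>C-1\}}(\lvert\nabla\eta\rvert^p+(\eta+1)^p)\,\mathrm{d}\mathcal{L}^2\to0$.

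The region where the weight is large is thus absorbed into a cutoff of small cost; no maximal function, covering, or quasi-continuity enters. The only substitute needed for a domain characterisation is the membership $W_{g,C}\in\mathrm{Dom}(Q^\xi)$. After approximating $g$, this reduces to $f_C\in\mathrm{Dom}(Q^\xi)$. The paper proves this by bounding $\lVert(\eta+1)(f_C\ast\varrho^{(\delta)})\rVert_p$ and $\lVert(\eta+1)\nabla(f_C\ast\varrho^{(\delta)})\rVert_p$ uniformly in $\delta$. It uses $(t+1)(s+1)\lvert F_C(t)-F_C(s)\rvert\le C\lvert t-s\rvert$ and $\lVert\eta(\cdot-z)-\eta\rVert_p\le\lvert z\rvert\,\lVert\nabla\eta\rVert_p$, and then applies weak compactness and Mazur's lemma.
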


\begin{lemma}[{\cite[Lemma 3.11]{Hin13} with $d=2$}]\label{lem_Hin311}
Let $\kappa$ be a positive Radon measure on $\mathbb{R}^2$. Then
$$\kappa\left(\left\{x\in \mathbb{R}^2:\varliminf_{r\downarrow0}\frac{\kappa(\overline{B}(x,r))}{r^2}=0\right\}\right)=0.$$
\end{lemma}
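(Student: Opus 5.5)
The plan is a Vitali-type covering argument combined with the density bound at small scales. Fix $\varepsilon>0$ and $\eta>0$, and let
$$E_\varepsilon:=\Big\{x\in\mathbb{R}^2:\varliminf_{r\downarrow0}\frac{\kappa(\overline{B}(x,r))}{r^2}<\varepsilon\Big\}.$$
Then
$$\Big\{x:\varliminf_{r\downarrow 0}\kappa(\overline{B}(x,r))/r^2=0\Big\}\subseteq\bigcap_{\varepsilon>0}E_\varepsilon,$$
so it suffices to show $\kappa(E_\varepsilon\cap B(0,R))\lesssim\varepsilon$ for each fixed $R>0$ (constant depending on $R$), and then let $\varepsilon\downarrow0$ and $R\uparrow\infty$. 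Since $\kappa$ is Radon, outer regularity gives an open set $U\supseteq E_\varepsilon\cap B(0,R)$ with $U\subseteq B(0,R+1)$ and $\kappa(U)\le\kappa^*(E_\varepsilon\cap B(0,R))+\eta$. We work with the outer measure to avoid proving measurability of $E_\varepsilon$; it is in fact Borel, since $r\mapsto\kappa(\overline{B}(x,r))$ is right-continuous and $x\mapsto\kappa(\overline{B}(x,r))$ is upper semicontinuous, which lets us restrict to rational $r$.

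For each $x\in E_\varepsilon\cap B(0,R)$ there are arbitrarily small radii $r\in(0,1)$ with $\overline{B}(x,r)\subseteq U$ and $\kappa(\overline{B}(x,r))<\varepsilon r^2$. These balls form a fine (Vitali) cover of $E_\varepsilon\cap B(0,R)$. The key is to apply the Besicovitch covering theorem in $\mathbb{R}^2$, which is valid for an arbitrary Radon measure $\kappa$, with no doubling needed. It yields countably many disjoint subfamilies, at most $N_2$ of them (a dimensional constant), which together cover $E_\varepsilon\cap B(0,R)$. Write $\{\overline{B}(x_k,r_k)\}_k$ for the union of these subfamilies. Then
$$\kappa^*(E_\varepsilon\cap B(0,R))\le\sum_k\kappa(\overline{B}(x_k,r_k))<\varepsilon\sum_k r_k^2.$$
Since each subfamily is disjoint and contained in $B(0,R+1)$, we get $\sum_k r_k^2\le N_2\,\pi^{-1}\mathcal{L}^2(B(0,R+1))=N_2(R+1)^2$. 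Hence $\kappa^*(E_\varepsilon\cap B(0,R))\le N_2\varepsilon(R+1)^2$, and letting $\varepsilon\downarrow0$ proves the claim. The parameter $\eta$ is not even needed; it is enough that the balls lie in a bounded region.

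The main obstacle is the covering step: $\kappa$ need not be doubling, so the usual Vitali $5r$-covering lemma, which enlarges balls, does not control $\kappa$ of the enlarged balls. The Besicovitch theorem avoids enlargement entirely, which is why it is the right tool. The estimate only uses Lebesgue measure to bound $\sum r_k^2$ through disjointness, so the $r^2$ normalisation matches $d=2$. Alternatively, one could cite the standard density theorem that $\varliminf_{r\downarrow0}\kappa(\overline{B}(x,r))/r^2\ge\varepsilon$ on a set $E$ implies $\kappa\ge c\,\varepsilon\,\mathcal{H}^2\llcorner E$; the contrapositive form used above is the same argument.
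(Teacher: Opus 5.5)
Your proof is correct. The paper gives no argument of its own for this lemma; it simply quotes \cite[Lemma 3.11]{Hin13}. What you supply is the standard self-contained covering proof of the classical fact that $\kappa$ vanishes on the set where its lower derivative with respect to $\mathcal{L}^2$ vanishes. The Borel measurability remark, the application of Besicovitch to the bounded set $E_\varepsilon\cap B(0,R)$, and the bound $\sum_k r_k^2\le N_2(R+1)^2$ are all sound.

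Two of your side comments should be corrected, though neither affects the proof.

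First, Besicovitch is not actually needed. The $\varliminf$ condition at $x$ provides small-mass balls at arbitrarily small scales, so you may select balls $\overline{B}(x,r)$, $r<1$, with $\kappa(\overline{B}(x,5r))<25\varepsilon r^2$. The $5r$-covering lemma then gives disjoint $\overline{B}(x_i,r_i)$ with $E_\varepsilon\cap B(0,R)\subseteq\bigcup_i\overline{B}(x_i,5r_i)$. Hence $\kappa^*(E_\varepsilon\cap B(0,R))\le 25\varepsilon\sum_i r_i^2\le 25\varepsilon(R+1)^2$. Disjointness is only used through Lebesgue measure, which is doubling, so the lack of doubling for $\kappa$ is no obstacle. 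This is the same $5B$-lemma the paper uses in Lemma \ref{lem_Hin312}.

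Second, your closing ``alternative'' points in the wrong direction. A lower bound $\kappa\ge c\,\varepsilon\,\mathcal{H}^2$ on the set where the lower density is at least $\varepsilon$ is not the contrapositive of your estimate. It says nothing about the set where the density vanishes. What the lemma needs is an upper bound on $\kappa$ over the small-density set, which is exactly what your covering argument proves.
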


Let us recall the definition of Hausdorff measure on $\mathbb{R}^2$ as follows. Let $A\subseteq \mathbb{R}^2$ and $s>0$. For any $\delta>0$, let
$$\mathcal{H}^s_\delta(A):=\inf\left\{\sum_{i=1}^\infty \left(\mathrm{diam}(E_i)\right)^s:A\subseteq \cup_{i=1}^\infty E_i,\mathrm{diam}(E_i)\le \delta\right\}.$$
Then the $s$-dimensional Hausdorff measure $\mathcal{H}^s(A)$ of $A$ is defined as $\mathcal{H}^{s}(A)=\lim_{\delta\downarrow0}\mathcal{H}^s_\delta(A)$.

\begin{lemma}\label{lem_Hin312}
Let $p\in(1,2)$ and let $\kappa$ be a positive Radon measure on $\mathbb{R}^2$. Then
\begin{equation}\label{eq_Hin312}
\lim_{r\downarrow0}\frac{\kappa \left(\overline{B}(x,r)\right)}{r^{2-p}}=0\text{ for $\mathrm{Cap}^{1,p}$-q.e. }x\in \mathbb{R}^2.
\end{equation}
\end{lemma}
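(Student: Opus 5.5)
The plan is the standard capacity–Hausdorff measure comparison: the exceptional set is covered by a set of small $(2-p)$-dimensional Hausdorff content. For $p\in(1,2)$, a set $E\subseteq\mathbb{R}^2$ with $\mathcal{H}^{2-p}(E)=0$ has $\mathrm{Cap}^{1,p}(E)=0$. This follows by covering $E$ with balls $B(x_i,r_i)$, $r_i\le1$ and $\sum_i r_i^{2-p}<\varepsilon$, and using the test functions $\min\{1,(2-|y-x_i|/r_i)_+\}$. Each such function has $Q_1$-energy $\lesssim r_i^{2-p}+r_i^2\lesssim r_i^{2-p}$, and taking the pointwise supremum gives $\mathrm{Cap}^{1,p}(E)\lesssim\varepsilon$. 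So it suffices to show that, for each $t>0$, the set
$$E_t:=\left\{x\in\mathbb{R}^2:\varlimsup_{r\downarrow0}\frac{\kappa(\overline{B}(x,r))}{r^{2-p}}>t\right\}$$
satisfies $\mathcal{H}^{2-p}(E_t)=0$. Indeed, the exceptional set in (\ref{eq_Hin312}) is $\bigcup_{k\ge1}E_{1/k}$, and $\mathrm{Cap}^{1,p}$ is countably subadditive.

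First reduce to $\kappa$ finite with compact support. Capacity-zero sets are local and $\kappa$ is Radon, so it is enough to restrict to $E_t\cap B(0,R)$ and replace $\kappa$ by $\kappa|_{B(0,R+1)}$. This does not change $\kappa(\overline{B}(x,r))$ for $|x|<R$ and $r<1$.

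The key step is a refinement of the density theorem, namely that $\mathcal{H}^{2-p}(E_t)=0$ rather than merely $\mathcal{H}^{2-p}(E_t)\le t^{-1}\kappa(\mathbb{R}^2)$. Fix $\delta>0$ and an open set $U\supseteq E_t$. For each $x\in E_t\cap U$, choose arbitrarily small radii $r<\delta$ with $\overline{B}(x,r)\subseteq U$ and $\kappa(\overline{B}(x,r))>t\,r^{2-p}$. By the Vitali $5r$-covering lemma there are disjoint such balls $\overline{B}(x_i,r_i)$ with $E_t\cap U\subseteq\bigcup_i B(x_i,5r_i)$. Hence
$$\mathcal{H}^{2-p}_{10\delta}(E_t)\le\sum_i(10r_i)^{2-p}\le\frac{10^{2-p}}{t}\sum_i\kappa(\overline{B}(x_i,r_i))\le\frac{10^{2-p}}{t}\kappa(U).$$
This first gives $\mathcal{H}^{2-p}(E_t)<+\infty$. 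Since $2-p<2$, it follows that $\mathcal{L}^2(E_t)=0$.

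The main obstacle is $\kappa$ itself: the bound above involves $\kappa(U)$, and $E_t$ need not be $\kappa$-null, since $\kappa$ may charge $E_t$. I would handle this by splitting $\kappa=\kappa|_{E_t}+\kappa|_{\mathbb{R}^2\setminus E_t}$.

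For the part $\nu:=\kappa|_{\mathbb{R}^2\setminus E_t}$, which satisfies $\nu(E_t)=0$, choose open sets $U\supseteq E_t$ with $\nu(U)$ arbitrarily small. Here $E_t$ is Borel, or one uses outer regularity applied to a Borel hull. On the set where the $\varlimsup$ for $\nu$ exceeds $t/2$, this gives $\mathcal{H}^{2-p}=0$.

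For the part $\kappa|_{E_t}$, first apply the previous estimate with $t/2$ in place of $t$. Then argue that $\mathcal{H}^{2-p}(E_t)<\infty$ together with the upper density condition forces a contradiction on the piece $\{x: \varlimsup_r \kappa|_{E_t}(\overline{B}(x,r))/r^{2-p}=+\infty\}$. The remaining piece, where the density is finite but positive, is handled in the same way at every level and yields $\mathcal{H}^{2-p}$-null sets. If this decomposition becomes delicate, the fallback is Frostman's lemma. A compact subset of $E_t$ of positive $\mathcal{H}^{2-p}$ measure would carry a measure of finite $(1,p)$-energy; testing that measure against $\kappa$ via the Wolff potential shows that the sets where $\kappa$ concentrates at rate $r^{2-p}$ are polar. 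Concretely, the Wolff potential $W^\kappa_{1,p}$ is finite q.e., and $W^\kappa_{1,p}(x)<\infty$ forces $\kappa(\overline{B}(x,r))/r^{2-p}\to0$ by the Cauchy tail criterion for the integral $\int_0^1\bigl(\kappa(B(x,r))/r^{2-p}\bigr)^{1/(p-1)}\,\mathrm{d}r/r$. I would in fact present this Wolff-potential route as the primary proof. Both steps, finiteness of $W^\kappa_{1,p}$ quasi-everywhere for locally finite $\kappa$ and the tail argument, are standard once $\kappa$ is truncated to a compact set.
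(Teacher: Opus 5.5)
Your reduction to the sets $E_t$, the localization, and the Vitali $5r$-covering bound $\mathcal{H}^{2-p}_{10\delta}(E_t)\le 10^{2-p}t^{-1}\kappa(U)$ are the same as in the paper. The step you call the key step, however, is false: $\mathcal{H}^{2-p}(E_t)=0$ does not hold in general. Take a self-similar Cantor set $S\subseteq\mathbb{R}\times\{0\}$ generated by two similitudes of ratio $2^{-1/(2-p)}<\frac12$, so that $S$ has dimension $2-p\in(0,1)$, and let $\kappa=\mathcal{H}^{2-p}|_S$. By Ahlfors regularity, $\kappa(\overline{B}(x,r))\ge c\,r^{2-p}$ for all $x\in S$ and small $r$. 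Hence $S\subseteq E_t$ for $t<c$, while $\mathcal{H}^{2-p}(S)>0$. So no splitting of $\kappa$ can yield $\mathcal{H}^{2-p}$-null pieces: your ``finite but positive density'' piece contains $S$. The Frostman fallback rests on the same misconception. $S$ has positive finite $\mathcal{H}^{2-p}$-measure yet zero $(1,p)$-capacity, so it carries no nonzero measure of finite $(1,p)$-energy. A Frostman measure only gives $\nu(B(x,r))\lesssim r^{2-p}$, and then $\int_0^1(\nu(B(x,r))/r^{2-p})^{1/(p-1)}\,\mathrm{d}r/r$ can diverge, as it does on $S$.

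The missing idea is that for $p>1$ finiteness suffices: $\mathcal{H}^{2-p}(E)<+\infty$ already implies $\mathrm{Cap}^{1,p}(E)=0$. This is \cite[Theorem 5.1.9]{AH96}, which the paper invokes. Your own test functions prove it once you add the step where $p>1$ enters. Cover $E$ at scales $\delta\downarrow0$ with $\sum_i r_i^{2-p}\lesssim\mathcal{H}^{2-p}(E)+1$. This gives functions $u_\delta$ equal to $1$ on a neighbourhood of $E$, with $Q(u_\delta)$ bounded and $\lVert u_\delta\rVert_{L^p}^p\lesssim\delta^p$. By reflexivity of $W^{1,p}(\mathbb{R}^2)$ and Mazur's lemma, finite convex combinations of the $u_\delta$ tend to $0$ in $W^{1,p}(\mathbb{R}^2)$ while remaining $\ge1$ on a neighbourhood of $E$. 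Combined with your bound $\mathcal{H}^{2-p}(E_t\cap B(0,R))\le 10^{2-p}t^{-1}\kappa(\overline{B}(0,R+1))<+\infty$, this finishes the proof; it is exactly the paper's route with $t=1/n$ and $R=n$.

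As for the Wolff-potential route you would make primary, the tail step is correct. But quasi-everywhere finiteness of $W^{\kappa}_{1,p}$ for an arbitrary finite $\kappa$ is not a routine consequence of truncation. There is no linear duality exchanging $\kappa$ with a capacitary measure when $p\ne2$. A standard way to obtain it is via the Kilpel\"ainen--Mal\'y pointwise estimates for $p$-superharmonic functions with Riesz measure $\kappa$, together with their quasi-everywhere finiteness. That statement implies the lemma, so asserting it without proof leaves your argument resting on a harder, unproved result.
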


\begin{proof}
The proof follows the argument of {\cite[Lemma 3.12]{Hin13}}. For any $n\ge1$, let
$$A_n:=\left\{x\in B(0,n):\varlimsup_{r\downarrow0}\frac{\kappa \left(\overline{B}(x,r)\right)}{r^{2-p}}>\frac{1}{n}\right\}.$$
For any $\delta\in(0,1)$, let
$$\mathcal{G}_{n,\delta}:=\left\{\overline{B}(x,r):x\in A_n,r\in(0,\delta), \frac{\kappa \left(\overline{B}(x,r)\right)}{r^{2-p}}>\frac{1}{n}\right\}.$$
Then for any $x\in A_n$, there exists $r\in(0,\delta)$ such that $\overline{B}(x,r)\in \mathcal{G}_{n,\delta}$, hence $A_n\subseteq \cup_{B\in \mathcal{G}_{n,\delta}}B$. By the $5B$-covering lemma (see \cite[Theorem 1.2]{Hei01}), there exists a countable family of disjoint balls $\{\overline{B}(x_i,r_i)\}_i\subseteq \mathcal{G}_{n,\delta}$ such that $A_n\subseteq\cup_{B\in \mathcal{G}_{n,\delta}}B\subseteq \cup_i\overline{B}(x_i,5r_i)$, hence
$$\mathcal{H}^{2-p}_{10\delta}(A_n)\le \sum_i (10r_i)^{2-p}\le 10^{2-p}n\sum_i \kappa \left(\overline{B}(x_i,r_i)\right)\le 10^{2-p}n \kappa \left(\overline{B}(0,n+1)\right).$$
Letting $\delta\downarrow0$, we have $\mathcal{H}^{2-p}(A_n)\le 10^{2-p}n \kappa \left(\overline{B}(0,n+1)\right)<+\infty$. By \cite[Theorem 5.1.9]{AH96}, $\mathrm{Cap}^{1,p}(A_n)=0$ for any $n\ge1$, hence $\mathrm{Cap}^{1,p}(\cup_{n\ge1}A_n)=0$, which gives (\ref{eq_Hin312}).
\end{proof}

\begin{proposition}\label{prop_Hin313}
Let $\boldsymbol{h}=(h_1,h_2)\in \mathcal{F}^2$ satisfy $\Lambda_{\boldsymbol{h}}(K)>0$ and $\boldsymbol{h}_*\Lambda_{\boldsymbol{h}}\ll \mathcal{L}^2$. Assume that its Radon--Nikodym derivative $\rho:=\frac{\mathrm{d}(\widetilde{\boldsymbol{h}}_*\Lambda_{\boldsymbol{h}})}{\mathrm{d}\mathcal{L}^2}$ is dominated by a non-negative function $\xi$ with $\xi^{\frac{1}{p}}\in W^{1,p}(\mathbb{R}^2)$. Then the measure $\boldsymbol{h}_*m$ does not concentrate on any $\mathrm{Cap}^{1,p}$-null set. More precisely, if $B\in \mathcal{B}(\mathbb{R}^2)$ satisfies $\mathrm{Cap}^{1,p}(B)=0$, then $(\boldsymbol{h}_*m)(\mathbb{R}^2\backslash B)>0$.
\end{proposition}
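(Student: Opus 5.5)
The plan is to argue by contradiction, using Proposition \ref{prop_Hin310} to turn the $\mathrm{Cap}^{1,p}$-null set $B$ into a $\mathrm{Cap}^{\rho}$-null set, and then testing the energy measure $\Lambda_{\boldsymbol{h}}$ against functions of the form $u\circ\widetilde{\boldsymbol{h}}$. Suppose $B\in\mathcal{B}(\mathbb{R}^2)$ satisfies $\mathrm{Cap}^{1,p}(B)=0$ but $(\boldsymbol{h}_*m)(\mathbb{R}^2\backslash B)=0$, that is, $\widetilde{\boldsymbol{h}}(x)\in B$ for $m$-a.e. $x\in K$. By Proposition \ref{prop_Hin310}, $\mathrm{Cap}^{\rho}(B)=0$. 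By the definition of capacity for the regular $p$-energy $(Q^\rho,\mathrm{Dom}(Q^\rho))$ and the density of $C^1_c(\mathbb{R}^2)$, I will choose, for each $n$, an open set $U_n\supseteq B$ and a function $u_n\in C^1_c(\mathbb{R}^2)$ with the following properties:
\begin{itemize}
\item $0\le u_n\le 1$ and $u_n=1$ on $U_n$ (after truncating, using that normal contractions preserve the domain and do not increase $Q^\rho_1$);
\item $Q^\rho_1(u_n)\to0$.
\end{itemize}
If truncation leaves $C^1_c$, I will instead smooth the truncation slightly, or simply work with continuous compactly supported elements of $\mathrm{Dom}(Q^\rho)$. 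In particular,
$$\int_{\mathbb{R}^2}u_n^p\,\rho\,\mathrm{d}\mathcal{L}^2\le Q^\rho_1(u_n)\to0.$$

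Next, set $f_n:=u_n(\widetilde{\boldsymbol{h}})$. By the chain rule (\ref{eq_chain}), applied to a $C^1$ function with bounded derivatives vanishing at $0$ (or by Lipschitz contractivity), $f_n\in\mathcal{F}$. Since $u_n$ is continuous and $\widetilde{h}_1,\widetilde{h}_2$ are quasi-continuous, $f_n$ is quasi-continuous. Because $\widetilde{\boldsymbol{h}}\in B\subseteq U_n$ $m$-a.e., we have $f_n=1$ $m$-a.e. on $K$. Two quasi-continuous functions that agree $m$-a.e. agree q.e. (see \cite[Section 8]{Yan25a}, \cite{EM25}), so $f_n=1$ q.e. on $K$. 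Energy measures do not charge sets of zero capacity, so $f_n=1$ $\Lambda_{\boldsymbol{h}}$-a.e.

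Combining the two previous paragraphs with the definition of $\rho$ as the density of $\widetilde{\boldsymbol{h}}_*\Lambda_{\boldsymbol{h}}$, we get
$$\Lambda_{\boldsymbol{h}}(K)=\int_K f_n^p\,\mathrm{d}\Lambda_{\boldsymbol{h}}=\int_{\mathbb{R}^2}u_n^p\,\mathrm{d}(\widetilde{\boldsymbol{h}}_*\Lambda_{\boldsymbol{h}})=\int_{\mathbb{R}^2}u_n^p\,\rho\,\mathrm{d}\mathcal{L}^2\to0.$$
This contradicts $\Lambda_{\boldsymbol{h}}(K)>0$. Note that the domination $\rho\le\xi$ with $\xi^{\frac{1}{p}}\in W^{1,p}(\mathbb{R}^2)$ enters only through Proposition \ref{prop_Hin310}.

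I expect the main obstacle to be the quasi-continuity bookkeeping in the second paragraph: one must justify that an $m$-a.e. identity between quasi-continuous functions upgrades to a q.e. identity, and that $\Lambda_{\boldsymbol{h}}$ does not charge capacity-zero sets. Both are standard for the regular $p$-Dirichlet space $(\mathcal{E}^{(c)},\mathcal{F}^{(c)})$ and will be quoted from \cite[Section 8]{Yan25a}. A secondary technical point is producing continuous capacity test functions $u_n$ equal to $1$ on a neighbourhood of $B$ with $Q^\rho_1(u_n)$ small; I will get these from the outer-regular definition of $\mathrm{Cap}^\rho$ together with regularity of $Q^\rho$.
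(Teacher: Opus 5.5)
There is a genuine gap. It lies in your first step, not in the quasi-continuity bookkeeping, which is fine. A continuous $u_n$ with $u_n=1$ on an open set $U_n\supseteq B$ equals $1$ on $\overline{B}$. Hence $\int_{\mathbb{R}^2}u_n^p\rho\,\mathrm{d}\mathcal{L}^2\ge\int_{\overline{B}}\rho\,\mathrm{d}\mathcal{L}^2$, and $\mathrm{Cap}^\rho(B)=0$ does not make the right-hand side small. For $p>2$ the only $\mathrm{Cap}^{1,p}$-null subset of $\mathbb{R}^2$ is $\emptyset$, so the statement is trivial. For $p\le2$ (the case used in Case 2 of the proof of Theorem \ref{thm_R1}) points are $\mathrm{Cap}^{1,p}$-null. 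So if the proposition failed for $B$, it would also fail for the dense null set $B\cup\mathbb{Q}^2$, whose closure is $\mathbb{R}^2$, giving $\int_{\mathbb{R}^2}\rho\,\mathrm{d}\mathcal{L}^2=\Lambda_{\boldsymbol{h}}(K)>0$; a proof must therefore handle dense $B$. Describing capacity by continuous test functions that are $\ge1$ on the set works for compact sets, not for arbitrary Borel sets; for those, only quasi-continuous (typically discontinuous) test functions are available.

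Your own second and third steps in fact show that the $u_n$ you want can never exist. Under the contradiction hypothesis, any bounded continuous $u$ with $u(\widetilde{\boldsymbol{h}})=1$ $m$-a.e. satisfies $u(\widetilde{\boldsymbol{h}})=1$ q.e., hence $\Lambda_{\boldsymbol{h}}$-a.e., hence $\int_{\mathbb{R}^2}u^p\rho\,\mathrm{d}\mathcal{L}^2=\Lambda_{\boldsymbol{h}}(K)$. Passing from $m$ to $\Lambda_{\boldsymbol{h}}$ through continuous test functions only shows that $\widetilde{\boldsymbol{h}}_*\Lambda_{\boldsymbol{h}}$ is carried by $\mathrm{supp}(\boldsymbol{h}_*m)$, which yields no contradiction. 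A symptom of this is that you never use the gradient part of $Q^\rho_1(u_n)$, so Proposition \ref{prop_Hin310} does no real work in your argument.

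The missing idea is to couple $m$ and $\Lambda_{\boldsymbol{h}}$ through the energy and the closedness of $(\mathcal{E},\mathcal{F})$, not pointwise.

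\begin{enumerate}
\item By inner regularity, take compact $B_n\subseteq B$ with $(\boldsymbol{h}_*m)(\mathbb{R}^2\backslash B_n)\to0$. For compact sets, $\mathrm{Cap}^\rho(B_n)=0$ does give $f_n\in C^1_c(\mathbb{R}^2)$ with $0\le f_n\le1+\frac1n$, $f_n\ge1$ on $B_n$, and $Q^\rho_1(f_n)\to0$.
\item Now $f_n(\widetilde{\boldsymbol{h}})=1$ only off a set of small $m$-measure, so the a.e.-to-q.e. upgrade is lost. Moreover, $\Lambda_{\boldsymbol{h}}$ need not be absolutely continuous with respect to $m$, so small $m$-measure says nothing about $\Lambda_{\boldsymbol{h}}$.
\item Instead, fix $u\in C^1_c(\mathbb{R}^2)$ with $u(0)=0$ and $\mathcal{E}(u(\boldsymbol{h}))>0$. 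The chain-rule bound (\ref{eq_Hin313c}) gives $\mathcal{E}((uf_n)(\boldsymbol{h}))\lesssim\int_{\mathbb{R}^2}\lvert\nabla(uf_n)\rvert^p\rho\,\mathrm{d}\mathcal{L}^2\to0$. This is exactly where the gradient part of $Q^\rho_1(f_n)$ enters.
\item Meanwhile $(uf_n)(\boldsymbol{h})\to u(\boldsymbol{h})$ in $L^p(K;m)$, because $\lvert1-f_n\rvert\le\frac1n$ on $B_n$ and $(\boldsymbol{h}_*m)(\mathbb{R}^2\backslash B_n)\to0$.
\item Closedness of $(\mathcal{E},\mathcal{F})$ then forces $\mathcal{E}(u(\boldsymbol{h}))=0$, a contradiction.
\end{enumerate}
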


\begin{proof}
We adapt the argument of \cite[Proposition 3.13]{Hin13} to the non-linear setting. Suppose that there exists $B\in \mathcal{B}(\mathbb{R}^2)$ satisfying that $\mathrm{Cap}^{1,p}(B)=(\boldsymbol{h}_*m)(\mathbb{R}^2\backslash B)=0$. By regularity, there exists a sequence of compact sets $\{B_n\}_{n\ge1}$ such that $B_n\subseteq B_{n+1}\subseteq B$ for any $n$, and
\begin{equation}\label{eq_Hin313a}
(\boldsymbol{h}_*m)(\mathbb{R}^2\backslash B_n)=(\boldsymbol{h}_*m)(B\backslash B_n)\downarrow0.
\end{equation}
By Proposition \ref{prop_Hin310}, $\mathrm{Cap}^{\rho}(B)=\mathrm{Cap}^{\rho}(B_n)=0$ for any $n\ge1$. Then there exists $f_n\in C^1_c(\mathbb{R}^2)$ satisfying that $0\le f_n\le 1+\frac{1}{n}$ on $\mathbb{R}^2$ and $1\le f_n\le 1+\frac{1}{n}$ on $B_n$ for any $n\ge1$, such that
\begin{equation}\label{eq_Hin313b}
\lim_{n\to+\infty}\int_{\mathbb{R}^2}\left(\lvert \nabla f_n\rvert^p +\lvert f_n\rvert^p\right)(\rho+1)\mathrm{d}\mathcal{L}^2=0.
\end{equation}

Recall the following basic property for $p$-energy measures. Let $\boldsymbol{h}=(h_1,\ldots,h_n)\in \mathcal{F}^n$ and let $\phi\in C_c^1(\mathbb{R}^n)$ satisfy $\phi(0)=0$, then $\phi(\boldsymbol{h})\in \mathcal{F}$ and
\begin{equation}\label{eq_Hin313c}
\Gamma(\phi(\boldsymbol{h}))\le n^{p-1} \left(\sum_{i=1}^n \lvert \frac{\partial\phi}{\partial x_i}(\boldsymbol{h})\rvert^p\right)\sum_{i=1}^n \Gamma(h_i).
\end{equation}
Indeed, by the chain rule,
$$\Gamma(\phi(\boldsymbol{h}))=\Gamma(\phi(\boldsymbol{h});\phi(\boldsymbol{h}))=\sum_{i=1}^n  \frac{\partial\phi}{\partial x_i}(\boldsymbol{h})\Gamma(\phi(\boldsymbol{h});h_i).$$
Let $\mu=\Gamma(\phi(\boldsymbol{h}))+\sum_{i=1}^n \Gamma(h_i)$, then by the preceding identity,
$$\frac{\mathrm{d}\Gamma(\phi(\boldsymbol{h}))}{\mathrm{d}\mu}\le\sum_{i=1}^n \left(\frac{\mathrm{d}\Gamma(\phi(\boldsymbol{h}))}{\mathrm{d}\mu}\right)^{\frac{p-1}{p}}\left(\lvert \frac{\partial\phi}{\partial x_i}(\boldsymbol{h})\rvert^p \frac{\mathrm{d}\Gamma(h_i)}{\mathrm{d}\mu}\right)^{\frac{1}{p}},$$
hence
\begin{align*}
&\frac{\mathrm{d}\Gamma(\phi(\boldsymbol{h}))}{\mathrm{d}\mu}\le \left(\sum_{i=1}^n\lvert \frac{\partial\phi}{\partial x_i}(\boldsymbol{h})\rvert \left(\frac{\mathrm{d}\Gamma(h_i)}{\mathrm{d}\mu}\right)^{\frac{1}{p}}\right)^p\le \left(\sum_{i=1}^n\lvert \frac{\partial\phi}{\partial x_i}(\boldsymbol{h})\rvert^{\frac{p}{p-1}}\right)^{p-1}\left(\sum_{i=1}^n \frac{\mathrm{d}\Gamma(h_i)}{\mathrm{d}\mu}\right)\\
&\le \left(n\max_{i=1,\ldots,n}\lvert \frac{\partial\phi}{\partial x_i}(\boldsymbol{h})\rvert^{\frac{p}{p-1}}\right)^{p-1}\left(\sum_{i=1}^n \frac{\mathrm{d}\Gamma(h_i)}{\mathrm{d}\mu}\right)\le n^{p-1}\left(\sum_{i=1}^n \lvert\frac{\partial\phi}{\partial x_i}(\boldsymbol{h})\rvert^p\right) \sum_{i=1}^n \frac{\mathrm{d}\Gamma(h_i)}{\mathrm{d}\mu}.
\end{align*}

Take any fixed $u\in C_c^1(\mathbb{R}^2)$ with $u(0)=0$ such that $\mathcal{E}(u(\boldsymbol{h}))>0$. Indeed, since $\Lambda_{\boldsymbol{h}}(K)>0$, there exist $R>0$ and $i=1,2$ such that $\Gamma(h_i)(\{\lvert \boldsymbol{h}\rvert<R\})>0$. Let $u\in C_c^1(\mathbb{R}^2)$ satisfy $u(x_1,x_2)=x_i$ for any $(x_1,x_2)\in B(0,R)$, then by the strongly local property, $\mathcal{E}(u(\boldsymbol{h}))\ge \Gamma(h_i)(\{\lvert \boldsymbol{h}\rvert<R\})>0$. Noting that $(uf_n)(\boldsymbol{h})\in \mathcal{F}$, by (\ref{eq_Hin313c}) and (\ref{eq_Hin313b}),
\begin{align*}
&\mathcal{E}((uf_n)(\boldsymbol{h}))\le 2^{p-1}\sum_{i=1}^2\int_{K} \lvert\frac{\partial (uf_n)}{\partial x_i}(\boldsymbol{h})\rvert^p \mathrm{d}\Lambda_{\boldsymbol{h}}=2^{p-1}\sum_{i=1}^2\int_{\mathbb{R}^2} \lvert\frac{\partial (uf_n)}{\partial x_i}\rvert^p \rho\mathrm{d}\mathcal{L}^2\\
&\le2^{2p-2}\sum_{i=1}^2\int_{\mathbb{R}^2}\left( \lvert u\rvert^p  \lvert \frac{\partial f_n}{\partial x_i}\rvert^p+\lvert f_n\rvert^p \lvert \frac{\partial u}{\partial x_i}\rvert^p\right)\rho \mathrm{d}\mathcal{L}^2\\
&\le 2^{2p-1} \left(\max_{\mathbb{R}^2}\lvert u\rvert^p \int_{\mathbb{R}^2}\lvert \nabla f_n\rvert^p \rho \mathrm{d}\mathcal{L}^2+\max_{\mathbb{R}^2}\lvert \nabla u\rvert^p \int_{\mathbb{R}^2}\lvert  f_n\rvert^p \rho \mathrm{d}\mathcal{L}^2\right)\to0,
\end{align*}
hence $\{(uf_n)(\boldsymbol{h})\}_{n\ge1}$ is $\mathcal{E}$-convergent to $0$. By (\ref{eq_Hin313a}),
\begin{align*}
&\int_K \lvert (uf_n)(\boldsymbol{h})-u(\boldsymbol{h})\rvert^p \mathrm{d}m= \left(\int_{B_n}+\int_{\mathbb{R}^2\backslash B_n}\right) \lvert u(1-f_n)\rvert^p \mathrm{d}(\boldsymbol{h}_*m)\\
&\le \max_{\mathbb{R}^2}\lvert u\rvert^p\left( \frac{1}{n^p}m(K)+2^p (\boldsymbol{h}_*m)(\mathbb{R}^2\backslash B_n)\right)\to0,
\end{align*}
hence $\{(uf_n)(\boldsymbol{h})\}_{n\ge1}$ is $L^p(K;m)$-convergent to $u(\boldsymbol{h})$. Combining this with its $\mathcal{E}$-convergence to $0$ and the completeness of $(\mathcal{F},\mathcal{E}_1)$, we have $\mathcal{E}(u(\boldsymbol{h}))=0$, a contradiction.
\end{proof}

We now turn to \ref{item_step3}, which completes the proof of Theorem \ref{thm_R1}.

\begin{proof}[Proof of Theorem \ref{thm_R1}]
For notational convenience, we may assume that all functions in $\mathcal{F}$ are quasi-continuous. By Remark \ref{rmk_R1}, suppose that (\ref{eq_R1_f}) holds, then by Proposition \ref{prop_Hin49} and Theorem \ref{thm_step1}, there exist $\boldsymbol{h}=(h_1,h_2)\in \mathcal{H}^2$ with $\Lambda_{\boldsymbol{h}}(K)>0$, and $\Theta\in C(\mathbb{S}^1)$, bounded above and below by positive constants, such that $\Psi_{\boldsymbol{h}}(\cdot)(x)=\Theta$ for $\Lambda_{\boldsymbol{h}}$-a.e. $x\in K$. Since $\Lambda_{\boldsymbol{h}}(K)>0$, and $\Lambda_{\boldsymbol{h}}(K^\partial)=0$ by (\ref{eq_bdy_zero}), there exists $w\in W_*$ such that $K_w\cap K^{\partial}=\emptyset$ and $\Lambda_{\boldsymbol{h}}(K_w)>0$. By regularity, there exists $f\in \mathcal{F}_D\cap C(K)$ with $0\le f\le 1$ on $K$ and $f=1$ on $K_w$. By Proposition \ref{prop_Hin34}, ${\boldsymbol{h}}_*({f}^p \Lambda_{\boldsymbol{h}})$ is absolutely continuous with respect to $\mathcal{L}^2$, and its Radon--Nikodym derivative satisfies $\left(\frac{\mathrm{d}{\boldsymbol{h}}_*({f}^p \Lambda_{\boldsymbol{h}})}{\mathrm{d}\mathcal{L}^2}\right)^{\frac{1}{p}}\in W^{1,p}(\mathbb{R}^2)$. Replacing $\boldsymbol{h}$ by $\boldsymbol{h}\circ F_w$, we may assume that $\Lambda_{\boldsymbol{h}}(K)>0$, ${\boldsymbol{h}}_*\Lambda_{\boldsymbol{h}}$ is absolutely continuous with respect to $\mathcal{L}^2$, and its Radon--Nikodym derivative $\rho:=\frac{\mathrm{d}(\boldsymbol{h}_*\Lambda_{\boldsymbol{h}})}{\mathrm{d}\mathcal{L}^2}\ne0$ is dominated by a non-negative function $\xi$ with $\xi^{\frac{1}{p}}\in W^{1,p}(\mathbb{R}^2)$, that is, $\rho\le \xi$ $\mathcal{L}^2$-a.e.

By \cite[Corollary 3.7 and Theorem 4.1]{BK13}, together with \cite[Theorem 8.5, Remark 8.6 and Definition 8.7]{MS25},
$$\beta_p > d_h \ \Longleftrightarrow\  p > \mathrm{dim}_{\mathrm{ARC}}(K,d),\qquad \beta_p \le d_h \ \Longleftrightarrow\  p \le \mathrm{dim}_{\mathrm{ARC}}(K,d),$$
where $\mathrm{dim}_{\mathrm{ARC}}(K,d)$ is the Ahlfors regular conformal dimension of $(K,d)$, which satisfies
$$1+\frac{\log 2}{\log 3}\le \mathrm{dim}_{\mathrm{ARC}}(K,d) < \frac{\log 8}{\log 3}=d_h;$$
see \cite[Remark 9.17]{MS25}.

We divide the proof into the following two cases.

\noindent\textbf{Case 1:} $\beta_p>d_h$. By the Morrey--Sobolev inequality \cite[Lemma 5.2]{Yan25c}, there exists an integer $k\ge0$ such that for any $n\ge0$, any $w\in W_n$, and any $x,y\in K_w$,
$$\lvert \boldsymbol{h}(x)-\boldsymbol{h}(y)\rvert^p\lesssim \lvert x-y\rvert^{\beta_p-d_h}\Lambda_{\boldsymbol{h}}\left(K_{\mathcal{N}_k(w)}\right),$$
where $\mathcal{N}_k(w)$ is given in (\ref{eq_k_nbhd}), hence
$$\mathrm{diam}\left(\boldsymbol{h}(K_w)\right)\lesssim \frac{1}{3^{\frac{\beta_p-d_h}{p}n}}\Lambda_{\boldsymbol{h}}\left(K_{\mathcal{N}_k(w)}\right)^{\frac{1}{p}}.$$
Choose
$$s\in \left(\frac{pd_h}{\beta_p},p\wedge2\right].$$
Such an $s$ exists by assumption; in particular, $\frac{pd_h}{\beta_p}<2$ follows from $d_h<2$ and $\beta_p>p$. By H\"older's inequality and (\ref{eq_sup_nbhd}),
\begin{align*}
&\sum_{w\in W_n}\mathrm{diam}\left(\boldsymbol{h}(K_w)\right)^s\lesssim\frac{1}{3^{(\beta_p-d_h)\frac{s}{p}n}}\sum_{w\in W_n}\Lambda_{\boldsymbol{h}}\left(K_{\mathcal{N}_k(w)}\right)^{\frac{s}{p}}\\
&\le\frac{1}{3^{(\beta_p-d_h)\frac{s}{p}n}}\left(\sum_{w\in W_n}\Lambda_{\boldsymbol{h}}\left(K_{\mathcal{N}_k(w)}\right)\right)^{\frac{s}{p}} 3^{d_h \left(1-\frac{s}{p}\right)n}\lesssim 3^{\left(d_h-\frac{\beta_p}{p}s\right)n}\Lambda_{\boldsymbol{h}}(K)^{\frac{s}{p}}\to0,
\end{align*}
hence $\mathcal{H}^s(\boldsymbol{h}(K))=0$. Since $s\le2$, $\mathcal{L}^2(\boldsymbol{h}(K))=0$. Using $\boldsymbol{h}_*\Lambda_{\boldsymbol{h}}\ll \mathcal{L}^2$, we have $0=(\boldsymbol{h}_*\Lambda_{\boldsymbol{h}})(\boldsymbol{h}(K))=\Lambda_{\boldsymbol{h}}(K)$, contradicting $\Lambda_{\boldsymbol{h}}(K)>0$.

\noindent\textbf{Case 2:} $\beta_p\le d_h$, or equivalently, $p\in(1,\mathrm{dim}_{\mathrm{ARC}}(K,d)]\subseteq(1,2)$. We follow the argument in the proof of \cite[Theorem 3.5(ii)]{Hin13}. The Poincar\'e inequality \hyperref[eq_PI]{$\text{PI}(\beta_p)$} implies the following Sobolev inequality (see, for example, \cite[Theorem 9.1]{BCLS95} or \cite[Theorem 4.5]{Bau24}): Let
$$p^*:=
\begin{cases}
\frac{pd_h}{d_h-\beta_p}&\text{if }\beta_p<d_h,\\
\text{any number in }(p,+\infty)&\text{if }\beta_p=d_h.
\end{cases}
$$
There exists a constant $C>0$ such that for any $f\in \mathcal{F}$,
\begin{equation}\label{eq_Sobolev}
\left(\int_K \lvert f\rvert^{p^*}\mathrm{d}m\right)^{\frac{p}{p^*}}\le C \left(\mathcal{E}(f)+\int_K \lvert f\rvert^p \mathrm{d}m\right).
\end{equation}

First, we prove that there exists $x_0\in \boldsymbol{h}(K)$ such that
\begin{enumerate}[label=(\alph*),ref=(\alph*)]
\item\label{item_Hina}
$$b:=\sup_{r>0}\dashint_{B(x_0,r)}\rho(y) \mathrm{d}y<+\infty;$$
\item\label{item_Hinb}
$$(\boldsymbol{h}_*m)\left(\overline{B}(x_0,r)\right)=o(r^{2-p})\text{ as }r\downarrow0;$$
\item\label{item_Hinc} there exist $a>0$ and $r_0>0$ such that $(\boldsymbol{h}_*m)(\overline{B}(x_0,r))\ge ar^2$ for any $r\in(0,r_0]$.
\end{enumerate}

Indeed, first, we have $\mathrm{Cap}^{\rho}(\boldsymbol{h}(K))>0$; otherwise, $0=\int_{\boldsymbol{h}(K)}\rho \mathrm{d}\mathcal{L}^2=(\boldsymbol{h}_*\Lambda_{\boldsymbol{h}})(\boldsymbol{h}(K))=\Lambda_{\boldsymbol{h}}(K)>0$, a contradiction. Hence by Proposition \ref{prop_Hin310}, $\mathrm{Cap}^{1,p}(\boldsymbol{h}(K))>0$. By Lemmas \ref{lem_Hin39} and \ref{lem_Hin312},
\begin{align*}
&\mathrm{Cap}^{1,p}\left(\left\{x\in \mathbb{R}^2:\sup_{r>0}\dashint_{B(x,r)}\rho(y) \mathrm{d}y=+\infty\right\}\right)\\
&=\mathrm{Cap}^{1,p}\left(\left\{x\in \mathbb{R}^2:\varlimsup_{r\downarrow0}\frac{(\boldsymbol{h}_*m)\left(\overline{B}(x,r)\right)}{r^{2-p}}>0\right\}\right)=0.
\end{align*}
Let $B$ be the union of these two sets, then $\mathrm{Cap}^{1,p}(B)=0$. Then by Proposition \ref{prop_Hin313}, $(\boldsymbol{h}_*m)(\boldsymbol{h}(K)\backslash B)=(\boldsymbol{h}_*m)(\mathbb{R}^2\backslash B)>0$. Let
$$A:=\left\{x\in \mathbb{R}^2:\varliminf_{r\downarrow0}\frac{(\boldsymbol{h}_*m)\left(\overline{B}(x,r)\right)}{r^2}=0\right\},$$
then by Lemma \ref{lem_Hin311}, $(\boldsymbol{h}_*m)(A)=0$. Therefore, $\boldsymbol{h}(K)\backslash(A\cup B)\ne\emptyset$, and any $x_0\in \boldsymbol{h}(K)\backslash(A\cup B)$ satisfies \ref{item_Hina}, \ref{item_Hinb}, and \ref{item_Hinc}. By replacing $\boldsymbol{h}$ by $\boldsymbol{h}-x_0$, we may assume that $x_0=0$.

For any $r\ge0$, let
$$\varphi(r)=(\lvert \boldsymbol{h}\rvert_*\Lambda_{\boldsymbol{h}})([0,r])=(\boldsymbol{h}_*\Lambda_{\boldsymbol{h}})(\overline{B}(0,r)),$$
then by \ref{item_Hina},
\begin{equation}\label{eq_phi_bdd}
\varphi(r)=\int_{\overline{B}(0,r)}\rho(y)\mathrm{d}y\le \pi br^2.
\end{equation}
Take a smooth function $g:[0,+\infty)\to[0,+\infty)$ satisfying that
$$g(t)=
\begin{cases}
1&\text{if }t\in[0,1],\\
\frac{2}{3}-\frac{\log \frac{t}{2}}{3\log \frac{3}{2}}&\text{if }t\in[2,3],\\
0&\text{if }t\in[4,+\infty),
\end{cases}
$$
and $-\frac{2}{t}\le g'(t)\le0$ for any $t\in[1,4]$. For any $\delta>0$, let $g_\delta=g(\frac{\cdot}{\delta})$ and $f_\delta=g_\delta(\lvert \boldsymbol{h}\rvert)$, then $f_\delta\in \mathcal{F}$. By (\ref{eq_Hin313c}) and (\ref{eq_phi_bdd}),
\begin{align}
&\mathcal{E}(f_\delta)\le 2^{p}\int_{K}  \lvert g_\delta'(\lvert \boldsymbol{h}\rvert)\rvert^p \mathrm{d}\Lambda_{\boldsymbol{h}}=2^p \int_{\delta}^{4\delta}\lvert g_\delta'(t)\rvert^p \mathrm{d}\varphi(t)\le 2^{2p}\int_{\delta}^{4\delta}\frac{1}{t^p}\mathrm{d}\varphi(t)\nonumber\\
&=2^{2p} \left(\frac{\varphi(t)}{t^p}|_\delta^{4\delta}-\int_\delta^{4\delta}\varphi(t)\mathrm{d}\frac{1}{t^p}\right)\le 2^{2p}\left(\frac{\pi b(4\delta)^2}{(4\delta)^p}+p\int_\delta^{4\delta}\frac{\pi b t^2}{t^{p+1}}\mathrm{d}t\right)=C_1 \delta^{2-p},\label{eq_fdelta_E}
\end{align}
where $C_1>0$ is a constant depending only on $b,p$. By \ref{item_Hinb},
\begin{equation}\label{eq_fdelta_Lp}
\int_K \lvert f_\delta\rvert^p \mathrm{d}m\le\int_{\{\lvert \boldsymbol{h}\rvert\le 4\delta\}} \mathrm{d}m=( \boldsymbol{h}_*m)(\overline{B}(0,4\delta))=o(\delta^{2-p})\text{ as }\delta\downarrow0.
\end{equation}
By \ref{item_Hinc}, for any $\delta\in(0,r_0]$,
$$\int_K\lvert f_\delta\rvert^{p^*} \mathrm{d}m\ge\int_{\{\lvert \boldsymbol{h}\rvert\le \delta\}}\mathrm{d}m=(\boldsymbol{h}_*m)(\overline{B}(0,\delta))\ge a\delta^2.$$
Combining this with (\ref{eq_fdelta_E}), (\ref{eq_fdelta_Lp}), and (\ref{eq_Sobolev}), as $\delta\downarrow0$,
$$ (a\delta^2)^{\frac{p}{p^*}}\le C \left(C_1\delta^{2-p}+o(\delta^{2-p})\right),$$
hence $\delta^{\frac{2p}{p^*}+p-2}\lesssim1$ as $\delta\downarrow0$. However, if $\beta_p<d_h$, then $\frac{2p}{p^*}+p-2=p-\frac{2\beta_p}{d_h}<0$, due to $d_h<2$ and $\beta_p>p$ (see \cite[Corollary 2.2]{Yan25e} or \cite[Theorem 9.8]{KS24aV5}); if $\beta_p=d_h$, then by taking $p^*>p$ sufficiently large, we still have $\frac{2p}{p^*}+p-2<0$. Therefore, $\lim_{\delta\downarrow0}\delta^{\frac{2p}{p^*}+p-2}=+\infty\lesssim1$, a contradiction.
\end{proof}

\section{A zero-one law}\label{sec_01law}

In this section, we continue to work with the canonical $p$-energy $(\mathcal{E}^{(c)},\mathcal{F}^{(c)})$ and its energy measure $\Gamma^{(c)}$. We omit the superscript $(c)$ when no ambiguity arises. The main result of this section is as follows.

\begin{theorem}[A zero-one law]\label{thm_01law}
Let $A\in \mathcal{B}(K)$ and let $\mu$ be a minimal energy-dominant measure. Assume that
\begin{equation}\label{eq_ss01}
\mu(F_i^{-1}(A)\mathbin{\triangle} A)=0\text{ for any }i=0,1,\ldots,7,\qquad \mu(\psi^{-1}(A)\mathbin{\triangle} A)=0\text{ for any }\psi\in D_4.
\end{equation}
Then either $\mu(A)=0$ or $\mu(K\backslash A)=0$.
\end{theorem}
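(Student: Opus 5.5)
The plan is to argue by contradiction: I assume $\mu(A)>0$ and $\mu(K\backslash A)>0$. I then compare the two ``restricted'' energies
$$\mathcal{E}^A(f):=\Gamma(f)(A),\qquad \mathcal{E}^{K\backslash A}(f):=\Gamma(f)(K\backslash A),$$
with $\mathcal{E}$ itself. (This is distinct from the cell-restricted $\mathcal{E}^{A}$ for $A\subseteq W_n$ of Section \ref{sec_harmonic}; here $A$ is a Borel set.) I use the quasi-uniqueness from \cite[Propositions 10.26 and 10.29]{MS23arXiv} recalled after Theorem \ref{thm_unique}, and a Lebesgue-density blowup in the spirit of the proof of Theorem \ref{thm_step1}.

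\textbf{Step 1: invariance transfers to the restricted energies.} Null sets for $\mu$ are exactly the common null sets of all $\Gamma(f)$, $f\in\mathcal{F}$. By (\ref{eq_Gamma_ss}) and (\ref{eq_Gamma_D4}), the maps $F_i$, $F_i^{-1}$ and $\psi\in D_4$ preserve $\mu$-null sets on the relevant cells. Hence (\ref{eq_ss01}) gives
$$\Gamma(f)(A\cap K_w)=\rho^{|w|}\Gamma(f\circ F_w)(A)\quad\text{for all } w\in W_*,\qquad \Gamma(f\circ\psi)(A)=\Gamma(f)(A).$$
So $\mathcal{E}^A$ is self-similar with the same $\rho$ and $D_4$-symmetric, and likewise for $\mathcal{E}^{K\backslash A}$.

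\textbf{Step 2: the restricted energies satisfy the axioms.} I would check \ref{item_SSEa}--\ref{item_SSEf} for $\mathcal{E}^A$ (and for $\mathcal{E}^{K\backslash A}$).
\begin{itemize}
\item Seminorm and uniform convexity follow from Lemma \ref{lem_meas}\ref{item_meas1} and \hyperref[eq_ClaGamma]{$\text{Cla}^{\Gamma}$}, combined with the $L^p$-norm.
\item Contractivity follows from Lemma \ref{lem_meas}\ref{item_meas2}.
\item Completeness uses $\mathcal{E}^A\le\mathcal{E}$ together with the lower semicontinuity \ref{item_pDS7}; I expect a bit of care to be needed to keep the same domain $\mathcal{F}$.
\end{itemize}
The delicate axiom is \ref{item_SSEa}, namely that $\Gamma(f)(A)=0$ forces $f$ to be constant. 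My first attempt is to prove that the capacity
$$\chi_A:=\inf\bigl\{\Gamma(f)(A): f\in\mathcal{F}\cap C(K),\ f|_{\ell_L}\equiv0,\ f|_{\ell_R}\equiv1\bigr\}$$
is positive. I would use the gluing and Mazur argument of the Remark after Lemma \ref{lem_rhochi}, with the self-similarity from Step 1. Once $\chi_A>0$, the chain Lemma \ref{lem_path01} $\Rightarrow$ Lemma \ref{lem_inter4} $\Rightarrow$ Proposition \ref{prop_gap_pre} $\Rightarrow$ Proposition \ref{prop_gap} goes through for $\mathcal{E}^A$. Those proofs use only path-crossing lower bounds and self-similarity, and \ref{item_SSEa} enters only via Lemma \ref{lem_bdd_Lp}. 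The resulting spectral gap yields \ref{item_SSEa}. To get $\chi_A>0$ itself, I would argue that if $\chi_A=0$, then a limit function as in the Remark is non-constant and has $\Gamma(\cdot)(A)=0$. Pushing that along cells by Step 1 should give $\Gamma(f)(A)=0$ for a family of functions whose energy measures dominate $\mu$, contradicting $\mu(A)>0$. This is the step I expect to be the main obstacle.

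\textbf{Step 3: comparability and the blowup contradiction.} By quasi-uniqueness applied to $(\mathcal{E}^A,\mathcal{F})\in\mathfrak{E}$, there is $C\ge1$ with $\Gamma(f)(A)\ge C^{-1}\mathcal{E}(f)$ for all $f\in\mathcal{F}$. Now I fix $f$ with $\Gamma(f)(K\backslash A)>0$. By the martingale convergence argument of Theorem \ref{thm_step1}, applied with $\Lambda=\Gamma(f)$ and the filtration generated by cells, there are cells $K_w$ with
$$\Gamma(f)(A\cap K_w)\le\varepsilon\,\Gamma(f)(K_w)$$
for arbitrarily small $\varepsilon$. Step 1 then turns this into $\Gamma(f\circ F_w)(A)\le\varepsilon\,\mathcal{E}(f\circ F_w)$. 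Taking $\varepsilon<C^{-1}$ contradicts the comparability, so $\mu(K\backslash A)=0$.
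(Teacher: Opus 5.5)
\textbf{Gap: Step 2 is circular.} The step you defer to ``a bit of care'' is in fact equivalent to the theorem. The two non-trivial axioms you need for $\Gamma(\cdot)(A)$ on the fixed domain $\mathcal{F}$ each already contain the conclusion.

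\emph{Completeness in \ref{item_SSEb}.} Property \ref{item_pDS7} only handles $\mathcal{E}$-bounded sequences, but a Cauchy sequence for $(\Gamma(\cdot)(A)+\lVert\cdot\rVert_{L^p(K;m)}^p)^{1/p}$ need not be $\mathcal{E}$-bounded. Conversely, suppose $\mathcal{F}$ were complete for this norm. The open mapping theorem would give $\mathcal{E}_1\lesssim\Gamma(\cdot)(A)+\lVert\cdot\rVert_{L^p(K;m)}^p$. Apply this to $f\circ\theta_w-f_K$, where $f\in\mathcal{F}\cap C(K)$, $\lvert w\rvert=n$, and $f\circ\theta_w$ are the folded copies from Remark \ref{rmk_01law}. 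By self-similarity and your Step 1, both energies scale by $(8\rho)^n$, while the $L^p$ term is unchanged. Letting $n\to+\infty$ gives $\Gamma(f)(A)\gtrsim\mathcal{E}(f)$. That is exactly the comparability Step 3 was supposed to obtain from quasi-uniqueness.

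\emph{\ref{item_SSEa} for $\Gamma(\cdot)(A)$.} This is not an independent input either. Whenever $\mu(K\backslash A)>0$, a blowup at a density point of $K\backslash A$ yields a non-constant continuous $g$ with $\Gamma(g)(A)=0$. So proving \ref{item_SSEa} amounts to proving $\mu(K\backslash A)=0$.

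\emph{Your route to $\chi_A>0$ does not escape this.} A minimizing sequence has $\Gamma(u_n)(A)\to0$ but possibly $\mathcal{E}(u_n)\to+\infty$, so Lemma \ref{lem_BAM} produces no limit function. The same missing compactness blocks the transfer of Lemma \ref{lem_bdd_Lp}, and hence of the whole spectral-gap chain. Even a non-constant $g$ with $\Gamma(g)(A)=0$ does not simply ``push along cells'' to a family of functions whose energy measures dominate $\mu$.

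\textbf{What the paper does instead.} It never invokes quasi-uniqueness here. It treats $\Gamma(\cdot)(K\backslash A)$ as a self-similar component, which needs only \ref{item_Q1}--\ref{item_Q5}, with no nondegeneracy or completeness. It then supplies the two ideas your proposal lacks.

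First, take $h\in\mathcal{H}$ with $\Gamma(h)(A)>0$ and rescale at a $\Gamma(h)$-density point of $A$, along cells with $\Gamma(h)(K_w)\ge\frac{\delta}{147}\Gamma(h)(K_{\mathcal{N}_3(w)})$. Proposition \ref{prop_Hin418} (compactness of rescaled harmonic functions, via the elliptic Harnack inequality, H\"older estimates, Arzel\`a--Ascoli and Clarkson) gives \emph{strong} $\mathcal{E}_1$- and uniform convergence to some $g\in\mathcal{H}\cap C(K)$. This $g$ satisfies $\mathcal{E}(g)=1$ and $\Gamma(g)(K\backslash A)=0$. Strong convergence is what keeps the limit non-constant; weak compactness cannot.

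Second, the dichotomy Theorem \ref{thm_dichotomy} shows that one non-constant continuous kernel element kills the whole component. Folding (Lemma \ref{lem_corridor}) yields a kernel function equal to $0$ on $\ell_L$ and $1$ on $\ell_R$. Lemma \ref{lem_interpolation} then approximates every $f\in\mathcal{F}$ by kernel functions $J_n(b)$, with energy bounds from \cite[Theorem 1.1(iii)]{MS25} and Mazur's lemma.

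Your Step 3 is fine and matches the implication \ref{item_equiv_Q4}$\Rightarrow$\ref{item_equiv_Q3} of Lemma \ref{lem_equivalent}. But the comparability it consumes is only available through an argument of this kind.
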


\begin{remark}\label{rmk_01law}
If $A\in \mathcal{B}(K)$ satisfies $\mu(A)=0$, then
\begin{equation}\label{eq_muzero}
\mu(F_w(A))=\mu(F_w^{-1}(A))=0\text{ for any }w\in W_*,\qquad \mu(\psi(A))=0\text{ for any }\psi\in D_4.
\end{equation}
Hence (\ref{eq_ss01}) is equivalent to the identities
$$1_A\circ F_i=1_A\ \mu\text{-a.e. for any }i=0,1,\ldots,7,\qquad 1_A\circ \psi=1_A\ \mu\text{-a.e. for any }\psi\in D_4,$$
and by induction, it implies $\mu(F_w^{-1}(A)\mathbin{\triangle} A)=0$, or equivalently, $1_A\circ F_w=1_A$ $\mu$-a.e., for any $w\in W_*$.

We give the proof of (\ref{eq_muzero}) as follows. Recall that for any $A\in \mathcal{B}(K)$, $\mu(A)=0$ if and only if $\Gamma(f)(A)=0$ for any $f\in \mathcal{F}$. First, for any $\psi\in D_4$ and any $f\in \mathcal{F}$, by (\ref{eq_Gamma_D4}), $\Gamma(f)(\psi(A))=\Gamma(f\circ\psi)(A)=0$, hence $\mu(\psi(A))=0$. Second, for any $n\ge0$ and any $w\in W_n$, by (\ref{eq_Gamma_ss}) and (\ref{eq_bdy_zero}),
$$\Gamma(f)(F_w(A))=\rho^n\sum_{v\in W_n}\Gamma(f\circ F_v)(F_v^{-1}(F_w(A)))=\rho^n\Gamma(f\circ F_w)(A)=0,$$
hence $\mu(F_w(A))=0$.

To prove $\mu(F_w^{-1}(A))=0$, we first define folding maps $\varphi_w:K\to K_w$ for all $w\in W_*$ inductively on the length of $w$. First, $\varphi_\emptyset:K\to K_\emptyset=K$ is the identity. For any $i=0,1,\ldots,7$, we define the folding map $\varphi_i:K\to K_i$ as follows. The map $\varphi_i$ is the identity on $K_i$. For a point $x$ in another cell $K_j$, we successively reflect across the sides shared by adjacent cells, following a route from $K_j$ to $K_i$. The composition of these reflections maps $x$ into $K_i$; see Figure \ref{fig_fold}, where the arrows indicate the orientations after the successive reflections. It is easy to verify that the resulting map $\varphi_i$ is well-defined. Assume that the folding maps $\varphi_w:K\to K_w$ are defined for all $w\in W_*$ with $\lvert w\rvert\le n$. Then for $n+1$, for any $w\in W_n$ and any $i=0,1,\ldots,7$, let $\varphi_{wi}:K\to K_{wi}$ be given by
$$\varphi_{wi}:=F_w\circ \varphi_i\circ F_w^{-1}\circ \varphi_w.$$
By induction, we obtain the folding maps $\varphi_w$ for all $w\in W_*$. It is easy to see that for any $w,v\in W_*$,
$$\varphi_{wv}=F_w\circ \varphi_v\circ F_w^{-1}\circ \varphi_w=F_{wv}\circ(F_v^{-1}\circ \varphi_v)\circ (F_w^{-1}\circ \varphi_w).$$
Or equivalently, let $\theta_w:=F_w^{-1}\circ \varphi_w:K\to K$, then for any $w,v\in W_*$,
$$\theta_{wv}=\theta_v\circ \theta_w.$$
Moreover, by induction, for any $w\in W_*$,
\begin{equation}\label{eq_varphiF}
\varphi_w\circ F_w=F_w.
\end{equation}
For explicit coordinate formulas for the folding maps, see \cite[Definition 8.9]{MS25}; see also \cite[Subsection 2.2]{BBKT10} and \cite[Section 4.3]{Kig23}.

\begin{figure}[htbp]
\centering
\begin{tikzpicture}[scale=1.5]
  \tikzset{
    arrow mark/.style={
      postaction={decorate},
      decoration={markings,#1}
    },
    one/.style={arrow mark={
      mark=at position .50 with
        {\arrow{Stealth[length=5pt,width=5pt]}}
    }},
    two/.style={arrow mark={
      mark=at position .40 with
        {\arrow{Stealth[length=5pt,width=5pt]}},
      mark=at position .60 with
        {\arrow{Stealth[length=5pt,width=5pt]}}
    }},
    three/.style={arrow mark={
      mark=at position .32 with
        {\arrow{Stealth[length=5pt,width=5pt]}},
      mark=at position .50 with
        {\arrow{Stealth[length=5pt,width=5pt]}},
      mark=at position .68 with
        {\arrow{Stealth[length=5pt,width=5pt]}}
    }},
    four/.style={arrow mark={
      mark=at position .26 with
        {\arrow{Stealth[length=5pt,width=5pt]}},
      mark=at position .42 with
        {\arrow{Stealth[length=5pt,width=5pt]}},
      mark=at position .58 with
        {\arrow{Stealth[length=5pt,width=5pt]}},
      mark=at position .74 with
        {\arrow{Stealth[length=5pt,width=5pt]}}
    }}
  }

  \draw[thick,three] (1,3)--(0,3);
  \draw[thick,three] (1,3)--(2,3);
  \draw[thick,three] (3,3)--(2,3);

  \draw[thick,one] (0,2)--(1,2);
  \draw[thick,one] (2,2)--(1,2);
  \draw[thick,one] (2,2)--(3,2);

  \draw[thick,three] (1,1)--(0,1);
  \draw[thick,three] (1,1)--(2,1);
  \draw[thick,three] (3,1)--(2,1);

  \draw[thick,one] (0,0)--(1,0);
  \draw[thick,one] (2,0)--(1,0);
  \draw[thick,one] (2,0)--(3,0);

  \draw[thick,four] (0,3)--(0,2);
  \draw[thick,four] (0,1)--(0,2);
  \draw[thick,four] (0,1)--(0,0);

  \draw[thick,two] (1,2)--(1,3);
  \draw[thick,two] (1,2)--(1,1);
  \draw[thick,two] (1,0)--(1,1);

  \draw[thick,four] (2,3)--(2,2);
  \draw[thick,four] (2,1)--(2,2);
  \draw[thick,four] (2,1)--(2,0);

  \draw[thick,two] (3,2)--(3,3);
  \draw[thick,two] (3,2)--(3,1);
  \draw[thick,two] (3,0)--(3,1);

  \node at (.5,.5)   {$K_{0}$};
  \node at (1.5,.5)  {$K_{1}$};
  \node at (2.5,.5)  {$K_{2}$};
  \node at (2.5,1.5) {$K_{3}$};
  \node at (2.5,2.5) {$K_{4}$};
  \node at (1.5,2.5) {$K_{5}$};
  \node at (.5,2.5)  {$K_{6}$};
  \node at (.5,1.5) {$K_{7}$};
\end{tikzpicture}
\caption{The folding map $\varphi_i:K\to K_i$}
\label{fig_fold}
\end{figure}

Now we prove $\mu(F_w^{-1}(A))=0$. For any $f\in \mathcal{F}\cap C(K)$, let $g:=f\circ \theta_w=f\circ F_w^{-1}\circ\varphi_w$, then by \ref{item_SSEe}, $g\in \mathcal{F}\cap C(K)$. By (\ref{eq_varphiF}), $g\circ F_w=(f\circ F_w^{-1}\circ\varphi_w)\circ F_w=f$, then (\ref{eq_Gamma_ss}) gives
$$0=\Gamma(g)(A)=\rho^n\sum_{v\in W_n}\Gamma(g\circ F_v)(F_v^{-1}(A))\ge \rho^n \Gamma(g\circ F_w)(F_w^{-1}(A))=\rho^n\Gamma(f)(F_w^{-1}(A)).$$
Hence $\Gamma(f)(F_w^{-1}(A))=0$. For any $f\in \mathcal{F}$, since $\mathcal{F}\cap C(K)$ is $\mathcal{E}_1$-dense in $\mathcal{F}$, there exists $\{f_n\}\subseteq \mathcal{F}\cap C(K)$ that $\mathcal{E}_1$-converges to $f$. For any $n$, $\Gamma(f_n)(F_w^{-1}(A))=0$, hence
\begin{align*}
&\Gamma(f)(F_w^{-1}(A))^{\frac{1}{p}}=\lvert \Gamma(f)(F_w^{-1}(A))^{\frac{1}{p}}-\Gamma(f_n)(F_w^{-1}(A))^{\frac{1}{p}}\rvert\\
&\le \Gamma(f-f_n)(F_w^{-1}(A))^{\frac{1}{p}}\le \mathcal{E}(f-f_n)^{\frac{1}{p}}\to0.
\end{align*}
Therefore, $\mu(F_w^{-1}(A))=0$. This completes the proof of (\ref{eq_muzero}).
\end{remark}

We say that a functional $Q:\mathcal{F}\to[0,+\infty)$ is a self-similar component (of $\mathcal{E}$) if
\begin{enumerate}[label=({Q}\arabic*),ref=({Q}\arabic*)]
\item\label{item_Q1} $Q^{\frac{1}{p}}$ is a seminorm on $\mathcal{F}$;
\item\label{item_Q2} for any $f\in\mathcal{F}$, $Q(f)\le\mathcal{E}(f)$;
\item\label{item_Q3} for any $f\in\mathcal{F}$, $Q(f)=\rho\sum_{i=0}^7Q(f\circ F_i)$;
\item\label{item_Q4} for any $f\in\mathcal{F}$ and any $\psi\in D_4$, $Q(f\circ\psi)=Q(f)$;
\item\label{item_Q5} for any $f\in\mathcal{F}$ and any Lipschitz function $\varphi:\mathbb{R}\to\mathbb{R}$, $Q(\varphi\circ f)\le \mathrm{Lip}(\varphi)^pQ(f)$ .
\end{enumerate}
Its kernel is $Z(Q):=\{f\in\mathcal{F}:Q(f)=0\}$.

We have some basic properties of the kernel. Their proofs are elementary and are therefore omitted.

\begin{lemma}\label{lem_kernel}
Let $Q$ be a self-similar component with kernel $Z=Z(Q)$.
\begin{enumerate}[label=(\arabic*),ref=(\arabic*)]
\item\label{item_kernel1} $Z$ is a closed linear subspace of $\mathcal{F}$ containing the constants.
\item\label{item_kernel2} For any $f\in Z$ and any $w\in W_*$, $f\circ F_w\in Z$.
\item\label{item_kernel3} For any $f\in Z$ and any $\psi\in D_4$, $f\circ\psi\in Z$.
\item\label{item_kernel4} For any $f\in Z\cap C(K)$ and any Lipschitz function $\varphi$, $\varphi\circ f\in Z\cap C(K)$. The set $Z\cap C(K)$ is an algebra and a lattice.
\item\label{item_kernel5} If $g\in C(K)$ and $n\ge0$ satisfy $g\circ F_w\in Z$ for any $ w\in W_n$, then $g\in Z\cap C(K)$.
\end{enumerate}
\end{lemma}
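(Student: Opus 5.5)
I will verify the five items in order, each one resting on a specific axiom among \ref{item_Q1}--\ref{item_Q5}.

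For \ref{item_kernel1}, I use that $Q^{1/p}$ is a seminorm by \ref{item_Q1}. This gives $Q(f+g)^{1/p}\le Q(f)^{1/p}+Q(g)^{1/p}$ and $Q(af)=\lvert a\rvert^pQ(f)$, so $Z$ is a linear subspace. Constants lie in $Z$ because $Q(c)\le\mathcal{E}(c)=0$ by \ref{item_Q2} and \ref{item_SSEa}. For closedness, take $f_n\to f$ in $\mathcal{E}_1$ with $f_n\in Z$. Then
$$Q(f)^{1/p}\le Q(f_n)^{1/p}+Q(f-f_n)^{1/p}\le \mathcal{E}(f-f_n)^{1/p}\to0.$$

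For \ref{item_kernel2}, I apply \ref{item_Q3}: $0=Q(f)=\rho\sum_iQ(f\circ F_i)$ with $\rho>0$ and all terms non-negative. Hence $Q(f\circ F_i)=0$ for each $i$. Induction on $\lvert w\rvert$ then gives $f\circ F_w\in Z$; here \ref{item_SSEe} guarantees that $f\circ F_i\in\mathcal{F}$. Item \ref{item_kernel3} is immediate from \ref{item_Q4}, using \ref{item_SSEd} for $f\circ\psi\in\mathcal{F}$.

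For \ref{item_kernel4}, \ref{item_Q5} gives $Q(\varphi\circ f)\le \mathrm{Lip}(\varphi)^pQ(f)=0$, and $\varphi\circ f$ is continuous, so $\varphi\circ f\in Z\cap C(K)$.
\begin{itemize}
\item \emph{Lattice.} Write $f\vee g=\tfrac12(f+g+\lvert f-g\rvert)$. Since $\lvert f-g\rvert=\varphi\circ(f-g)$ with $\varphi(t)=\lvert t\rvert$, this lies in $Z$ by linearity. The case of $f\wedge g$ is the same.
\item \emph{Algebra.} Write $fg=\tfrac14((f+g)^2-(f-g)^2)$. Each square $u^2$ with $u=f\pm g$ continuous and bounded equals $\varphi\circ u$ for a Lipschitz $\varphi$ agreeing with $t^2$ on $[-\lVert u\rVert_\infty,\lVert u\rVert_\infty]$. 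So $fg\in Z$.
\end{itemize}

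For \ref{item_kernel5}, I first need $g\in\mathcal{F}$. The second half of \ref{item_SSEe}, iterated, gives $g\in\mathcal{F}\cap C(K)$ whenever $g\in C(K)$ and $g\circ F_w\in\mathcal{F}$ for all $w\in W_n$. This uses that $g\circ F_v\in C(K)$ for the intermediate words $v$, and the induction downward in word length is routine. Then iterating \ref{item_Q3} $n$ times yields
$$Q(g)=\rho^n\sum_{w\in W_n}Q(g\circ F_w)=0.$$

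The only mildly delicate points are the downward induction in \ref{item_kernel5} and truncating $t^2$ to a Lipschitz function in \ref{item_kernel4}; neither is a real obstacle.
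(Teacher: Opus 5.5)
Your proposal is correct and is exactly the routine verification that the paper omits as elementary, including the downward induction via the second half of \ref{item_SSEe} in item \ref{item_kernel5}. The only point worth making explicit is that in item \ref{item_kernel4} the membership $\varphi\circ f\in\mathcal{F}$ comes from the Lipschitz contractivity \ref{item_SSEfp}, which \ref{item_Q5} tacitly presupposes; this parallels your citing \ref{item_SSEe} and \ref{item_SSEd} for membership in items \ref{item_kernel2} and \ref{item_kernel3}.
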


\begin{lemma}\label{lem_corridor}
Let $Q$ be a self-similar component with kernel $Z=Z(Q)$. If there exists $f\in Z\cap C(K)$ which is non-constant on one of the four sides $\ell_L,\ell_R,\ell_B,\ell_T$, then there exists $g\in Z\cap C(K)$ such that
$$g=0\text{ on }\ell_L,\qquad g=1\text{ on }\ell_R.$$
\end{lemma}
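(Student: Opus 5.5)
The plan is to build $g$ by combining symmetries, restriction to sub-cells, and the lattice/Lipschitz closure properties of $Z\cap C(K)$ from Lemma \ref{lem_kernel}. By Lemma \ref{lem_kernel}\ref{item_kernel3}, composing $f$ with a suitable $\psi\in D_4$, we may assume that $f$ is non-constant on the bottom side $\ell_B$. So there are points $x<y$ in $[0,1]$ with $f(x,0)\ne f(y,0)$.

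The first step is to replace this by a function that is monotone along $\ell_B$ between two specific values. Composing with $F_w$ for cells $K_w$ adjacent to $\ell_B$ (Lemma \ref{lem_kernel}\ref{item_kernel2}) zooms in. Since $f|_{\ell_B}$ is continuous and non-constant, we can choose a bottom-row cell $K_w$ on which $f$ differs at the two bottom corners $F_w(0,0)$ and $F_w(1,0)$. Such a cell exists: if $f$ agreed at the two bottom corners of every bottom-row cell at every level, it would agree at all triadic points, hence be constant on $\ell_B$ by continuity. Replacing $f$ by $f\circ F_w$, and applying an affine map (allowed since $Z$ is a linear space containing constants), we get $f\in Z\cap C(K)$ with $f(0,0)=0$ and $f(1,0)=1$.

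Next we truncate. Set $u=(f\vee 0)\wedge 1$, which lies in $Z\cap C(K)$ by Lemma \ref{lem_kernel}\ref{item_kernel4}.

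The main construction then produces a function equal to $0$ on all of $\ell_L$ and $1$ on all of $\ell_R$, using only corner information. Here I would use the gluing criterion Lemma \ref{lem_kernel}\ref{item_kernel5} together with the folding maps $\theta_w$ from Remark \ref{rmk_01law}. The folding maps preserve $Z\cap C(K)$, because $(f\circ\theta_w)\circ F_v$ is $f$ composed with an element of $D_4$ on each cell, so Lemma \ref{lem_kernel}\ref{item_kernel5} applies.

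Define $v:=u\circ\theta_0$ type foldings. More concretely, I would first build a function that is close to $0$ near the corner $(0,0)$ and close to $1$ near $(1,0)$ on whole neighbourhoods. By continuity, there is $n$ with $u\le 1/3$ on $K_{0^n}$ and $u\ge 2/3$ on $K_{2^n}$. Applying $\varphi(t)=((3t-1)\vee0)\wedge1$ then gives $u'\in Z\cap C(K)$ with $u'=0$ on $K_{0^n}$ and $u'=1$ on $K_{2^n}$.

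Now fold. The function $u'\circ F_{0^n}^{-1}\circ\varphi_{0^n}$ is constant $0$, and the analogous fold onto $K_{2^n}$ gives constant $1$. These do not directly help, so instead I would use the strip structure. I would compose $u'$ with maps from $K$ onto the bottom row of level-$n$ cells, via successive foldings along horizontal lines, so that the whole carpet is folded onto the strip $\bigcup_{w} K_w$ of bottom-row cells. This strip contains $K_{0^n}$ at the left and $K_{2^n}$ at the right. The resulting function $g$ is $0$ on the left column of cells and $1$ on the right column, hence on $\ell_L$ and $\ell_R$.

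The main obstacle is verifying that this vertical folding map $\Phi:K\to$ (bottom strip) is well defined and continuous. One must also check that $u'\circ\Phi$ restricted to each level-$n$ cell equals $u'\circ F_v\circ\psi$ for some cell $v$ and $\psi\in D_4$. Given that, $(u'\circ\Phi)\circ F_w\in Z$ for all $w\in W_n$ by Lemma \ref{lem_kernel}\ref{item_kernel2},\ref{item_kernel3}, and Lemma \ref{lem_kernel}\ref{item_kernel5} yields $g=u'\circ\Phi\in Z\cap C(K)$. This is a planar bookkeeping analogous to the construction of $\varphi_i$ in Figure \ref{fig_fold}, iterated row by row. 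The left and right columns of the carpet are full columns at every level, so the fold maps $\ell_L$ into $K_{0^n}$ and $\ell_R$ into $K_{2^n}$, giving the required boundary values.
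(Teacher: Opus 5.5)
Your proof is correct and is essentially the paper's argument. Your $\Phi(x,y)=(x,\tau(y))$, with $\tau$ the triangle wave folding $[0,1]$ onto $[0,3^{-n}]$, is exactly the paper's alternating row-by-row reflection. The step you flag as the main obstacle is immediate: $\Phi$ is continuous on $\mathbb{R}^2$, every position in the bottom row of level-$n$ cells is occupied, and $\Phi\circ F^{(n)}_{r,s}=F^{(n)}_{r,0}\circ\psi$ on $K$, with $\psi$ either the identity or the reflection $(x,y)\mapsto(x,1-y)$. The only difference is how the bottom row is prepared. The paper keeps $h=\varphi\circ f$ in place and pads with $0$ to the left of $K_u$ and $1$ to the right of $K_v$. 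You instead zoom in and renormalize so that the $0$- and $1$-cells are the corner cells $K_{0^n}$ and $K_{2^n}$, which removes the need for any horizontal gluing.
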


\begin{proof}
By Lemma \ref{lem_kernel}\ref{item_kernel3}, we may assume that $f\in Z\cap C(K)$ is non-constant on $\ell_B$, then there exist $x_0,x_1\in [0,1]$ with $x_0<x_1$ such that $\alpha:=f(x_0,0)\ne f(x_1,0)=:\beta$, let $\varepsilon:=\frac{1}{3}\lvert \alpha-\beta\rvert>0$ and define $\varphi:\mathbb{R}\to \mathbb{R}$ by $t\mapsto ((\frac{\lvert t-\alpha\rvert}{\varepsilon}-1)\vee0)\wedge1$; see Figure \ref{fig_phi_cutoff}. Let $h:=\varphi\circ f$, then by Lemma \ref{lem_kernel}\ref{item_kernel4}, $h\in Z\cap C(K)$. By continuity, there exist $n\ge0$ sufficiently large and $u,v\in W_n$, such that $(x_0,0)\in K_u$, $(x_1,0)\in K_v$, $h=0$ on $K_u$, and $h=1$ on $K_v$.

\begin{figure}[ht]
\centering
\begin{tikzpicture}[x=1.2cm,y=2cm,>=stealth]
  \draw[->] (-3.4,0) -- (3.4,0) node[right] {$t$};

  \draw[dotted] (-2,0) -- (-2,1);
  \draw[dotted] (2,0) -- (2,1);

  \draw[very thick,blue!70!black]
    (-3,1) -- (-2,1) -- (-1,0) -- (1,0)
           -- (2,1) -- (3,1);

  \node[above] at (-2.5,1) {$\varphi$};
  \node[left] at (-3,1) {$1$};

  \foreach \x/\lab in {
    -2/{\alpha-2\varepsilon},
    -1/{\alpha-\varepsilon},
     0/{\alpha},
     1/{\alpha+\varepsilon},
     2/{\alpha+2\varepsilon}
  } {
    \draw (\x,0.035) -- (\x,-0.035);
    \node[below] at (\x,-0.05) {$\lab$};
  }
\end{tikzpicture}
\caption{The function $\varphi$}
\label{fig_phi_cutoff}
\end{figure}

We now use a folding argument to construct the desired function $g$ from the function $h$. For $r,s=0,1,\ldots,3^n-1$, let $F^{(n)}_{r,s}:\mathbb{R}^2\to \mathbb{R}^2$ be given by
$$F^{(n)}_{r,s}(x,y):=\frac{1}{3^n}\left(x+r,y+s\right),\ (x,y)\in \mathbb{R}^2.$$
Then exactly $3^n$ level-$n$ cells intersect $\ell_B$, that is, $F^{(n)}_{r,0}(K)$ for $r=0,1,\ldots,3^{n}-1$, whose union is $K\cap([0,1]\times[0,\frac{1}{3^n}])$, and there exist $a,b=0,1,\ldots,3^{n}-1$ with $a<b$ such that $K_u=F^{(n)}_{a,0}(K)$ and $K_v=F^{(n)}_{b,0}(K)$. First, for any $r=0,1,\ldots,3^n-1$, define
$$g|_{F^{(n)}_{r,0}(K)}:=
\begin{cases}
0&\text{if }r\le a,\\
h|_{F^{(n)}_{r,0}(K)} &\text{if }a<r<b,\\
1&\text{if }r\ge b.
\end{cases}
$$
Second, for any $s=1,2,\ldots,3^{n}-1$ and any $r=0,1,\ldots,3^n-1$ such that $F^{(n)}_{r,s}(K)\subseteq K$, define $g$ on $F^{(n)}_{r,s}(K)$ by
$$g(x,y):=
\begin{cases}
g\left(x,y-\frac{s}{3^n}\right) &\text{if $s$ is even},\\
g\left(x,\frac{s+1}{3^n}-y\right)&\text{if $s$ is odd}.
\end{cases}
$$

\begin{figure}[htbp]
\centering
\begin{subfigure}[t]{0.40\textwidth}
\centering
\begin{minipage}[c][4.5cm][c]{\linewidth}
\centering
\begin{tikzpicture}[x=0.45cm,y=0.45cm,>=Stealth]
  \foreach \r in {0,...,8}{
    \pgfmathtruncatemacro{\used}{and(\r>=2,\r<=4)}
    \ifnum\used=1
      \draw[gray!80] (\r,0) rectangle ++(1,1);
    \else
      \draw[gray!50,dashed] (\r,0) rectangle ++(1,1);
    \fi
    \node[font=\tiny] at (\r+0.5,-0.45) {$\r$};
  }

  \fill[green!30] (2,0) rectangle ++(1,1);
  \draw[green!50!black] (2,0) rectangle ++(1,1);
  \fill[orange!35] (4,0) rectangle ++(1,1);
  \draw[orange!70!black] (4,0) rectangle ++(1,1);

  \node[font=\scriptsize] at (2.5,1.6) {$g=0$};
  \node[font=\scriptsize] at (4.5,1.6) {$g=1$};
\end{tikzpicture}
\end{minipage}
\caption{Bottom-row cells}
\label{fig_bottom}
\end{subfigure}
\hfill
\begin{subfigure}[t]{0.56\textwidth}
\centering
\begin{minipage}[c][4.5cm][c]{\linewidth}
\centering
\begin{tikzpicture}[x=0.42cm,y=0.42cm,>=Stealth]
  \foreach \r in {0,...,8}{
    \foreach \s in {0,...,8}{
      \pgfmathtruncatemacro{\hole}{
        or(and(mod(\r,3)==1,mod(\s,3)==1),
           and(int(\r/3)==1,int(\s/3)==1))
      }
      \pgfmathtruncatemacro{\odd}{mod(\s,2)}
      \ifnum\hole=0
        \ifnum\odd=1
          \fill[blue!13] (\r,\s) rectangle ++(1,1);
        \fi
        \draw[gray!80] (\r,\s) rectangle ++(1,1);
      \fi
    }
  }

  \foreach \s in {0,...,8}{
    \node[left,font=\tiny] at (0,\s+0.5) {$s=\s$};
    \pgfmathtruncatemacro{\odd}{mod(\s,2)}
    \ifnum\odd=0
      \draw[->,thick,black!70]
        (9.45,\s+0.15)--(9.45,\s+0.85);
    \else
      \draw[->,thick,blue!70!black]
        (9.45,\s+0.85)--(9.45,\s+0.15);
    \fi
  }
\end{tikzpicture}
\end{minipage}
\caption{Vertical folding}
\label{fig_vertical}
\end{subfigure}
\caption{The level-$2$ cell configuration}
\label{fig_folding}
\end{figure}

Figure \ref{fig_folding} illustrates the folding construction for $n=2$, $a=2$, and $b=4$. Consider a level-$n$ cell $F^{(n)}_{r,s}(K)$. First, on the bottom row $s=0$, $g=0$ on cells with $r\le a$, $g=h$ on cells with $a<r<b$, and $g=1$ on cells with $r\ge b$. In particular, $g=0$ on $F^{(n)}_{a,0}(K)$ and $g=1$ on $F^{(n)}_{b,0}(K)$, see Figure \ref{fig_bottom}. These values match across adjacent bottom-row cells. Then, on any cell in a row $s\ge1$, $g$ is obtained from its bottom-row counterpart by preserving the vertical orientation when $s$ is even and reversing it when $s$ is odd, see Figure \ref{fig_vertical}, where the arrows at the right indicate these orientations. This alternating construction makes the values match across adjacent rows.

It is easy to verify that $g\in C(K)$, and on each level-$n$ cell, $g$ is either identically $0$, identically $1$, or equal to $h$ on a level-$n$ cell, possibly after a suitable reflection, hence Lemma \ref{lem_kernel}\ref{item_kernel2}, \ref{item_kernel3} and \ref{item_kernel5} yield $g\in Z\cap C(K)$. Moreover, by construction, $g=0$ on $\cup_{s=0}^{3^n-1}F^{(n)}_{0,s}(K)\supseteq \ell_L$ and $g=1$ on $\cup_{s=0}^{3^n-1}F^{(n)}_{3^{n}-1,s}(K)\supseteq \ell_R$.
\end{proof}

\begin{lemma}\label{lem_interpolation}
Let $Q$ be a self-similar component with kernel $Z=Z(Q)$. If there exists $f\in Z\cap C(K)$ with $f=0$ on $\ell_L$ and $f=1$ on $\ell_R$, then $Q=0$, that is, $Z=\mathcal{F}$.
\end{lemma}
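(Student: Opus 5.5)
The plan is to show that the kernel $Z$ is $\mathcal{E}_1$-dense in $\mathcal{F}$. This suffices: by \ref{item_Q1} and \ref{item_Q2}, for any $u\in\mathcal{F}$ and $z\in Z$ we have
$$Q(u)^{\frac1p}\le Q(u-z)^{\frac1p}+Q(z)^{\frac1p}=Q(u-z)^{\frac1p}\le \mathcal{E}(u-z)^{\frac1p},$$
so $Q(u)\le\inf_{z\in Z}\mathcal{E}(u-z)$, and density forces $Q(u)=0$. I would carry this out in three steps.

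\textbf{Step 1 (coordinate-type elements of $Z$).} Fix $n\ge0$ and index the level-$n$ cells as $F^{(n)}_{r,s}(K)$, as in the proof of Lemma \ref{lem_corridor}. On the cell in column $r$ and row $s$, define
$$g_n:=3^{-n}\bigl(r+f\circ\tau_{r,s}\circ(F^{(n)}_{r,s})^{-1}\bigr),$$
where $\tau_{r,s}\in D_4$ is the identity for $s$ even and the vertical reflection for $s$ odd. The values match across cells:
\begin{itemize}
\item on the shared vertical side of columns $r$ and $r+1$, both definitions equal $(r+1)/3^n$, since $f=0$ on $\ell_L$ and $f=1$ on $\ell_R$;
\item on horizontal sides, the alternating reflection makes the two adjacent rows agree, exactly as in the folding construction of Lemma \ref{lem_corridor}.
\end{itemize}
Hence $g_n\in C(K)$. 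On each level-$n$ cell, $g_n\circ F_w$ is an affine function of $f\circ\psi$ for some $\psi\in D_4$. By Lemma \ref{lem_kernel}\ref{item_kernel1}, \ref{item_kernel3} and \ref{item_kernel5}, this gives $g_n\in Z\cap C(K)$, with $\sup_K|g_n-x_1|\le3^{-n}$. Applying a rotation in $D_4$ and Lemma \ref{lem_kernel}\ref{item_kernel3}, I get the analogous $\tilde g_n\in Z\cap C(K)$ approximating $x_2$.

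\textbf{Step 2 (uniform density).} By Lemma \ref{lem_kernel}\ref{item_kernel4}, $Z\cap C(K)$ is an algebra and a lattice containing the constants. By Step 1 it separates points of $K$. Stone--Weierstrass then shows that $Z\cap C(K)$ is uniformly dense in $C(K)$.

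\textbf{Step 3 (upgrade to $\mathcal{E}_1$-density), the main obstacle.} Uniform approximation alone gives no control on $\mathcal{E}$. The energies $\mathcal{E}(g_n)\asymp(8\rho/3^p)^n\mathcal{E}(f)$ blow up, because $\beta_p>p$. The natural tool is Lemma \ref{lem_BAM}: $Z$ is closed and convex, hence weakly closed. So if $z_k\in Z$ converge to $u$ in $L^p$ with $\sup_k\mathcal{E}(z_k)<\infty$, then $u\in Z$.

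My first attempt at energy-bounded approximants works cellwise and uses self-similarity. For $u\in\mathcal{F}\cap C(K)$ (dense by \ref{item_SSEc}), write
$$Q(u)=\rho^n\sum_{w\in W_n}Q(u\circ F_w),$$
and bound each term by $Q(u\circ F_w)\le\mathcal{E}(u\circ F_w-c_w\varphi_w)$. Here $\varphi_w\in Z$ is a Lipschitz image, via Lemma \ref{lem_kernel}\ref{item_kernel4}, of the functions from Step 1 adapted to the boundary values of $u\circ F_w$. Summing over $w$ and using $\Gamma(u)(K^\partial_*)=0$, I would try to show that the right-hand side is at most a fixed fraction $\theta<1$ of $\mathcal{E}(u)$. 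Iterating this, or exploiting that the quotient seminorm $u\mapsto\inf_{z\in Z}\mathcal{E}(u-z)$ again satisfies \ref{item_Q1}--\ref{item_Q5}, should then give $Q\equiv0$.

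The delicate point is the uniformity of the constant $\theta$. I expect to get it from the capacity bound $\chi>0$ in Lemma \ref{lem_rhochi}, together with the fact that functions of the form $x_1$-staircases already realize the crossing.
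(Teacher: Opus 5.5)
\textbf{There is a genuine gap: the lemma lives entirely in your Step 3, and Step 3 is a hope, not an argument.} Steps 1 and 2 are correct. The alternating-reflection staircase $g_n$ does lie in $Z\cap C(K)$, and Stone--Weierstrass does give uniform density of $Z\cap C(K)$. But, as you note, this gives no energy control.

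The cellwise scheme you sketch cannot work as stated. Since $Q(u\circ F_w)=Q(u\circ F_w-z)$ for every $z\in Z$, proving $\rho^n\sum_w\mathcal{E}(u\circ F_w-c_w\varphi_w)\le\theta\,\mathcal{E}(u)$ with a uniform $\theta<1$ is the original density problem again, now at scale one for the arbitrary functions $u\circ F_w$. A single function $c_w\varphi_w$ dictated by the boundary values of $u\circ F_w$ cannot absorb energy carried in the interior of the cell. For instance, if $u\circ F_w$ is constant on $K^\partial$, your $\varphi_w$ is constant and nothing is gained. The constant $\chi>0$ of Lemma \ref{lem_rhochi} is a \emph{lower} bound on energies, so it cannot supply the upper bound you need. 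Finally, the quotient seminorm $\widetilde Q(u):=\inf_{z\in Z}\mathcal{E}(u-z)$ does not satisfy \ref{item_Q3}. Lemma \ref{lem_kernel}\ref{item_kernel2} only gives $\widetilde Q(u)\ge\rho\sum_i\widetilde Q(u\circ F_i)$. Equality would require gluing cellwise near-minimizers into a single element of $Z$, which is exactly the missing construction.

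\textbf{The missing idea.} You need elements of $Z$ that interpolate \emph{arbitrary} corner data, with energy controlled by a discrete energy that is bounded uniformly in $n$ for every $u\in\mathcal{F}$. The paper builds these as follows.
\begin{itemize}
\item Truncate $f$ to $[0,1]$ and symmetrize it so that $f(x,0)=f(x,1)=\theta(x)$. Set $g(x,y)=f(y,x)$ and $\psi_{00}=(1-f)(1-g)$, $\psi_{01}=(1-f)g$, $\psi_{10}=f(1-g)$, $\psi_{11}=fg$. By the algebra property these lie in $Z\cap C(K)$. They form a partition of unity with $\psi_{ij}(k,l)=\delta_{ik}\delta_{jl}$.
\item The trace of $I_a=\sum a_{ij}\psi_{ij}$ on each side of $\square$ depends only on the two corner values at the ends of that side, through the same profile $\theta$. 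Hence, for any data $b$ on the level-$n$ corners, the cellwise definition $J_n(b)\circ F_w=I_{a(w)}$ glues to a continuous function. It lies in $Z$ by Lemma \ref{lem_kernel}\ref{item_kernel5}.
\item The energy satisfies $\mathcal{E}(J_n(b))\lesssim\rho^n\sum_w\sum_{i,j}|b(F_w(i,j))-u_{K_w}|^p$. Take $b(z)$ to be the average of the $u_{K_w}$ over the cells containing $z$. Then this is $\lesssim\rho^n\sum_{K_v\cap K_w\neq\emptyset}|u_{K_v}-u_{K_w}|^p\lesssim\mathcal{E}(u)$ uniformly in $n$, by \cite[Theorem 1.1(iii)]{MS25}.
\item By (\ref{eq_P9a}) and $\rho\ge\frac12$, $J_n(b)\to u$ in $L^p(K;m)$. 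Your weak-closedness remark via Lemma \ref{lem_BAM} then gives $u\in Z$.
\end{itemize}
Your staircase $g_n$ is essentially $J_n(b)$ with $b$ equal to the first coordinate. Its energy blows up because those corner data have unbounded discrete energy, of order $(8\rho)^n3^{-np}$. The fix is to feed the interpolation data coming from $u$ itself and to invoke the uniform discrete energy bound, rather than to seek a contraction constant $\theta$.
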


\begin{proof}
By Lemma \ref{lem_kernel}\ref{item_kernel4}, replacing $f$ by $(f\vee0)\wedge1$ allows us to assume that $0\le f\le 1$ on $K$. By Lemma \ref{lem_kernel}\ref{item_kernel3}, the function $((x,y)\mapsto f(x,1-y))\in Z\cap C(K)$. By replacing $f$ by $(x,y)\mapsto \frac{1}{2}(f(x,y)+f(x,1-y))$ and Lemma \ref{lem_kernel}\ref{item_kernel1}, we may assume that $f(x,0)=f(x,1)=:\theta(x)$ for any $x\in[0,1]$, where $\theta\in C([0,1])$ satisfies $0\le \theta\le 1$ on $[0,1]$. Moreover, the function $g=((x,y)\mapsto f(y,x))\in Z\cap C(K)$ satisfies $g=0$ on $\ell_B$, $g=1$ on $\ell_T$, and $g(0,y)=g(1,y)=\theta(y)$ for any $y\in[0,1]$. Let
$$\psi_{00}:=(1-f)(1-g),\qquad\psi_{01}:=(1-f)g,\qquad\psi_{10}:=f(1-g),\qquad \psi_{11}:=fg,$$
then $\psi_{ij}\in Z\cap C(K)$, $\psi_{ij}(k,l)=\delta_{ik}\delta_{jl}$ for any $i,j,k,l=0,1$, and $\sum_{i,j=0,1}\psi_{ij}=1$ on $K$.

For any $a=(a_{00},a_{10},a_{01},a_{11})\in\mathbb{R}^4$ let
$$I_a:=\sum_{i,j=0,1}a_{ij}\psi_{ij}\in Z\cap C(K).$$
It is easy to verify that $I_a(i,j)=a_{ij}$ for any $i,j=0,1$, and for any $(x,y)\in K$,
\begin{align*}
&I_a(0,y)=(1-\theta(y))a_{00}+\theta(y)a_{01},\qquad I_a(1,y)=(1-\theta(y))a_{10}+\theta(y)a_{11},\\
&I_a(x,0)=(1-\theta(x))a_{00}+\theta(x)a_{10},\qquad I_a(x,1)=(1-\theta(x))a_{01}+\theta(x)a_{11}.
\end{align*}
Moreover, since $\sum_{i,j=0,1}\psi_{ij}=1$ on $K$,
\begin{align}
&\mathcal{E}(I_a)^{\frac{1}{p}}=\mathcal{E}\left(\sum_{i,j=0,1}\left(a_{ij}-a_{00}\right)\psi_{ij}\right)^{\frac{1}{p}}\le \sum_{i,j=0,1}\lvert a_{ij}-a_{00}\rvert \mathcal{E}(\psi_{ij})^{\frac{1}{p}}\nonumber\\
&\le 2 \left(\lvert a_{00}-a_{01}\rvert+\lvert a_{01}-a_{11}\rvert+\lvert a_{10}-a_{11}\rvert+\lvert a_{00}-a_{10}\rvert\right)\max_{i,j=0,1}\mathcal{E}(\psi_{ij})^{\frac{1}{p}}.\label{eq_EIa}
\end{align}

For any $n\ge0$, let $V_n:=\cup_{w\in W_n}\{F_w(i,j):i,j=0,1\}$ be the set of corners of level-$n$ cells. For any $b:V_n\to \mathbb{R}$, let $a:W_n\to \mathbb{R}^4$ be given by
$$a(w):=\left(b(F_w(0,0)),b(F_w(1,0)),b(F_w(0,1)),b(F_w(1,1))\right),\ w\in W_n,$$
that is, $a(w)$ lists the values of $b$ at the corners of $K_w$. Define $J_n(b)$ on $K$ by specifying, for any $w\in W_n$,
\begin{equation}\label{eq_Jnb}
J_n(b)\circ F_w=I_{a(w)}.
\end{equation}

We claim that $J_n(b)$ is well-defined, and $J_n(b)\in Z\cap C(K)$. It suffices to show that for any distinct $u,v\in W_n$ with $K_u\cap K_v\ne\emptyset$, the above definitions agree on $K_u\cap K_v$. Indeed, for any such $u,v$, $K_u\cap K_v$ is either a one-point set or a line segment. If $K_u\cap K_v=\{z\}\subseteq V_n$, then $I_{a(u)}$ and $I_{a(v)}$ both give the value $b(z)$ on $z$, indeed, $I_{a(u)}(F_u^{-1}(z))=I_{a(v)}(F_v^{-1}(z))=b(z)$. If $K_u\cap K_v$ is a line segment, without loss of generality, we may assume that it is a vertical line segment with the same parametrization $t\mapsto F_u(1,t)=F_v(0,t)$, $t\in[0,1]$, see Figure \ref{fig_inter_uv}, where $a_{ij}=b(F_u(i,j))$ and $a'_{ij}=b(F_v(i,j))$ are the values of $b$ at the corners of cells $K_u$ and $K_v$, notably $a_{10}=a'_{00}$ and $a_{11}=a'_{01}$. Then on the line segment, $I_{a(u)}$ gives at $F_u(1,t)$ the value $I_{a(u)}(1,t)=(1-\theta(t))a_{10}+\theta(t)a_{11}$, while $I_{a(v)}$ gives at $F_v(0,t)$ the value $I_{a(v)}(0,t)=(1-\theta(t))a'_{00}+\theta(t)a'_{01}$. Hence $J_n(b)$ is well-defined and $J_n(b)\in C(K)$. Moreover, Lemma \ref{lem_kernel}\ref{item_kernel5} gives $J_n(b)\in Z$.

\begin{figure}[ht]
\centering
\begin{tikzpicture}[scale=2.6,>=Stealth]
\draw[thick] (0,0) rectangle (1,1);
\draw[thick] (1,0) rectangle (2,1);
\foreach \p/\l/\a in {(0,0)/{a_{00}}/below left,(1,0)/{a_{10}=a'_{00}}/below,(0,1)/{a_{01}}/above left,(1,1)/{a_{11}=a'_{01}}/above,(2,0)/{a'_{10}}/below right,(2,1)/{a'_{11}}/above right}{\fill \p circle (0.7pt); \node[\a,font=\footnotesize] at \p {$\l$};}
\node at (0.5,0.5) {$K_u$};
\node at (1.5,0.5) {$K_v$};
\end{tikzpicture}
\caption{Two level-$n$ cells $K_u$ and $K_v$}
\label{fig_inter_uv}
\end{figure}

For any $f\in \mathcal{F}$ and any $n\ge0$, let $b:V_n\to \mathbb{R}$ be given by
$$b(z):=\frac{1}{\# \{w\in W_n:z\in K_w\}}\sum_{\substack{w\in W_n\\z\in K_w}}f_{K_w},\ z\in V_n.$$
Let $f_n:=J_n(b)\in Z\cap C(K)$ be given as above, then by (\ref{eq_Jnb}) and (\ref{eq_EIa}),
\begin{align*}
&\mathcal{E}(f_n)=\rho^{n}\sum_{w\in W_n}\mathcal{E}(J_n(b)\circ F_w)\\
&\lesssim \rho^n \sum_{w\in W_n}\left(\lvert b(F_w(0,0))-b(F_w(0,1))\rvert^p+\lvert b(F_w(0,1))-b(F_w(1,1))\rvert^p\right.\\
&\hspace{60pt}\left.+\lvert b(F_w(1,0))-b(F_w(1,1))\rvert^p+\lvert b(F_w(0,0))-b(F_w(1,0))\rvert^p\right)\\
&\le 2^{p-1}\rho^n \sum_{w\in W_n} \left(\lvert b(F_w(0,0))-f_{K_w}\rvert^p +\lvert b(F_w(0,1))-f_{K_w}\rvert^p+\ldots\right)\\
&\lesssim \rho^n\sum_{w\in W_n}\sum_{i,j=0,1}\lvert b(F_w(i,j))-f_{K_w}\rvert^p.
\end{align*}
For any $i,j=0,1$,
\begin{align}
&\lvert b(F_w(i,j))-f_{K_w}\rvert\nonumber\\
&\le \frac{1}{\# \{v\in W_n:F_{w}(i,j)\in K_v\}}\sum_{\substack{v\in W_n\\F_{w}(i,j)\in K_v}}\lvert f_{K_v}-f_{K_w} \rvert\le\sum_{\substack{v\in W_n\\ K_v\cap K_w\ne\emptyset}}\lvert f_{K_v}-f_{K_w} \rvert.\label{eq_bFwij}
\end{align}
Since $\sup_{w\in W_n}\#\{v\in W_n:K_v\cap K_w\ne\emptyset\}\le8$,
\begin{align*}
\mathcal{E}(f_n)\lesssim \rho^n\sum_{w\in W_n}\sum_{\substack{v\in W_n\\ K_v\cap K_w\ne\emptyset}}\lvert f_{K_v}-f_{K_w} \rvert^p\lesssim\rho^n\sum_{\substack{v,w\in W_n\\ K_v\cap K_w\ne\emptyset}}\lvert f_{K_v}-f_{K_w}\rvert^p\lesssim \mathcal{E}(f),
\end{align*}
where the last inequality follows from \cite[Theorem 1.1(iii)]{MS25}. Moreover,
\begin{align*}
\int_K \lvert f_n-f\rvert^p \mathrm{d}m\le 2^{p-1}\sum_{w\in W_n}\int_{K_w}\lvert f_n-f_{K_w}\rvert^p \mathrm{d}m+2^{p-1}\sum_{w\in W_n}\int_{K_w}\lvert f-f_{K_w}\rvert^p \mathrm{d}m,
\end{align*}
where by (\ref{eq_P9a}) and the fact $\rho\ge \frac{1}{2}$ (see Lemma \ref{lem_rhochi}),
$$\sum_{w\in W_n}\int_{K_w}\lvert f-f_{K_w}\rvert^p \mathrm{d}m\lesssim \frac{1}{8^n}\sum_{w\in W_n}\mathcal{E}(f\circ F_w)=\frac{1}{(8\rho)^n}\mathcal{E}(f)\to0.$$
For any $w\in W_n$, since $\sum_{i,j=0,1}\psi_{ij}=1$ on $K$, by (\ref{eq_bFwij}),
\begin{align*}
&\int_{K_w}\lvert f_n-f_{K_w}\rvert^p \mathrm{d}m=\frac{1}{8^n}\int_{K}\lvert \sum_{i,j=0,1}\left(b(F_w(i,j))-f_{K_w}\right)\psi_{ij}\rvert^p \mathrm{d}m\\
&\lesssim \frac{1}{8^n}\sum_{i,j=0,1}\lvert b(F_w(i,j))-f_{K_w}\rvert^p\lesssim \frac{1}{8^n}\sum_{\substack{v\in W_n\\ K_v\cap K_w\ne\emptyset}}\lvert f_{K_v}-f_{K_w} \rvert^p.
\end{align*}
Hence by \cite[Theorem 1.1(iii)]{MS25} and the fact $\rho\ge \frac{1}{2}$ again,
\begin{align*}
\sum_{w\in W_n}\int_{K_w}\lvert f_n-f_{K_w}\rvert^p \mathrm{d}m\lesssim \frac{1}{8^n}\sum_{\substack{v,w\in W_n\\ K_v\cap K_w\ne\emptyset}}\lvert f_{K_v}-f_{K_w} \rvert^p\lesssim \frac{1}{(8\rho)^n}\mathcal{E}(f)\to0.
\end{align*}
Therefore, $\{f_n\}_{n\ge0}\subseteq Z\cap C(K)$ is $L^p(K;m)$-convergent to $f$ and is $\mathcal{E}$-bounded. By Lemma \ref{lem_BAM}, for any $n\ge0$, there exist an integer $I_n\ge n$ and coefficients $\lambda^{(n)}_k\ge0$, $k=n,\ldots,I_n$, such that $\sum_{k=n}^{I_n}\lambda^{(n)}_k=1$, and $\{\sum_{k=n}^{I_n}\lambda^{(n)}_kf_k\}_{n\ge0}$ is $\mathcal{E}_1$-convergent to $f$. By Lemma \ref{lem_kernel}\ref{item_kernel1}, $\{\sum_{k=n}^{I_n}\lambda^{(n)}_kf_k\}_{n\ge0}\subseteq Z\cap C(K)$ and $f\in Z$. Since $f\in \mathcal{F}$ is arbitrary, $\mathcal{F}=Z$, that is, $Q=0$.
\end{proof}

We have the following dichotomy.

\begin{theorem}\label{thm_dichotomy}
Let $Q$ be a self-similar component with kernel $Z=Z(Q)$. Then either $Z\cap C(K)$ consists only of constant functions or $Q=0$ (equivalently, $Z=\mathcal{F}$).
\end{theorem}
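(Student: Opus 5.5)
The plan is to argue by contraposition: assume $Z\cap C(K)$ contains a non-constant function $f$, and deduce $Q=0$ from Lemmas \ref{lem_corridor} and \ref{lem_interpolation}. Lemma \ref{lem_interpolation} already gives $Q=0$ once some $g\in Z\cap C(K)$ has $g=0$ on $\ell_L$ and $g=1$ on $\ell_R$. Lemma \ref{lem_corridor} produces such a $g$ as soon as some function in $Z\cap C(K)$ is non-constant on one of the four sides. So the whole task reduces to the following: from a non-constant $f\in Z\cap C(K)$, produce an element of $Z\cap C(K)$ that is non-constant on a side of $K$.

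To do this, I will use Lemma \ref{lem_kernel}\ref{item_kernel2}, which says that $f\circ F_w\in Z\cap C(K)$ for every $w\in W_*$. It therefore suffices to find a word $w$ such that $f$ is non-constant on $K^\partial_w=F_w(K^\partial)$. Indeed, then $f\circ F_w$ is non-constant on $K^\partial$, hence on one of $\ell_L,\ell_R,\ell_B,\ell_T$.

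Suppose, for contradiction, that $f$ is constant on $K^\partial_w$ for every $w\in W_*$.
\begin{enumerate}
\item Fix $n$. For $u,v\in W_n$ with $K_u\cap K_v\neq\emptyset$, the boundaries $K^\partial_u$ and $K^\partial_v$ meet, because that intersection lies in both boundaries. So the constant values of $f$ on $K^\partial_u$ and on $K^\partial_v$ agree.
\item The adjacency graph on $W_n$ is connected. Hence $f$ takes one common value $c_n$ on $K^\partial_n=\bigcup_{w\in W_n}K^\partial_w$.
\item Since $K^\partial_n\supseteq K^\partial$ for every $n$, the constants agree: $c_n=f|_{K^\partial}=c$ for all $n$.
\item Thus $f=c$ on $K^\partial_*$. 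This set is dense in $K$, because every point lies in cells of arbitrarily small diameter and each cell contains boundary points. By continuity, $f\equiv c$, contradicting the choice of $f$.
\end{enumerate}

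The only delicate points are that the intersection of adjacent cells lies in both cell boundaries, and the connectivity of $(W_n,\sim_n)$. The latter was already used in the proof of Lemma \ref{lem_poinc0}. I therefore expect no real obstacle: the substance of the theorem sits in Lemmas \ref{lem_corridor} and \ref{lem_interpolation}, and this step is a short density-and-continuity argument.
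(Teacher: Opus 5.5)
Your proposal is correct and follows the paper's proof: use Lemma~\ref{lem_kernel}\ref{item_kernel2} to replace $f$ by some $f\circ F_w$ that is non-constant on a side, which exists because a continuous $f$ that is constant on every $K^\partial_w$ is constant on the dense set $K^\partial_*$; then apply Lemma~\ref{lem_corridor} and Lemma~\ref{lem_interpolation}. Your extra details (adjacent cells meet inside both cell boundaries, the adjacency graph on $W_n$ is connected, $K^\partial\subseteq K^\partial_n$, and the sides are joined at corners) just spell out what the paper compresses into ``by continuity''.
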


\begin{proof}
Assume that there exists a non-constant function $f\in Z\cap C(K)$. First, we claim that there exists $w\in W_*$ such that $f\circ F_w$ is non-constant on one of the four sides $\ell_L,\ell_R,\ell_B,\ell_T$. Indeed, otherwise, by continuity, for any $w\in W_*$, $f$ is constant on $K^\partial_w$, hence for any $n\ge0$, $f$ is constant on $K^\partial_n$, then $f$ is constant on $K^\partial_*$. Since $K^\partial_*$ is dense in $K$, $f$ is constant on $K$, a contradiction. By replacing $f$ by $f\circ F_w$ and Lemma \ref{lem_kernel}, we may assume that $f\in Z\cap C(K)$ is non-constant on one of the four sides $\ell_L,\ell_R,\ell_B,\ell_T$. By Lemma \ref{lem_corridor}, there exists $g\in Z\cap C(K)$ such that $g=0$ on $\ell_L$ and $g=1$ on $\ell_R$. Then Lemma \ref{lem_interpolation} gives $Z=\mathcal{F}$.
\end{proof}

For any $A\in \mathcal{B}(K)$, let $Q_A:\mathcal{F}\to[0,+\infty)$ be given by
$$Q_A(f):=\Gamma(f)(A),\ f\in \mathcal{F}.$$

\begin{lemma}\label{lem_invcom}
Let $A\in \mathcal{B}(K)$ satisfy (\ref{eq_ss01}). Then the following hold:
\begin{enumerate}[label=(\alph*),ref=(\alph*)]
\item\label{item_invcom1} For any $f\in\mathcal{F}$, any $w\in W_*$, and any $B\in \mathcal{B}(K)$,
$$\Gamma(f)(A\cap F_w(B))=\rho^{\lvert w\rvert}\Gamma(f\circ F_w)(A\cap B),$$
in particular,
$$\Gamma(f)(A\cap K_w)=\rho^{\lvert w\rvert}Q_A(f\circ F_w).$$
\item\label{item_invcom2} The functional $Q_A:\mathcal{F}\to[0,+\infty)$ is a self-similar component of $\mathcal{E}$.
\item\label{item_invcom3} The set $K\backslash A$ also satisfies (\ref{eq_ss01}), and $Q_A+Q_{K\backslash A}=\mathcal{E}$.
\end{enumerate}
\end{lemma}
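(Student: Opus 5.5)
The plan is to prove \ref{item_invcom1} first and derive the other two parts from it.

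\textbf{Part \ref{item_invcom1}.} By (\ref{eq_Gamma_ss}) with $n=\lvert w\rvert$,
$$\Gamma(f)(A\cap F_w(B))=\rho^{n}\sum_{v\in W_n}\Gamma(f\circ F_v)\bigl(F_v^{-1}(A\cap F_w(B))\bigr).$$
For $v\ne w$, the set $F_v^{-1}(F_w(B))$ is contained in $F_v^{-1}(K_v\cap K_w)\subseteq K^\partial$. That term therefore vanishes by (\ref{eq_bdy_zero}).

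For $v=w$, injectivity of $F_w$ gives $F_w^{-1}(A\cap F_w(B))=F_w^{-1}(A)\cap B$. By Remark \ref{rmk_01law}, $\mu(F_w^{-1}(A)\mathbin{\triangle}A)=0$. Since $\Gamma(f\circ F_w)\ll\mu$, we may replace $F_w^{-1}(A)\cap B$ by $A\cap B$. This gives the identity, and taking $B=K$ gives the particular case.

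\textbf{Part \ref{item_invcom2}.} I check (Q1)--(Q5) in turn.
\begin{itemize}
\item (Q1) follows from Lemma \ref{lem_meas}\ref{item_meas1}.
\item (Q2) holds because $\Gamma(f)(A)\le\Gamma(f)(K)=\mathcal{E}(f)$.
\item (Q5) follows from Lemma \ref{lem_meas}\ref{item_meas2}.
\item (Q3): the cells $K_i$, $i=0,\dots,7$, cover $K$ and overlap only inside the $\mu$-null set $K^\partial_1$. Hence
$$Q_A(f)=\sum_i\Gamma(f)(A\cap K_i)=\rho\sum_iQ_A(f\circ F_i)$$
by part \ref{item_invcom1}.
\item (Q4): by (\ref{eq_Gamma_D4}),
$$Q_A(f\circ\psi)=\Gamma(f)(\psi(A))=\Gamma(f)\bigl((\psi^{-1})^{-1}(A)\bigr).$$
Since $D_4$ is a group, $\psi^{-1}\in D_4$, so this set differs from $A$ by a $\mu$-null set by (\ref{eq_ss01}). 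Thus $Q_A(f\circ\psi)=\Gamma(f)(A)=Q_A(f)$.
\end{itemize}

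\textbf{Part \ref{item_invcom3}.} Preimages commute with complements, so
$$F_i^{-1}(K\backslash A)\mathbin{\triangle}(K\backslash A)=F_i^{-1}(A)\mathbin{\triangle}A,$$
and the same holds for each $\psi\in D_4$. Hence $K\backslash A$ satisfies (\ref{eq_ss01}). The identity $Q_A+Q_{K\backslash A}=\mathcal{E}$ is additivity of the measure $\Gamma(f)$ together with $\Gamma(f)(K)=\mathcal{E}(f)$.

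\textbf{Main care point.} The only delicate step is the null-set bookkeeping in part \ref{item_invcom1}. It must use that every energy measure, of $f$ and of each $f\circ F_v$, is absolutely continuous with respect to $\mu$. It must also use that the boundaries $K^\partial_*$ are $\mu$-null.
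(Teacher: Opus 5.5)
Your proposal is correct and follows essentially the same route as the paper. You expand via (\ref{eq_Gamma_ss}), kill the off-diagonal terms with (\ref{eq_bdy_zero}), replace $F_w^{-1}(A)$ by $A$ using Remark \ref{rmk_01law} together with $\Gamma(f\circ F_w)\ll\mu$, and then check (Q1)--(Q5) and the complement statement directly. The only cosmetic difference is that you obtain (Q3) from part (a) with $B=K$ rather than reapplying (\ref{eq_Gamma_ss}) with $n=1$, which is the same computation.
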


\begin{proof}
\ref{item_invcom1} By (\ref{eq_ss01}), $1_{A}\circ F_w=1_A$ $\Gamma(f\circ F_w)$-a.e. Let $n=\lvert w\rvert$, then by (\ref{eq_Gamma_ss}) and (\ref{eq_bdy_zero}),
\begin{align*}
&\Gamma(f)(A\cap F_w(B))=\rho^n\sum_{v\in W_n}\Gamma(f\circ F_v)(F_v^{-1}(A\cap F_w(B)))\\
&=\rho^{\lvert w\rvert}\Gamma(f\circ F_w)(F_w^{-1}(A)\cap B)=\rho^{\lvert w\rvert}\Gamma(f\circ F_w)(A\cap B).
\end{align*}
In particular, taking $B=K$,
$$\Gamma(f)(A\cap K_w)=\rho^{\lvert w\rvert}\Gamma(f\circ F_w)(A)=\rho^{\lvert w\rvert}Q_A(f\circ F_w).$$

\ref{item_invcom2} \ref{item_Q1}, \ref{item_Q2}, and \ref{item_Q5} follow directly from the definition and the properties of energy measures. For \ref{item_Q3}, by (\ref{eq_Gamma_ss}) and (\ref{eq_ss01}), for any $f\in \mathcal{F}$,
$$Q_A(f)=\Gamma(f)(A)=\rho\sum_{i=0}^7\Gamma(f\circ F_i)(F_i^{-1}(A))=\rho\sum_{i=0}^7\Gamma(f\circ F_i)(A)=\rho\sum_{i=0}^7Q_A(f\circ F_i).$$
For \ref{item_Q4}, by (\ref{eq_ss01}) and (\ref{eq_Gamma_D4}), for any $f\in \mathcal{F}$,
$$Q_A(f)=\Gamma(f)(A)=\Gamma(f)(\psi(A))=\Gamma(f\circ \psi)(A)=Q_A(f\circ \psi).$$

\ref{item_invcom3} It follows directly by noting that for any $i=0,1,\ldots,7$ and any $\psi\in D_4$,
$$1_{K\backslash A}=1-1_A=1\circ F_i-1_A\circ F_i=(1-1_A)\circ F_i=1_{K\backslash A}\circ F_i\ \mu\text{-a.e.},$$
$$1_{K\backslash A}\circ \psi=\left(1-1_{A}\right)\circ \psi=1\circ \psi-1_A\circ \psi=1-1_A=1_{K\backslash A}\ \mu\text{-a.e.}$$
\end{proof}

\begin{lemma}\label{lem_equivalent}
Let $A\in \mathcal{B}(K)$ satisfy (\ref{eq_ss01}). The following are equivalent.
\begin{enumerate}[label=(\roman*),ref=(\roman*)]
\item\label{item_equiv_Q1} $Q_{K\backslash A}=0$.
\item\label{item_equiv_Q2} $Q_A=\mathcal{E}$.
\item\label{item_equiv_Q3} $\mu(K\backslash A)=0$.
\item\label{item_equiv_Q4} There exists a constant $c>0$ such that $Q_A\ge c \mathcal{E}$.
\end{enumerate}
\end{lemma}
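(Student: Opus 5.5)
The plan is to prove the cycle of implications \ref{item_equiv_Q1}$\Leftrightarrow$\ref{item_equiv_Q2}, \ref{item_equiv_Q1}$\Leftrightarrow$\ref{item_equiv_Q3}, and \ref{item_equiv_Q2}$\Rightarrow$\ref{item_equiv_Q4}$\Rightarrow$\ref{item_equiv_Q1}.

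The first three links are direct. By Lemma \ref{lem_invcom}\ref{item_invcom3}, $Q_A+Q_{K\backslash A}=\mathcal{E}$, which gives \ref{item_equiv_Q1}$\Leftrightarrow$\ref{item_equiv_Q2} immediately. For \ref{item_equiv_Q1}$\Leftrightarrow$\ref{item_equiv_Q3}, I will use the characterization of a minimal energy-dominant measure: $\mu(K\backslash A)=0$ if and only if $\Gamma(f)(K\backslash A)=0$ for all $f\in\mathcal{F}$. This is exactly $Q_{K\backslash A}=0$. The implication \ref{item_equiv_Q2}$\Rightarrow$\ref{item_equiv_Q4} is trivial with $c=1$.

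The substantive step is \ref{item_equiv_Q4}$\Rightarrow$\ref{item_equiv_Q1}. The idea is to localize the inequality $Q_A\ge c\mathcal{E}$ to cells and then use a density argument.

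First, apply the hypothesis to $f\circ F_w$ and use Lemma \ref{lem_invcom}\ref{item_invcom1}. Together with $\Gamma(f)(K_w)=\rho^{|w|}\mathcal{E}(f\circ F_w)$ (from (\ref{eq_Gamma_ss}) and (\ref{eq_bdy_zero})), this gives
$$\Gamma(f)(A\cap K_w)\ge c\,\Gamma(f)(K_w)\text{ for any }w\in W_*,\ f\in\mathcal{F}.$$
Since $\Gamma(f)$ does not charge $K^\partial_*$, finite disjoint unions of cells form a $\pi$-system generating $\mathcal{B}(K)$ modulo $\Gamma(f)$-null sets. A monotone class (or outer regularity) argument then extends the bound to $\Gamma(f)(A\cap E)\ge c\,\Gamma(f)(E)$ for every Borel set $E$.

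Now take $E=K\backslash A$. The left side is $\Gamma(f)(\emptyset)=0$, so $\Gamma(f)(K\backslash A)=0$ for all $f$, which is \ref{item_equiv_Q1}.

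An alternative route is martingale convergence. Apply it under the probability measure $\Gamma(f)/\mathcal{E}(f)$, with the cell filtration $\mathcal{G}_n$ as in the proof of Theorem \ref{thm_step1}. The ratios $\Gamma(f)(A\cap K_{[x]_n})/\Gamma(f)(K_{[x]_n})$ converge to $1_A$ a.e. Since they are bounded below by $c>0$, we get $1_A=1$ $\Gamma(f)$-a.e.

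I expect the only delicate point to be this extension from cells to arbitrary Borel sets. It relies on the vanishing of $\Gamma(f)$ on $K^\partial_*$, and I will handle it via the martingale argument, which avoids any regularity issues.
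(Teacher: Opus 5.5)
Your proposal is correct and matches the paper's proof: the same cycle of implications, the same cell-level inequality $\Gamma(f)(A\cap K_w)\ge c\,\Gamma(f)(K_w)$ obtained from Lemma~\ref{lem_invcom}\ref{item_invcom1} and (\ref{eq_bdy_zero}), and the same extension to all Borel sets via the monotone class theorem; your martingale variant is an equally valid way to finish.

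One precision: the class of $B$ with $\Gamma(f)(A\cap B)\ge c\,\Gamma(f)(B)$ is closed under monotone limits but not under proper differences. So the tool to invoke is the monotone class theorem for the \emph{algebra} of finite unions of cells modulo $K^\partial_*$, not the $\pi$--$\lambda$ theorem for a $\pi$-system.
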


\begin{proof}
The equivalences \ref{item_equiv_Q1} $\Longleftrightarrow$ \ref{item_equiv_Q2} and \ref{item_equiv_Q1} $\Longleftrightarrow$ \ref{item_equiv_Q3} follow directly from the definitions, (\ref{eq_ss01}), and Lemma \ref{lem_invcom}. \ref{item_equiv_Q2} $\Longrightarrow$ \ref{item_equiv_Q4}: Trivial. \ref{item_equiv_Q4} $\Longrightarrow$ \ref{item_equiv_Q1}: For any $f\in \mathcal{F}$, we show that $Q_{K\backslash A}(f)=\Gamma(f)(K\backslash A)=0$. Indeed, by assumption, for any $w\in W_*$, by Lemma \ref{lem_invcom}\ref{item_invcom1},
$$Q_A(f\circ F_w)=\frac{1}{\rho^{\lvert w\rvert}}\Gamma(f)(A\cap K_w)\ge c \mathcal{E}(f\circ F_w),$$
hence $\Gamma(f)(A\cap K_w)\ge c\rho^{\lvert w\rvert}\mathcal{E}(f\circ F_w)=c\Gamma(f)(K_w)$ for any $w\in W_*$. By (\ref{eq_bdy_zero}) and the monotone class theorem, $\Gamma(f)(A\cap B)\ge c\Gamma(f)(B)$ for any $B\in \mathcal{B}(K)$; in particular, by taking $B=K\backslash A$, $\Gamma(f)(K\backslash A)=0$.
\end{proof}

We give the proof of Theorem \ref{thm_01law} as follows.

\begin{proof}[Proof of Theorem \ref{thm_01law}]
Assume that $\mu(A)>0$. By Lemma \ref{lem_equivalent}, to show that $\mu(K\backslash A)=0$, we only need to show that $Q_{K\backslash A}=0$. Then by Theorem \ref{thm_dichotomy}, it suffices to show that there exists a non-constant function in $Z(Q_{K\backslash A})\cap C(K)$. We construct such a function using a blowup argument similar to that used in the proof of Theorem \ref{thm_step1}.

Since $\mu(A)>0$, there exists $f\in \mathcal{F}$ such that $\Gamma(f)(A)>0$. By Lemma \ref{lem_Hstar}, there exist $k\ge0$ and $h\in \mathcal{H}_k$ such that $\Gamma(h)(A)>0$ by noting that $\lvert \Gamma(f)(A)^{\frac{1}{p}}-\Gamma(h)(A)^{\frac{1}{p}}\rvert\le \Gamma(f-h)(A)^{\frac{1}{p}}\le \mathcal{E}(f-h)^{\frac{1}{p}}$. Then there exists $v\in W_k$ such that $\Gamma(h)(A\cap K_v)>0$. Replacing $h$ by $h\circ F_v$, we may assume that $h\in \mathcal{H}$ and $\Gamma(h)(A)>0$. By normalization, we may assume that $\Gamma(h)(K)=1$, then $\Gamma(h)$ is a probability measure on $K$, let $\mathbb{E}$ denote the corresponding expectation. Let $\delta:=\Gamma(h)(A)>0$. For any $n\ge0$, let
$$A_n:=\{w\in W_n:K_{\mathcal{N}_3(w)}\cap K^\partial\ne\emptyset\},$$
then $K_{A_n}\downarrow K^\partial$. By (\ref{eq_bdy_zero}), $\Gamma(h)(K^\partial)=0$, then there exists $n_0\ge0$ such that for any $n\ge n_0$, $\Gamma(h)(K_{A_n})<\frac{\delta}{3}$. For any $n\ge0$, let
$$G_n:=\left\{w\in W_n:\Gamma(h)(K_w)<\frac{\delta}{147}\Gamma(h)\left(K_{\mathcal{N}_3(w)}\right)\right\}.$$
By (\ref{eq_bdy_zero}) and (\ref{eq_sup_nbhd}),
\begin{align*}
&\Gamma(h)\left(K_{G_n}\right)=\sum_{w\in G_n}\Gamma(h)(K_w)\le \frac{\delta}{147}\sum_{w\in G_n}\Gamma(h)\left(K_{\mathcal{N}_3(w)}\right)=\frac{\delta}{147}\sum_{w\in G_n}\sum_{v\in \mathcal{N}_3(w)}\Gamma(h) \left(K_v\right)\\
&\le \frac{\delta}{147}\sum_{v\in W_n} \Gamma(h)(K_v)\cdot \#\{w\in W_n:v\in \mathcal{N}_3(w)\}\le \frac{\delta}{147}\cdot (2\cdot3+1)^2\sum_{v\in W_n}\Gamma(h)(K_v)=\frac{\delta}{3}.
\end{align*}
Let $K_\infty:=\varliminf_{n\to+\infty}K_{G_n}=\cup_{N\ge0}\cap_{n\ge N}K_{G_n}$, then Fatou's lemma gives
$$\Gamma(h)(K_\infty)\le \varliminf_{n\to+\infty}\Gamma(h)(K_{G_n})\le \frac{\delta}{3}.$$
Let $B:=A\backslash (K_{A_{n_0}}\cup K_\infty\cup K^\partial_*)$, then $\Gamma(h)(B)\ge \delta-\frac{\delta}{3}-\frac{\delta}{3}=\frac{\delta}{3}$.

For any $n\ge0$, let $\mathcal{G}_n$ be the $\sigma$-algebra generated by $\{K_w:w\in W_n\}$, then $\{\mathcal{G}_n\}_{n\ge0}$ is a filtration, and $\mathcal{B}(K)=\mathcal{G}_\infty:=\sigma (\cup_{n\ge0}\mathcal{G}_n)$. For any $n\ge0$, let
$$Y_n:=\mathbb{E}[1_{B}|\mathcal{G}_n],$$
then $\{Y_n\}_{n\ge0}$ is a non-negative uniformly integrable martingale. By the martingale convergence theorem,
\begin{equation}
Y_n\to \mathbb{E}[1_{B}|\mathcal{G}_\infty]=1_{B}\ \Gamma(h)\text{-a.e. on $K$ and in }L^1(K;\Gamma(h)).
\end{equation}
Recall that for any $x\in K\backslash K^\partial_*$ and any $n\ge0$, $[x]_n$ is the unique word in $W_n$ such that $x\in K_{[x]_n}$. By (\ref{eq_bdy_zero}), $\Gamma(h)(K^\partial_*)=0$, then
\begin{equation*}
Y_n=\sum_{\substack{w\in W_n\\ \Gamma(h)(K_w)>0}}\frac{\Gamma(h)(B\cap K_w)}{\Gamma(h)(K_w)}1_{K_w}\ \Gamma(h)\text{-a.e. on }K.
\end{equation*}
In particular, for $\Gamma(h)$-a.e. $x\in K\backslash K^\partial_*$,
$$
Y_n(x)=
\begin{cases}
\frac{\Gamma(h)(B\cap K_{[x]_n})}{\Gamma(h)(K_{[x]_n})}&\text{if }\Gamma(h)(K_{[x]_n})>0,\\
0&\text{if }\Gamma(h)(K_{[x]_n})=0.
\end{cases}
$$
Since $\Gamma(h)(B)>0$, there exists $x\in B$ such that $\lim_{n\to+\infty}Y_n(x)=1_B(x)=1$, which implies $\Gamma(h)(B\cap K_{[x]_n})>0$. Since $x\in K\backslash K_\infty=\cap_{N\ge0}\cup_{n\ge N}(K\backslash K_{G_n})$, there exists a subsequence $\{n_k\}_{k\ge1}\subseteq\{n_0,n_0+1,\ldots\}$ such that $\Gamma(h)(K_{[x]_{n_k}})\ge \frac{\delta}{147}\Gamma(h)(K_{\mathcal{N}_3([x]_{n_k})})$ for any $k\ge1$; in particular, $\Gamma(h)(B\cap K_{[x]_{n_k}})>0$ and $\lim_{k\to+\infty}Y_{n_k}(x)=1$. Let
$$g_k:=\left(\frac{\rho^{n_k}}{\Gamma(h)(K_{[x]_{n_k}})}\right)^{\frac{1}{p}}\left(h-h_{K_{[x]_{n_k}}}\right),$$
then $(g_k)_{K_{[x]_{n_k}}}=0$,
$$\Gamma(g_k)(K_{[x]_{n_k}})=\frac{\rho^{n_k}}{\Gamma(h)(K_{[x]_{n_k}})}\Gamma(h)(K_{[x]_{n_k}})=\rho^{n_k},$$
and
$$\Gamma(g_k)(K_{\mathcal{N}_3([x]_{n_k})})=\frac{\rho^{n_k}}{\Gamma(h)(K_{[x]_{n_k}})}\Gamma(h)(K_{\mathcal{N}_3([x]_{n_k})})\le \frac{147}{\delta}\rho^{n_k}.$$
Since $x\not\in K_{A_{n_0}}$, Proposition \ref{prop_Hin418} applies to $\{g_k\}_k$ and $\{[x]_{n_k}\}_{k\ge1}$, then there exist a subsequence, still denoted by $\{g_k\circ F_{[x]_{n_k}}\}_{k\ge1}$, and a function $g\in \mathcal{H}\cap C(K)$, such that $\{g_k\circ F_{[x]_{n_k}}\}_{k\ge1}$ is $\mathcal{E}_1$-convergent and uniformly convergent to $g$ on $K$. Since
$$\mathcal{E}(g_k\circ F_{[x]_{n_k}})=\frac{1}{\rho^{n_k}}\Gamma(g_k)(K_{[x]_{n_k}})=1,$$
we have $\mathcal{E}(g)=\lim_{k\to+\infty}\mathcal{E}(g_k\circ F_{[x]_{n_k}})=1$, which implies $g$ is non-constant. By Lemma \ref{lem_invcom}\ref{item_invcom1} and \ref{item_invcom3},
\begin{align*}
&Q_{K\backslash A}(g_k\circ F_{[x]_{n_k}})=\frac{1}{\rho^{n_k}}\Gamma(g_k)(K_{[x]_{n_k}}\backslash A)=\frac{\Gamma(h)(K_{[x]_{n_k}}\backslash A)}{\Gamma(h)(K_{[x]_{n_k}})}\le 1-Y_{n_k}(x)\to0,
\end{align*}
hence $Q_{K\backslash A}(g)=\lim_{k\to+\infty}Q_{K\backslash A}(g_k\circ F_{[x]_{n_k}})=0$, that is, $g\in Z(Q_{K\backslash A})$. In summary, $g\in Z(Q_{K\backslash A})\cap C(K)$ is non-constant. This completes the proof.
\end{proof}

\section{Proof of Theorem \ref{thm_unique}}\label{sec_unique}

Let $(\mathcal{E}^{(c)},\mathcal{F}^{(c)})\in \mathfrak{E}$ be the canonical $p$-energy, and let $\Gamma^{(c)}$, $\rho^{(c)}$, and $\beta^{(c)}_p$ denote the associated energy measure and parameters. For any $(\mathcal{E},\mathcal{F})\in \mathfrak{E}$, let $\Gamma$, $\rho$, and $\beta_p$ denote the corresponding energy measure and parameters.

We have the following quasi-uniqueness result.

\begin{lemma}[{\cite[Propositions 10.26 and 10.29]{MS23arXiv}}]\label{lem_quasi_unique}
We have $\rho=\rho^{(c)}$, $\mathcal{F}=\mathcal{F}^{(c)}$, and there exists a constant $C\ge1$ such that
$$\frac{1}{C}\mathcal{E}^{(c)}(f)\le \mathcal{E}(f)\le C \mathcal{E}^{(c)}(f)\text{ for any }f\in \mathcal{F}=\mathcal{F}^{(c)}.$$
Hence for any $f\in \mathcal{F}=\mathcal{F}^{(c)}$ and any $A\in \mathcal{B}(K)$,
\begin{equation}\label{eq_gamma_gammac}
\frac{1}{C}\Gamma^{(c)}(f)(A)\le \Gamma(f)(A)\le C\Gamma^{(c)}(f)(A).
\end{equation}
\end{lemma}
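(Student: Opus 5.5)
The first assertion, that $\rho=\rho^{(c)}$, $\mathcal{F}=\mathcal{F}^{(c)}$ and $\frac{1}{C}\mathcal{E}^{(c)}\le\mathcal{E}\le C\mathcal{E}^{(c)}$, I would take directly from \cite[Propositions 10.26 and 10.29]{MS23arXiv}. Those results are stated for self-similar $p$-energies satisfying assumptions of the type \ref{item_SSEa}--\ref{item_SSEg}, and we have verified \ref{item_SSEg} in Proposition \ref{prop_gap}. I would only check that the hypotheses used there are among \ref{item_SSEa}--\ref{item_SSEg}. The work is then to deduce (\ref{eq_gamma_gammac}) from the energy comparability.

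I would not work on $K$, where cells overlap along their boundaries. Instead I would work on $\Sigma$ with the measures $\nu_f$ and $\nu^{(c)}_f$ constructed before Proposition \ref{prop_gap}. Since $\rho=\rho^{(c)}$ and $f\circ F_w\in\mathcal{F}=\mathcal{F}^{(c)}$ for every $w\in W_*$ by \ref{item_SSEe}, the energy comparability gives, for every cylinder,
$$\nu_f(\Sigma_w)=\rho^{\lvert w\rvert}\mathcal{E}(f\circ F_w)\le C\rho^{\lvert w\rvert}\mathcal{E}^{(c)}(f\circ F_w)=C\nu^{(c)}_f(\Sigma_w),$$
together with the reverse inequality $\nu_f(\Sigma_w)\ge \frac{1}{C}\nu^{(c)}_f(\Sigma_w)$. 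The point of using $\rho=\rho^{(c)}$ here is that the same scaling factor $\rho^{\lvert w\rvert}$ appears on both sides, so it cancels.

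Next I would extend the cylinder inequalities to all Borel subsets of $\Sigma$. Any two cylinders are either nested or disjoint, so every open subset of $\Sigma$ is a countable disjoint union of cylinders. Countable additivity then gives $\nu_f(O)\le C\nu^{(c)}_f(O)$ for every open $O\subseteq\Sigma$. Both measures are finite Borel measures on the compact metrizable space $\Sigma$, hence outer regular. So for a Borel set $E$ I would take open sets $O_k\supseteq E$ with $\nu^{(c)}_f(O_k)\downarrow\nu^{(c)}_f(E)$ and conclude
$$\nu_f(E)\le\nu_f(O_k)\le C\nu^{(c)}_f(O_k)\to C\nu^{(c)}_f(E).$$
The lower bound follows in the same way with the two measures exchanged. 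Alternatively, the monotone class theorem applied to the $\pi$-system of cylinders would give the same conclusion.

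Finally, for $A\in\mathcal{B}(K)$ I would set $E=\pi^{-1}(A)$, which is Borel because $\pi$ is continuous. Since $\Gamma(f)=\pi_*\nu_f$ and $\Gamma^{(c)}(f)=\pi_*\nu^{(c)}_f$, this yields (\ref{eq_gamma_gammac}). The only genuine obstacle is confirming that the hypotheses of the cited quasi-uniqueness propositions hold in the present setting; the measure-theoretic step is routine.
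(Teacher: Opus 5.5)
Your proposal is correct and follows the paper. The paper likewise takes $\rho=\rho^{(c)}$, $\mathcal{F}=\mathcal{F}^{(c)}$ and the energy comparability directly from \cite[Propositions 10.26 and 10.29]{MS23arXiv}, and treats (\ref{eq_gamma_gammac}) as an immediate consequence; your comparison of $\nu_f$ and $\nu^{(c)}_f$ cylinder by cylinder on $\Sigma$, followed by pushing forward under $\pi$, is a correct way to fill in that ``hence''. Working on $\Sigma$ rather than on $K$ also avoids circularity: the vanishing of $\Gamma(f)$ on $K^\partial_*$ is only derived later (Lemma \ref{lem_meas_PI}) from (\ref{eq_gamma_gammac}) itself, so an argument based on cell masses on $K$ could not use it here.
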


For simplicity, we use $\mathcal{F}$ to denote both $\mathcal{F}$ and $\mathcal{F}^{(c)}$.

We divide the proof of uniqueness into two steps:

\begin{enumerate}[label=Step \arabic*:,ref=Step \arabic*]
\item\label{main_step1} We use Theorem \ref{thm_R1} to show that there exists a Borel function $a$ on $K$ such that
$$\Gamma(f)=a \Gamma^{(c)}(f)\text{ for any }f\in \mathcal{F}=\mathcal{F}^{(c)}.$$
\item\label{main_step2} We use Theorem \ref{thm_01law} to show that $a$ is indeed a constant function.
\end{enumerate}

As a consequence of Lemma \ref{lem_quasi_unique}, we have several properties of $(\mathcal{E},\mathcal{F})$ that follow from the corresponding properties of $(\mathcal{E}^{(c)},\mathcal{F}^{(c)})$.

\begin{lemma}\label{lem_meas_PI}
\begin{enumerate}[label=(\arabic*)]
\item\label{item_meas5} For any $f\in \mathcal{F}$ and any $w\in W_*$, $\Gamma(f)(K^\partial_*)=0$, hence
$$\Gamma(f)(K_w)=\rho^{\lvert w\rvert}\mathcal{E}(f\circ F_w).$$

\item\label{item_meas6} For any $f\in \mathcal{F}$, $w\in W_*$, and $A\in \mathcal{B}(K)$, $\Gamma(f)(F_w(A))=\rho^{\lvert w\rvert}\Gamma(f\circ F_w)(A)$, hence for any bounded Borel function $\phi$,
\begin{equation*}
\int_{K_w}\phi \mathrm{d}\Gamma(f)=\rho^{\lvert w\rvert}\int_K (\phi\circ F_w) \mathrm{d}\Gamma(f\circ F_w).
\end{equation*}
\end{enumerate}
\end{lemma}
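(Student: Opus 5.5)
The plan is to transfer the two properties from $\Gamma^{(c)}$ to $\Gamma$ using the comparability (\ref{eq_gamma_gammac}) of Lemma \ref{lem_quasi_unique}, combined with the self-similarity formula (\ref{eq_Gamma_ss}) of Lemma \ref{lem_meas}\ref{item_meas3}, which holds for any $(\mathcal{E},\mathcal{F})\in\mathfrak{E}$.

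For \ref{item_meas5}, the first step is to observe that (\ref{eq_bdy_zero}) gives $\Gamma^{(c)}(f)(K^\partial_*)=0$ for every $f\in\mathcal{F}^{(c)}=\mathcal{F}$. The upper bound in (\ref{eq_gamma_gammac}) then yields $\Gamma(f)(K^\partial_*)\le C\Gamma^{(c)}(f)(K^\partial_*)=0$. Next, with $n=\lvert w\rvert$, I apply (\ref{eq_Gamma_ss}) to the set $K_w$:
$$\Gamma(f)(K_w)=\rho^n\sum_{v\in W_n}\Gamma(f\circ F_v)(F_v^{-1}(K_w)).$$
For $v\ne w$ the set $F_v^{-1}(K_w)=F_v^{-1}(K_v\cap K_w)$ is contained in $K^\partial$, because distinct level-$n$ cells meet only along their boundaries. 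Since $f\circ F_v\in\mathcal{F}$, the previous sentence gives $\Gamma(f\circ F_v)(K^\partial)=0$, so all terms with $v\ne w$ vanish. For $v=w$ we have $F_w^{-1}(K_w)=K$, and the remaining term is $\rho^n\Gamma(f\circ F_w)(K)=\rho^n\mathcal{E}(f\circ F_w)$.

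For \ref{item_meas6}, the same computation applied to the set $F_w(A)$ gives
$$\Gamma(f)(F_w(A))=\rho^n\sum_{v}\Gamma(f\circ F_v)(F_v^{-1}(F_w(A))).$$
For $v\ne w$, the set $F_v^{-1}(F_w(A))$ is again contained in $F_v^{-1}(K_v\cap K_w)\subseteq K^\partial$, so these terms are null. For $v=w$, injectivity of $F_w$ gives $F_w^{-1}(F_w(A))=A$, which yields the identity $\Gamma(f)(F_w(A))=\rho^{\lvert w\rvert}\Gamma(f\circ F_w)(A)$.

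Measurability of $F_w(A)$ follows because $F_w$ is a homeomorphism onto the compact set $K_w$. The identity says that $\Gamma(f)|_{K_w}=\rho^{\lvert w\rvert}(F_w)_*\Gamma(f\circ F_w)$. The integral formula then follows by the standard route: indicators, then simple functions, then bounded Borel $\phi$ by uniform approximation, using the finiteness of the measures. The only point needing care is the inclusion $F_v^{-1}(K_v\cap K_w)\subseteq K^\partial$ for $v\ne w$ in $W_n$, which is the elementary geometric fact that distinct level-$n$ cells of the carpet have disjoint interiors.
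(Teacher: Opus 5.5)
Your proposal is correct and follows the paper's proof essentially step for step. You obtain $\Gamma(f)(K^\partial_*)=0$ from (\ref{eq_bdy_zero}) via the comparison (\ref{eq_gamma_gammac}), apply (\ref{eq_Gamma_ss}) and discard the terms $v\ne w$ using $F_v^{-1}(K_v\cap K_w)\subseteq K^\partial$, and then pass from indicators to bounded Borel $\phi$; the only cosmetic difference is that you use uniform approximation by simple functions against finite measures in the last step, where the paper uses dominated convergence.
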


\begin{proof}
\ref{item_meas5} For any $f\in \mathcal{F}$, by (\ref{eq_bdy_zero}) and (\ref{eq_gamma_gammac}), $\Gamma(f)(K_*^\partial)=0$. Then for any $n\ge0$ and any $w\in W_n$, by (\ref{eq_Gamma_ss}),
$$\Gamma(f)(K_w)=\rho^{n}\sum_{v\in W_n}\Gamma(f\circ F_v)(F_v^{-1}(K_w\cap K_v))=\rho^n \Gamma(f\circ F_w)(K)=\rho^n \mathcal{E}(f\circ F_w),$$
where for any $v\ne w$, $F_v^{-1}(K_w\cap K_v)\subseteq K^\partial\subseteq K^\partial_*$, hence $\Gamma(f\circ F_v)(F_v^{-1}(K_w\cap K_v))=0$.

\ref{item_meas6} For any $f\in \mathcal{F}$, $n\ge0$, $w\in W_n$, and $A\in \mathcal{B}(K)$, by (\ref{eq_Gamma_ss}) and \ref{item_meas5},
$$\Gamma(f)(F_w(A))=\rho^n\sum_{v\in W_n}\Gamma(f\circ F_v)(F_v^{-1}(F_w(A)))=\rho^n\Gamma(f\circ F_w)(A).$$
Then for any $B\in \mathcal{B}(K_w)$, let $A=F_w^{-1}(B)\in \mathcal{B}(K)$,
$$\int_{K_w}1_B \mathrm{d}\Gamma(f)=\rho^{n}\int_{F_w^{-1}(B)}\mathrm{d}\Gamma(f\circ F_w)=\rho^{n}\int_{K}(1_B\circ F_w) \mathrm{d}\Gamma(f\circ F_w).$$
By linearity, approximation by simple functions, and the dominated convergence theorem, for any bounded Borel function $\phi$,
\begin{align*}
&\int_{K_w}\phi \mathrm{d}\Gamma(f)=\int_{K_w}(\phi|_{K_w}) \mathrm{d}\Gamma(f)\\
&=\rho^{n}\int_K \left((\phi|_{K_w})\circ F_w \right)\mathrm{d}\Gamma(f\circ F_w)=\rho^{n}\int_K (\phi\circ F_w )\mathrm{d}\Gamma(f\circ F_w).
\end{align*}
\end{proof}

Since $(\mathcal{F},\mathcal{E}^{(c)}_1)$ is separable, by Lemma \ref{lem_quasi_unique}, there exists a countable $\mathbb{Q}$-linear space $\mathcal{D}\subseteq \mathcal{F}\cap C(K)$ which is dense in $\mathcal{F}$ with respect to both $\mathcal{E}^{(c)}_1$ and $\mathcal{E}_1$. Write $\mathcal{D}=\{f_k:k\ge1\}$. For any $k\ge1$, let $b_k=\frac{1}{2^k\left(\mathcal{E}^{(c)}(f_k)+1\right)}\in(0,+\infty)$. Let
$$\mu:=\sum_{k\ge1}b_k\Gamma^{(c)}(f_k).$$
By \cite[Lemma 9.20(b)]{MS25}, $\mu$ is a minimal energy-dominant measure of $(\mathcal{E}^{(c)},\mathcal{F})$, then for any $f\in \mathcal{F}$, $\Gamma^{(c)}(f)\ll \mu$ and $\Gamma(f)\ll \mu$ by (\ref{eq_gamma_gammac}).

\begin{lemma}\label{lem_martingale}
There exists a Borel $\mu$-null set $N_0\supseteq K^\partial_*$ such that $R_x(f)$ and $T_x(f)$ are well-defined and non-negative for all $x\in K\backslash N_0$ and $f\in \mathcal{D}$. The following conditions hold. For any $f\in \mathcal{D}$, $x\mapsto R_x(f)$ and $x\mapsto T_x(f)$ are Borel measurable, and there exist $\mu$-versions of $\frac{\mathrm{d}\Gamma^{(c)}(f)}{\mathrm{d}\mu}$ and $\frac{\mathrm{d}\Gamma(f)}{\mathrm{d}\mu}$, denoted by the same symbols, such that for any $x\in K\backslash N_0$,
$$R_x(f)^p=\frac{\mathrm{d}\Gamma^{(c)}(f)}{\mathrm{d}\mu}(x),\qquad T_x(f)^p=\frac{\mathrm{d}\Gamma(f)}{\mathrm{d}\mu}(x).$$
Moreover, for any $x\in K\backslash N_0$,
\begin{enumerate}[label=(\arabic*)]
\item $R_x$ and $T_x$ are $\mathbb{Q}$-seminorms on $\mathcal{D}$, that is, for any $f,g\in \mathcal{D}$ and any $c\in \mathbb{Q}$, $R_x(f+g)\le R_x(f)+R_x(g)$ and $R_x(cf)=\lvert c\rvert R_x(f)$, and similarly for $T_x$;
\item for any $f\in \mathcal{D}$,
$$\frac{1}{C^{\frac{1}{p}}}R_x(f)\le T_x(f)\le C^{\frac{1}{p}}R_x(f),$$
where $C\ge1$ is the constant in Lemma \ref{lem_quasi_unique};
\item $\sum_{k\ge1}b_k R_x(f_k)^p=1$.
\end{enumerate}
\end{lemma}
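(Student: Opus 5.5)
The plan is to construct everything from a single good version of the Radon--Nikodym derivatives on the countable family $\mathcal{D}$. For each $f\in\mathcal{D}$ we have $\Gamma^{(c)}(f)\ll\mu$ and $\Gamma(f)\ll\mu$, so we fix Borel versions $\rho^{(c)}_f:=\frac{\mathrm{d}\Gamma^{(c)}(f)}{\mathrm{d}\mu}$ and $\rho_f:=\frac{\mathrm{d}\Gamma(f)}{\mathrm{d}\mu}$, taking values in $[0,+\infty)$. We then set
$$R_x(f):=\rho^{(c)}_f(x)^{\frac1p},\qquad T_x(f):=\rho_f(x)^{\frac1p}.$$
Borel measurability and non-negativity are then automatic. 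The remaining task is to find one $\mu$-null set $N_0$ off which the three listed properties hold. Since $\mathcal{D}$ and $\mathbb{Q}$ are countable, each property is a countable family of $\mu$-a.e. statements, and we take $N_0$ to be the union of their exceptional sets together with $K^\partial_*$. The set $K^\partial_*$ is Borel and $\mu$-null by (\ref{eq_bdy_zero}).

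For (1), fix $f,g\in\mathcal{D}$ and $c\in\mathbb{Q}$. Homogeneity $\Gamma^{(c)}(cf)=|c|^p\Gamma^{(c)}(f)$ gives $\rho^{(c)}_{cf}=|c|^p\rho^{(c)}_f$ $\mu$-a.e. For subadditivity, apply Lemma \ref{lem_meas}\ref{item_meas1} to both $\Gamma^{(c)}$ and $\Gamma$, i.e. $\Gamma(f+g)(A)^{\frac1p}\le\Gamma(f)(A)^{\frac1p}+\Gamma(g)(A)^{\frac1p}$ for all Borel $A$. This set inequality is converted into a pointwise density inequality $\mu$-a.e. by the usual differentiation argument: if the density inequality failed on a set $E$ of positive $\mu$-measure, we could restrict to a subset on which all three densities are nearly constant and derive a contradiction. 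This is exactly the argument behind Lemma \ref{lem_Hin25}\ref{item_enedom1} and (\ref{eq_Gamma_holder}). Doing this for the countably many pairs and rationals gives exceptional null sets, which we absorb into $N_0$.

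For (2), by (\ref{eq_gamma_gammac}) we have $C^{-1}\Gamma^{(c)}(f)\le\Gamma(f)\le C\Gamma^{(c)}(f)$ as measures. Hence $C^{-1}\rho^{(c)}_f\le\rho_f\le C\rho^{(c)}_f$ $\mu$-a.e., and taking $p$-th roots gives the claim. For (3), by monotone convergence,
$$\mu=\sum_k b_k\Gamma^{(c)}(f_k)=\Big(\sum_k b_k\rho^{(c)}_{f_k}\Big)\mu .$$
This forces $\sum_k b_k\rho^{(c)}_{f_k}=1$ $\mu$-a.e., using the $\sigma$-finiteness of $\mu$; indeed $\mu$ is finite, since $\sum_k b_k\mathcal{E}^{(c)}(f_k)\le1$. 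This is precisely $\sum_kb_kR_x(f_k)^p=1$.

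The only step needing any care is the passage in (1) from set-wise to pointwise subadditivity. The cleanest route is to note that for any Borel $A$, $\int_A(\rho_f^{1/p}+\rho_g^{1/p})^p\,\mathrm{d}\mu\ge\Gamma(f+g)(A)$. This can be proved by partitioning $A$ finely according to the values of $\rho_f,\rho_g$, applying the set inequality on each piece, and summing. Once this is established, choosing $A=\{\rho_{f+g}^{1/p}>\rho_f^{1/p}+\rho_g^{1/p}\}$ shows that $A$ is $\mu$-null. All other steps are routine countable unions of null sets.
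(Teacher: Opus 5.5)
Your proposal is correct and follows essentially the same approach as the paper. The paper likewise fixes versions of the densities and collects the countably many $\mu$-a.e.\ exceptional sets into $N_0$: those for subadditivity, rational homogeneity, the comparability from (\ref{eq_gamma_gammac}), and the normalization $\sum_k b_k\,\mathrm{d}\Gamma^{(c)}(f_k)/\mathrm{d}\mu=1$, each taken to contain $K^\partial_*$. It then defines $R_x(f)$ and $T_x(f)$ as $p$-th roots of these densities, and your justification of the passage from set-wise to pointwise subadditivity fills in a step the paper simply attributes to ``the properties of energy measures''.
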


\begin{proof}
For any $f,g\in \mathcal{D}$ and any $c\in \mathbb{Q}$, by the properties of energy measures, there exist Borel $\mu$-null sets $N_{f,g}^{(c)}$, $N_{f,g}$, $N_{c,f}^{(c)}$, $N_{c,f}$, $N_f$, $N_1$, all containing $K^\partial_*$, such that
$$\left(\frac{\mathrm{d}\Gamma^{(c)}(f+g)}{\mathrm{d}\mu}(x)\right)^{\frac{1}{p}}\le \left(\frac{\mathrm{d}\Gamma^{(c)}(f)}{\mathrm{d}\mu}(x)\right)^{\frac{1}{p}}+\left(\frac{\mathrm{d}\Gamma^{(c)}(g)}{\mathrm{d}\mu}(x)\right)^{\frac{1}{p}}\text{ for any }x\in K\backslash N^{(c)}_{f,g},$$
$$\left(\frac{\mathrm{d}\Gamma(f+g)}{\mathrm{d}\mu}(x)\right)^{\frac{1}{p}}\le \left(\frac{\mathrm{d}\Gamma(f)}{\mathrm{d}\mu}(x)\right)^{\frac{1}{p}}+\left(\frac{\mathrm{d}\Gamma(g)}{\mathrm{d}\mu}(x)\right)^{\frac{1}{p}}\text{ for any }x\in K\backslash N_{f,g},$$
$$\frac{\mathrm{d}\Gamma^{(c)}(cf)}{\mathrm{d}\mu}(x)=\lvert c\rvert^p \frac{\mathrm{d}\Gamma^{(c)}(f)}{\mathrm{d}\mu}(x)\text{ for any }x\in K\backslash N^{(c)}_{c,f},$$
$$\frac{\mathrm{d}\Gamma(cf)}{\mathrm{d}\mu}(x)=\lvert c\rvert^p \frac{\mathrm{d}\Gamma(f)}{\mathrm{d}\mu}(x)\text{ for any }x\in K\backslash N_{c,f},$$
$$\frac{1}{C}\frac{\mathrm{d}\Gamma^{(c)}(f)}{\mathrm{d}\mu}(x)\le \frac{\mathrm{d}\Gamma(f)}{\mathrm{d}\mu}(x)\le C\frac{\mathrm{d}\Gamma^{(c)}(f)}{\mathrm{d}\mu}(x)\text{ for any }x\in K\backslash N_f,$$
$$\sum_{k\ge1}b_k \frac{\mathrm{d}\Gamma^{(c)}(f_k)}{\mathrm{d}\mu}(x)=1\text{ for any }x\in K\backslash N_1.$$
Since $\mathcal{D}$ is countable, let
$$N_0:=\left(\bigcup_{f,g\in \mathcal{D}}(N^{(c)}_{f,g}\cup N_{f,g})\right)\bigcup\left(\bigcup_{c\in \mathbb{Q},f\in \mathcal{D}}(N^{(c)}_{c,f}\cup N_{c,f})\right)\bigcup\left(\bigcup_{f\in \mathcal{D}}N_f\right)\bigcup N_1,$$
then $N_0\supseteq K^\partial_*$ is a Borel $\mu$-null set. For any $f\in \mathcal{D}$ and any $x\in K\backslash N_0$, let
$$R_x(f):=\left(\frac{\mathrm{d}\Gamma^{(c)}(f)}{\mathrm{d}\mu}(x)\right)^{\frac{1}{p}},\qquad T_x(f):=\left(\frac{\mathrm{d}\Gamma(f)}{\mathrm{d}\mu}(x)\right)^{\frac{1}{p}}.$$
Then the results follow directly.
\end{proof}

For any $x\in K\backslash N_0$, let
$$\mathrm{ker}(R_x):=\{f\in \mathcal{D}:R_x(f)=0\},$$
then $\mathrm{ker}(R_x)$ is a $\mathbb{Q}$-linear subspace of $\mathcal{D}$. Let $\mathcal{D}/\mathrm{ker}(R_x)=\{[f]_x:f\in \mathcal{D}\}$ denote the quotient space, where $[f]_x$ is the equivalence class containing $f$. Define $\lVert [f]_x\rVert_x:=R_x(f)$. This is well-defined: its value is independent of the choice of representative $f$ in $[f]_x$. Thus $(\mathcal{D}/\mathrm{ker}(R_x),\lVert \cdot\rVert_x)$ is a normed linear space over $\mathbb{Q}$. Let $(V_x,\lVert \cdot\rVert_x)$ denote its completion, that is,
\begin{equation}\label{eq_Vx}
V_x:=\overline{\left\{[f]_x:f\in \mathcal{D}\right\}}^{\,\lVert \cdot\rVert_x},
\end{equation}
then $(V_x,\lVert \cdot\rVert_x)$ is a normed linear space over $\mathbb{R}$. Moreover, for any $f\in \mathcal{D}$, $\mathbb{R}[f]_x:=\{t[f]_x:t\in \mathbb{R}\}\subseteq V_x$, and
\begin{equation}\label{eq_Vx_norm}
\lVert t[f]_x\rVert_x=\lvert t\rvert\cdot\lVert [f]_x\rVert_x=\lvert t\rvert R_x(f).
\end{equation}

We apply Theorem \ref{thm_R1} through the following result.

\begin{theorem}\label{thm_dim1}
For $\mu$-a.e. $x\in K\backslash N_0$, we have
$$\mathrm{dim}_{\mathbb{R}}V_x=1.$$
\end{theorem}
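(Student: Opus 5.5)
We split the claim into $\dim_{\mathbb{R}}V_x\ge1$ and $\dim_{\mathbb{R}}V_x\le1$ for $\mu$-a.e. $x\in K\backslash N_0$. The lower bound is immediate from Lemma \ref{lem_martingale}(3): since $\sum_{k\ge1}b_kR_x(f_k)^p=1$, some $f_k$ has $R_x(f_k)>0$, so $[f_k]_x\ne0$ in $V_x$.

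For the upper bound we apply Theorem \ref{thm_R1} to countably many pairs. For each pair $(f,g)\in\mathcal{D}^2$, write $\boldsymbol{f}=(f,g)$. Theorem \ref{thm_R1} gives a $\Lambda_{\boldsymbol{f}}$-null exceptional set outside of which
$$\inf_{\boldsymbol{\lambda}\in\mathbb{S}^1}\Psi_{\boldsymbol{f}}(\boldsymbol{\lambda})(x)=0.$$
By (\ref{eq_Psiphimu}), on the set $\{\frac{\mathrm{d}\Lambda_{\boldsymbol{f}}}{\mathrm{d}\mu}>0\}$ we have $\Psi_{\boldsymbol{f}}(\boldsymbol{\lambda})(x)=R_x(\lambda_1f+\lambda_2g)^p/(R_x(f)^p+R_x(g)^p)$. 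On the complementary set both $R_x(f)$ and $R_x(g)$ vanish, so that set poses no difficulty. Since $\Lambda_{\boldsymbol{f}}$ and $\frac{\mathrm{d}\Lambda_{\boldsymbol{f}}}{\mathrm{d}\mu}\,\mu$ agree, a $\Lambda_{\boldsymbol{f}}$-null set intersected with $\{\frac{\mathrm{d}\Lambda_{\boldsymbol{f}}}{\mathrm{d}\mu}>0\}$ is $\mu$-null. We then take the countable union $N'$ of all these exceptional sets over $(f,g)\in\mathcal{D}^2$, together with the null sets on which the Lipschitz extension (\ref{eq_Gamma_holder}) of $\boldsymbol{\lambda}\mapsto R_x(\lambda_1f+\lambda_2g)$ from $S$ to $\mathbb{S}^1$ fails to agree with the seminorm structure on $\mathcal{D}$. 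The set $N'$ is $\mu$-null.

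Fix $x\notin N_0\cup N'$. On $\mathbb{Q}^2$ the map $(\lambda_1,\lambda_2)\mapsto\lVert\lambda_1[f]_x+\lambda_2[g]_x\rVert_x$ is a seminorm, and it extends continuously to $\mathbb{R}^2$ as the seminorm $\lVert\lambda_1[f]_x+\lambda_2[g]_x\rVert_x$ in $V_x$. The point needing care is that this extension coincides with the continuous extension $\Psi_{\boldsymbol{f}}(\cdot)(x)^{1/p}$ on $\mathbb{S}^1$ (up to the normalizing factor); this holds because both are Lipschitz in $\boldsymbol{\lambda}$ with constant controlled by $R_x(f)+R_x(g)$ and they agree on the dense set $S$. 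The vanishing infimum then produces $\boldsymbol{\lambda}\in\mathbb{S}^1$ with $\lambda_1[f]_x+\lambda_2[g]_x=0$ in $V_x$, by compactness of $\mathbb{S}^1$ and continuity. Hence any two vectors $[f]_x,[g]_x$ with $f,g\in\mathcal{D}$ are linearly dependent over $\mathbb{R}$ in $V_x$.

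To conclude, pick $f_0\in\mathcal{D}$ with $[f_0]_x\ne0$. Every $[g]_x$ then lies in $\mathbb{R}[f_0]_x$. This line is a closed subspace, being one-dimensional in a normed space, and $V_x$ is the closure of $\{[g]_x:g\in\mathcal{D}\}$, so $V_x=\mathbb{R}[f_0]_x$ and $\dim_{\mathbb{R}}V_x=1$. The main obstacle is the bookkeeping that matches the a.e.-defined continuous function $\Psi_{\boldsymbol{f}}$ of Theorem \ref{thm_R1} with the pointwise seminorm $R_x$ on the completion. This is handled through the Lipschitz estimate (\ref{eq_Gamma_holder}) on the countable dense set $S$, and through a careful choice of $\mu$-versions so that a single null set serves all countably many pairs.
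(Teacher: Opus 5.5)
Your proposal is correct and follows the paper's route. You apply Theorem \ref{thm_R1} to the countably many pairs in $\mathcal{D}^2$, identify $\Psi_{\boldsymbol{f}}(\cdot)(x)$ with $\lVert\lambda_1[f]_x+\lambda_2[g]_x\rVert_x^p/(R_x(f)^p+R_x(g)^p)$ off a $\mu$-null set, and use that the line $\mathbb{R}[f_0]_x$ is closed in $V_x$. The only cosmetic difference is that you argue directly, whereas the paper argues by contradiction through a set where $\inf_{\boldsymbol{\lambda}\in\mathbb{Q}^2\backslash\{0\}}R_x(\lambda_1f_1+\lambda_2f_2)/\lvert\boldsymbol{\lambda}\rvert>\delta$.

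Your lower bound via Lemma \ref{lem_martingale}(3) is in fact more economical than the paper's density argument for $\mu(N_1)=0$, since that identity already forces $N_1=\emptyset$.
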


\begin{proof}
To begin with, we show that for $x\in K\backslash N_0$, if $\mathrm{dim}_{\mathbb{R}}V_x\ge2$, then there exist $f_1,f_2\in \mathcal{D}$ such that $[f_1]_x$, $[f_2]_x$ are linearly independent in $V_x$ over $\mathbb{R}$. Indeed, suppose that this is false. First, we have $\mathcal{D}\backslash \mathrm{ker}(R_x)\ne \emptyset$, otherwise, $\mathcal{D}/\mathrm{ker}(R_x)=\{[0]_x\}$, which implies $V_x=\{[0]_x\}$, contradicting $\mathrm{dim}_{\mathbb{R}}V_x\ge2$. Take $f\in \mathcal{D}\backslash \mathrm{ker}(R_x)$, then $[f]_x\ne[0]_x$ in $V_x$, hence $\mathbb{R}[f]_x$ is a $1$-dimensional subspace of $V_x$ over $\mathbb{R}$. By assumption, for any $g\in \mathcal{D}$, $[g]_x\in \mathbb{R}[f]_x\subseteq V_x$. Then (\ref{eq_Vx}) gives
$$V_x=\overline{\left\{[g]_x:g\in \mathcal{D}\right\}}^{\,\lVert \cdot\rVert_x}\subseteq \mathbb{R}[f]_x\subseteq V_x,$$
hence $\mathrm{dim}_{\mathbb{R}}V_x=1$, a contradiction. Therefore,
\begin{align}
&\left\{x\in K\backslash N_0:\mathrm{dim}_{\mathbb{R}}V_x\ge2\right\}\nonumber\\
&=\bigcup_{f_1,f_2\in \mathcal{D}}\left\{x\in K\backslash N_0:\text{$[f_1]_x$, $[f_2]_x$ are linearly independent in $V_x$ over $\mathbb{R}$}\right\}.\label{eq_dim1a}
\end{align}

Then, for any $f_1,f_2\in \mathcal{D}$, we show that
\begin{align}
&\left\{x\in K\backslash N_0:\text{$[f_1]_x$, $[f_2]_x$ are linearly independent in $V_x$ over $\mathbb{R}$}\right\}\nonumber\\
&=\left\{x\in K\backslash N_0:\inf_{\boldsymbol{\lambda}=(\lambda_1,\lambda_2)\in \mathbb{Q}^2\backslash\{0\}}\frac{R_x(\lambda_1f_1+\lambda_2f_2)}{\lvert \boldsymbol{\lambda}\rvert}>0\right\}.\label{eq_dim1b}
\end{align}
Indeed, for any $x\in K\backslash N_0$, by Lemma \ref{lem_martingale},
$$\boldsymbol{\lambda}=(\lambda_1,\lambda_2)\mapsto \frac{\lVert \lambda_1[f_1]_x+\lambda_2[f_2]_x\rVert_x}{\lvert\boldsymbol{\lambda}\rvert},$$
is a continuous function on $\mathbb{R}^2\backslash\{0\}$, hence
\begin{align*}
&\inf_{\boldsymbol{\lambda}=(\lambda_1,\lambda_2)\in \mathbb{R}^2\backslash\{0\}}\frac{\lVert \lambda_1[f_1]_x+\lambda_2[f_2]_x\rVert_x}{\lvert\boldsymbol{\lambda}\rvert}\\
&=\inf_{\boldsymbol{\lambda}=(\lambda_1,\lambda_2)\in \mathbb{Q}^2\backslash\{0\}}\frac{\lVert \lambda_1[f_1]_x+\lambda_2[f_2]_x\rVert_x}{\lvert\boldsymbol{\lambda}\rvert}=\inf_{\boldsymbol{\lambda}=(\lambda_1,\lambda_2)\in \mathbb{Q}^2\backslash\{0\}}\frac{R_x(\lambda_1f_1+\lambda_2f_2)}{\lvert\boldsymbol{\lambda}\rvert}.
\end{align*}
Since $[f_1]_x$, $[f_2]_x$ are linearly independent in $V_x$ over $\mathbb{R}$ if and only if $\lVert \lambda_1[f_1]_x+\lambda_2[f_2]_x\rVert_x>0$ for any $\boldsymbol{\lambda}=(\lambda_1,\lambda_2)\in \mathbb{R}^2\backslash\{0\}$, we obtain (\ref{eq_dim1b}).

Next, suppose that $\mu(\{x\in K\backslash N_0:\mathrm{dim}_{\mathbb{R}}V_x\ge2\})>0$. Combining (\ref{eq_dim1a}) with (\ref{eq_dim1b}), we find $f_1,f_2\in \mathcal{D}$ and $\delta>0$ such that
$$\mu \left(\left\{x\in K\backslash N_0:\inf_{\boldsymbol{\lambda}=(\lambda_1,\lambda_2)\in \mathbb{Q}^2\backslash\{0\}}\frac{R_x(\lambda_1f_1+\lambda_2f_2)}{\lvert \boldsymbol{\lambda}\rvert}>\delta\right\}\right)>0.$$
Let $A$ denote the set in the parentheses above, then for any $x\in A$, $R_x(f_1)>\delta$ and $R_x(f_2)>\delta$. Let $\boldsymbol{f}=(f_1,f_2)\in \mathcal{D}^2\subseteq \mathcal{F}^2$, then for any $\boldsymbol{\lambda}=(\lambda_1,\lambda_2)\in \mathbb{S}^1$ and any $x\in A$,
$$\Psi_{\boldsymbol{f}}(\boldsymbol{\lambda})(x)=\frac{\lVert \lambda_1[f_1]_x+\lambda_2[f_2]_x\rVert_{x}^p}{\lVert [f_1]_x\rVert_x^p+\lVert [f_2]_x\rVert_x^p}=\frac{\lVert \lambda_1[f_1]_x+\lambda_2[f_2]_x\rVert_{x}^p}{R_x(f_1)^p+R_x(f_2)^p},$$
hence
$$\inf_{\boldsymbol{\lambda}\in \mathbb{S}^1}\Psi_{\boldsymbol{f}}(\boldsymbol{\lambda})(x)=\inf_{\boldsymbol{\lambda}=(\lambda_1,\lambda_2)\in \mathbb{Q}^2\backslash\{0\}}\frac{R_x(\lambda_1f_1+\lambda_2f_2)^p}{\lvert \boldsymbol{\lambda}\rvert^p \left(R_x(f_1)^p+R_x(f_2)^p\right)}\ge \frac{\delta^p}{R_x(f_1)^p+R_x(f_2)^p}>0,$$
which implies $\boldsymbol{f}$ is $p$-independent on $A$. Moreover,
\begin{align*}
&\Lambda_{\boldsymbol{f}}(A)=\int_{A}\left(\frac{\mathrm{d}\Gamma^{(c)}(f_1)}{\mathrm{d}\mu}(x)+\frac{\mathrm{d}\Gamma^{(c)}(f_2)}{\mathrm{d}\mu}(x)\right)\mu(\mathrm{d}x)\\
&=\int_A \left(R_x(f_1)^p+R_x(f_2)^p\right)\mu(\mathrm{d}x)\ge 2\delta^p\mu(A)>0.
\end{align*}
Therefore,
$$\Lambda_{\boldsymbol{f}}\left(\left\{\text{$\boldsymbol{f}$ is $p$-independent}\right\}\right)\ge \Lambda_{\boldsymbol{f}}(A)\ge 2\delta^p\mu(A)>0,$$
contradicting Theorem \ref{thm_R1}. Hence, $\mathrm{dim}_{\mathbb{R}}V_x\le 1$ for $\mu$-a.e. $x\in K\backslash N_0$.

Finally, let
$$N_1:=\left\{x\in K\backslash N_0:\mathrm{dim}_{\mathbb{R}}V_x=0\right\}=\left\{x\in K\backslash N_0:R_x(f)=0\text{ for any }f\in \mathcal{D}\right\},$$
then for any $f\in \mathcal{D}$,
$$\Gamma^{(c)}(f)(N_1)=\int_{N_1}R_x(f)^p \mu(\mathrm{d}x)=0.$$
For any $f\in \mathcal{F}$, since $\mathcal{D}$ is $\mathcal{E}^{(c)}_1$-dense in $\mathcal{F}$, there exists $\{f_n\}_n\subseteq \mathcal{D}$ that $\mathcal{E}^{(c)}_1$-converges to $f$, which implies
$$\Gamma^{(c)}(f)(N_1)^{\frac{1}{p}}=\lvert \Gamma^{(c)}(f)(N_1)^{\frac{1}{p}}-\Gamma^{(c)}(f_n)(N_1)^{\frac{1}{p}}\rvert\le \Gamma^{(c)}(f-f_n)(N_1)^{\frac{1}{p}}\le \mathcal{E}^{(c)}(f-f_n)^{\frac{1}{p}}\to0.$$
Hence, $\Gamma^{(c)}(f)(N_1)=0$. Since $f\in \mathcal{F}$ is arbitrary, $\mu(N_1)=0$. Therefore, $\mathrm{dim}_{\mathbb{R}}V_x=1$ for $\mu$-a.e. $x\in K\backslash N_0$.
\end{proof}

By enlarging $N_0$ by the $\mu$-null set in Theorem \ref{thm_dim1}, we may assume that $\mathrm{dim}_{\mathbb{R}}V_x=1$ for any $x\in K\backslash N_0$. The following result completes \ref{main_step1}.

\begin{proposition}\label{prop_step1}
Let $a:K\to [0,+\infty)$ be given by
$$a(x):=
\begin{cases}
\sum_{k\ge1}b_kT_x(f_k)^p&\text{if }x\in K\backslash N_0,\\
1&\text{if }x\in N_0.
\end{cases}
$$
Then $a$ is Borel measurable, $\frac{1}{C}\le a\le C$ on $K$, where $C\ge1$ is the constant in Lemma \ref{lem_quasi_unique}, and for any $f\in \mathcal{F}$,
$$\Gamma(f)=a\Gamma^{(c)}(f).$$
\end{proposition}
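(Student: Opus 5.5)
The plan is to exploit the one-dimensionality of $V_x$ from Theorem \ref{thm_dim1} and to show that $T_x$ is then a constant multiple of $R_x$ on $\mathcal{D}$, with the constant equal to $a(x)^{1/p}$. Fix $x\in K\backslash N_0$. Since $T_x\le C^{1/p}R_x$ on $\mathcal{D}$, $T_x$ vanishes on $\mathrm{ker}(R_x)$ and is $\mathbb{Q}$-sublinear, so $|T_x(f)-T_x(g)|\le T_x(f-g)\le C^{1/p}R_x(f-g)$. Hence $T_x$ descends to a Lipschitz seminorm on $\mathcal{D}/\mathrm{ker}(R_x)$ and extends continuously to a seminorm $\overline{T}_x$ on $V_x$, still satisfying $C^{-1/p}\lVert\cdot\rVert_x\le\overline{T}_x\le C^{1/p}\lVert\cdot\rVert_x$. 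Since $\dim_{\mathbb{R}}V_x=1$, pick $e\in V_x$ with $\lVert e\rVert_x=1$. Every $v\in V_x$ has the form $v=te$, so $\overline{T}_x(v)=|t|\,\overline{T}_x(e)=c(x)\lVert v\rVert_x$ with $c(x):=\overline{T}_x(e)\in[C^{-1/p},C^{1/p}]$. Therefore $T_x(f)^p=c(x)^pR_x(f)^p$ for all $f\in\mathcal{D}$. Summing with weights $b_k$ and using Lemma \ref{lem_martingale}(3) gives $a(x)=c(x)^p\sum_k b_kR_x(f_k)^p=c(x)^p$, so $T_x(f)^p=a(x)R_x(f)^p$ and $C^{-1}\le a(x)\le C$. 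On $N_0$ we have $a=1$, so the bounds hold on all of $K$.

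For measurability, $a$ is a countable sum of the Borel maps $x\mapsto b_kT_x(f_k)^p$ on the Borel set $K\backslash N_0$, and is constant on $N_0$. The sum converges because $T_x(f_k)^p\le CR_x(f_k)^p$ and $\sum_k b_kR_x(f_k)^p=1$.

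For $f\in\mathcal{D}$ the identity $\Gamma(f)=a\Gamma^{(c)}(f)$ follows by integrating against $\mu$: $\frac{\mathrm{d}\Gamma(f)}{\mathrm{d}\mu}=T_\cdot(f)^p=aR_\cdot(f)^p=a\frac{\mathrm{d}\Gamma^{(c)}(f)}{\mathrm{d}\mu}$ $\mu$-a.e. Here we use that $\Gamma(f)\ll\mu$ and $\mu(N_0)=0$. For general $f\in\mathcal{F}$, choose $f_n\in\mathcal{D}$ converging to $f$ in $\mathcal{E}^{(c)}_1$, and hence in $\mathcal{E}_1$ by Lemma \ref{lem_quasi_unique}. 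Then for any Borel $A$, $|\Gamma(f)(A)^{1/p}-\Gamma(f_n)(A)^{1/p}|\le\mathcal{E}(f-f_n)^{1/p}\to0$, and similarly for $\Gamma^{(c)}$. Since $a\le C$, $\int_Aa\,\mathrm{d}\Gamma^{(c)}(f_n)\to\int_Aa\,\mathrm{d}\Gamma^{(c)}(f)$; this follows from Lemma \ref{lem_Hin25}\ref{item_enedom2}, which gives $L^1(\mu)$ convergence of the densities. Passing to the limit yields $\Gamma(f)(A)=\int_Aa\,\mathrm{d}\Gamma^{(c)}(f)$.

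The only delicate point is the first step: making sure the continuous extension of $T_x$ to $V_x$ is genuinely a real-homogeneous seminorm. Real homogeneity comes from $\mathbb{Q}$-homogeneity plus continuity, and it is exactly what lets one-dimensionality force proportionality to $\lVert\cdot\rVert_x$. The rest is bookkeeping with null sets, all of which were already absorbed into $N_0$ in Lemma \ref{lem_martingale} and Theorem \ref{thm_dim1}.
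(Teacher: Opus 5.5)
Your proposal is correct and is essentially the paper's argument. The paper also uses $\dim_{\mathbb{R}}V_x=1$ and $C^{-1/p}R_x\le T_x\le C^{1/p}R_x$ to get $T_x(f)^p=a(x)R_x(f)^p$ on $\mathcal{D}$. The only difference there is that, instead of extending $T_x$ to a seminorm on $V_x$, it fixes $g\in\mathcal{D}$ with $R_x(g)>0$ and approximates $[f]_x$ by $r_n[g]_x$ with $r_n\in\mathbb{Q}$. The passage from $\mathcal{D}$ to $\mathcal{F}$ is the same density argument, with the seminorm $\bigl(\int_A a\,\mathrm{d}\Gamma^{(c)}(\cdot)\bigr)^{1/p}$ playing the role of your appeal to Lemma \ref{lem_Hin25}\ref{item_enedom2}.
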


\begin{proof}
By Lemma \ref{lem_martingale}, $a$ is Borel measurable. Fix any $x\in K\backslash N_0$. Since $\mathrm{dim}_{\mathbb{R}}V_x=1$, there exists $g\in \mathcal{D}$ such that $R_x(g)>0$, hence $V_x\ni [g]_x\ne0$, and for any $f\in \mathcal{D}$, $[f]_x$, $[g]_x$ are linearly dependent in $V_x$ over $\mathbb{R}$, and there exists $t\in \mathbb{R}$ such that $[f]_x=t[g]_x\in V_x$. By (\ref{eq_Vx}) and (\ref{eq_Vx_norm}), there exists $\{r_n\}_n\subseteq \mathbb{Q}$ with $\lim_{n\to+\infty}r_n=t$, such that
$$\lVert [f]_x-r_n[g]_x\rVert_x=\lVert [f-r_ng]_x\rVert_x=R_x(f-r_ng)\to0.$$
Lemma \ref{lem_martingale} gives $T_x(g)>0$, $T_x(f-r_ng)\to0$, and
$$T_x(f)=\lim_{n\to+\infty}T_x(r_ng)=\lim_{n\to+\infty}\lvert r_n\rvert T_x(g)=\frac{T_x(g)}{R_x(g)}\lim_{n\to+\infty}\lvert r_n\rvert R_x(g)=\frac{T_x(g)}{R_x(g)}R_x(f),$$
moreover, $a(x)=\frac{T_x(g)^p}{R_x(g)^p}\sum_{k\ge1}b_k R_x(f_k)^p=\frac{T_x(g)^p}{R_x(g)^p}\in[\frac{1}{C},C]$, and $T_x(f)^p=a(x)R_x(f)^p$.

Fix any $A\in\mathcal{B}(K)$. First, for any $f\in \mathcal{D}$,
$$\Gamma(f)(A)=\int_A \frac{\mathrm{d}\Gamma(f)}{\mathrm{d}\mu}\mathrm{d}\mu=\int_A T_x(f)^p \mu(\mathrm{d}x)=\int_Aa(x)R_x(f)^p \mu(\mathrm{d}x)=\int_A a \mathrm{d}\Gamma^{(c)}(f).$$
Second, for any $f\in \mathcal{F}$, there exists $\{f_n\}_n\subseteq \mathcal{D}$ that converges to $f$ both in $\mathcal{E}^{(c)}_1$ and in $\mathcal{E}_1$. For any $n$, $\Gamma(f_n)(A)=\int_A a \mathrm{d}\Gamma^{(c)}(f_n)$, where
$$\lvert \Gamma(f)(A)^{\frac{1}{p}}-\Gamma(f_n)(A)^{\frac{1}{p}}\rvert\le \Gamma(f-f_n)(A)^{\frac{1}{p}}\le \mathcal{E}(f-f_n)^{\frac{1}{p}}\to0,$$
and
\begin{equation}\label{eq_temp_ffn}
\lvert \left(\int_A a \mathrm{d}\Gamma^{(c)}(f)\right)^{\frac{1}{p}}-\left(\int_A a \mathrm{d}\Gamma^{(c)}(f_n)\right)^{\frac{1}{p}}\rvert\le \left(\int_A a \mathrm{d}\Gamma^{(c)}(f-f_n)\right)^{\frac{1}{p}}\le \left(C \mathcal{E}^{(c)}(f-f_n)\right)^{\frac{1}{p}}\to0.
\end{equation}
Hence $\Gamma(f)(A)=\int_A a \mathrm{d}\Gamma^{(c)}(f)$.
\end{proof}

The function $a$ has the following invariance properties.

\begin{proposition}\label{prop_ss_a}
Let $a$ be the function given in Proposition \ref{prop_step1}. Then for any $w\in W_*$ and any $\psi\in D_4$,
$$a\circ F_w=a\ \mu\text{-a.e.},\qquad a\circ\psi=a\ \mu\text{-a.e.}$$
\end{proposition}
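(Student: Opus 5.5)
The plan is to use the two transformation rules for energy measures, available for both $\Gamma$ and $\Gamma^{(c)}$, together with the identity $\Gamma(f)=a\Gamma^{(c)}(f)$ of Proposition \ref{prop_step1}. We then test the result against all $f$ and invoke minimality of $\mu$.

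\textbf{Self-similarity.} Fix $w\in W_n$ and $f\in\mathcal{F}$, and choose $g\in\mathcal{F}$ with $g\circ F_w=f$. If $f\in\mathcal{F}\cap C(K)$, take $g=f\circ\theta_w$ with the folding map of Remark \ref{rmk_01law}. By Lemma \ref{lem_meas_PI}\ref{item_meas6}, applied to $(\mathcal{E},\mathcal{F})$ and to $(\mathcal{E}^{(c)},\mathcal{F})$ (both have $\rho=\rho^{(c)}$), for any bounded Borel $\phi$ on $K$,
$$\rho^n\int_K\phi\,\mathrm{d}\Gamma(f)=\int_{K_w}\phi\circ F_w^{-1}\,\mathrm{d}\Gamma(g)=\int_{K_w}(\phi\circ F_w^{-1})\,a\,\mathrm{d}\Gamma^{(c)}(g)=\rho^n\int_K\phi\,(a\circ F_w)\,\mathrm{d}\Gamma^{(c)}(f).$$
Here the last step again uses Lemma \ref{lem_meas_PI}\ref{item_meas6}, now with the bounded Borel function $(\phi\circ F_w^{-1})a$ on $K_w$. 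Thus $\Gamma(f)=(a\circ F_w)\Gamma^{(c)}(f)$. Since also $\Gamma(f)=a\Gamma^{(c)}(f)$, we get $a\circ F_w=a$ $\Gamma^{(c)}(f)$-a.e.

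To extend from continuous $f$ to all of $\mathcal{F}$, approximate $f$ by elements of $\mathcal{F}\cap C(K)$. Both sides of $\Gamma(f)(A)=\int_A (a\circ F_w)\,\mathrm{d}\Gamma^{(c)}(f)$ are continuous in $f$ for $\mathcal{E}_1$, by the seminorm estimate (\ref{eq_temp_ffn}) and the boundedness $\frac1C\le a\le C$. Alternatively, it is enough to work with $f\in\mathcal{D}\subseteq\mathcal{F}\cap C(K)$. Then $a\circ F_w=a$ holds $\Gamma^{(c)}(f_k)$-a.e. for every $k$, and since $\mu=\sum_kb_k\Gamma^{(c)}(f_k)$ this gives $\mu$-a.e. equality.

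\textbf{Symmetry.} For $\psi\in D_4$, use (\ref{eq_Gamma_D4}) for both energies: $\Gamma(f\circ\psi)=(\psi^{-1})_*\Gamma(f)$. Integrating a test function,
$$\int\phi\,\mathrm{d}\Gamma(f\circ\psi)=\int\phi\circ\psi^{-1}\,a\,\mathrm{d}\Gamma^{(c)}(f)=\int\phi\,(a\circ\psi)\,\mathrm{d}\Gamma^{(c)}(f\circ\psi).$$
Hence $a\circ\psi=a$ $\Gamma^{(c)}(g)$-a.e. for every $g=f\circ\psi$, that is, for every $g\in\mathcal{F}$. Running $g$ over $\mathcal{D}$ gives $\mu$-a.e. equality.

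\textbf{Main obstacle.} The delicate step is the bookkeeping in the self-similar case. The change of variables must be confined to $K_w$: contributions from other cells $K_v$ have to vanish, which uses $\Gamma(K^\partial_*)=0$ from Lemma \ref{lem_meas_PI}\ref{item_meas5}. We also need every $f$ (or at least every $f\in\mathcal{D}$) to be of the form $g\circ F_w$, which is exactly what the folding construction $g=f\circ\theta_w$ with (\ref{eq_varphiF}) provides for continuous $f$. Measurability of $a\circ F_w$ and the fact that $\mu$-null sets are preserved under $F_w^{\pm1}$ and $\psi$ (by (\ref{eq_muzero})) make the "$\mu$-a.e." statements meaningful.
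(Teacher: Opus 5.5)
Your proposal is correct and follows essentially the same route as the paper: you transfer $\Gamma=a\Gamma^{(c)}$ through Lemma \ref{lem_meas_PI}\ref{item_meas6}, using $g=f\circ\theta_w$ for continuous $f$, and through (\ref{eq_Gamma_D4}) for $\psi\in D_4$, which yields equality $\Gamma^{(c)}(f)$-a.e. Your shortcut of testing only $f\in\mathcal{D}\subseteq\mathcal{F}\cap C(K)$ and reading off $\mu$-a.e.\ equality from $\mu=\sum_k b_k\Gamma^{(c)}(f_k)$ is valid, and it lets you skip the paper's approximation step.
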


\begin{proof}
For any $f\in \mathcal{F}$, $w\in W_*$, and $A\in \mathcal{B}(K)$, by Proposition \ref{prop_step1} and Lemma \ref{lem_meas_PI}\ref{item_meas6},
$$\Gamma(f)(F_w(A))=\rho^{\lvert w\rvert}\Gamma(f\circ F_w)(A)=\rho^{\lvert w\rvert}\int_A a \mathrm{d}\Gamma^{(c)}(f\circ F_w),$$
and
\begin{align*}
&\Gamma(f)(F_w(A))=\int_{F_wA}a \mathrm{d}\Gamma^{(c)}(f)=\int_{K_w}(a1_{F_wA})\mathrm{d}\Gamma^{(c)}(f)\\
&=\rho^{\lvert w\rvert}\int_K \left((a1_{F_wA})\circ F_w\right)\mathrm{d}\Gamma^{(c)}(f\circ F_w)=\rho^{\lvert w\rvert}\int_A \left(a\circ F_w\right)\mathrm{d}\Gamma^{(c)}(f\circ F_w),
\end{align*}
hence
$$\int_A a \mathrm{d}\Gamma^{(c)}(f\circ F_w)=\int_A \left(a\circ F_w\right)\mathrm{d}\Gamma^{(c)}(f\circ F_w).$$

For $w\in W_*$, let $\varphi_w:K\to K_w$ be the folding map, and let $\theta_w=F_w^{-1}\circ \varphi_w:K\to K$ be the map given in Remark \ref{rmk_01law}. For any $f\in \mathcal{F}\cap C(K)$, let $g:=f\circ \theta_w=f\circ F_w^{-1}\circ\varphi_w$, then by \ref{item_SSEe}, $g\in \mathcal{F}\cap C(K)$. By (\ref{eq_varphiF}), $g\circ F_w=(f\circ F_w^{-1}\circ\varphi_w)\circ F_w=f$. By the preceding identity,
$$\int_A a \mathrm{d}\Gamma^{(c)}(f)=\int_A \left(a\circ F_w\right)\mathrm{d}\Gamma^{(c)}(f).$$
For any $f\in \mathcal{F}$, since $\mathcal{F}\cap C(K)$ is $\mathcal{E}^{(c)}_1$-dense in $\mathcal{F}$, there exists $\{f_n\}_n\subseteq \mathcal{F}\cap C(K)$ that $\mathcal{E}^{(c)}_1$-converges to $f$. For any $n$,
$$\int_A a \mathrm{d}\Gamma^{(c)}(f_n)=\int_A \left(a\circ F_w\right)\mathrm{d}\Gamma^{(c)}(f_n).$$
By the same argument used to prove (\ref{eq_temp_ffn}),
$$\int_A a \mathrm{d}\Gamma^{(c)}(f_n)\to \int_A a \mathrm{d}\Gamma^{(c)}(f),\qquad\int_A \left(a\circ F_w\right)\mathrm{d}\Gamma^{(c)}(f_n)\to \int_A \left(a\circ F_w\right)\mathrm{d}\Gamma^{(c)}(f),$$
hence
$$\int_A a \mathrm{d}\Gamma^{(c)}(f)=\int_A \left(a\circ F_w\right)\mathrm{d}\Gamma^{(c)}(f),$$
which implies $a=a\circ F_w$ $\Gamma^{(c)}(f)$-a.e. Since $f\in \mathcal{F}$ is arbitrary, we have $a=a\circ F_w$ $\mu$-a.e.

For any $\psi\in D_4$, $A\in \mathcal{B}(K)$, and $f\in \mathcal{F}$, let $B=\psi(A)\in \mathcal{B}(K)$, by Proposition \ref{prop_step1} and (\ref{eq_Gamma_D4}),
$$\Gamma(f\circ \psi^{-1})(B)=\int_B a \mathrm{d}\Gamma^{(c)}(f\circ \psi^{-1})=\int_{B} a \mathrm{d}(\psi_*(\Gamma^{(c)}(f)))=\int_{A} (a\circ \psi) \mathrm{d}\Gamma^{(c)}(f),$$
and
$$\Gamma(f\circ \psi^{-1})(B)=\Gamma(f)(\psi^{-1}(B))=\int_A a \mathrm{d}\Gamma^{(c)}(f).$$
hence
$$\int_A a \mathrm{d}\Gamma^{(c)}(f)=\int_A (a\circ \psi)\mathrm{d}\Gamma^{(c)}(f),$$
which implies $a=a\circ \psi$ $\Gamma^{(c)}(f)$-a.e. Since $f\in \mathcal{F}$ is arbitrary, we have $a=a\circ \psi$ $\mu$-a.e.
\end{proof}

We now use Theorem \ref{thm_01law} and Proposition \ref{prop_ss_a} to show that the function $a$ is indeed constant, as stated in the following result. This completes \ref{main_step2} and hence the proof of Theorem \ref{thm_unique}.

\begin{proposition}
There exists a constant $c\in [\frac{1}{C},C]$, where $C\ge1$ is the constant in Lemma \ref{lem_quasi_unique}, such that $a=c$ $\mu$-a.e.
\end{proposition}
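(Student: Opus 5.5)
The plan is to apply the zero-one law (Theorem \ref{thm_01law}) to the superlevel sets of $a$. For each $r\in\mathbb{R}$, let $A_r:=\{x\in K:a(x)>r\}$, which is Borel by Proposition \ref{prop_step1}. The key identity is that for any $w\in W_*$,
$$F_w^{-1}(A_r)=\{x: a(F_w(x))>r\},$$
and similarly for $\psi\in D_4$. By Proposition \ref{prop_ss_a}, $a\circ F_i=a$ and $a\circ\psi=a$ hold $\mu$-a.e. Hence $1_{A_r}\circ F_i=1_{A_r}$ and $1_{A_r}\circ\psi=1_{A_r}$ $\mu$-a.e., that is, $\mu(F_i^{-1}(A_r)\mathbin{\triangle}A_r)=0$ and $\mu(\psi^{-1}(A_r)\mathbin{\triangle}A_r)=0$. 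This is exactly condition (\ref{eq_ss01}) for $A_r$, with $\mu$ the minimal energy-dominant measure of $(\mathcal{E}^{(c)},\mathcal{F})$ constructed before Lemma \ref{lem_martingale}. One technical point is that Theorem \ref{thm_01law} is stated for an arbitrary minimal energy-dominant measure of the canonical energy, so it applies to this $\mu$.

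Theorem \ref{thm_01law} then shows that for every $r$, either $\mu(A_r)=0$ or $\mu(K\backslash A_r)=0$. Set
$$c:=\sup\{r\in\mathbb{Q}:\mu(K\backslash A_r)=0\}.$$
Since $\frac1C\le a\le C$ on $K$, we have $\mu(K\backslash A_r)=0$ for rational $r<\frac1C$ and $\mu(A_r)=0$ for rational $r\ge C$. Here $\mu(K)>0$ holds because $\mu$ has full support. The map $r\mapsto A_r$ is decreasing, so the rationals split into those with $\mu(K\backslash A_r)=0$, which lie below $c$, and those with $\mu(A_r)=0$, which lie above $c$. The two alternatives cannot hold simultaneously, again because $\mu(K)>0$. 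It follows that $c\in[\frac1C,C]$.

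Finally, $\{a>c\}=\bigcup_{r\in\mathbb{Q},r>c}A_r$ is a countable union of $\mu$-null sets. Likewise $\{a<c\}=\bigcup_{r\in\mathbb{Q},r<c}(K\backslash A_r)$ up to the harmless reindexing $\{a\le r\}$ for $r<c$, and each of these is $\mu$-null. Therefore $a=c$ $\mu$-a.e.

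To conclude Theorem \ref{thm_unique}, we combine this with Proposition \ref{prop_step1}. Since $\Gamma^{(c)}(f)\ll\mu$, we get $\Gamma(f)=c\Gamma^{(c)}(f)$, and evaluating at $K$ gives $\mathcal{E}=c\mathcal{E}^{(c)}$. We do not expect a real obstacle: the only care needed is in verifying (\ref{eq_ss01}) from the $\mu$-a.e. invariance, and in checking that the level-set sup argument uses only countably many $r$.
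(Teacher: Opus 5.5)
Your proposal is correct and follows the paper's proof: you use Proposition \ref{prop_ss_a} to verify (\ref{eq_ss01}) for the superlevel sets $\{a>r\}$, apply Theorem \ref{thm_01law}, and then locate $c$ through a countable family of rational thresholds. The only difference is cosmetic: the paper sets $c=\inf\{r\in\mathbb{Q}:\mu(\{a>r\})=0\}$, while you use the equivalent $\sup\{r\in\mathbb{Q}:\mu(\{a\le r\})=0\}$.
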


\begin{proof}
For any $r\in \mathbb{R}$, $i=0,1,\ldots,7$, and $\psi\in D_4$, by Proposition \ref{prop_ss_a},
$$1_{\{a>r\}}=1_{\{a\circ F_i>r\}}=1_{\{a>r\}}\circ F_i\ \mu\text{-a.e.},\qquad 1_{\{a>r\}}=1_{\{a\circ \psi>r\}}=1_{\{a>r\}}\circ \psi\ \mu\text{-a.e.}$$
Then Theorem \ref{thm_01law} gives either $\mu(\{a>r\})=0$ or $\mu(K\backslash\{a>r\})=\mu(\{a\le r\})=0$. Since $\frac{1}{C}\le a\le C$ on $K$ by Proposition \ref{prop_step1}, $\mu(\{a>r\})=0$ for any $r\ge C$ and $\mu(\{a\le r\})=0$ for any $r<\frac{1}{C}$. Let
$$c:=\inf \left\{r\in \mathbb{Q}:\mu(\{a>r\})=0\right\},$$
then $c\in[\frac{1}{C},C]$. For any $r>c$, $\mu(\{a>r\})=0$, hence
$$\mu(\{a>c\})=\mu\left(\bigcup_{r\in \mathbb{Q}\cap (c,+\infty)}\{a>r\}\right)=0.$$
For any $r<c$, $\mu(\{a\le r\})=0$, hence
$$\mu(\{a<c\})=\mu\left(\bigcup_{r\in \mathbb{Q}\cap (-\infty,c)}\{a\le r\}\right)=0.$$
Therefore, $a=c$ $\mu$-a.e.
\end{proof}

\section{Proof of Proposition \ref{prop_Hin310}}\label{sec_Hin310}

In this section, we give some results in potential theory on Euclidean spaces. Since we have not found these results in the existing literature, we include their proofs for completeness. We write $L^p(\mathbb{R}^2)=L^p(\mathbb{R}^2;\mathcal{L}^2)$ and use $\|\cdot\|_p$ for the norm in $L^p(\mathbb{R}^2)$. Let $\xi:\mathbb{R}^2\to[0,+\infty)$ satisfy $\eta:=\xi^{\frac{1}{p}}\in W^{1,p}(\mathbb{R}^2)$ and $0\le \rho\le \xi$ $\mathcal{L}^2$-a.e. Since $\xi=\eta^p\in L^1(\mathbb{R}^2)$, the measure $(\xi+1)\mathcal{L}^2$ is finite on bounded sets and has the same null sets as Lebesgue measure $\mathcal{L}^2$. In what follows, ``a.e." means $\mathcal{L}^2$-a.e., which is equivalent to $(\xi+1)\mathcal{L}^2$-a.e.

For any $u\in C_c^1(\mathbb{R}^2)$, let
$$Q^\xi(u):=\int_{\mathbb{R}^2}|\nabla u|^p(\xi+1)\mathrm{d}\mathcal{L}^2,\qquad Q_1^\xi(u):=\int_{\mathbb{R}^2}(|\nabla u|^p+|u|^p)(\xi+1)\mathrm{d}\mathcal{L}^2.$$
Let $D=\mathrm{Dom}(Q^\xi)$ be the $Q_1^\xi$-closure of $C_c^1(\mathbb{R}^2)$. Recall that for any $u\in W^{1,p}(\mathbb{R}^2)$,
$$Q(u)=\int_{\mathbb{R}^2}|\nabla u|^p\mathrm{d}\mathcal{L}^2,\qquad Q_1(u)=\int_{\mathbb{R}^2}(|\nabla u|^p+\lvert u\rvert^p)\mathrm{d}\mathcal{L}^2.$$
For any open subset $U\subseteq\mathbb{R}^2$, let
\begin{align*}
\mathrm{Cap}^\xi(U)&:=\inf\left\{Q_1^\xi(u):u\in D,\ u\ge1\text{ a.e. in }U\right\},\\
\mathrm{Cap}^{1,p}(U)
&:=\inf\left\{Q_1(u):u\in W^{1,p}(\mathbb{R}^2),\ u\ge1\text{ a.e. in }U\right\}.
\end{align*}
For any subset $A\subseteq\mathbb{R}^2$, let
$$
\mathrm{Cap}^\xi(A):=\inf_{\substack{U\supseteq A\\U:\text{ open}}}\mathrm{Cap}^\xi(U),\qquad \mathrm{Cap}^{1,p}(A):=\inf_{\substack{U\supseteq A\\U\text{ open}}}\mathrm{Cap}^{1,p}(U),$$
here we use the convention that $\inf\emptyset=+\infty$. Since $1\le \rho+1\le \xi+1$, for any subset $A\subseteq \mathbb{R}^2$,
$$\mathrm{Cap}^{1,p}(A)\le \mathrm{Cap}^{\rho}(A)\le \mathrm{Cap}^{\xi}(A).$$
It suffices to prove the following result.

\begin{theorem}\label{thm_cap_main}
For any subset $A\subseteq\mathbb{R}^2$ and any $C\ge2$,
\begin{equation}\label{eq_comparison}
\mathrm{Cap}^{1,p}(A)\le\mathrm{Cap}^\xi(A)\le2^{p-1}\left(C^p\mathrm{Cap}^{1,p}(A)+\varepsilon(C)\right),\end{equation}
where
\begin{equation}\label{eq_tail}
\varepsilon(C):=\int_{\{\eta>C-1\}}\left(|\nabla\eta|^p+(\eta+1)^p\right)\mathrm{d}\mathcal{L}^2\to0\text{ as }C\to+\infty.
\end{equation}
Consequently, for any subset $B\subseteq\mathbb{R}^2$, $\mathrm{Cap}^{1,p}(B)=0$ if and only if $\mathrm{Cap}^\xi(B)=0$.
\end{theorem}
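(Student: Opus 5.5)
The lower bound $\mathrm{Cap}^{1,p}(A)\le\mathrm{Cap}^\xi(A)$ follows from $Q_1\le Q_1^\xi$ on $C_c^1(\mathbb{R}^2)$, hence on $D$, which embeds continuously into $W^{1,p}(\mathbb{R}^2)$. Thus any admissible function for $\mathrm{Cap}^\xi(U)$ is admissible for $\mathrm{Cap}^{1,p}(U)$. It therefore suffices to prove the upper bound for open $U$ and then take the infimum over open $U\supseteq A$. The consequence follows by letting $C\to+\infty$ in (\ref{eq_comparison}), using (\ref{eq_tail}). The tail estimate (\ref{eq_tail}) itself is dominated convergence, since $|\nabla\eta|^p+(\eta+1)^p$ is integrable on $\{\eta>1\}$, where $(\eta+1)^p\le 2^p\eta^p$, and $\mathcal{L}^2(\{\eta>C-1\})<\infty$.

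For the upper bound, fix open $U$ and $u\in W^{1,p}(\mathbb{R}^2)$ with $u\ge1$ a.e. in $U$. By truncation, which does not increase $Q_1$, we may assume $0\le u\le1$. The key construction is a cutoff that kills the region where the weight is large:
\[
\chi:=\left((C-\eta)\wedge 1\right)\vee 0, \qquad v:=Cu\,\chi+\left(1-\chi\right).
\]
The intent is that $\chi=1$ where $\eta\le C-1$ and $\chi=0$ where $\eta\ge C$. Cheaper still, take $v:=\max\{Cu\wedge 1,\ \min\{\eta-(C-2),1\}_+\}$, so that $v\ge1$ on $U$ and also on $\{\eta>C-1\}$, and $v$ only sees the weight $\xi+1=\eta^p+1\le C^p$ roughly where $\eta\le C-1$. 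Then
\[
Q_1^\xi(v)\le \int_{\{\eta\le C-1\}}(|\nabla (Cu)|^p+|Cu|^p)(\eta^p+1)\,\mathrm{d}\mathcal{L}^2+\int_{\{\eta>C-2\}}(|\nabla\eta|^p+1)(\eta^p+1)\,\mathrm{d}\mathcal{L}^2 .
\]
The first term is at most $C^p\cdot C^p Q_1(u)$-type. To obtain exactly $2^{p-1}(C^pQ_1(u)+\varepsilon(C))$, I would normalize more carefully: use $u$ rather than $Cu$, and apply the cutoff multiplicatively as $v=u\chi+(1-\chi)$. The gradient is $\nabla v=\chi\nabla u+(u-1)\nabla\chi$, with $|\nabla\chi|\le|\nabla\eta|1_{\{C-1<\eta<C\}}$ and $|u-1|\le1$. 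The convexity estimate $|a+b|^p\le 2^{p-1}(|a|^p+|b|^p)$ then gives the first piece $\le 2^{p-1}\int(|\nabla u|^p+|u|^p)(\xi+1)\chi^p\le 2^{p-1}C^pQ_1(u)$, after adjusting the constants so that $\xi+1\le C^p$ on $\{\chi>0\}$, e.g.\ by cutting at $\eta\le C-1$ so that $\eta^p+1\le C^p$ for $C\ge2$. The second piece is bounded by $2^{p-1}\int_{\{\eta>C-1\}}(|\nabla\eta|^p+(\eta+1)^p)=2^{p-1}\varepsilon(C)$, using $\xi+1\le(\eta+1)^p$. Since $v\ge1$ a.e. on $U$ (as $u\ge1$ there and $v$ is a convex combination of $u$ and $1$), $v$ is admissible once we know $v\in D$.

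The main obstacle is exactly this membership $v\in D$, i.e.\ that $v$ lies in the $Q_1^\xi$-closure of $C_c^1$. This is where the Röckner--Zhang/Eberle characterization is replaced. I would proceed as follows. First, functions $w\in W^{1,p}$ with $\xi$ bounded on a neighbourhood of $\operatorname{supp} w$ belong to $D$, by mollification, since the weight is bounded there. Second, $v$ itself is not compactly supported and equals $1$ on a large set, so I would instead work with $1-v$ or multiply by cutoffs $\zeta_R$, and approximate $v$ on $\{\eta<C-1/2\}$, where $\xi$ is bounded. One then has to check that the pieces where $\chi<1$ contribute controllable energy. I would handle this by showing $\eta\wedge M\in D_{\mathrm{loc}}$-type estimates, mollifying with the weight $\xi\le C^p$ on $\operatorname{supp}\chi$, and using $\zeta_R$ together with $\int(\xi+1)|\nabla\zeta_R|^p\to0$ on bounded-weight regions. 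Completing this approximation cleanly is where I expect the real work to lie.
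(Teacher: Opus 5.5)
Your architecture matches the paper's. You blend $u$ with a cutoff in $\eta$, and the paper's test function $W_{g,C}=f_C+(1-f_C)g$ is your $v=u\chi+(1-\chi)$ with $\chi=1-f_C$. Your lower bound, the reduction to open $U$, and the proof that $\varepsilon(C)\to0$ are fine. However, there is a genuine gap in the tail estimate, caused by the cutoff profile you chose.

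With $\chi$ linear in $\eta$ across $\{C-1<\eta<C\}$, the tail gradient term is $\int_{\{C-1<\eta<C\}}\lvert u-1\rvert^p\lvert\nabla\eta\rvert^p(\xi+1)\,\mathrm{d}\mathcal{L}^2$. Applying $\xi+1\le(\eta+1)^p$ turns the integrand into the \emph{product} $\lvert\nabla\eta\rvert^p(\eta+1)^p$, not the sum $\lvert\nabla\eta\rvert^p+(\eta+1)^p$ that appears in $\varepsilon(C)$. This term is of order $C^p\int_{\{C-1<\eta<C\}}\lvert\nabla\eta\rvert^p$, and $\eta\in W^{1,p}$ does not make it small. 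For instance, take $p=3/2$ and $\eta=\lvert x\rvert^{-3/10}$ near the origin, smoothly cut off. Then $\eta\in W^{1,p}(\mathbb{R}^2)$, but $\int_{\{C-1<\eta<C\}}\lvert\nabla\eta\rvert^p\xi\,\mathrm{d}\mathcal{L}^2$ grows like $C^{1/3}$.

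This is not merely a lossy bound. If $B$ is a point far from the origin and $u$ is supported near $B$, then $v=1-\chi$ on the support of $\eta$. So $Q_1^\xi(v)$ is genuinely at least this large, does not tend to zero as $C\to+\infty$, and letting $C\to+\infty$ cannot yield $\mathrm{Cap}^\xi(B)=0$. The first piece is also off, since $\xi+1$ can exceed $C^p$ on $\{\chi>0\}=\{\eta<C\}$.

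What is missing is a cutoff that absorbs the weight. You need $(\eta+1)\chi\le C$ and $(\eta+1)\lvert\nabla\chi\rvert\le\lvert\nabla\eta\rvert 1_{\{\eta>C-1\}}$. So $\chi$ must decay like $C/(\eta+1)$ rather than vanish across a unit interval. The paper takes $\chi=1-f_C=\min\{1,C/(\eta+1)\}$, where $f_C=F_C(\eta)$ and $F_C(s)=(1-\frac{C}{s_++1})_+$. Then $(\eta+1)W_{g,C}\le Cg+(\eta+1)1_{\{\eta>C-1\}}$ and $(\eta+1)\lvert\nabla W_{g,C}\rvert\le C\lvert\nabla g\rvert+\lvert\nabla\eta\rvert 1_{\{\eta>C-1\}}$. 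After convexity this gives exactly $2^{p-1}(C^pQ_1(g)+\varepsilon(C))$.

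The membership $v\in D$, which you leave open, is the other essential step, and the route you sketch is not available. For $p<2$, which is the range where the paper needs this result, $\xi$ can be essentially unbounded on every nonempty open set. An example is $\eta=\sum_k 2^{-k}\phi(x-q_k)\lvert x-q_k\rvert^{-a}$ with $\{q_k\}$ dense, $\phi$ a smooth bump, and $0<a<\frac{2}{p}-1$. So there is no neighbourhood on which the weight is bounded. Sets like $\{\eta<C-\frac12\}$ are merely measurable, and any mollification leaks into the region of large weight.

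The paper's mechanism (Lemma~\ref{lem_fCinD}) is a commutator estimate. The inequality $(t+1)(s+1)\lvert F_C(t)-F_C(s)\rvert\le C\lvert t-s\rvert$, combined with $\lVert\eta(\cdot-z)-\eta\rVert_p\le\lvert z\rvert\,\lVert\nabla\eta\rVert_p$, bounds $\lVert(\eta+1)(f_C\ast\varrho^{(\delta)})\rVert_p$ and $\lVert(\eta+1)\nabla(f_C\ast\varrho^{(\delta)})\rVert_p$ uniformly in $\delta$. Weak compactness together with Mazur's lemma (Lemma~\ref{lem_weak}) then gives $f_C\in D$. Finally, $W_{g,C}\in D$ follows by approximating $g$ with compactly supported Lipschitz functions and using the multiplier lemma (Lemma~\ref{lem_multiplier} and Proposition~\ref{prop_blend}).
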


Since $\xi+1\ge1$, we have $Q^\xi(u)\ge Q(u)$ for any $u\in C_c^1(\mathbb{R}^2)$, which gives $D\subseteq W^{1,p}(\mathbb{R}^2)$.

Fix a non-negative mollifier $\varrho\in C_c^\infty(B(0,1))$ with $\int_{\mathbb{R}^2}\varrho\mathrm{d}\mathcal{L}^2=1$, and let
\begin{equation}\label{eq_mollifier}
\varrho^{(\delta)}(x):=\frac{1}{\delta^2}\varrho \left(\frac{x}{\delta}\right)\text{ for any }x\in \mathbb{R}^2\text{ and any }\delta>0.
\end{equation}

\begin{lemma}\label{lem_lipschitz}
For any Lipschitz function $u$ with compact support, we have $u\in D$.
\end{lemma}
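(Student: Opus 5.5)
To show that a compactly supported Lipschitz function $u$ lies in $D$, I would regularize by mollification and check that the regularized functions converge to $u$ in the weighted norm $Q_1^\xi$.

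\textbf{Approximation.} Let $u$ be Lipschitz with support in a ball $B(0,R)$. Set $u_\delta:=u*\varrho^{(\delta)}$, using the mollifier (\ref{eq_mollifier}). Then $u_\delta\in C_c^\infty(\mathbb{R}^2)\subseteq C_c^1(\mathbb{R}^2)$. All $u_\delta$ with $\delta<1$ are supported in $B(0,R+1)$ and satisfy
$$\lvert u_\delta\rvert\le\lVert u\rVert_\infty,\qquad \lvert\nabla u_\delta\rvert\le \mathrm{Lip}(u).$$
As $\delta\downarrow0$ we have $u_\delta\to u$ uniformly. Moreover, $\nabla u_\delta=(\nabla u)*\varrho^{(\delta)}\to\nabla u$ a.e. by the Lebesgue differentiation theorem, since $\nabla u\in L^\infty$ exists a.e. by Rademacher's theorem.

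\textbf{Convergence in $Q_1^\xi$.} By the bounds above, the integrand
$$\left(\lvert\nabla(u_\delta-u)\rvert^p+\lvert u_\delta-u\rvert^p\right)(\xi+1)$$
is dominated by $2^p\left(\mathrm{Lip}(u)^p+\lVert u\rVert_\infty^p\right)(\xi+1)1_{B(0,R+1)}$. This dominating function is integrable because $\xi=\eta^p\in L^1(\mathbb{R}^2)$ and the ball has finite Lebesgue measure. The integrand tends to $0$ a.e., so dominated convergence gives $Q_1^\xi(u_\delta-u)\to0$.

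\textbf{Identification of the limit.} Hence $\{u_\delta\}$ is $Q_1^\xi$-Cauchy and has a limit in $D$. Since $D$ sits inside $L^p(\mathbb{R}^2;(\xi+1)\mathcal{L}^2)$, and $u_\delta\to u$ in that space, the limit element is $u$, and $u\in D$.

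There is no real obstacle here. The only point needing care is the a.e. convergence of gradients, which follows from Rademacher's theorem and Lebesgue points of $\nabla u\in L^\infty_{\mathrm{loc}}$. Note that no smallness of $\xi$ is needed beyond local integrability; the cutoff issues involving $\{\eta>C-1\}$ only arise later, for Theorem \ref{thm_cap_main}.
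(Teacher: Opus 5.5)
Your proposal is correct and follows essentially the same argument as the paper. You mollify $u$, use the bounds $\lvert u^{(\delta)}\rvert\le M$ and $\lvert\nabla u^{(\delta)}\rvert\le L$ with supports in $B(0,R+1)$, and apply dominated convergence with the integrable majorant $2^p(M^p+L^p)(\xi+1)1_{B(0,R+1)}$; your explicit identification of the $Q_1^\xi$-limit with $u$ is a detail the paper leaves implicit.
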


\begin{proof}
Assume that $\mathrm{supp}(u)\subseteq B(0,R)$ for some $R>0$. Let $M:=\mathrm{sup}_{\mathbb{R}^2}\lvert u\rvert$ and $L:=\mathrm{Lip}(u)$. For any $\delta\in(0,1)$, let $u^{(\delta)}=u\ast\varrho^{(\delta)}$, then $u^{(\delta)}\in C_c^\infty(\mathbb{R}^2)\subseteq D$, $\mathrm{supp}(u^{(\delta)})\subseteq B(0,R+1)$, $\lvert u^{(\delta)}\rvert\le M$, $\lvert \nabla u^{(\delta)}\rvert\le L$, $u^{(\delta)}\rightrightarrows u$, and $\nabla u^{(\delta)}=(\nabla u)\ast\varrho^{(\delta)}\to\nabla u$ a.e. as $\delta\downarrow0$. Hence
$$(|u^{(\delta)}-u|^p+|\nabla u^{(\delta)}-\nabla u|^p)(\xi+1)\le 2^p \left(M^p+L^p\right)(\xi+1)1_{B(0,R+1)},$$
where the RHS is in $L^1(\mathbb{R}^2)$. By the dominated convergence theorem, $u\in D$.
\end{proof}

\begin{lemma}\label{lem_multiplier}
If $u\in D$ and $v$ is a bounded Lipschitz function, then $uv\in D$.
\end{lemma}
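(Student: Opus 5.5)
The plan is to approximate $u\in D$ by smooth compactly supported functions $u_n\in C_c^1(\mathbb{R}^2)$ in the $Q_1^\xi$-norm and to show that $u_nv\in D$ with $Q_1^\xi(u_nv-u_mv)\to0$. Since $D$ is closed and $u_nv\to uv$ in $L^p((\xi+1)\mathcal{L}^2)$, this gives $uv\in D$.

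\emph{Step 1: $u_nv\in D$.} For $u_n\in C_c^1(\mathbb{R}^2)$ and $v$ bounded Lipschitz, the product $u_nv$ is Lipschitz with compact support. Hence Lemma \ref{lem_lipschitz} gives $u_nv\in D$ directly.

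\emph{Step 2: the Cauchy estimate.} Let $M:=\sup\lvert v\rvert$ and $L:=\mathrm{Lip}(v)$, so that $\lvert\nabla v\rvert\le L$ a.e. by Rademacher's theorem. Writing $w=u_n-u_m$, we have $\nabla(wv)=v\nabla w+w\nabla v$ a.e. Therefore
$$Q_1^\xi(wv)\le 2^{p-1}\int_{\mathbb{R}^2}\left(M^p\lvert\nabla w\rvert^p+L^p\lvert w\rvert^p\right)(\xi+1)\mathrm{d}\mathcal{L}^2+M^p\int_{\mathbb{R}^2}\lvert w\rvert^p(\xi+1)\mathrm{d}\mathcal{L}^2\lesssim Q_1^\xi(w)\to0.$$
So $\{u_nv\}$ is $Q_1^\xi$-Cauchy in $D$. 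It converges in $D$ to some limit, and this limit must equal the $L^p((\xi+1)\mathcal{L}^2)$-limit $uv$, because $\lvert u_nv-uv\rvert\le M\lvert u_n-u\rvert$.

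\emph{Step 3: identifying the limit.} We need that the limit in $D$ is $uv$ with gradient $v\nabla u+u\nabla v$. This is automatic here, since $D\subseteq W^{1,p}(\mathbb{R}^2)$ and the $D$-norm dominates the $W^{1,p}$-norm, so the weak gradients converge as well. There is essentially no obstacle in this argument. The only point requiring care is justifying the product rule a.e. for Lipschitz $v$, which follows from Rademacher's theorem combined with the standard product rule in $W^{1,p}_{\mathrm{loc}}$.
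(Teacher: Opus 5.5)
Your proposal is correct and follows essentially the same route as the paper: approximate $u$ by $u_n\in C_c^1(\mathbb{R}^2)$, get $u_nv\in D$ from Lemma \ref{lem_lipschitz}, and use the pointwise bounds $\lvert wv\rvert\le M\lvert w\rvert$ and $\lvert\nabla(wv)\rvert\le M\lvert\nabla w\rvert+L\lvert w\rvert$ to obtain $Q_1^\xi$-convergence of $\{u_nv\}$ to $uv$. The only difference is cosmetic: you estimate the Cauchy differences $w=u_n-u_m\in C_c^1(\mathbb{R}^2)$, so the product rule is only needed for a $C^1_c$ function times a Lipschitz one, whereas the paper estimates $(u_n-u)v$ directly.
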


\begin{proof}
Take $\{u_n\}\subseteq C_c^1(\mathbb{R}^2)$ that $Q_1^\xi$-converges to $u$. Then for any $n$, $u_nv$ is a Lipschitz function with compact support, by Lemma \ref{lem_lipschitz}, $u_nv\in D$. Notice that
$$\lvert (u_n-u)v\rvert\le \sup_{\mathbb{R}^2}\lvert v\rvert\cdot \lvert u_n-u\rvert,$$
$$\lvert\nabla ((u_n-u)v)\rvert\le \lvert u_n-u\lvert \cdot\lvert\nabla v\rvert+\lvert v\rvert \cdot\lvert\nabla (u_n-u)\rvert\le \mathrm{Lip}(v)\lvert u_n-u\rvert+\sup_{\mathbb{R}^2}\lvert v\rvert\cdot\lvert\nabla (u_n-u)\rvert.$$
Thus $\{u_nv\}$ is $Q_1^\xi$-convergent, and hence $uv\in D$.
\end{proof}

\begin{lemma}\label{lem_weak}
Let $\{u_n\}_n\subseteq D$ satisfy $\sup_n Q_1^\xi(u_n)<+\infty$, and $u_n\to u$ a.e. Then $u\in D$ and
$$Q_1^\xi(u)\le\varliminf_{n\to+\infty}Q_1^\xi(u_n).$$
\end{lemma}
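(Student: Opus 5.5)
The plan is to use reflexivity of the weighted space together with Mazur's lemma. This mirrors Lemma \ref{lem_BAM}, with $D$ playing the role of $\mathcal{F}$.

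First, $D$ is a closed subspace of the weighted Sobolev-type space. More precisely, the map $u\mapsto(u,\nabla u)$ embeds $(D,(Q_1^\xi)^{1/p})$ isometrically into
$$L^p(\mathbb{R}^2;(\xi+1)\mathcal{L}^2)\times L^p(\mathbb{R}^2;(\xi+1)\mathcal{L}^2)^2.$$
Here the gradient is well defined because $D\subseteq W^{1,p}(\mathbb{R}^2)$. The image is closed by the definition of $D$ as a closure. Since $p\in(1,+\infty)$, this product space is uniformly convex, hence reflexive, and so $D$ is reflexive.

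Given $\sup_nQ_1^\xi(u_n)<+\infty$, I would pass to a subsequence along which $\lim_n Q_1^\xi(u_n)$ equals the original $\varliminf$. I would then extract a further subsequence converging weakly in $D$ to some $v\in D$. By Mazur's lemma, convex combinations $w_n=\sum_{k=n}^{I_n}\lambda^{(n)}_ku_k$ converge to $v$ strongly in $Q_1^\xi$. By convexity of $Q_1^\xi$ and lower semicontinuity of the norm,
$$Q_1^\xi(v)\le\varliminf_n Q_1^\xi(w_n)\le \lim_n\sup_{k\ge n}Q_1^\xi(u_k)=\varliminf_n Q_1^\xi(u_n).$$

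It remains to identify $v=u$ a.e. Strong convergence of $w_n$ in $L^p((\xi+1)\mathcal{L}^2)$ gives a subsequence with $w_{n_j}\to v$ a.e. On the other hand, $u_k\to u$ a.e. Fix a point $x$ where $u_k(x)\to u(x)$. Since $w_n(x)$ is a convex combination of the values $u_k(x)$ with $k\ge n$, we get $w_n(x)\to u(x)$. Hence $u=v$ a.e., so $u\in D$ with the claimed inequality.

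The only delicate point is justifying reflexivity and the closedness of the embedding. Both are routine here, since the weight satisfies $\xi+1\ge1$, which makes $\nabla$ unambiguous on $D$.
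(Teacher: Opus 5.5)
Your proposal is correct and follows essentially the same route as the paper: reflexivity of $(D,(Q_1^\xi)^{1/p})$, then Banach--Alaoglu together with Mazur's lemma, then identifying the weak limit $v$ with $u$ via an a.e.-convergent subsequence of the convex combinations. You also spell out two points the paper leaves implicit: why $D$ is reflexive (via the embedding $u\mapsto(u,\nabla u)$), and why tail convex combinations of an a.e.-convergent sequence still converge to $u$ pointwise.
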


\begin{proof}
Passing to a subsequence, we may assume that $\lim_{n\to+\infty}Q_1^\xi(u_n)$ exists. Since $(D,(Q^\xi_1)^{\frac{1}{p}})$ is a reflexive Banach space and $\{u_n\}_n$ is $Q^\xi_1$-bounded, by the Banach--Alaoglu theorem (see \cite[Theorems 3.16 and 3.18]{Bre11}) together with Mazur’s lemma, for which we refer to \cite[Theorem 2 in Section V.1]{Yos95}, there exist a subsequence, still denoted by $\{u_n\}_n$, and $v\in D$, such that $\{u_n\}_n$ is $Q^\xi_1$-weakly-convergent to $v$. Moreover, for any $n$, there exist an integer $I_n\ge n$ and coefficients $\lambda^{(n)}_k\ge0$, $k=n,\ldots,I_n$, such that $\sum_{k=n}^{I_n}\lambda_k^{(n)}=1$, and $\{\sum_{k=n}^{I_n}\lambda_k^{(n)}u_k\}_n$ is $Q^\xi_1$-convergent to $v$, and hence also $L^p(\mathbb{R}^2;(\xi+1)\mathcal{L}^2)$-convergent to $v$, then it has a subsequence that converges to $v$ a.e. Since $u_n$ converges to $u$ a.e., the same subsequence also converges to $u$ a.e. Hence $u=v\in D$ and $\{u_n\}_n$ is $Q^\xi_1$-weakly-convergent to $u$, which gives $Q^\xi_1(u)\le \varliminf_{n\to+\infty}Q^\xi_1(u_n)$.
\end{proof}

For any $C\ge2$, let $F_C:\mathbb{R}\to \mathbb{R}$ be given by
$$
F_C(s):=\left(1-\frac{C}{s_++1}\right)_+
=\begin{cases}
0,&\text{if }s\le C-1,\\
1-\frac{C}{s+1},&\text{if }s>C-1,
\end{cases}
$$
see Figure \ref{fig_FC}.

\begin{figure}[ht]
  \centering
  \begin{tikzpicture}[x=0.45cm,y=3.5cm,>=stealth]
    \draw[->] (-3,0) -- (21,0) node[right] {$s$};
    \draw[->] (0,0) -- (0,1.18) node[above] {$F_C(s)$};
    \node[below] at (0,0) {$0$};

    \draw[dashed,gray] (0,1) -- (20,1);
    \node[left] at (0,1) {$1$};

    \draw[very thick,blue!70!black] (-3,0) -- (2,0);
    \draw[very thick,blue!70!black,
          domain=2:20,samples=150,smooth]
      plot (\x,{1-3/(1+\x)});

    \node[below=1pt] at (2,0) {$C-1$};
  \end{tikzpicture}
  \caption{The function $F_C(s)=(1-\frac{C}{s_++1})_+$}
  \label{fig_FC}
\end{figure}

It is obvious that $F_C$ is a non-negative Lipschitz function, bounded above by $1$, and satisfies $F_C(0)=0$. We have the following elementary but useful properties. For any $t,s\ge0$,
\begin{gather}
(s+1)(1-F_C(s))\le C,\label{eq_cutoff_one}\\
0\le F_C(s)\le {1}_{\{s>C-1\}},\label{eq_cutoff_two}\\
(s+1)F_C'(s)\le {1}_{\{s>C-1\}},\label{eq_cutoff_three}\\
(t+1)(s+1)|F_C(t)-F_C(s)|\le C|t-s|.\label{eq_cutoff_difference}
\end{gather}
Let $f_C:=F_C(\eta)$, then $f_C\in W^{1,p}(\mathbb{R}^2)$ and $\nabla f_C=F_C'(\eta)\nabla\eta$ a.e. Notice also that
\begin{equation}\label{eq_weight_bound}
\xi+1\le(\eta+1)^p\le 2^{p-1}(\xi+1),\qquad(\eta+1)f_C\le(\eta+1) {1}_{\{\eta>1\}}\le2\eta.
\end{equation}

\begin{lemma}\label{lem_fCinD}
For any $C\ge2$, we have $f_C\in D$.
\end{lemma}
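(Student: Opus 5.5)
We will show that $f_C=F_C(\eta)$ is the a.e.\ limit of a $Q_1^\xi$-bounded sequence in $D$, and then invoke Lemma \ref{lem_weak}.

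\textbf{Approximating sequence.} Since $\eta\in W^{1,p}(\mathbb{R}^2)$, there are $\eta_n\in C_c^\infty(\mathbb{R}^2)$ with $\eta_n\to\eta$ in $W^{1,p}(\mathbb{R}^2)$ and a.e. Set $g_n:=F_C((\eta_n)_+)$. Because $F_C$ is Lipschitz and vanishes on $(-\infty,C-1]$, each $g_n$ is Lipschitz with compact support. By Lemma \ref{lem_lipschitz}, $g_n\in D$. Moreover $g_n\to f_C$ a.e. by continuity of $F_C$.

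\textbf{Uniform bound.} We need $\sup_nQ_1^\xi(g_n)<+\infty$, and this is the main obstacle: the weight $\xi+1\le(\eta+1)^p$ involves $\eta$, while the cutoff is built from $\eta_n$. For the gradient term, (\ref{eq_cutoff_three}) gives
\[
|\nabla g_n|\le F_C'(\eta_n)|\nabla\eta_n|\le\frac{1_{\{\eta_n>C-1\}}}{\eta_n+1}|\nabla\eta_n|,
\]
so
\[
\int|\nabla g_n|^p(\xi+1)\le\int_{\{\eta_n>C-1\}}\Big(\frac{\eta+1}{\eta_n+1}\Big)^p|\nabla\eta_n|^p.
\]
The ratio $\frac{\eta+1}{\eta_n+1}\le 1+|\eta-\eta_n|$ is not uniformly bounded, so we handle it as follows. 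The factor $(\eta_n+1)^{-p}\le 1$ is harmless, but $|\nabla\eta_n|^p(\eta+1)^p$ need not be integrable. Therefore, instead of smooth $\eta_n$, we approximate $\eta$ by \emph{truncations} and then mollify. Set $\eta^{(k)}:=\eta\wedge k$ and
\[
g_{k,\delta}:=F_C\big(\eta^{(k)}\ast\varrho^{(\delta)}\big)\cdot\chi_k,
\]
where $\chi_k$ is a Lipschitz cutoff equal to $1$ on $B(0,k)$ with $|\nabla\chi_k|\le1$. For fixed $k$ the weight is integrable on $B(0,k+1)$, so dominated convergence as $\delta\downarrow0$ (as in Lemma \ref{lem_lipschitz}) shows that $F_C(\eta^{(k)})\chi_k\in D$. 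Its norm is then estimated with $\eta^{(k)}$ and $\eta$ evaluated at the same point:
\[
|\nabla F_C(\eta^{(k)})|\le\frac{1_{\{\eta>C-1\}}|\nabla\eta|}{\eta^{(k)}+1},\qquad \xi+1\le(\eta+1)^p.
\]
On $\{\eta\le k\}$ the ratio $\frac{\eta+1}{\eta^{(k)}+1}$ equals $1$, and on $\{\eta>k\}$ the gradient $\nabla\eta^{(k)}$ vanishes a.e. Hence
\[
\int|\nabla F_C(\eta^{(k)})|^p(\xi+1)\le\int_{\{\eta>C-1\}}|\nabla\eta|^p.
\]
The zero-order term is bounded via (\ref{eq_cutoff_two}) by $\int_{\{\eta>C-1\}}(\eta+1)^p<\infty$. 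The $\chi_k$ contributions are controlled by the Leibniz rule, again using $F_C\le1_{\{\eta>C-1\}}$. Altogether this gives a bound of order $\varepsilon(C)$, uniform in $k$.

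\textbf{Conclusion.} Since $F_C(\eta^{(k)})\chi_k\to f_C$ a.e. as $k\to\infty$, Lemma \ref{lem_weak} yields $f_C\in D$. As a bonus, we obtain the bound $Q_1^\xi(f_C)\lesssim\varepsilon(C)$, which will be used in the proof of (\ref{eq_comparison}).
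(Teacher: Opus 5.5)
The gap is your claim that, for fixed $k$, dominated convergence as $\delta\downarrow0$ ``as in Lemma~\ref{lem_lipschitz}'' gives $F_C(\eta^{(k)})\chi_k\in D$.

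\begin{itemize}
\item \emph{No majorant.} The majorant in Lemma~\ref{lem_lipschitz} is $2^p(M^p+L^p)(\xi+1)1_{B(0,R+1)}$, and it exists only because $\lvert\nabla u^{(\delta)}\rvert\le L$ uniformly in $\delta$. Your $\eta^{(k)}=\eta\wedge k$ is bounded but not Lipschitz. So the term $\chi_kF_C'(\eta^{(k)}\ast\varrho^{(\delta)})\,(\nabla\eta^{(k)})\ast\varrho^{(\delta)}$ of $\nabla g_{k,\delta}$ has no $\delta$-uniform pointwise bound, while $\xi+1$ is only locally integrable. Integrability of the weight on $B(0,k+1)$ therefore gives no majorant for $\lvert\nabla g_{k,\delta}\rvert^p(\xi+1)$.
\item \emph{No uniform bound either.} You also lack a $\delta$-uniform bound on $Q_1^\xi(g_{k,\delta})$ that would let you use Lemma~\ref{lem_weak} instead. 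The function $(\nabla\eta^{(k)})\ast\varrho^{(\delta)}$ need not vanish on $\{\eta>k\}$. There you have no control of $(\eta+1)/(\eta^{(k)}\ast\varrho^{(\delta)}+1)\ge(\eta+1)/(k+1)$, since for $p\le2$ the function $\eta$ need not be bounded.
\end{itemize}

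This is exactly the mismatch you diagnosed for $F_C((\eta_n)_+)$: truncating $\eta$ inside $F_C$ does not truncate the weight $\xi=\eta^p$. Your computation (ratio $=1$ on $\{\eta\le k\}$, $\nabla\eta^{(k)}=0$ a.e.\ on $\{\eta>k\}$) concerns only the limit function $F_C(\eta^{(k)})\chi_k$. It shows that this function has a finite weighted Sobolev integral, not that it lies in the $Q_1^\xi$-closure of $C_c^1(\mathbb{R}^2)$. That closure property is precisely what the lemma asserts.

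What is missing is a way to let the weight at $x$ interact with values at $x-z$ without ever putting a derivative on $\eta$.

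\begin{itemize}
\item The paper mollifies $f_C$ itself, taking $g_n=\chi_n\,(f_C\ast\varrho^{(1/n)})\in C_c^\infty(\mathbb{R}^2)$.
\item It uses (\ref{eq_cutoff_difference}) in the form $(\eta(x)+1)\lvert F_C(\eta(x-z))-F_C(\eta(x))\rvert\le C\lvert\eta(x-z)-\eta(x)\rvert$.
\item Together with $\int_{\mathbb{R}^2}\nabla\varrho^{(\delta)}\,\mathrm{d}\mathcal{L}^2=0$, this gives $(\eta+1)\lvert\nabla(f_C\ast\varrho^{(\delta)})\rvert\le C\int_{\mathbb{R}^2}\lvert\eta(\cdot-z)-\eta\rvert\,\lvert\nabla\varrho^{(\delta)}(z)\rvert\,\mathrm{d}z$.
\item By the translation estimate $\lVert\eta(\cdot-z)-\eta\rVert_p\le\lvert z\rvert\,\lVert\nabla\eta\rVert_p$, the $L^p$-norm of the right-hand side is at most $Cc_\varrho\lVert\nabla\eta\rVert_p$, uniformly in $\delta$.
\item The zero-order term $(\eta+1)(f_C\ast\varrho^{(\delta)})$ is handled the same way, and Lemma~\ref{lem_weak} concludes.
\end{itemize}

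Since $F_C(\cdot\wedge k)$ satisfies the same one-sided inequality, this argument would also prove your intermediate claim, but then the truncation is superfluous. Once that claim is supplied, your $k$-uniform estimate and the limit $k\to\infty$ via Lemma~\ref{lem_weak} are fine.
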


\begin{proof}
Denote $f=f_C$. Let $\chi\in C_c^\infty(\mathbb{R}^2)$ satisfy $0\le\chi\le1$ on $\mathbb{R}^2$ and $\chi=1$ in $B(0,1)$. For any $n\ge1$ and any $\delta>0$, let
$$\chi_n=\chi\left(\frac{\cdot}{n}\right),\qquad f^{(\delta)}=f\ast\varrho^{(\delta)},\qquad g_n=\chi_n f^{(1/n)},$$
where $\varrho^{(\delta)}$ is given in (\ref{eq_mollifier}), then $g_n\in C_c^\infty(\mathbb{R}^2)$ and $\{g_n\}_{n\ge1}$ converges to $f$ a.e. By Lemma \ref{lem_weak}, it suffices to show that $\sup_{n\ge1} Q_1^\xi(g_n)<+\infty$. Notice that $0\le g_n\le f^{(1/n)}$ and $\nabla g_n=\chi_n\nabla f^{(1/n)}+\frac{1}{n}f^{(1/n)}\nabla\chi(\frac{\cdot}{n})$. By (\ref{eq_weight_bound}), it suffices to show that
\begin{equation}\label{eq_fCinD1}
\sup_{\delta\in(0,1]}\lVert (\eta+1)f^{(\delta)}\rVert_p<+\infty,\qquad \sup_{\delta\in(0,1]}\lVert (\eta+1)\nabla f^{(\delta)}\rVert_p<+\infty.
\end{equation}
For any scalar- or vector-valued function $k$, let
$$S_k(x):=\int_{\mathbb{R}^2}|\eta(x-z)-\eta(x)|\cdot|k(z)|\mathrm{d}z.$$
Then for $\eta\in C_c^\infty(\mathbb{R}^2)$,
$$\eta(x-z)-\eta(x)=\int_0^1 \frac{\mathrm{d}}{\mathrm{d}t}\eta(x-tz)\mathrm{d}t=-\int_0^1z\cdot\nabla\eta(x-tz)\mathrm{d}t.$$
By Minkowski's inequality,
$$\lVert\eta(\cdot-z)-\eta\rVert_p\le \int_0^1 \lvert z\rvert\cdot \lVert \nabla \eta(\cdot-tz)\rVert_p \mathrm{d}t=\lvert z\rvert\cdot \lVert \nabla \eta\rVert_p.$$
Since $\eta\in W^{1,p}(\mathbb{R}^2)$, by the density of $C_c^\infty(\mathbb{R}^2)$ in $W^{1,p}(\mathbb{R}^2)$,
$$\lVert\eta(\cdot-z)-\eta\rVert_p\le\lvert z\rvert\cdot \lVert \nabla \eta\rVert_p.$$
Then by Minkowski's inequality again,
$$
\lVert S_k\rVert_p\le\int_{\mathbb{R}^2}\lVert \eta(\cdot-z)-\eta\rVert_p \cdot \lvert k(z)\rvert \mathrm{d}z\le\lVert\nabla\eta\rVert_p\int_{\mathbb{R}^2}|z|\cdot|k(z)|\mathrm{d}z.
$$
In particular,
\begin{equation}\label{eq_kernel_bounds}
\lVert S_{\varrho^{(\delta)}}\rVert_p\le\delta \rVert\nabla\eta\rVert_p,\qquad \lVert S_{\nabla\varrho^{(\delta)}}\rVert_p\le c_\varrho\lVert\nabla\eta\rVert_p,
\end{equation}
where $c_\varrho:=\int_{\mathbb{R}^2}|z|\cdot|\nabla\varrho(z)|\mathrm{d}z<+\infty$. Since $\int_{\mathbb{R}^2}\varrho^{(\delta)}\mathrm{d}\mathcal{L}^2=1$, by (\ref{eq_cutoff_difference}) and (\ref{eq_weight_bound}), for a.e. $x\in \mathbb{R}^2$,
\begin{align*}
&0\le(\eta(x)+1)f^{(\delta)}(x)=(\eta(x)+1)f(x)+(\eta(x)+1)\int_{\mathbb{R}^2}(F_C(\eta(x-z))-F_C(\eta(x)))\varrho^{(\delta)}(z)\mathrm{d}z\\
&\le2\eta(x)+C\int_{\mathbb{R}^2}\lvert\eta(x-z)-\eta(x)\rvert\varrho^{(\delta)}(z)\mathrm{d}z=2\eta(x)+CS_{\varrho^{(\delta)}}(x),
\end{align*}
then by (\ref{eq_kernel_bounds}),
\begin{equation}\label{eq_fCinD2}
\lVert (\eta+1)f^{(\delta)}\rVert_p\le 2 \lVert \eta\rVert_p+C\delta \lVert \nabla \eta\rVert_p.
\end{equation}
Since $\int_{\mathbb{R}^2}\nabla\varrho^{(\delta)}\mathrm{d}\mathcal{L}^2=0$, by (\ref{eq_cutoff_difference}), for a.e. $x\in \mathbb{R}^2$,
\begin{align*}
&(\eta(x)+1)\lvert \nabla f^{(\delta)}(x)\rvert=(\eta(x)+1)\lvert \int_{\mathbb{R}^2}f(x-z)\nabla \varrho^{(\delta)}(z)\mathrm{d}z\rvert\\
&=(\eta(x)+1)\lvert \int_{\mathbb{R}^2}(F_C(\eta(x-z))-F_C(\eta(x)))\nabla \varrho^{(\delta)}(z)\mathrm{d}z\rvert\\
&\le C \int_{\mathbb{R}^2}\lvert \eta(x-z)-\eta(x)\rvert\cdot \lvert \nabla \varrho^{(\delta)}(z)\rvert \mathrm{d}z=CS_{\nabla \varrho^{(\delta)}}(x),
\end{align*}
then by (\ref{eq_kernel_bounds}),
$$\lVert (\eta+1)\nabla f^{(\delta)}\rVert_p\le Cc_\varrho \lVert \nabla \eta\rVert_p.$$
Combining this with (\ref{eq_fCinD2}), we obtain (\ref{eq_fCinD1}).
\end{proof}

For any $g\in W^{1,p}(\mathbb{R}^2)$ with $0\le g\le1$, let
$$W_{g,C}:=f_C+g-f_Cg=g+(1-g)f_C=f_C+(1-f_C)g.$$
Since $f_C,g\in W^{1,p}(\mathbb{R}^2)\cap L^\infty(\mathbb{R}^2)$, we have $W_{g,C}\in W^{1,p}(\mathbb{R}^2)$ and
$$\nabla W_{g,C}=(1-f_C)\nabla g+(1-g)\nabla f_C\text{ a.e.}$$
Recall that $f_C=F_C(\eta)$ and $\nabla f_C=F'_C(\eta)\nabla \eta$, by (\ref{eq_cutoff_one})--(\ref{eq_cutoff_three}),
\begin{gather*}
0\le(\eta+1)W_{g,C}\le Cg+(\eta+1){1}_{\{\eta>C-1\}},\\
(\eta+1)|\nabla W_{g,C}|\le C|\nabla g|+|\nabla\eta|{1}_{\{\eta>C-1\}}.
\end{gather*}
Combining these estimates with (\ref{eq_weight_bound}) gives
\begin{equation}\label{eq_blend_energy}
\int_{\mathbb{R}^2}(|\nabla W_{g,C}|^p+|W_{g,C}|^p)(\xi+1)\mathrm{d}\mathcal{L}^2\le2^{p-1}\left(C^p Q_1(g)+\varepsilon(C)\right),
\end{equation}
where $\varepsilon(C)$ is given in (\ref{eq_tail}). Since $\eta\in W^{1,p}(\mathbb{R}^2)$, $\lim_{C\to+\infty}\mathcal{L}^2(\{\eta>C-1\})=0$. By the dominated convergence theorem, $\lim_{C\to+\infty}\varepsilon(C)$=0.

\begin{proposition}\label{prop_blend}
For any $g\in W^{1,p}(\mathbb{R}^2)$ with $0\le g\le1$, we have $W_{g,C}\in D$, and
\begin{equation}\label{eq_blend}
Q^\xi_1(W_{g,C})\le 2^{p-1}\left(C^pQ_1(g)+\varepsilon(C)\right).
\end{equation}
Moreover, $0\le W_{g,C}\le1$, and $W_{g,C}=1$ a.e. on $\{g=1\}$.
\end{proposition}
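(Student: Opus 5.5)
The plan has three parts: the pointwise bounds, the energy estimate (already done in (\ref{eq_blend_energy})), and the membership $W_{g,C}\in D$. The last one is the real content.

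\emph{Pointwise properties.} Since $0\le f_C\le 1$ and $0\le g\le 1$, the identity $W_{g,C}=g+(1-g)f_C$ writes $W_{g,C}$ as a convex combination of $1$ and $f_C$ with weight $g$. This immediately gives $0\le W_{g,C}\le 1$. On $\{g=1\}$ the formula reduces to $W_{g,C}=1+0=1$ a.e. The bound (\ref{eq_blend}) will follow from (\ref{eq_blend_energy}) once we know $W_{g,C}\in D$ and that $Q_1^\xi$ on $D$ is computed by the integral formula with the weak gradient. That last point holds because $D\subseteq W^{1,p}(\mathbb{R}^2)$ and $Q_1^\xi$-limits of $C_c^1$ functions converge in $W^{1,p}$ as well.

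\emph{Membership in $D$: plan.} I would approximate $g$ and use Lemma \ref{lem_weak}.

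1. Choose $g_n\in C_c^\infty(\mathbb{R}^2)$ with $0\le g_n\le 1$, $g_n\to g$ in $W^{1,p}(\mathbb{R}^2)$ and a.e. This is possible by mollifying, cutting off with $\chi_n$, and then composing with a smooth truncation to $[0,1]$, which keeps the values in $[0,1]$ and preserves $W^{1,p}$ convergence.
2. Write $W_{g_n,C}=f_C+g_n(1-f_C)$.
3. $f_C\in D$ by Lemma \ref{lem_fCinD}.
4. $g_n(1-f_C)=g_n-g_nf_C$. Here $g_n\in C_c^1\subseteq D$, and $g_nf_C\in D$ by Lemma \ref{lem_multiplier}, since $f_C\in D$ and $g_n$ is bounded Lipschitz.
5. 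Hence $W_{g_n,C}\in D$ because $D$ is a linear space.
6. Applying (\ref{eq_blend_energy}) with $g_n$ in place of $g$ gives $Q_1^\xi(W_{g_n,C})\le 2^{p-1}(C^pQ_1(g_n)+\varepsilon(C))$, and the right-hand side is bounded uniformly in $n$.
7. Since $W_{g_n,C}\to W_{g,C}$ a.e., Lemma \ref{lem_weak} gives $W_{g,C}\in D$ and
$$Q_1^\xi(W_{g,C})\le\varliminf_{n\to+\infty}2^{p-1}\left(C^pQ_1(g_n)+\varepsilon(C)\right)=2^{p-1}\left(C^pQ_1(g)+\varepsilon(C)\right).$$
This yields (\ref{eq_blend}) directly, so even the integral-formula identification above is not needed for the final bound.

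\emph{Main obstacle.} The only delicate step is 1, producing approximants that stay in $[0,1]$ and converge both in $W^{1,p}$ and a.e. I would compose the mollified, cut-off functions with a fixed smooth $\phi:\mathbb{R}\to[0,1]$ that is the identity on $[0,1]$, and pass to a subsequence to get a.e. convergence.

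A cruder route also works and needs only boundedness of $Q_1(g_n)$: use clipped functions $(g_n\vee0)\wedge1$, which are Lipschitz with compact support, hence in $D$ by Lemma \ref{lem_lipschitz}. Their energies are bounded by $Q_1(g_n)$, and a.e. convergence is retained. Since Lemma \ref{lem_weak} requires only a uniform bound plus a.e. convergence, either choice suffices.
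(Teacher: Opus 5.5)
Your proposal is correct and takes the paper's approach; your ``cruder route'' is exactly the paper's proof: clip $C_c^\infty$ approximants to $[0,1]$, get $W_{g_n,C}\in D$ from Lemmas \ref{lem_lipschitz}, \ref{lem_multiplier} and \ref{lem_fCinD}, bound the energies uniformly by (\ref{eq_blend_energy}), and conclude with Lemma \ref{lem_weak}. One small slip in your main route: no smooth $\phi:\mathbb{R}\to[0,1]$ can be the identity on $[0,1]$, since $\phi(1)=1=\max\phi$ forces $\phi'(1)=0$ while $\phi=\mathrm{id}$ on $[0,1]$ forces $\phi'(1)=1$; no truncation is needed there anyway, because mollifying $g$ and multiplying by a cutoff $0\le\chi_n\le1$ already keeps the values in $[0,1]$.
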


\begin{proof}
Take $\{h_n\}\subseteq C_c^\infty(\mathbb{R}^2)$ that $Q_1$-converges to $g$. By passing to a subsequence, we may assume that $\{h_n\}$ converges to $g$ a.e. For any $n$, let $g_n=(h_n\vee0)\wedge1$, then $g_n\in W^{1,p}(\mathbb{R}^2)$ is a Lipschitz function with compact support. By Lemma \ref{lem_lipschitz}, $g_n\in D$. By Lemmas \ref{lem_multiplier} and \ref{lem_fCinD}, $f_Cg_n\in D$, hence $W_{g_n,C}\in D$. By (\ref{eq_blend_energy}), $\sup_n Q_1^\xi(W_{g_n,C})<+\infty$. Since $\{W_{g_n,C}\}_n$ converges to $W_{g,C}$ a.e., by Lemma \ref{lem_weak}, $W_{g,C}\in D$ and (\ref{eq_blend_energy}) is exactly (\ref{eq_blend}). Obviously, $0\le W_{g,C}\le 1$, and for a.e. $x\in \{g=1\}$, $W_{g,C}(x)=f_C(x)+1-f_C(x)=1$.
\end{proof}

\begin{proof}[Proof of Theorem \ref{thm_cap_main}]
Since $\xi+1\ge1$, it follows directly that $\mathrm{Cap}^{1,p}(\cdot)\le \mathrm{Cap}^\xi(\cdot)$. To prove the other inequality, fix any open subset $U\subseteq \mathbb{R}^2$. Without loss of generality, we may assume that $\mathrm{Cap}^{1,p}(U)<+\infty$. For any $u\in W^{1,p}(\mathbb{R}^2)$ with $u\ge1$ a.e. in $U$, let $g=(u\vee0)\wedge1$. Then $g\in W^{1,p}(\mathbb{R}^2)$ satisfies $0\le g\le1$ and $g=1$ a.e. in $U$. Then by Proposition \ref{prop_blend}, $W_{g,C}\in D$, $0\le W_{g,C}\le1$, and $W_{g,C}=1$ a.e. on $\{g=1\}\supseteq U$. Hence (\ref{eq_blend}) gives
$$\mathrm{Cap}^{\xi}(U)\le Q^{\xi}_1(W_{g,C})\le 2^{p-1}\left(C^pQ_1(g)+\varepsilon(C)\right).$$
Taking the infimum over $u$, we have
$$\mathrm{Cap}^{\xi}(U)\le 2^{p-1}\left(C^p \mathrm{Cap}^{1,p}(U)+\varepsilon(C)\right).$$
For any subset $A\subseteq \mathbb{R}^2$, for any open set $U\supseteq A$, the above inequality holds. Taking the infimum over $U$, we have
$$\mathrm{Cap}^{\xi}(A)\le 2^{p-1}\left(C^p \mathrm{Cap}^{1,p}(A)+\varepsilon(C)\right).$$

Consequently, for any subset $B\subseteq \mathbb{R}^2$, if $\mathrm{Cap}^\xi(B)=0$, then (\ref{eq_comparison}) gives directly $\mathrm{Cap}^{1,p}(B)=0$; if $\mathrm{Cap}^{1,p}(B)=0$, then (\ref{eq_comparison}) gives $\mathrm{Cap}^{\xi}(B)\le 2^{p-1}\varepsilon(C)$. Letting $C\to+\infty$, by (\ref{eq_tail}), we have $\mathrm{Cap}^{\xi}(B)=0$.
\end{proof}

\section*{AI statement}

The use of the concept of index in \cite{Hin10} and the concept of $p$-independence in \cite{EM25}, which are central to this paper, was suggested by OpenAI's ChatGPT (GPT-5.6 Sol) in mid-August 2026. We later used our recent results \cite{Yan25a,Yan25c,Yan25d,Yan25e} to establish non-linear analogs of the main arguments in \cite{Hin13}. During the final preparation of this paper, ChatGPT (GPT-6 Astra) was also used to assist with proofreading, polishing the English, and generating TikZ code for figures. The author takes full responsibility for the content of this paper.

\bibliographystyle{plain}

\end{document}